\documentclass[11pt,twoside,reqno]{amsart}
\usepackage[utf8]{inputenc}
\usepackage[T1]{fontenc}
\usepackage{import}
\usepackage{newclude}
\usepackage{amsmath,amssymb,amsthm,mathtools}
\usepackage{anysize}
\usepackage{amscd}
\usepackage{stmaryrd}
\usepackage{wasysym}
\usepackage{mathrsfs}
\usepackage[scr=boondox]{mathalfa}
\usepackage{enumitem}
\usepackage{accents}
\usepackage{dsfont}
\usepackage{soul}
\usepackage{color}
\usepackage[all]{xy}
\usepackage{float}
\usepackage{bm}
\usepackage{makecell}
\usepackage{thmtools}      
\usepackage{setspace}
\usepackage{comment}
\usepackage{tikz}
\usepackage{tikz-cd}
\usepackage{mathdots}
\usepackage{graphicx}
\usepackage[myheadings]{fullpage}
\usepackage{url}
\usepackage[hypertexnames=false,colorlinks,linkcolor=black,citecolor=black]{hyperref}
\usepackage[capitalize,nameinlink]{cleveref}

\numberwithin{equation}{section}
\newcommand\mtop{.95in}
\newcommand\mbottom{.95in}
\newcommand\mleft{1in}
\newcommand\mright{1in}
\usepackage[top = \mtop, bottom = \mbottom, left = \mleft, right=\mright]{geometry}

\newtheorem{thm}{Theorem}[section]
\newtheorem{example}[thm]{Example}
\newtheorem{prop}[thm]{Proposition}

\newtheorem{lemma}[thm]{Lemma}

\theoremstyle{definition}
\newtheorem{defi}[thm]{Definition}

\newtheorem{rmk}[thm]{Remark}

\newcommand\reallywidehat[1]{%
\savestack{\tmpbox}{\stretchto{%
  \scaleto{%
    \scalerel*[\widthof{\ensuremath{#1}}]{\kern-.6pt\bigwedge\kern-.6pt}%
    {\rule[-\textheight/2]{1ex}{\textheight}}
  }{\textheight}%
}{0.5ex}}%
\stackon[1pt]{#1}{\tmpbox}%
}
\DeclareSymbolFont{bbold}{U}{bbold}{m}{n}
\DeclareSymbolFontAlphabet{\mathbbold}{bbold}

\makeatletter
\def\@tocline#1#2#3#4#5#6#7{\relax
  \ifnum #1>\c@tocdepth 
  \else
    \par \addpenalty\@secpenalty\addvspace{#2}%
    \begingroup \hyphenpenalty\@M
    \@ifempty{#4}{%
      \@tempdima\csname r@tocindent\number#1\endcsname\relax
    }{%
      \@tempdima#4\relax
    }%
    \parindent\z@ \leftskip#3\relax \advance\leftskip\@tempdima\relax
    \rightskip\@pnumwidth plus4em \parfillskip-\@pnumwidth
    #5\leavevmode\hskip-\@tempdima
      \ifcase #1
       \or\or \hskip 1em \or \hskip 2em \else \hskip 3em \fi%
      #6\nobreak\relax
    \hfill\hbox to\@pnumwidth{\@tocpagenum{#7}}\par
    \nobreak
    \endgroup
  \fi}
\makeatother

\newcommand{\R}{\mathbb{R}}
\newcommand{\Z}{\mathbb{Z}}
\newcommand{\Q}{\mathbb{Q}}
\newcommand{\N}{\mathbb{N}}

\newcommand{\C}{\mathbb{C}}
\newcommand{\F}{\mathbb{F}}
\newcommand{\D}{\mathbb{D}}

\renewcommand{\L}{\mathcal{L}}
\newcommand{\dist}{\text{dist}}

\newcommand{\T}{\mathcal{T}}

\renewcommand{\P}{\mathcal{P}}
\renewcommand{\D}{\mathcal{D}}

\DeclareMathOperator{\loc}{loc}

\DeclareMathOperator{\ABC}{ABC}
\DeclareMathOperator{\hor}{hor}
\DeclareMathOperator{\ver}{ver}
\DeclareMathOperator{\diam}{diam}

\DeclareMathOperator{\spt}{spt}
\DeclareMathOperator{\Dir}{dir}

\DeclareMathOperator{\Proj}{Projection}

\title{$p$-adic Sum-Product, Projections, and Furstenberg Sets}
\author{Jiahe Shen}

\date{\today}
\begin{document}

\thanks{The author thanks Ruixiang Zhang for his encouragement to complete this problem.
He is especially grateful to Pablo Shmerkin for reading an earlier draft and for providing valuable comments and suggestions which helped clarify several aspects of the project. He also thanks Tuomas Orponen and Yuhan Chu for helpful discussions. The author acknowledges support from Ivan Corwin's NSF grant DMS-2246576 and Simons Investigator grant 929852.}

\maketitle

\begin{abstract}
Let $p$ be a prime number. We prove the sharp Furstenberg set bound in the $p$-adic plane $\Q_p^2$: every $(s,t)$-Furstenberg set $E\subset\Q_p^2$ satisfies
$$
\dim_H E\ge \min\left\{s+t,\frac{3s+t}{2},s+1\right\}.
$$
This matches the sharp lower bound in the Euclidean plane. We also derive two related consequences: a $p$-adic projection theorem for the maps $\pi_\theta(x,y)=x+\theta y$, together with the corresponding exceptional set estimate giving a $p$-adic analogue of Oberlin's projection question; and a discretized fractal sum-product estimate over $\Q_p$, showing that sufficiently non-concentrated subsets of $\Z_p^\times$ cannot have both small sum set and small product set.

The proof follows the projection-theoretic and multiscale machinery developed in the Euclidean works of Orponen--Shmerkin \cite{orponen2023projections} and Ren--Wang \cite{ren2023furstenberg}. The main task is to rebuild this machinery in the non-archimedean setting, and along the way we develop several new $p$-adic inputs needed to overcome the ultrametric features of the problem.
\end{abstract}

\textbf{Keywords: }\keywords{$p$-adic Furstenberg sets; Hausdorff dimension; Geometric measure theory}

\textbf{Mathematics Subject Classification (2020): }\subjclass{28A78 (primary); 28A80 (secondary)}

\tableofcontents

\section{Introduction}\label{sec: Introduction}

\subsection{Main results}

This paper resolves the sharp Furstenberg set problem over the $p$-adic plane $\Q_p^2$. We show that the optimal dimension lower bound for $(s,t)$-Furstenberg sets is the same as in the Euclidean plane, completing the $p$-adic Furstenberg program initiated in the previous work of Ren-Shen \cite{ren2025high}.

We begin by recalling the Euclidean formulation. Fix $s\in(0,1]$ and $t\in(0,2]$. A $(s,t)$-Furstenberg set in the real plane is a set $E\subset \R^2$ such that there exists a family $\L$ of lines with $\dim_H\L\ge t$ and
$$
\dim_H(E\cap l)\ge s,\qquad l\in\L.
$$
The Furstenberg set conjecture asserts that every such set satisfies
\begin{equation}\label{eq: sharp bound in real plane}
\dim_HE\ge\left\{s+t,\frac{3s+t}{2},s+1\right\}.
\end{equation}
There has been a substantial body of work on Furstenberg sets in the real plane over the past decades; see, for instance, Wolff \cite{wolff1999recent}, Héra-Shmerkin-Yavicoli \cite{hera2021improved}, Fu-Ren \cite{fu2024incidence}, Orponen-Shmerkin \cite{orponen2023hausdorff,orponen2023projections}, and the references therein. A decisive step was the work of Orponen-Shmerkin, which developed a projection-theoretic and additive-combinatorial approach to the regular case. The full conjecture was then resolved by Ren-Wang \cite{ren2023furstenberg}, who combined the regular theory with the semi-well-spaced analysis through a multiscale decomposition. The lower bound in \eqref{eq: sharp bound in real plane} is sharp.

Motivated by the Euclidean case, it is natural to ask whether the same sharp lower bound persists over the $p$-adic plane. The semi-well-spaced case was previously established by Ren-Shen \cite{ren2025high}. In this paper we complete the remaining regular and multiscale parts of the argument, and obtain the full $p$-adic analogue of the sharp Euclidean Furstenberg theorem.

\begin{thm}\label{thm: sharp bound of Furstenberg}
Let $s\in(0,1],t\in(0,2]$. Let $E\subset\Q_p^2$ be a $(s,t)$-Furstenberg set. Then the following sharp bound holds:
$$\dim_HE\ge\min\left\{s+t,\frac{3s+t}{2},s+1\right\}.$$
\end{thm}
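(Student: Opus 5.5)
The plan is to follow the Ren--Wang strategy \cite{ren2023furstenberg}, transplanted to $\Q_p^2$. First, by standard reductions (Frostman's lemma in $\Q_p^2$ and in the space of $p$-adic lines, plus pigeonholing over dyadic-type scales $p^{-k}$), it suffices to prove a discretized incidence statement. Namely, let $\delta=p^{-N}$. Let $\mathcal{T}$ be a $(\delta,t)$-set of $\delta$-tubes in $\Z_p^2$, and for each $T\in\mathcal{T}$ let $P_T\subset T$ be a $(\delta,s)$-set of $\delta$-balls. Then the union $P=\bigcup_T P_T$ has cardinality
$$|P|\gtrsim_\epsilon \delta^{-\min\{s+t,(3s+t)/2,s+1\}+\epsilon}.$$
The ultrametric structure helps here. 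Balls and tubes are nested exactly, so the multiscale decompositions (branching functions, uniformization, ``induction on scales'') can be carried out without the usual fuzzy overlaps. I would record these combinatorial lemmas first: uniform subsets, and the Lipschitz branching function with its piecewise-linear approximation of Shmerkin type.

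Second, I would run the Ren--Wang multiscale decomposition of the branching function of $\mathcal{T}$, or of $P$. This splits the scale range $[\delta,1]$ into finitely many blocks, each of two kinds. On a semi-well-spaced block the relevant configuration has $t$-dimensional spacing close to $\min$-extremal, and there I would invoke the semi-well-spaced $p$-adic Furstenberg estimate of Ren--Shen \cite{ren2025high}, taken as an established input. On a regular block, where the tube family is $t$-regular between two scales, I would need the $p$-adic analogue of the Orponen--Shmerkin regular-case bound \cite{orponen2023projections}, which gives the $(3s+t)/2$ exponent. The block estimates are then concatenated. The key observation is that the target function $\min\{s+t,(3s+t)/2,s+1\}$ is concave, and the block bounds multiply across scales by exact rescaling in $\Q_p$ (affine maps $x\mapsto a+p^kx$ preserve everything). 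Jensen-type convexity over the piecewise-linear branching function then yields the global bound, exactly as in \cite{ren2023furstenberg}.

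The main obstacle is the regular case. In Orponen--Shmerkin it rests on a projection theorem: for a $t$-regular measure and a Frostman family of directions, the projections have dimension at least $\min\{t,(s+t)/2\}$-type gains. That theorem in turn comes from a discretized sum-product / Bourgain projection input and an ABC-sum-product estimate. Over $\Q_p$, Bourgain's theorem needs care, because $\Z_p$ contains the subrings $p^k\Z_p$ and has nontrivial approximate subrings mod $p^k$. Non-concentration must therefore be imposed in the $p$-adic sense, meaning it should not cluster in residue classes. I would try first to prove a $p$-adic discretized sum-product estimate for non-concentrated subsets of $\Z_p^\times$, via a Balog--Szemer\'edi--Gowers plus Pl\"unnecke argument together with the Bourgain--Glibichuk style expansion in $\Z/p^N\Z$. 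From this I would derive the projection theorem for $\pi_\theta(x,y)=x+\theta y$ with $\theta\in\Z_p$. I would then bootstrap it, as in \cite{orponen2023projections}, into the $(3s+t)/2$ incidence bound for regular configurations via the ``high-low'' duality between points and tubes, which in $\Q_p^2$ is an exact point--line duality. Once this regular $p$-adic input is in place, the assembly in the second step is essentially formal.
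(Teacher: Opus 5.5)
Your overall architecture is the one the paper uses. You reduce to a discretized statement; the paper works with the dual form, \Cref{thm: discretized furstenberg}, which counts tubes through a $t$-dimensional point set and is your statement after point--line duality. You then uniformize, cut the scale range into regular and semi-well-spaced blocks, apply Ren--Shen \cite{ren2025high} on the latter and a $p$-adic Orponen--Shmerkin estimate on the former, and multiply.

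\textbf{The assembly step.} You call this step essentially formal, but your justification runs the wrong way. Suppose the blocks have relative lengths $w_j$ and branching exponents $t_j$ with $\sum_j w_jt_j\approx t$. Multiplying the local bounds gives the exponent $\sum_j w_jg(t_j)$, where $g(t)=\min\{t,(s+t)/2,1\}$ in the dual normalization. Since $g$ is concave, Jensen gives $\sum_j w_jg(t_j)\le g(t)$, which is the wrong direction. For instance, two halves with $t_j\approx 2$ and $t_j\approx 0$ average to $t=1$ and are consistent with a $(\delta,1)$-set, yet yield only $1/2<(s+1)/2$.

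What actually makes the concatenation work, in Ren--Wang and in the paper, is \Cref{lem: interval partitioning} with the specific choice $u=2-s$. It is applied to the point branching function after normalizing $\delta^{-t+\eta}\le|\mathcal P|\le\delta^{-t-\eta}$. Every structured block then has slope $t_j\in[s,2-s]$. On that range the local gain $(s+t_j)/2$ is \emph{affine} in $t_j$, so the exponents add exactly to $(s+t)/2$, up to an $O(\eta_0)$ loss from the discarded scales. In the example above, the lemma treats all of $[0,n]$ as one superlinear block of slope $1$.

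The same lemma dictates the shape of the non-regular blocks. Its superlinear alternative $f(x)\ge\min\{f(c)+(2-s)(x-c),f(d)-s(d-x)\}-\epsilon(d-c)$ is exactly the semi-well-spaced condition \eqref{eq: local semi well spaced geometry}. Under that condition \cite[Theorem 1.4]{ren2025high} gives the same gain $(s+t_j)/2$. Since the lemma needs $s<t<2-s$, the ranges $t\le s$ and $t\ge 2-s$ are never reached by the concatenation. The paper handles them separately via \cite[Theorems 1.2 and 1.3]{ren2025high}.

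\textbf{The regular case.} Your plan here is compatible with the paper but puts the difficulty in the wrong place and omits the decisive step.
\begin{itemize}
\item The paper does not derive a $p$-adic discretized sum-product estimate from a Bourgain--Glibichuk argument. It imports the $p$-adic Bourgain projection theorem from O'Regan \cite{o2026bourgain} (\Cref{lem: padic Bourgain projection}), which you would otherwise have to prove.
\item That input gives only an $\epsilon$-gain over $t/2$; this is the base case \Cref{prop: base case of projection}. Reaching $(s+t)/2$ for regular sets requires the Orponen--Shmerkin induction that lowers the multiplicity exponent from $t/2-\eta$ to $(t-s)/2$. Each step of that induction is driven by the ABC sum-product theorem.
\item Proving ABC needs a thin-tube estimate that in $\R^2$ comes from radial projections to $S^1$, and there is no $p$-adic unit circle. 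The paper replaces this with the two-chart ratio maps $\rho_x,\rho_y$ and a self-improvement argument (\Cref{prop: p-adic thin tubes}).
\item The paper also normalizes direction differences $\theta-\theta_0$ by their valuation so that the ABC coefficients lie in $\Z_p^\times$. Your sketch addresses neither this nor the thin-tube replacement.
\item The passage from the projection theorem to the regular Furstenberg bound is not a high--low argument; that is the tool behind the semi-well-spaced case. It is the recursive refinement of \Cref{thm: discrete regular case}, which applies the many-directions projection theorem to rescaled regular blocks at a fixed intermediate scale.
\end{itemize}
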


The sharpness of the bound is discussed in \Cref{app: sharpness examples}, where
we give direct $p$-adic analogues of the three standard examples corresponding
to the terms $s+t$, $(3s+t)/2$, and $s+1$. Therefore, in this paper we focus on proving that the inequality in
\Cref{thm: sharp bound of Furstenberg} indeed holds.

Based on \Cref{thm: sharp bound of Furstenberg}, we wish to deduce a range of corollaries in related problems at the interface of geometric measure theory and additive combinatorics. One especially relevant theme, both in the real and $p$-adic settings, is projection theory: Furstenberg-type estimates are closely tied to understanding how a set behaves under projection, and to quantifying the (typically small) family of directions in which the projection can be atypically small. Usually, one seeks exceptional set estimates, showing that the set of directions for which a projection becomes anomalously small is itself small (for instance, has small Hausdorff dimension). In the Euclidean setting, projection theory is typically formulated in terms of orthogonal projections onto lines, which are naturally parametrized by directions in the unit circle $S^1$. In the $p$-adic setting, however, there is
no distinguished notion of angle or orthogonality compatible with the ultrametric geometry. For this reason, it is natural to define projections by affine linear functionals, as given by the following.

\begin{defi}\label{defi: projection map}
Let $\theta\in\Q_p$. Then, the projection map $\pi_\theta$ is given by
\begin{align}
\begin{split}
\pi_\theta:\Q_p^2&\rightarrow\Q_p\\
(x,y)&\mapsto x+\theta y.
\end{split}
\end{align}
\end{defi}

The above definition gives a reparametrisation of the family of affine line projections, and it matches the convention used by Orponen-Shmerkin, see in particular \cite[(4.64)]{orponen2023projections}. We now state our projection theorem in a form analogous to a standard consequence of the Euclidean projection theory developed in Orponen-Shmerkin and Ren-Wang.

\begin{thm}\label{thm: projection theorem}
Let $E\subset\Q_p^2$ and $\Lambda\subset\Q_p$ be nonempty Borel sets. Then there exists $\theta\in\Lambda$ such that
$$
\dim_H\pi_\theta(E)\ge\min\left\{
\frac{\dim_HE+\dim_H\Lambda}{2},\dim_HE,1\right\}.
$$
\end{thm}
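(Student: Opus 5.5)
The natural route is to deduce \Cref{thm: projection theorem} from \Cref{thm: sharp bound of Furstenberg} by the standard duality between projections and Furstenberg sets. I argue by contradiction. Write $u=\dim_H E$ and $t=\dim_H\Lambda$, and suppose that for every $\theta\in\Lambda$ we have $\dim_H\pi_\theta(E)<\sigma$, where $\sigma<\min\{(u+t)/2,u,1\}$. By Frostman's lemma (valid in $\Q_p^2$, a complete separable ultrametric space) and by passing to compact subsets, I may assume $E$ and $\Lambda$ are compact. I may also assume they carry Frostman measures $\mu,\nu$ of exponents $u'<u$ and $t'<t$, with the inequalities still holding for $u',t'$ close to $u,t$.

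The dual construction is as follows. For each $\theta\in\Lambda$, the fibres $\pi_\theta^{-1}(a)=\{(x,y):x+\theta y=a\}$ are lines. Consider the point-line duality in which a point $(x,y)\in E$ corresponds to the line $\ell_{(x,y)}=\{(\theta,a):a=x+\theta y\}$ in the $(\theta,a)$-plane. The family $\{\ell_{(x,y)}:(x,y)\in E\}$ has dimension $u$, because the duality map is a bi-Lipschitz identification of $E$ with a set of line parameters. Now set
$$F=\{(\theta,\pi_\theta(x,y)):\theta\in\Lambda,(x,y)\in E\}=\bigcup_{\theta\in\Lambda}\{\theta\}\times\pi_\theta(E).$$
Each dual line $\ell_{(x,y)}$ meets $F$ in $\{(\theta,x+\theta y):\theta\in\Lambda\}$. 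This set is the graph of an affine function over $\Lambda$, hence bi-Lipschitz to $\Lambda$ in the ultrametric, so it has dimension $t$. Therefore $F$ is a $(t,u)$-Furstenberg set, with $t\le1$ the line-slice dimension. Here I need $t\le 1$, which is automatic since $\Lambda\subset\Q_p$, and $u\le2$. If $t=0$, the claimed bound is $\min\{u/2,u,1\}$, which would need a separate argument. I would handle that degenerate case by a direct Kaufman-type energy estimate; in fact if $t=0$ the bound $u/2$ is weaker and still requires some input, but I expect the discretized version below to absorb it.

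Applying \Cref{thm: sharp bound of Furstenberg} with $(s,t)$ replaced by $(t,u)$ gives
$$\dim_H F\ge\min\{t+u,(3t+u)/2,t+1\}.$$
On the other hand, $F$ is a union over $\theta\in\Lambda$ of fibres of dimension less than $\sigma$. The main obstacle is that Hausdorff dimension is not subadditive over such products, so I cannot directly conclude $\dim_H F\le t+\sigma$. The upper bound holds for packing or box dimension of $\Lambda$, but not for Hausdorff dimension in general. I would therefore run the argument at the discretized level instead. Fix a scale $\delta=p^{-n}$ and, for each $\theta$ in a $(\delta,t)$-set of directions extracted from $\nu$, cover $\pi_\theta(E)$ by few $\delta$-balls, using a pigeonholing over dyadic scales. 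This uses the $\delta$-discretized Furstenberg estimate that underlies \Cref{thm: sharp bound of Furstenberg} (the paper's multiscale statement for $(\delta,s,C)$-sets of lines and points) rather than the Hausdorff statement itself. The upshot is $|F|_\delta\lesssim\delta^{-t-\sigma-\epsilon}$, against the lower bound $\delta^{-\min\{t+u,(3t+u)/2,t+1\}+\epsilon}$.

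Comparing the two bounds gives $\sigma\ge\min\{u,(t+u)/2,1\}$, a contradiction. The delicate step is the pigeonholing that turns ``$\dim_H\pi_\theta(E)<\sigma$ for all $\theta$'' into a uniform single-scale covering on a large-$\nu$-measure set of $\theta$. I would handle it as Orponen–Shmerkin do: choose covers of each $\pi_\theta(E)$ with $\sum r_i^\sigma<1$, pigeonhole a scale $\delta_\theta$ carrying a $\gtrsim 1/n^2$ portion of both the $\mu$-mass and the $\nu$-mass, and then restrict to a common scale.
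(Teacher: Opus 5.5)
Your duality is the right one, and it is the route the paper intends. The paper does not write out a separate proof: it presents this theorem as a standard corollary of the Furstenberg estimate (as in Ren--Wang, after localizing to $E\subset\Z_p^2$, $\theta\in\Z_p$). That deduction is exactly your dual configuration, with the points of $E$ and the $\pi_\theta$-fibres through them as tubes, fed into \Cref{thm: discretized furstenberg}.

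The step that cannot work is your plan for the degenerate case $\dim_H\Lambda=0$. No Kaufman-type energy estimate or discretized argument can give it, because the inequality is false there. Take $\Lambda=\{0\}$ and $E=\{0\}\times\Z_p$: then $\dim_H E=1$ and the right-hand side equals $1/2$, but $\pi_0(E)=\{0\}$. (Kaufman's argument needs a Frostman measure of positive exponent on $\Lambda$, and \Cref{thm: discretized furstenberg} needs $s>0$.) So your argument, and the statement itself, must assume $\dim_H\Lambda>0$.

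Relatedly, your contradiction hypothesis fixes $\sigma<m:=\min\{(u+t)/2,u,1\}$. What you therefore prove is $\sup_{\theta\in\Lambda}\dim_H\pi_\theta(E)\ge m$, not the existence of a single $\theta$ attaining $m$. The supremum form is all that is used to derive \Cref{thm: exceptional set estimate}. You can do better by running the argument with a $t'$-Frostman measure $\nu$ restricted to the bad set $\{\theta:\dim_H\pi_\theta(E)<\sigma\}$. This shows the bad set is $\nu$-null for every $\sigma<\min\{(u+t')/2,u,1\}$, and hence gives a single $\theta$ attaining $m$ in two cases:
\begin{itemize}
\item when $m<(u+t)/2$, by choosing $t'<t$ with $(u+t')/2\ge\min\{u,1\}$;
\item when $\mathcal H^t(\Lambda)>0$, by taking $t'=t$.
\end{itemize}
This argument does not give it in general.

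For $\dim_H\Lambda>0$, the remaining issues are bookkeeping in the discretization you sketch.
\begin{itemize}
\item The pigeonholed scale $\delta$ only controls $\pi_\theta(E_\theta)$ for a $\theta$-dependent piece $E_\theta\subset E$ of $\mu$-mass $\gtrsim n^{-2}$, not all of $\pi_\theta(E)$. You therefore need a Fubini step producing many $z$ for which $\{\theta:z\in E_\theta\}$ has $\nu$-mass $\gtrsim n^{-2}$. These sets are the $(\delta,t')$-sets of tubes through $z$.
\item The covering number of the direction set need not be $\approx\delta^{-t}$, so you should not compare the absolute exponents $t+\sigma$ and $(3t+u)/2$. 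Instead, pigeonhole the $\nu$-masses of $\delta$-balls so that $|\Lambda_\delta|\lessapprox M$, where $M$ is the number of tubes per point. Then compare the lower bound $M\delta^{-\min\{u',(t'+u')/2,1\}+\epsilon}$ from \Cref{thm: discretized furstenberg} with the upper bound $|\Lambda_\delta|\,\delta^{-\sigma}$ for the total number of tubes.
\end{itemize}
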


Furthermore, one may formulate the same projection phenomenon as an exceptional set estimate. This is the form closest to Oberlin's projection question \cite[(1.8)]{oberlin2012restricted}; thus the following theorem gives its natural $p$-adic analogue.

\begin{thm}\label{thm: exceptional set estimate}
Let $E\subset\Q_p^2$ be a Borel set. Then, for all $0\le u\le \min\{\dim_H(E),1\}$, we have
$$
\dim_H\{\theta\in\Q_p:\dim_H\pi_\theta(E)<u\}
\le\max\{2u-\dim_HE,0\}.
$$
\end{thm}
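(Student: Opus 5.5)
The plan is to deduce \Cref{thm: exceptional set estimate} directly from \Cref{thm: projection theorem}, arguing by contradiction. Write $d=\dim_H E$ and fix $0\le u\le\min\{d,1\}$. Let $\Theta_u=\{\theta\in\Q_p:\dim_H\pi_\theta(E)<u\}$.

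The first technical point is that $\Theta_u$ need not be Borel, while \Cref{thm: projection theorem} requires a Borel parameter set. I would handle this in one of two ways.
\begin{itemize}
\item Reduce to $E$ compact. We may assume $E$ is compact or $\sigma$-compact, via Frostman-type inner regularity of Hausdorff dimension for Borel, indeed analytic, sets in the complete separable metric space $\Q_p^2$. Then $\theta\mapsto\dim_H\pi_\theta(E)$ is a Borel function, by the standard argument that $\theta\mapsto\mathcal{H}^s_\infty(\pi_\theta(K))$ is upper semicontinuous for compact $K$, so $\Theta_u$ is Borel.
\item Use an analytic subset. Alternatively, replace $\Theta_u$ by a compact, or analytic, subset $\Lambda\subset\Theta_u$ with $\dim_H\Lambda$ arbitrarily close to $\dim_H\Theta_u$. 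This uses that $\Theta_u$ is analytic in general and that analytic sets satisfy Frostman's lemma in $\Q_p$.
\end{itemize}
The reduction to compact $E$ with $\dim_H E$ close to $d$ costs an $\epsilon$, and this can be absorbed by a limiting argument in $u$. Note that if $E'\subset E$, then $\pi_\theta(E')\subset\pi_\theta(E)$, so the exceptional set only grows when $E$ is shrunk. Hence it suffices to bound $\dim_H\Theta_u(E')$ with $\dim_H E'\ge d-\epsilon$, which yields the bound $\max\{2u-d+\epsilon,0\}$, and we then let $\epsilon\to0$.

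For the main step, suppose $\dim_H\Theta_u>\max\{2u-d,0\}$ and pick a Borel $\Lambda\subset\Theta_u$ with $\dim_H\Lambda=\lambda>\max\{2u-d,0\}$. \Cref{thm: projection theorem} gives $\theta\in\Lambda$ with
$$\dim_H\pi_\theta(E)\ge\min\{(d+\lambda)/2,\,d,\,1\}.$$
Each term is at least $u$:
\begin{itemize}
\item $(d+\lambda)/2>u$, since $\lambda>2u-d$;
\item $d\ge u$ and $1\ge u$, by the hypothesis $u\le\min\{d,1\}$.
\end{itemize}
This already gives $\dim_H\pi_\theta(E)\ge u$. The first term is strict, but the other two may equal $u$ exactly, so this does not yet contradict $\theta\in\Theta_u$, where $\dim_H\pi_\theta(E)<u$.

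I resolve this by a monotone limit. We have $\Theta_u=\bigcup_{u'<u}\Theta_{u'}$, and dimension is countably stable, so it suffices to bound $\dim_H\Theta_{u'}$ for $u'<u$ along a sequence increasing to $u$. Each such $u'$ satisfies $u'<\min\{d,1\}$ strictly whenever $u\le\min\{d,1\}$. For these $u'$ all three terms exceed $u'$ strictly, so the contradiction goes through, and $\dim_H\Theta_{u'}\le\max\{2u'-d,0\}\le\max\{2u-d,0\}$.

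The case $u=0$ is trivial, since then $\Theta_0=\emptyset$.

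The only real obstacle is the measurability issue of passing from $\Theta_u$ to a Borel $\Lambda$ in the first step. I expect it to be handled by the standard analytic-set and Frostman arguments transplanted to $\Q_p$, which work since $\Q_p$ is a complete separable ultrametric space.
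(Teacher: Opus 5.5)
Your proof is correct and is essentially the paper's own deduction: apply \Cref{thm: projection theorem} to a Borel subset of the exceptional set and compare exponents. Your two additions are worth keeping, because the paper's short argument passes over exactly these points. It uses the non-strict bound $\sup_{\theta\in\Lambda}\dim_H\pi_\theta(E)\le u$, which gives no information when $u=\min\{\dim_H E,1\}$, and your limit $u'\uparrow u$ repairs this. It also takes a supremum over Borel subsets of the exceptional set without justifying inner regularity, and your reduction to compact or $\sigma$-compact $E$ supplies this. Rely on that first option: your alternative claim that the exceptional set is analytic for general Borel $E$ is not justified, since the natural bound is only co-analytic.
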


Indeed, \Cref{thm: projection theorem} implies \Cref{thm: exceptional set estimate}. Let
$$
\Lambda_u:=\{\theta\in\Q_p:\dim_H\pi_\theta(E)<u\}.
$$
Fix an arbitrary Borel set $\Lambda\subset\Lambda_u$. Since
$$
\sup_{\theta\in\Lambda}\dim_H\pi_\theta(E)\le u,
$$
\Cref{thm: projection theorem} gives
$$
u\ge\min\left\{
\frac{\dim_HE+\dim_H\Lambda}{2},
\dim_HE,1\right\}.
$$
Since $u\le\min\{\dim_HE,1\}$, it follows that
$$
\dim_H\Lambda\le 2u-\dim_HE
$$
whenever $2u-\dim_HE\ge 0$. If $2u-\dim_HE<0$, the same inequality forces every Borel subset of $\Lambda_u$ to be empty. Therefore, in all cases,
$$
\dim_H\Lambda\le \max\{2u-\dim_HE,0\}.
$$
Taking the supremum over all Borel sets $\Lambda\subset\Lambda_u$, we obtain
$$
\dim_H\Lambda_u\le \max\{2u-\dim_HE,0\},
$$
which proves \Cref{thm: exceptional set estimate}.

Although \Cref{thm: projection theorem} and \Cref{thm: exceptional set estimate} are stated for directions $\theta\in\Q_p$, the proof below will be carried out after localizing to $\theta\in\Z_p$ and $E\subset\Z_p^2$. This entails no loss of generality. By countable stability of Hausdorff dimension, we may decompose both $E$ and the parameter space $\Q_p$ into countably many $p$-adic balls. After translating and dilating the ambient variables, each bounded piece of $E$ may be placed inside $\Z_p^2$. Likewise, since
$$
\Q_p={0}\bigcup\left(\bigcup_{m\in\Z}p^m\Z_p^\times\right),
$$
each nonzero direction range can be reduced to a bounded direction chart by a linear change of variables of the form $(x,y)\mapsto (x,p^m y)$, up to harmless bi-Lipschitz transformations. Such transformations preserve Hausdorff dimension and affect covering numbers only by constants depending on the chart. Therefore it suffices to prove the localized form of the projection theorem, with $E\subset\Z_p^2$ and $\theta\in\Z_p$, and then sum over the countably many charts.

Another related area of research is the discrete sum-product estimate. Let $\F$ be a field, and $A,B\subset \F$ be two subsets. Then, the sum set $A+B$ is defined as 
$$A+B:=\{a+b:a\in A,b\in B\},$$
and the product set is defined as
$$A\cdot B:=\{a\cdot b:a\in A,b\in B\}.$$
For a finite set $A$, Erd\H{o}s and Szemerédi concerned the size of $A+A$ or $A\cdot A$ and asked if they can be substantially larger than the cardinality of $A$. See Solymosi \cite{solymosi2009bounding}, Mohammadi-Stevens \cite{mohammadi2023attaining}, Rudnev-Stevens \cite{rudnev2022update}, and the author \cite{shen2025szemer} for recent progress on the Erd\H{o}s-Szemerédi sum-product theorem. We also emphasize the recent breakthrough of Bloom-Sawin-Schildkraut-Zhelezov \cite{bloom2026sum}, which disproves the Erd\H{o}s-Szemerédi sum-product conjecture over the real numbers by constructing arbitrarily large finite sets $A\subset \mathbb R$ with
$$
\max\{|A+A|,|A\cdot A|\}\le |A|^{2-c}
$$
for some absolute constant $c>0$. Their construction also yields analogous counterexamples in the $p$-adic setting, showing that the expected nearly quadratic lower bound fails far beyond the real case.

For the infinite set case, a natural generalization is to adopt a fractal analogue of the sum-product phenomenon. For a bounded set $A\subset \Q_p^d$ and $\delta\in p^{-\N}$, we will (somewhat informally) write $|A|_\delta$ for the $p$-adic $\delta$-covering number of $A$ (i.e.\ the minimal number of $p$-adic balls of radius $\delta$ needed to cover $A$).

\begin{defi}\label{defi: dsC set}
($(\delta,s,C)$-set) For $\delta=p^{-n}\in p^{-\N},s\in[0,d]$ and $C>0$, a nonempty bounded set $A\subset\Q_p^d$ is called a \emph{$(\delta,s,C)$-set} if for all $r=p^{-m}\in[\delta,1]$ and $x\in\Q_p^d$,
$$|A\cap B(x,r)|_\delta\le Cr^s|A|_\delta.$$
\end{defi}

We will also use the same terminology for subsets of the finite ring $(\Z/p^n\Z)^d$, assuming $\delta=p^{-n}$. In that setting, we identify a subset $A\subset(\Z/p^n\Z)^d$ with its canonical lift to $\Z_p^d$, namely the union of $\delta$-cosets
$$
\widetilde A:=\bigcup_{x\in  A}\bigl(x+p^n\Z_p^d\bigr)\ \subset\ \Z_p^d,
$$
and we say that $A$ is a $(\delta,s,C)$-set in $(\Z/p^n\Z)^d$ if the lifted set $\widetilde A$ is a $(\delta,s,C)$-set in the sense of \Cref{defi: dsC set}. Equivalently, all statements about $|\cdot|_\delta$ may be interpreted combinatorially in $(\Z/p^n\Z)^d$ by counting $\delta$-cosets. 

We will use two different regularity notions in the paper. The first one is the familiar
Ahlfors--David regularity. Recall that a $(\delta,t,C)$-set
$\P\subset\P_\delta$ is called $(\delta,t,C)$-AD-regular if
\begin{equation}
\label{eq: AD regular definition section2}
C^{-1}r^t|\P|
\le
|\P\cap B(x,r)|_\delta
\le
Cr^t|\P|,
\qquad x\in\P,\quad \delta\le r\le 1.
\end{equation}
This condition contains both upper and lower mass bounds. In contrast, the
recursive argument in \Cref{sec: Furstenberg regular set} only uses
an upper regularity condition, which is the $p$-adic analogue of the
$(\delta,t,C)$-regularity defined in Orponen--Shmerkin
\cite[Definition 2.23]{orponen2023projections}. It should not be confused with
Ahlfors--David regularity.

\begin{defi}[Upper regular set]
\label{def: upper regular set}
Let $\delta\in p^{-\mathbb N}$, $t>0$, and $C\ge 1$. A non-empty set
$\P\subset\P_\delta$ is called $(\delta,t,C)$-regular, or upper regular, if
\begin{enumerate}
    \item $\P$ is a $(\delta,t,C)$-set;
    \item for all $p$-adic scales $\delta\le r\le R\le 1$ and every
    $R$-cube $\mathbf p\in\P_R$,
    $$
    |\P\cap \mathbf p|_r\le C\left(\frac  Rr\right)^t.
    $$
\end{enumerate}
Here $|\P\cap \mathbf p|_r:=|\D_r(\P\cap \mathbf p)|$.
\end{defi}

Now, as a corollary of \Cref{thm: sharp bound of Furstenberg}, we obtain an estimate for the discretized sum-product theorem as the following.

\begin{thm}\label{thm: discretized sum-product}
Let $0<s<\frac 23$ and $\eta>0$. Then, there exists $\epsilon=\epsilon(s,\eta)>0$ such that the following holds for small enough $\delta>0$. Let $A\subset\Z_p^\times$ be a $(\delta,s,\delta^{-\epsilon})$-set. Then
$$\max\{|A+A|_\delta,|A\cdot A|_\delta\}\ge\delta^{-(5/4)s+\eta}.$$
\end{thm}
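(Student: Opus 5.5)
We deduce the estimate from the discretized form of \Cref{thm: sharp bound of Furstenberg}, following the Elekes-type argument used in the Euclidean setting (as in Orponen--Shmerkin and Ren--Wang). The underlying point is the multiplicative incidence structure $c=a\cdot b+a'$.

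\textbf{Setup.} Suppose for contradiction that $|A+A|_\delta\le\delta^{-(5/4)s+\eta}$ and $|A\cdot A|_\delta\le\delta^{-(5/4)s+\eta}$, where $A\subset\Z_p^\times$ is a $(\delta,s,\delta^{-\epsilon})$-set. Set $N:=|A|_\delta\gtrsim\delta^{-s+\epsilon}$.

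\textbf{Step 1: the incidence configuration.} Consider the point set
$$
P:=(A\cdot A)\times(A+A)\subset\Z_p^2,
$$
viewed at scale $\delta$. Then $|P|_\delta\le\delta^{-(5/2)s+2\eta}$. For $(a,b)\in A\times A$, let $\ell_{a,b}$ be the line
$$
\ell_{a,b}=\{(x,y):y=a^{-1}x+b\}.
$$
Since $a\in\Z_p^\times$, the map $a\mapsto a^{-1}$ is an isometry of $\Z_p^\times$. For each $c\in A$, the point $(ac,\,c+b)$ lies in $\ell_{a,b}\cap P$. So each line contains a $\delta$-separated $(\delta,s,\delta^{-\epsilon})$-set of points of $P$, namely an isometric image of $A$; the isometry holds because $|a|_p=1$.

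The line family $\L=\{\ell_{a,b}\}$ is parametrized by $(a^{-1},b)\in A^{-1}\times A$. This is a product of two $(\delta,s)$-sets, hence a $(\delta,2s,\delta^{-2\epsilon})$-set in the dual parameter space. We therefore have a discretized $(\delta,s,2s)$-Furstenberg configuration.

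\textbf{Step 2: apply the discretized Furstenberg bound.} We need the $\delta$-discretized version of \Cref{thm: sharp bound of Furstenberg}, which the paper proves on the way to the Hausdorff statement. It says that for a $(\delta,t)$-set of lines, each containing a $(\delta,s)$-set of points of $P$, one has
$$
|P|_\delta\gtrsim\delta^{-\min\{s+t,(3s+t)/2,s+1\}+\eta'}.
$$
With $t=2s$ the three terms are $3s$, $5s/2$ and $s+1$. Since $s<2/3$, we have $5s/2<s+1$, so the minimum is $5s/2$. This gives $|P|_\delta\ge\delta^{-(5/2)s+\eta'}$.

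\textbf{Step 3: conclude.} Compare with the upper bound $|A\cdot A|_\delta|A+A|_\delta\le\delta^{-(5/2)s+2\eta}$. Choose $\epsilon$ and $\eta'$ small relative to $\eta$; this contradicts the assumption, so $\max\{|A+A|_\delta,|A\cdot A|_\delta\}\ge\delta^{-(5/4)s+\eta}$.

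\textbf{Main obstacle.} The hard part is verifying the exact hypotheses of the discretized Furstenberg theorem. First, the points $(ac,c+b)$ lie only on the $\delta$-neighborhood of $\ell_{a,b}$ inside $\delta$-cells of $P$. Because the geometry is ultrametric, $\delta$-tubes are exact cosets, so this step should be clean. Second, the non-concentration constants on each line and on $\L$ must be uniform, of the form $\delta^{-O(\epsilon)}$, and the line–parameter correspondence must be bi-Lipschitz. This holds because we are inside the chart of slopes in $\Z_p^\times$. Finally, the $\epsilon$ allowed by the discretized theorem depends on $(s,2s,\eta)$, and we take our $\epsilon$ below it.
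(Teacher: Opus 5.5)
Your proposal is correct and is essentially the deduction the paper intends: an Elekes-type configuration (lines $y=a^{-1}x+b$ with $a,b\in A$, each containing the isometric copy $\{(ac,c+b):c\in A\}$ of $A$ inside $(A\cdot A)\times(A+A)$) fed into the discretized Furstenberg estimate \Cref{thm: discretized furstenberg} with $t=2s$, where $s<2/3$ is exactly what gives $s<t<2-s$ and makes $(3s+t)/2$ the minimum. The only bookkeeping to add is that \Cref{thm: discretized furstenberg} is stated in dual nice-configuration form: you dualize (cubes at $(a^{-1},b)$, tubes $\mathbf{D}(-ac,c+b)$) and use $M=|A|_\delta\ge\delta^{-s+\epsilon}$ to turn $M\delta^{-\min\{t,(s+t)/2,1\}+\epsilon'}$ into your bound $\delta^{-5s/2+O(\epsilon+\epsilon')}$.
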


We have already explained above how \Cref{thm: exceptional set estimate}
follows from \Cref{thm: projection theorem}. The above discretized sum-product
consequence is obtained by the standard Furstenberg set construction
associated to additive and multiplicative translates of $A$: if both
$A+A$ and $A\cdot A$ were too small, the resulting configuration would
contradict \Cref{thm: sharp bound of Furstenberg}. This is the same
deduction as in Ren-Wang \cite[Theorems 1.2 and 1.3]{ren2023furstenberg}, with affine
$p$-adic projections in place of the Euclidean projection parametrisation. Moreover, beyond these particular corollaries, Ren-Wang also discussed a number of further developments in the Euclidean setting and its broad interactions with several other areas; we refer the reader to their introduction for an overview. We look forward to seeing further
applications of the $p$-adic analogues derived in this paper.

\subsection{New $p$-adic inputs and strategy}

The Furstenberg set problem is a representative example among Kakeya-type questions. In recent years, several problems of this kind in the Euclidean setting have been solved by a common multiscale strategy. One first treats configurations whose structure is essentially the same at all relevant scales, using projection theory developed by Bourgain together with additive-combinatorial input. One then extends the result to arbitrary configurations by an induction-on-scales or multiscale decomposition. In the proof of the Kakeya conjecture by Wang-Zahl \cite{wang2026sticky,wang2025assouad,wang2025volume}, the uniform-structure case is called the \emph{sticky} case; in the Furstenberg set problem, the corresponding role is played by the Ahlfors-David regular case.

Since $\R$ and $\Q_p$ are both local fields, it is natural to expect that some of the Euclidean ideas should have meaningful $p$-adic counterparts. Our proof is guided by this expectation: at the level of overall strategy, it follows the multiscale architecture of Orponen-Shmerkin \cite{orponen2023projections} and Ren-Wang \cite{ren2023furstenberg}. However, turning this architecture into a proof over $\Q_p$ is not a formal change of notation. The main new points are the following.

\paragraph{\textbf{Projection input and finite-local-ring additive combinatorics.}}
The additive-combinatorial input of this paper is an ABC sum-product theorem
over the finite local rings $\Z/p^n\Z$. Its final form is stated in
\Cref{thm: ABC sum-product} and proved in \Cref{sec: sum-product problem}. As
in the Euclidean work of Orponen--Shmerkin, the proof is not purely additive
combinatorial: it begins with a projection-theoretic input. In the present
setting, this is a $p$-adic Bourgain projection theorem obtained from
O'Regan's recent streamlined version of Bourgain's argument. We use this
input in \Cref{sec: sum-product problem} to establish the thin-tube estimate
which replaces the radial-projection step in the Euclidean proof. The
remaining part of the argument consists of additive-energy estimates,
Balog--Szemerédi--Gowers, Plünnecke--Ruzsa, and the restricted-graph
reduction.

The coefficient set in \Cref{thm: ABC sum-product} is assumed to lie in
$(\Z/p^n\Z)^\times$. This is the natural normalized form of the theorem. In
the regular projection argument of \Cref{sec: projections}, one first fixes a
reference direction $\theta_0$ and then pigeonholes the directions according
to the $p$-adic size of $\theta-\theta_0$. If the selected differences have
common size $\rho\in p^{-\Z}$, then they can be written as
$$
\theta-\theta_0=\rho^{-1} c_\theta,
\qquad c_\theta\in\Z_p^\times.
$$
The factor $\rho$ is absorbed into the spatial rescaling, and the corresponding
terminal resolution changes from $\delta$ to $\delta/\rho$. Thus the
normalized coefficients entering the ABC theorem are units, even though the
original differences of directions may occur arbitrarily deep in the
$p$-adic direction tree.

This is the direct non-archimedean analogue of descending to the effective
separation scale in the real argument. In the Euclidean setting, one
normalizes a direction difference by its actual size so that the resulting
coefficient lies in a fixed interval away from zero, such as $[1,2]$. In the
$p$-adic setting, the direction space is tree-like rather than ordered, and
the effective separation scale is detected by the valuation of the direction
difference. After dividing by the corresponding $p$-adic scale, the
coefficient lies in $\Z_p^\times$.

\begin{thm}\label{thm: ABC sum-product}
Let $0<\beta\le\alpha< 1$, and $\gamma\in(\alpha-\beta,1]$. Then, there exists $$\chi=\chi(\alpha,\beta,\gamma),\delta_0=\delta_0(\alpha,\beta,\gamma)>0$$
such that the following holds. Let $\delta=p^{-n}$ with $\delta\in(0,\delta_0]$, and let $A,B\subset \Z/p^n\Z$ be subsets satisfying the following properties:
\begin{enumerate}
\item $|A|\le\delta^{-\alpha}$.
\item $B$ is a nonempty $(\delta,\beta,\delta^{-\chi})$-set.
\end{enumerate}
Furthermore, let $C\subset(\Z/p^n\Z)^\times$ be a nonempty $(\delta,\gamma,\delta^{-\chi})$-set. Then, there exists $c\in C$ such that
\begin{equation}
|{a+cb:(a,b)\in G}|_\delta\ge\delta^{-\chi}|A|,\quad\forall G\subset A\times B,\ |G|\ge\delta^{\chi}|A||B|.
\end{equation}
\end{thm}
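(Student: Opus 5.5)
We argue by contradiction, following the Orponen--Shmerkin proof of the Euclidean ABC sum-product theorem and transplanting each step to $\Z/p^n\Z$. Suppose that for every $c\in C$ there is a set $G_c\subset A\times B$ with $|G_c|\ge\delta^{\chi}|A||B|$ and
$$
|\{a+cb:(a,b)\in G_c\}|_\delta<\delta^{-\chi}|A|.
$$
The first step is to pass from small restricted sumsets to additive structure. Cauchy--Schwarz on the fibres of $(a,b)\mapsto a+cb$ restricted to $G_c$ shows that the additive energy $E(A,cB)$ is at least $\delta^{O(\chi)}|A|^2|B|$ for each $c\in C$. Pigeonholing in $c$ lets us keep a subset $C'\subset C$ with $|C'|\ge\delta^{O(\chi)}|C|$ on which these bounds are uniform; $C'$ is still a $(\delta,\gamma,\delta^{-O(\chi)})$-set.

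Next we apply Balog--Szemer\'edi--Gowers, which holds verbatim in the abelian group $\Z/p^n\Z$. For each $c\in C'$ it gives $A_c\subset A$ and $B_c\subset B$, each of proportional size up to $\delta^{O(\chi)}$, with $|A_c+cB_c|\le\delta^{-O(\chi)}|A|$. To make the sets independent of $c$ we use the restricted-graph reduction mentioned in the introduction: a further pigeonholing gives a single large $B'\subset B$ that works for many $c$. Plünnecke--Ruzsa then yields $|B'+cB'|\le\delta^{-O(\chi)}|A|\le\delta^{-\alpha-O(\chi)}$ and $|B'+B'|\le\delta^{-\alpha-O(\chi)}$, uniformly over $c\in C''$. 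Here $C''\subset C'$ is a set of units that is still a $(\delta,\gamma,\delta^{-O(\chi)})$-set, and $B'$ is a $(\delta,\beta,\delta^{-O(\chi)})$-set.

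The heart of the argument is to contradict this using the $p$-adic Bourgain projection theorem, in the form of the thin-tube estimate of \Cref{sec: sum-product problem}. Form the product set $B'\times B'\subset(\Z/p^n\Z)^2$, which is a $(\delta,2\beta,\delta^{-O(\chi)})$-set of size about $|B'|^2$. Its images under $\pi_c$ are $B'+cB'$, so every $c\in C''$ is a direction in which the projection has size at most $\delta^{-\alpha-O(\chi)}|B'|^0$. The Bourgain projection theorem (O'Regan's version) applies in the regime where $C''$ has dimension $\gamma>0$ and the planar set is not concentrated on thin tubes. It gives some $c\in C''$ with
$$
|\pi_c(B'\times B')|_\delta\ge\delta^{-\beta-\eta}
$$
for some $\eta=\eta(\beta,\gamma)>0$. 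The hypothesis $\gamma>\alpha-\beta$ is precisely what lets us upgrade this. We iterate the Plünnecke-type gain over multiscale decompositions, or more precisely apply the projection estimate to $A\times B'$ with the refined non-concentration. This pushes the lower bound on $|A+cB'|$ to $\delta^{-\alpha-\eta'}$, contradicting the upper bound $\delta^{-\alpha-O(\chi)}$ once $\chi\ll\eta'$.

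The main obstacle is this last step. We must verify the non-concentration hypothesis of Bourgain's theorem, namely that $A\times B'$, or an iterated sumset of it, puts at most $\delta^{\epsilon}$ of its mass in any $\delta^{\kappa}$-tube. Ultrametric tubes $\pi_c^{-1}(\text{ball})$ nest rather than overlap partially, which makes the analysis different from the Euclidean case. Tube concentration could come either from $B'$ being concentrated in a ball, which the $(\delta,\beta)$-set property excludes, or from $A+cB'$ clustering, which we rule out by a scale-by-scale induction. The unit assumption $C\subset(\Z/p^n\Z)^\times$ is essential here: it ensures that distinct $c$ give tube families transverse at scale $1$, so that no valuation loss degrades $\delta$.
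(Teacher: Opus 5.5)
Your opening moves (contradiction, Cauchy--Schwarz to large energy, Balog--Szemer\'edi--Gowers, then uniformization and Pl\"unnecke--Ruzsa to get common sets) match the paper's reduction from the restricted-graph statement to the weak one. The step that must produce the contradiction, however, is missing, and the version you sketch cannot work.

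Bourgain's theorem only gives an $\eta$-gain over half the dimension. Applied to $B'\times B'$ (dimension about $2\beta$), it yields $|B'+cB'|_\delta\ge\delta^{-\beta-\eta}$. This is compatible with your upper bound $\delta^{-\alpha-O(\chi)}$ as soon as $\alpha\ge\beta+\eta$, so no contradiction arises. Passing to $A\times B'$ does not help. $A$ carries no non-concentration hypothesis, so Bourgain's theorem does not apply to it. Even heuristically, it would give exponent $(\alpha+\beta)/2+\eta$, which lies below $\alpha$ whenever $\beta<\alpha-2\eta$.

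The phrases ``iterate the Pl\"unnecke-type gain over multiscale decompositions'' and ``scale-by-scale induction'' do not describe an argument. The hypothesis $\gamma>\alpha-\beta$ is invoked but never actually used; a Bourgain-type $\epsilon$-improvement cannot see the threshold $\alpha-\beta$ at all. To beat $\delta^{-\alpha}$, you need a Kaufman-type projection estimate whose exponent is close to $\min\{\beta+\gamma,1\}>\alpha$.

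Two smaller points. The energy bound should read $|A||B|^2$, not $|A|^2|B|$. Also, the unit hypothesis does not make distinct $c$ transverse at scale $1$, since distinct units can agree modulo high powers of $p$. It is used only so that $b\mapsto cb$ is a bijection of $\Z/p^n\Z$, giving $|cB|=|B|$.

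The paper supplies the missing mechanism as follows. It first sums the bounds $\mathcal E_\delta(A,cB)\ge\delta^{O(\chi)}|A||B|^2$ over $c\in C$ and rewrites $a+cb_1=a'+cb_2$ as $a=a'+c(b_2-b_1)$. This swaps roles: for a typical pair $(b_1,b_2)$, the set $A\times C$ is mapped by the (possibly non-unit) multiplier $d=b_2-b_1$ back into $A$ with large multiplicity. A multiplier form of BSG then gives $A'\subset A$ and $C'\subset C$ with $|A'+dC'|_\delta\le\delta^{-\chi_0/2}|A|$.

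The contradiction comes from a difference-expansion lemma: $|A'+(b_1-b_2)C'|_\delta\ge\delta^{-(\beta+\gamma-\alpha)/4}|A|$ for all but a $\delta^{\chi}$-proportion of pairs. This lemma rests on a discretised expansion estimate with two ingredients. First, a thin-tube theorem for the two-chart ratio maps $(b_1-b_2)/(a_1-a_2)$, obtained by bootstrapping the $p$-adic Bourgain/O'Regan theorem, makes the induced slope measure Frostman with exponent near $\min\{s+t,1\}$. Second, the $p$-adic Kaufman projection theorem then gives projections of exponent $\alpha+\theta<\beta+\gamma$. That is precisely where $\gamma>\alpha-\beta$ enters. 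Bourgain's theorem serves only as the seed of the thin-tube bootstrap, never as a direct lower bound on a sumset.
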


After the ABC theorem is established using the O'Regan projection
input, it will in turn be used in \Cref{sec: projections} to prove the regular-set
projection theorem, the $p$-adic analogue of
\cite[Theorem 1.6]{orponen2023projections}.

\paragraph{\textbf{A ratio substitute for radial projections.}}
In the Euclidean argument of Orponen-Shmerkin \cite{orponen2023projections}, the proof of the ABC theorem uses radial projections as a fundamental input. More precisely, their \cite[Definition 3.4]{orponen2023projections} defines, for a centre $x\in\R^d$, the radial projection
$$
\pi_x(y):=\frac{y-x}{|y-x|}\in S^{d-1}.
$$
This construction relies on the Euclidean norm, the unit sphere, and the angular parametrisation of directions. It has no direct counterpart over $\Q_p^2$. Although one can divide a vector by one of its coordinates in an affine chart, there is no canonical $p$-adic unit circle or angular space obtained by normalising a vector by its length. In particular, the expression $(y-x)/||y-x||_p$ does not produce a canonical point in a distinguished direction space analogous to $S^1$; the result depends on choices of coordinates and on which coordinate realizes the maximum norm.

We therefore replace the Euclidean radial-projection step by a two-chart ratio construction. This mechanism is made precise later in \Cref{sec: discretised expansion estimate}, where we decompose the non-diagonal pair space into the two regions $\Omega_x$ and $\Omega_y$ and introduce the corresponding ratio maps; it is then used as the projection-theoretic input for the regular projection argument in \Cref{sec: projections}. Given two points $z_1=(x_1,y_1)$ and $z_2=(x_2,y_2)$ in $\Q_p^2$, if
$$
||x_1-x_2||_p\ge ||y_1-y_2||_p,
$$
then the direction from $z_2$ to $z_1$ is recorded in the $x$-chart by the ratio
$$
\rho(z_1,z_2):=\frac{y_1-y_2}{x_1-x_2}\in\Z_p.
$$
On the complementary region, one interchanges the two coordinates and uses the reciprocal $y$-chart
$$
\rho'(z_1,z_2):=\frac{x_1-x_2}{y_1-y_2}\in\Z_p.
$$
Equivalently, in the notation used later, these are the two regions $\Omega_x$ and $\Omega_y$. The first consists of pairs for which the $x$-coordinate difference controls the distance, and the second consists of pairs for which the $y$-coordinate difference controls the distance. These two affine charts cover all non-diagonal pairs. Thus, instead of projecting to a global unit circle, we work chart by chart with $p$-adic slope ratios.

This replacement is not merely a change of notation. The fibres of the ratio maps are exactly the objects that play the role of thin tubes in the $p$-adic argument. For instance, fixing $z_2=(x_2,y_2)$ and requiring
$$
\frac{y_1-y_2}{x_1-x_2}\in J
$$
for a subset $J\subset\Z_p$ is equivalent to requiring $z_1$ to lie in a corresponding family of $p$-adic tubes through $z_2$, with slopes in $J$. Thus the thin-tube estimates obtained from radial projections in the Euclidean proof are replaced here by estimates for inverse images of balls under the two ratio maps on $\Omega_x$ and $\Omega_y$.

There is also a small but important bookkeeping issue absent from the Euclidean formulation. To use the ratio on $\Omega_x$, one must first remove a near-diagonal region so that the denominator $x_1-x_2$ has a controlled $p$-adic size. In the ultrametric setting this is handled using parent balls in the $p$-adic tree: after discarding pairs whose first coordinates lie in the same suitable parent ball, division by $x_1-x_2$ changes covering numbers only by a controlled factor. The complementary chart $\Omega_y$ is treated in the same way after swapping the two coordinates. This two-chart ratio mechanism is the $p$-adic substitute for the radial-projection input in Orponen-Shmerkin's proof.

The regular-set projection theorem proved in \Cref{sec: projections}, namely \Cref{thm: regular set projection}, is then used in \Cref{sec: Furstenberg regular set} to prove the discretized Furstenberg estimate for
upper-regular point sets in the sense of \Cref{def: upper regular set}. This gives the sharp Furstenberg lower bound in the regular case. To pass from the regular case to arbitrary Furstenberg configurations, we use the standard multiscale decomposition mechanism appearing in the Euclidean proofs of Orponen-Shmerkin and Ren-Wang: after passing to a uniform refinement, the relevant range of scales is partitioned into consecutive intervals, and on each interval the branching profile falls into one of two alternatives. In the semi-well-spaced alternative we apply the theorem of Ren-Shen \cite{ren2025high}, while in the regular alternative we apply the regular-case estimate proved in \Cref{sec: Furstenberg regular set}. Combining the contributions from these intervals yields the full $p$-adic Furstenberg bound given in \Cref{thm: sharp bound of Furstenberg}.

\subsection{Field dependence of Furstenberg and Kakeya phenomena}
It is worth mentioning that the inequality \eqref{eq: sharp bound in real plane} no longer holds if we replace $\R^2$ by $\C^2$. A counterexample is $E=\R^2$. In this case, $E$ is a $(\frac{1}{2},1)$-Furstenberg set, but it only has Hausdorff dimension $1$. The inherent reason is that they have different additive structures. Indeed, in the
complex setting, configurations essentially supported on the real subplane admit a large family of complex directions in which the projection becomes small, leading to the observed anomaly.

Likewise, for our non-archimedean setting, \Cref{thm: sharp bound of Furstenberg} does not hold if we replace the $p$-adic plane $\Q_p^2$ by $K^2$, where $K/\Q_p$ is an extension of degree $r>1$. To see this, one simply takes $E=\Q_p^2$, which is a $(\frac{1}{r},\frac{2}{r})$-Furstenberg set in $K^2$ of Hausdorff dimension $\frac{2}{r}$. Intrinsically, one can also see that the estimates in \Cref{thm: projection theorem} and \Cref{thm: exceptional set estimate} are no longer valid for such $K$.

Moreover, \Cref{thm: sharp bound of Furstenberg} also does not hold in the function field plane $\F_q((t))^2$ where $q$ is a power of a prime number $p$, since we can take $E=\F_q((t^r))^2$ for some integer $r>1$. In this case, $E$ is again a $(\frac{1}{r},\frac{2}{r})$-Furstenberg set in $\F_q((t))^2$ of Hausdorff dimension $\frac{2}{r}$. Therefore, among non-archimedean local fields, our theorem is specific to $\Q_p$ (with $p$ prime); heuristically, this reflects the fact that $\Q_p$ does not contain any proper closed subfield of finite index, whereas both nontrivial extensions of $\Q_p$ and $\F_q((t))$ contain large proper subfields that yield immediate counterexamples by restriction.

This field-dependence is specific to the Furstenberg-type estimates considered in this paper, and should be contrasted with the Kakeya problem. The finite field Kakeya conjecture was resolved by Dvir \cite{dvir2009size} using the polynomial method. Motivated by Dvir's argument, Arsovski \cite{arsovski2021p} developed a polynomial-method approach over discrete valuation rings and proved the Kakeya set conjecture over $\Q_p$; in fact, his result implies the conjecture over every finite extension of $\Q_p$. Salvatore \cite{salvatore2023kakeya} then adapted Arsovski's method to local fields of positive characteristic, proving the Kakeya conjecture over $\mathbb F_q((t))$. Thus Kakeya sets have full Hausdorff dimension over arbitrary non-archimedean local fields, in sharp contrast with the Furstenberg examples above.

Thus, unlike the Furstenberg problem, the Kakeya set conjecture remains true over arbitrary finite fields and non-archimedean local fields, including nontrivial finite extensions of $\Q_p$ and function fields such as $\mathbb F_q((t))$. The reason is that the Kakeya condition is much more rigid with respect to the ambient field: a Kakeya set must contain a line in every direction over the ambient field. A set supported on a proper closed subfield can contain many lines over the subfield, and hence can violate Furstenberg-type estimates, but it cannot contain lines in all directions of the larger field. This is why the subfield examples above obstruct the sharp Furstenberg bound but do not contradict the Kakeya conjecture.

It is also worth comparing this with the role of field-dependence in the recent proof of the Kakeya conjecture in $\R^3$ by Wang-Zahl \cite{wang2025volume}. In discussions of that proof, the Heisenberg group example is often described as a counterexample to a natural complex analogue in $\C^3$. More precisely, this example concerns a related tube configuration satisfying the analogue of the Wolff axioms, rather than a literal Kakeya set counterexample of the same kind as the subfield examples above. 

\subsection{Outline of the paper}

In \Cref{sec: Preliminaries}, we set up the basic discretized language used
throughout the paper. This includes $p$-adic $\delta$-cubes, dual
$\delta$-tubes, projection-adapted $\pi_\theta$-tubes, covering numbers, and the
non-concentration notions for discretized sets. We also record two multiscale
tools: a Lipschitz-function decomposition which will later be used to separate
regular and semi-well-spaced scale intervals, and a branching-number pigeonholing
lemma which allows us to pass to large uniform subsets.

\Cref{sec: sum-product problem} proves the $p$-adic ABC sum-product theorem,
\Cref{thm: ABC sum-product}. The logical direction here is important. The proof
does not use the regular projection theorem proved later in \Cref{sec: projections}.
Instead, it starts from a $p$-adic Bourgain projection input, in the streamlined
form due to O'Regan \cite{o2026bourgain}, and uses it to obtain the $p$-adic thin-tube estimate needed in the analogue of the Orponen-Shmerkin ABC argument. After proving the unit-coefficient weak version, where the coefficient set lies in $(\mathbb Z/p^n\mathbb Z)^\times$, we upgrade it to the restricted-graph statement needed later. In \Cref{sec: projections}, direction differences are first pigeonholed according to their $p$-adic size and then normalized by the selected separation scale. The resulting coefficients are units, so the unit-coefficient form of \Cref{thm: ABC sum-product} applies directly.

In \Cref{sec: projections}, we apply the ABC sum-product theorem from
\Cref{sec: sum-product problem} to prove a discretized projection theorem for
almost regular sets in $\mathbb Q_p^2$. Thus the projection theorem in this
section is an output of the ABC theorem, while the Bourgain/O'Regan projection
input used in \Cref{sec: sum-product problem} is an earlier external input. Since
the $p$-adic setting does not have a useful notion of orthogonal or radial
projection, the argument is formulated using the affine projections
$\pi_\theta(x,y)=x+\theta y$ and the corresponding $\pi_\theta$-tubes.

In \Cref{sec: Furstenberg regular set}, we use the projection theorem from
\Cref{sec: projections} inside a recursive construction to prove the sharp
Furstenberg lower bound in the regular case. This section is the $p$-adic
analogue of the regular-set part of the Orponen-Shmerkin strategy, with the
necessary modifications coming from the ultrametric geometry and from the use of
affine projections.

In \Cref{sec: General case}, we prove the full Furstenberg set estimate. Following
the multiscale strategy of Ren-Wang, we decompose the relevant range of scales
into intervals. On each interval, the configuration is either sufficiently
regular, in which case the result follows from \Cref{sec: Furstenberg regular set},
or semi-well-spaced, in which case we invoke the earlier semi-well-spaced theorem
of Ren-Shen. Combining these two alternatives yields
\Cref{thm: sharp bound of Furstenberg}.

Finally, \Cref{app: sharpness examples} records the sharpness constructions. We
give direct $p$-adic analogues of the standard examples corresponding to the
three terms in \Cref{thm: sharp bound of Furstenberg}. In particular, the middle term $\frac{3s+t}{2}$ is realized by a
$p$-adic Wolff-type grid construction, formulated at finite scale and then passed
to a limiting Moran construction.

\subsection{Disclosure on the use of AI tools}

During the preparation of this manuscript, the author used AI tools, including
ChatGPT, as an auxiliary writing and editing aid. These tools were used to help
improve exposition, reorganise preliminary drafts, check grammar, and suggest
alternative ways of presenting arguments. They were also occasionally used to
produce preliminary summaries of background material and to compare the structure
of this manuscript with related works in the literature.

The author is aware that AI-generated text can introduce imprecise or misleading
mathematical statements, especially when summarising technical arguments. For this
reason, all mathematical claims, definitions, proofs, and references appearing in
the paper were checked and revised by the author. The author takes
responsibility for the content of the manuscript and for any remaining errors. No
result in the paper relies on AI-generated output as a source of mathematical
justification.

\section{Preliminaries}\label{sec: Preliminaries}

We begin by fixing the basic metric and dimensional conventions used throughout the paper. Hausdorff dimension in $\Q_p^d$ is always taken with respect to the standard ultrametric
$$
||(x_1,\ldots,x_d)||_p:=\max_{1\le i\le d}||x_i||_p.
$$
All products of $p$-adic spaces below are equipped with the corresponding max metric.

We shall also use Hausdorff dimension for families of affine lines in $\Q_p^2$. This is defined in the same way as in the real setting, by using the natural finite collection of affine coordinate charts on the space of lines. More explicitly, every non-vertical affine line can be written uniquely in the form
$$
l_{a,b}:=\{(x,y)\in\Q_p^2:y=ax+b\},\qquad (a,b)\in\Q_p^2,
$$
while every non-horizontal affine line can be written uniquely in the form
$$
l'_{a,b}:=\{(x,y)\in\Q_p^2:x=ay+b\},\qquad (a,b)\in\Q_p^2.
$$
These two charts cover the space of affine lines. If $\mathcal L$ is a family of affine lines, we define $\dim_H\mathcal L$ to be the maximum of the Hausdorff dimensions of its parameter sets in these charts. Equivalently,
$$
\dim_H\mathcal L:=\max\left\{
\dim_H\{(a,b)\in\Q_p^2:l_{a,b}\in\mathcal L\},
\dim_H\{(a,b)\in\Q_p^2:l'_{a,b}\in\mathcal L\}
\right\}.
$$
This agrees with the usual convention in the Euclidean Furstenberg problem, where the space of affine lines is identified locally with slope-intercept coordinates. The use of two charts is only to avoid privileging one direction; replacing these charts by any other finite system of standard affine charts gives the same dimension, since the transition maps are locally bi-Lipschitz away from the usual chart boundaries.

With this convention, we say that a set $E\subset\Q_p^2$ is an $(s,t)$-Furstenberg set if there exists a family $\mathcal L$ of affine lines in $\Q_p^2$ with $\dim_H\mathcal L\ge t$ such that
$$
\dim_H(E\cap l)\ge s,\qquad l\in\mathcal L.
$$

In this paper, we will follow the same notation as in \cite{ren2025high}. We write $A\lesssim B$ if $A\leq CB$ for some constant $C>0$, and the notation $A\gtrsim B$ and $A\sim B$ is understood in a similar way. We write $A\lesssim_n B$ to emphasize that the implicit constant may depend on $n$. The notation $\gtrapprox_\delta$ means $\gtrsim_N \log(1/\delta)^{-C_N}$ for some number $N$ independent of $\delta$, and similarly for $\lessapprox_\delta$ and $\approx_\delta$.

We will also use the notation $0<a\ll b$ for parameters to mean that $a>0$ is chosen sufficiently small depending on $b$ and on the other fixed parameters in the ambient statement. Equivalently, after $b$ has been fixed, all subsequent arguments are valid provided $a\leq c(b)$ for a sufficiently small positive constant $c(b)$.

The notation $|A|$ denotes either the cardinality of $A$ when $A$ is a finite set, or the Haar measure when $A$ is measurable and infinite. We use the word fibre only in the elementary sense of a level set of one of the maps under consideration, such as a coordinate projection or an affine projection $\pi_\theta(x,y)=x+\theta y$; no fibration structure is intended.

\begin{defi}
($p$-adic $\delta$-cubes) Let $x=(a,b)\in\Z_p^2$, $\delta=p^{-n}$ where $n\in\N$. Then the \emph{$\delta$-cube} centered at $x$ is defined by
$$B(x,\delta):=(a+p^n\Z_p,b+p^n\Z_p)=\{(a_0,b_0): ||a-a_0||\le\delta,||b-b_0||\le\delta\}.$$
\end{defi}

In particular, we have $B(x,1)=B(0,1)=\Z_p^2$ for all $x\in\Z_p^2$. In contrast to the real setting, where $B(x,r)$ typically denotes a Euclidean ball, in the $p$-adic setting balls and cubes coincide due to the ultrametric structure. Hence we use the same notation $B(x,\delta)$. Moreover, any two $p$-adic cubes are either disjoint or one contains the other, reflecting the underlying tree-like structure of $\Z_p^2$, which often simplifies multiscale arguments.

For $n\in\Z_{\ge 0}$ and $\delta=p^{-n}$, denote by
$$\D_\delta(\Z_p^2):=\{B(x,\delta)\mid x\in\Z_p^2\}$$
the set of $\delta$-cubes in $\Z_p^2$. Via the natural quotient map $\Z_p\rightarrow \Z_p/p^k\Z_p$, we can also identify $\D_\delta(\Z_p^2)$ with $(\Z/p^n\Z)^2$. For distinct cubes $\mathscr{p}_1=B((a_1,b_1),\delta),\mathscr{p}_2=B((a_2,b_2),\delta)\in\D_\delta$, we define their distance by
$$\dist(\mathscr{p}_1,\mathscr{p}_2):=\max\{||a_1-a_2||,||b_1-b_2||\},$$
which takes values in $\{p^{-k}:0\le k\le n-1\}$.

For a set $P\subset\Z_p^2$, denote
$$\D_\delta(P):=\{\mathscr{p}\in\D_\delta(\Z_p^2): \mathscr{p}\cap P\ne\emptyset\}.$$
For brevity, we write $\P_\delta=\D_\delta(\Z_p^2)$. If $\delta<\Delta\in p^{-\N}$, and $\mathscr{p}=B(x,\delta)\in\P_\delta$, let $\mathscr{p}^\Delta=B(x,\Delta)$ denote the unique $\Delta$-cube $\mathbf{p}\in\P_\Delta$ that contains $\mathscr{p}$.

\begin{defi}\label{defi: covering number}
For any set $P\subset\Z_p^2$ and $\delta=p^{-n}\in p^{-\N}$, the $\delta$-covering number of $P$ is defined by
$$|P|_\delta:=|\D_\delta(P)|.$$
\end{defi}

If $\P\subset\P_\delta$ is a set of $\delta$-cubes, we say that $\P$ is a $(\delta,s,C)$-set if the union of $\delta$-cubes in $\P$ is a $(\delta,s,C)$-set in the above sense and we write $|\P|$ to denote the number of $\delta$-cubes in $\P$ and $|\P|_\Delta:=|\D_\Delta(\P)|$, so that $|\P|_\delta=|\P|$. 

\begin{defi}\label{defi: p-adic tubes}
($p$-adic $\delta$-tubes) Let $\delta=p^{-n}\in p^{-\N}$. A $\delta$-tube is a set of the form $T=\cup_{x\in\mathscr{p}}\mathbf{D}(x)$, where $\mathscr{p}\in\P_\delta$, and $\mathbf{D}$ is the point-line duality map
$$\mathbf{D}(a,b)=\{(x,y)\in\Z_p^2: y=ax+b\}\subset \Z_p^2$$
sending the point $(a,b)$ to the $p$-adic line with slope $a$ and intercept $b$. For brevity, we write $\mathbf{D}(\mathscr{p}):=\cup_{x\in\mathscr{p}}\mathbf{D}(x)$ as the $\delta$-tube that corresponds to $\mathscr{p}$. The collection of all $p$-adic $\delta$-tubes is denoted
$$\T_\delta:=\{\mathbf{D}(\mathscr{p}): \mathscr{p}\in\P_\delta\},$$
which may also be viewed as the set of lines in $(\Z/p^n\Z)^2$ with slopes in $\Z/p^n\Z$.
\end{defi}

A collection of $p$-adic $\delta$-tubes $\{\mathbf{D}(\mathscr{p})\}_{\mathscr{p}\in\P}$ is called a $(\delta,s,C)$-set if $\P$ is a $(\delta,s,C)$-set. For two distinct cubes $\mathscr{p}_1,\mathscr{p}_2\in\P_\delta$ with $$\dist(\mathscr{p}_1,\mathscr{p}_2)=p^{-k},\quad 0\le k\le n-1,$$ 
there are at most $p^k$ lines in $(\Z/p^n\Z)^2$ that pass through both of them.

If $\delta<\Delta\in p^{-\N}$ and $T\in\T_\delta$, denote by $T^\Delta$ the unique $p$-adic tube $\mathbf{T}\in\T_\delta$ containing $T$. For a set of $p$-adic $\delta$-tubes $\T$ and any scale $\Delta>\delta$, define the $p$-adic $\Delta$-covering number $|\T|_\Delta:=|\D_\Delta(\T)|$, where $\D_\Delta(\T):=\{\mathbf{T}\in\T_\Delta:\exists T\in\T,\mathbf{T}=T^\Delta\}$. We also denote $\T\cap\mathbf{T}:=\{T\in\T: T\subset\mathbf{T}\}$ for all $\mathbf{T}\in\D_\Delta(\T)$. When $\delta=\Delta$, we usually omit the subscript and simply write $|\T|=|\T|_\delta$.

We will use two related but distinct notions of tubes. The tubes in \Cref{defi: p-adic tubes} are dual tubes, parametrised by $\delta$-cubes in the line-parameter space $(a,b)$, and will be used for line families, incidences, and Furstenberg configurations. In contrast, the projection arguments below require tubes adapted to the fibres of
$\pi_\theta(x,y)=x+\theta y$. Thus, when studying $\pi_\theta(K)$, we will use the following $\pi_\theta$-tubes, which are inverse images of $\delta$-balls under $\pi_\theta$.

\begin{defi}[$\pi_\theta$-tubes]\label{defi: projection tubes}
Let $\delta=p^{-n}\in p^{-\mathbb N}$ and let $\theta\in \Z_p$. If $J\subset \Z_p$ is a $\delta$-ball, we define the associated
$\pi_\theta$-tube by
$$
T_{\theta,J}:=\pi_\theta^{-1}(J)\cap \Z_p^2
=\{(x,y)\in \Z_p^2:x+\theta y\in J\},
$$
where $\pi_\theta(x,y)=x+\theta y$. We denote by $\mathcal T_{\theta,\delta}$ the collection of all $\pi_\theta$-tubes of width
$\delta$.

In particular, when $\theta=0$, the tubes in $\mathcal T_{0,\delta}$ are the
vertical strips $J\times \Z_p$. These tubes will be used in the projection
arguments below. They should be distinguished from the dual tubes in
\Cref{defi: p-adic tubes}, which are parametrised by lines of the form
$y=ax+b$.
\end{defi}

\begin{defi}
(Slope set) The slope of a line $l=\mathbf{D}(a,b)$ is defined to be the number $a\in\Z_p$, and we write $\Dir(l):=a$. If $T=\mathbf{D}(p)$ is a $p$-adic $\delta$-tube with $\delta=p^{-n}$, we define the slope $\Dir(T)=\bigcup_{x\in\mathscr{p}}\Dir(\mathbf{D}(x))$, which we can regard as an element in $\Z/p^n\Z$. If $T(\mathscr{p})$ is a collection of $p$-adic $\delta$-tubes through a fixed point $\mathscr{p}$, we identify $\mathcal{T}(\mathscr{p})$ with $\Dir(\mathcal{T}(\mathscr{p}))$. 
\end{defi}

\subsection{Lemmas on Lipschitz functions}

In multiscale arguments in the Euclidean setting, Lipschitz functions often serve as a convenient way to encode scale-by-scale information and to control how geometric quantities evolve across scales. For instance, one may track the growth of covering numbers or branching patterns by a Lipschitz ``profile'', and then use elementary
properties of Lipschitz functions (together with pigeonholing) to locate scales on which the behaviour is essentially uniform, which is a basic mechanism underlying many inductive-on-scales proofs. Although we work in the $p$-adic setting, the same underlying multiscale mechanism remains in force: the geometry changes, but the bookkeeping of scale-to-scale growth and the uniformisation steps are conceptually the same. In this subsection, we introduce the Lipschitz functions that will be used throughout the paper as a systematic tool for this purpose.

\begin{defi}
(Interpolating slope) Given a function $f:[a,b]\rightarrow\R$, we define $s_f(a,b)$ to be the slope of the secant line through $a$ and $b$, i.e.,
$$s_f(a,b):=\frac{f(b)-f(a)}{b-a}.$$
\end{defi}

\begin{defi}
($\epsilon$-linear and superlinear functions). Given a function $f:[a,b]\rightarrow\R$ and $\epsilon>0,\sigma$, we say that $(f,a,b)$ is \emph{$(\sigma,\epsilon)$-superlinear} (or that $f$ is \emph{$(\sigma,\epsilon)$-superlinear} on the interval $[a,b]$) if
$$f(x)\ge f(a)+\sigma(x-a)-\epsilon(b-a),\quad x\in[a,b].$$
If $\sigma=s_f(a,b)$, then we simply say $f$ is $\epsilon$-linear.

We say that $(f,a,b)$ is $\epsilon$-linear or $f$ is $\epsilon$-linear on $[a,b]$ if both $(f,a,b)$ and $(-f,a,b)$ are $\epsilon$-superlinear. Equivalently,
$$|f(x)-f(a)-s_f(a,b)(x-a)|\le\epsilon(b-a),\quad x\in[a,b].$$
\end{defi}

\begin{lemma}\label{lem: interval partitioning}
Fix $0<s<t<u$ and $m>0$. For every $0<\epsilon<\min(u-s,\frac{1}{2})$, there is $\tau=\tau(\epsilon,s,u,t)>0$ such that the following holds: for every piecewise affine $2$-Lipschitz function $f:[0,m]\rightarrow\R$ such that 
$$f(x)\ge tx-\epsilon^2m\quad\forall x\in[0,m],\quad f(m)\le (t+\epsilon^2)m,$$
there exists a family of non-overlapping intervals $\{[c_j,d_j]\}_{j=1}^n$ contained in $[0,m]$ such that:
\begin{enumerate}
\item For each $j$, at least one of the following alternatives holds.
\begin{enumerate}
\item $(f,c_j,d_j)$ is $\epsilon$-linear with $s_f(c_j,d_j)\in[s,u]$; \label{item: epsilon linear invertal}
\item $(f,c_j,d_j)$ is $\epsilon$-superlinear with $s_f(c_j,d_j)\in[s,u]$ and
$$f(x)\ge\min\{f(c_j)+u(x-c_j),f(d_j)-s(d_j-x)\}-\epsilon(d_j-c_j).$$ \label{item: epsilon superlinear invertal}
\end{enumerate}
\item $d_j-c_j\ge\tau m$ for all $j$;
\item $[[0,m]\backslash[c_j,d_j]]\lesssim_{s,t,u}\epsilon m$.
\end{enumerate}
\end{lemma}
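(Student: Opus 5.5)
The lemma is a statement about real Lipschitz functions only; the $p$-adic setting plays no role. So the plan is to reproduce the Euclidean argument (Orponen--Shmerkin, Ren--Wang) directly, building the intervals from convex minorants and running a bounded-depth iteration to get the lower bound $\tau m$ on interval lengths.

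\textbf{Step 1: convex minorant.} Let $h$ be the greatest convex minorant of $f$ on $[0,m]$. Since $f$ is piecewise affine, $h$ is piecewise affine with finitely many maximal faces $[a_i,b_i]$. On each face we have $f(a_i)=h(a_i)$, $f(b_i)=h(b_i)$ and $f\ge h$. Hence $(f,a_i,b_i)$ is $0$-superlinear with slope $\sigma_i:=s_f(a_i,b_i)$, and the slopes $\sigma_i$ are nondecreasing in $i$.

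\textbf{Step 2: the slopes are essentially in $[s,u]$.} The hypotheses $f(x)\ge tx-\epsilon^2 m$ and $f(m)\le (t+\epsilon^2)m$ force $h$ to stay within $O(\epsilon^2 m)$ of the line $tx$. By convexity, the total length of faces with slope $<s$ is $\lesssim \epsilon^2 m/(t-s)$. Likewise, the total length of faces with slope $>u$ is $\lesssim \epsilon^2 m/(u-t)$. We discard these faces, at a cost that fits inside the exceptional set of size $\lesssim_{s,t,u}\epsilon m$.

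\textbf{Step 3: absorb short faces.} Faces of length $<\epsilon^{C}m$ are merged greedily into consecutive blocks of length comparable to $\epsilon^{C}m$. Because the slopes are monotone and lie in $[s,u]$, merging neighbouring nearly collinear pieces keeps $\epsilon$-superlinearity with an $O(\epsilon)$ error. Any leftover pieces have total length $\lesssim\epsilon m$ and are discarded.

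\textbf{Step 4: linear versus two-slope dichotomy.} Each surviving block $[c,d]$ is $\epsilon$-superlinear with slope in $[s,u]$. If $f\le f(c)+\sigma(x-c)+\epsilon(d-c)$ throughout, the block is $\epsilon$-linear, which is alternative (a). Otherwise $f$ rises above the secant by more than $\epsilon(d-c)$ at some point. In that case, pass to the least concave majorant of $f$ on $[c,d]$ and compare $f$ with the two lines of slopes $u$ (from $c$) and $s$ (from $d$). Either the two-slope inequality (b) holds up to $\epsilon(d-c)$, or there is a subinterval of length $\gtrsim_{s,u}\epsilon(d-c)$ on which $f$ has an excess increment. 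In the second case we recurse on the subintervals cut out by the majorant.

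\textbf{Termination and the main obstacle.} The hard part is showing that this recursion terminates after a number of generations $N=N(\epsilon,s,t,u)$ independent of $f$. That is exactly what gives $d_j-c_j\ge\tau m$ with $\tau\approx\epsilon^{C N}$. To control the depth, track the potential $\Phi=\sum_j$ (deviation of $f$ from its secant on $[c_j,d_j]$). Each non-terminating step increases the total ``convexity defect'' of $f$ by $\gtrsim\epsilon^2$ times the length involved. This defect is bounded because $f$ is $2$-Lipschitz, so the depth satisfies $N\lesssim\epsilon^{-O(1)}$.

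The exceptional lengths discarded at all generations then sum to $\lesssim_{s,t,u}\epsilon m$, provided the losses at generation $k$ are chosen summable, for example as $\epsilon 2^{-k}$ times the length. This gives property (3), and properties (1) and (2) hold by construction.
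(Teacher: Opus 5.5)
The paper gives no argument here beyond citing \cite[Proposition 6.4]{ren2023furstenberg}, so your sketch has to stand on its own. It has a genuine gap in Step 4, which is exactly where the content of the lemma lives: the two-slope alternative together with the uniform bound $d_j-c_j\ge\tau m$.

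\textbf{The recursion leaves the class on which (a)/(b) is run.} Faces of the least concave majorant are sublinear rather than superlinear, and their slopes need not lie in $[s,u]$. Concretely, pick $u<u'\le 2$ and $0\le s'<s$ with $\sigma:=(u'+s')/2\in(s,u)$. Let $f$ on $[c,d]$ (length $L$) have slope $u'$ on the first quarter and slope $s'$ on the second, and repeat this pattern on the second half. Then the midpoint lies on the secant of slope $\sigma$, and the block is superlinear. For small $\epsilon$ it fails (a), since the peaks are $(u'-s')L/8$ above the secant. It also fails (b), since at the midpoint the tent is $\min\{u-\sigma,\sigma-s\}L/2$ above $f$. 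The correct decomposition cuts at the midpoint into two single bumps, each satisfying (b) with zero error. But the concave majorant has faces $[c,c+L/4]$, $[c+L/4,c+3L/4]$, $[c+3L/4,d]$ with slopes $u',\sigma,s'$. So half the block is inadmissible, and the needed cut point is not even a vertex.

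\textbf{The termination argument fails for the same reason.} The cut you actually need produces two halves of slope $\sigma$. It therefore does not increase $\sum_I|I|\,s_f(I)^2$, which is the natural bounded monotone potential for a $2$-Lipschitz $f$. The cuts that do gain $\gtrsim\epsilon^2L$ are at points where $f$ is well above the secant, and those typically create slopes $>u$ and $<s$; think of cutting a single tent at its peak. So the claim that each non-terminating step increases the convexity defect by $\gtrsim\epsilon^2$ times the length is unsupported, and with it the depth bound and property (2).

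You need a termination mechanism decoupled from the individual cuts. For instance, first pass to an $\epsilon'$-linear skeleton: pieces of length $\ge\tau' m$ covering all but $\epsilon' m$, from the standard energy-increment lemma. Then any group of consecutive pieces is automatically long, and you must still give an actual grouping argument that produces the two-slope intervals.

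Two smaller points.
\begin{itemize}
\item Step 2 silently uses $f(0)=0$. This holds for branching functions and is genuinely needed. For $t=1$, $s=\frac12$, $u=\frac32$, the function equal to $m-x$ on $[0,m/2]$ and $x$ on $[m/2,m]$ satisfies the printed hypotheses. Yet every interval starting in $[0,m/4)$ has slope $<s$, so that stretch cannot be covered.
\item In Step 3, leftover runs of short faces between long faces can have total length comparable to $m$. The fix is to group consecutive faces by slope variation $\le\epsilon$, which gives only $O((u-s)/\epsilon)$ groups by monotonicity, and then discard the groups shorter than $\tau m$.
\end{itemize}
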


\begin{proof}
This is \cite[Proposition 6.4]{ren2023furstenberg}.
\end{proof}

\subsection{Branching numbers and uniform sets via pigeonholing}

\begin{defi}
Let $N\ge 1$, and
$$\delta=\Delta_N<\Delta_{N-1}<\cdots<\Delta_1<\Delta_0=1$$
be a sequence of $p$-adic scales. We say that a set $P\subset\Z_p^2$ is \emph{$\{\Delta_j\}_{j=1}^N$-uniform} if there is a sequence $\{N_j\}_{j=1}^N$ with $N_j\in p^{\N}$ and $|P\cap Q|_{\Delta_j}\in[N_j/p,N_j)$ for all $j\in\{1,2,\ldots,N\}$ and $Q\in\D_{\Delta_{j-1}}(\P)$.
\end{defi}

\begin{lemma}\label{lem: pigeonhole}
Let $P\in\Z_p^2$, $N,T\in\N$, and $\delta=p^{-NT}$. Also, let $\Delta_j:=p^{-jT}$ for all $0\le j\le N$. Then there exists a $\{\Delta_j\}_{j=1}^N$-uniform set $P'\subset P$ such that
$$|P'|_\delta\ge (pT)^{-N}|P|_\delta=p^{-\frac{\log_p T+1}{T}\cdot NT}|P|_\delta.$$
In particular, if $\epsilon>0$ and $T^{-1}(\log_p T+1)\le\epsilon$, then $|P'|_\delta\ge\delta^{\epsilon}|P|_\delta$.
\end{lemma}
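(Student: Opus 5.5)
The plan is the standard top-down pigeonholing over the dyadic-type tree of $p$-adic cubes, carried out one scale at a time from the finest scale $\Delta_N=\delta$ up to $\Delta_1$. We may assume $P$ is a union of $\delta$-cubes, replacing $P$ by $\D_\delta(P)$; every statement concerns only covering numbers, so nothing is lost. We construct sets $P=P_N\supset P_{N-1}\supset\cdots\supset P_0=P'$ such that $P_{k}$ is uniform at the levels $j\in\{k+1,\ldots,N\}$, and such that $|P_{k-1}|_\delta\ge (pT)^{-1}|P_k|_\delta$. Iterating the latter bound gives the factor $(pT)^{-N}$.

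\emph{Step at level $j$.} Fix a level $j$ and a $\Delta_{j-1}$-cube $Q$. We have $|P_j\cap Q|_{\Delta_j}\le (\Delta_{j-1}/\Delta_j)^2=p^{2T}$ by counting children in $\Z_p^2$. Hence the quantity $|P_j\cap Q|_{\Delta_j}$ lies in one of the intervals $[p^{i-1},p^i)$ with $1\le i\le 2T+1$. This gives $O(T)$ classes, and the constant is absorbed into the stated bound $pT$ after adjusting the count. If one wants exactly $T$ classes, one can work with $\log$ base $p^2$ or simply accept a constant factor; this is a harmless normalisation and I would check it against the precise constant in the statement.

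Assign each $\Delta_{j-1}$-cube $Q$ meeting $P_j$ to its class $i(Q)$. Weight each class by the total $\delta$-mass $\sum_{Q:\,i(Q)=i}|P_j\cap Q|_\delta$ and keep the class of largest mass, so that $P_{j-1}$ is the union of $P_j\cap Q$ over the kept $Q$. This loses at most the factor equal to the number of classes. Within a kept $Q$, the number of $\Delta_j$-children lies in $[N_j/p,N_j)$ with $N_j=p^{i}$.

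\emph{Preserving the finer levels.} The key point, and the only thing to verify, is that this pigeonholing does not destroy uniformity already established at finer levels $j'>j$. The ultrametric nesting makes this automatic: we discard entire $\Delta_{j-1}$-cubes, so for every surviving $\Delta_{j'-1}$-cube with $j'>j$, its contents are untouched. Hence $|P_{j-1}\cap Q'|_{\Delta_{j'}}$ is unchanged for surviving $Q'$. Processing in order from fine to coarse is therefore essential. Going coarse to fine would break the upper levels, since deleting children changes the counts at the coarser levels.

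For the final claim, note that $(pT)^{-N}=p^{-N(1+\log_pT)}=\delta^{(1+\log_pT)/T}\ge\delta^\epsilon$. The main, though mild, obstacle is the class-count constant: the number of possible values of $\log_p$ of the branching number per level is about $2T$, not $T$. I would handle this by defining the classes as $[p^{i-1},p^i)$ and checking the count, or by noting that the stated inequality tolerates an absolute constant once $\epsilon$ is adjusted.
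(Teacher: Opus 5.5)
Your argument is correct. It is the standard fine-to-coarse pigeonholing; the paper states this lemma without proof, as the $p$-adic version of the Orponen--Shmerkin uniformization lemma. The point you isolate is exactly what makes it work: discarding whole $\Delta_{j-1}$-cubes leaves the finer-level counts of every surviving cube untouched.

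The one loose end is the constant, and your proposed fix via base-$p^2$ classes is not available. The definition of uniformity requires $|P'\cap Q|_{\Delta_j}\in[N_j/p,N_j)$ with $N_j\in p^{\N}$. So the classes must have ratio exactly $p$, and since the counts lie in $\{1,\dots,p^{2T}\}$ there are $2T+1$ of them.

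\begin{itemize}
\item For $p\ge 3$ this already closes the gap, because $2T+1\le 3T\le pT$, so you get the stated bound verbatim.
\item For $p=2$ your argument gives $(2T+1)^{-N}$ rather than $(2T)^{-N}$. In fact it gives $(2T+1)^{-(N-1)}$, since $\D_{\Delta_0}(P)=\{\Z_p^2\}$ makes level $1$ cost nothing. This still yields the ``in particular'' clause once $T$ is chosen with $T^{-1}\log_p(2T+1)\le\epsilon$, which is all that is used later.
\end{itemize}
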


\begin{defi}
(Branching function) Let $T\in \N$, and let $\mathcal{P}\subset\Z_p^d$ be a $\{\Delta_j\}_{j=1}^N$-uniform set with $\Delta_j:=p^{-jT}$, and let $\{N_j\}_{j=1}^n\subset\{1,\ldots,p^{dT}\}^n$ be the associated sequence. We define the branching function $f:[0,n]\rightarrow[0,dn]$ by setting $f(0)=0$, and
$$f(j):=\frac{\log|\mathcal{P}|_{p^{-jT}}}{T}=\frac{1}{T}\sum_{i=1}^j\log N_i,\quad i=\{1,2,\ldots,n\},$$
and then interpolating linearly.
\end{defi}

\begin{lemma}[Passing to coarse parents]\label{lem: passing to coarse parents}
Let $\delta<\Delta$ be $p$-adic scales, and let $P\subset \P_\delta$ be a
$(\delta,s,C)$-set. Assume that there exists an integer $N$ such that
$$
|P\cap Q|=N,\qquad Q\in \D_\Delta(P).
$$
Then $\D_\Delta(P)$ is a $(\Delta,s,O_p(C))$-set. More generally, if
$P'\subset P$ satisfies $|P'|\ge \lambda |P|$ and
$$
|P'\cap Q|=N',\qquad Q\in \D_\Delta(P'),
$$
then $\D_\Delta(P')$ is a $(\Delta,s,O_p(C\lambda^{-1}))$-set.
\end{lemma}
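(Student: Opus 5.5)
The plan is a direct count. Let $x\in\Q_p^2$ and a $p$-adic scale $r\in[\Delta,1]$ be given. We must bound $|\D_\Delta(P)\cap B(x,r)|$ by $O_p(C)\,r^s\,|\D_\Delta(P)|$.

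\textbf{Step 1: compare cardinalities.} Every $Q\in\D_\Delta(P)$ contains exactly $N$ cubes of $P$, and the cubes $Q$ are pairwise disjoint. Hence $|P|=N|\D_\Delta(P)|$.

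\textbf{Step 2: use the ultrametric nesting.} Because of the tree structure, each $\Delta$-cube $Q$ meeting $B(x,r)$ with $r\ge\Delta$ lies entirely inside $B(x,r)$. Summing over such $Q$ gives
$$
N\,|\D_\Delta(P)\cap B(x,r)| = |P\cap B(x,r)|_\delta \le C r^s |P| = C r^s N |\D_\Delta(P)|.
$$
Dividing by $N$ gives the $(\Delta,s,C)$-set property. The only slack is in the normalization of balls and cubes and the radius conventions in \Cref{defi: dsC set}, which costs at most a factor of $p$. That gives the $O_p(C)$.

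\textbf{Step 3: the refined version.} Now let $P'\subset P$ satisfy $|P'\cap Q|=N'$ for every $Q\in\D_\Delta(P')$. As in Step 1, $|P'|=N'|\D_\Delta(P')|$. As in Step 2,
$$
N'\,|\D_\Delta(P')\cap B(x,r)| = |P'\cap B(x,r)| \le |P\cap B(x,r)| \le C r^s|P| \le C\lambda^{-1} r^s|P'| = C\lambda^{-1}r^sN'|\D_\Delta(P')|.
$$
Cancelling $N'$ gives the claimed $(\Delta,s,O_p(C\lambda^{-1}))$ bound.

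\textbf{Expected obstacle.} The only delicate point is bookkeeping. One must check that the covering number $|\cdot|_\delta$ of a union of $\delta$-cubes equals the number of cubes, and that $B(x,r)$ for $r\in p^{-\N}$ with $r\ge\Delta$ is a union of whole $\Delta$-cubes. Both follow from the nested-or-disjoint property of $p$-adic balls.
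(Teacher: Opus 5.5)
Your proposal is correct and matches the paper's proof: both count the $\Delta$-parents inside an $r$-ball using the exact fibre size $N'$, bound the resulting number of $\delta$-cubes by the $(\delta,s,C)$ condition for $P$, and then divide using $|P'|=N'|\D_\Delta(P')|\ge\lambda|P|$. The only difference is that the paper proves the $\lambda$-version once and obtains the first statement by taking $P'=P$, $\lambda=1$. You are also right that the argument gives the constant $C\lambda^{-1}$ exactly, with no genuine $p$-dependent loss.
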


\begin{proof}
We prove the second statement, which contains the first. Let $R\in p^{-\N}$ with
$\Delta\le R\le 1$, and let $B$ be an $R$-ball in $\Z_p^2$. Since every
$\Delta$-cube in $\D_\Delta(P')$ contains exactly $N'$ elements of $P'$, we have
$$
|\D_\Delta(P')\cap B|\,N'
\le |P'\cap B|_\delta
\le |P\cap B|_\delta
\le C R^s |P|.
$$
On the other hand,
$$
|\D_\Delta(P')|\,N'=|P'|\ge \lambda |P|.
$$
Combining the two estimates gives
$$
|\D_\Delta(P')\cap B|
\le C\lambda^{-1}R^s|\D_\Delta(P')|.
$$
This is precisely the $(\Delta,s,O_p(C\lambda^{-1}))$-set condition, up to the
harmless constant coming from replacing balls by $\Delta$-cubes.
\end{proof}




\section{The ABC sum-product problem}\label{sec: sum-product problem}

In this section, we prove \Cref{thm: ABC sum-product}. The argument follows the
structure of \cite[Section 3]{orponen2023projections}. We first establish a weak
form in which the conclusion is required only for the full product
$G=A\times B$. We then pass to the restricted-graph formulation by the
additive-combinatorial reduction from
\cite[Section 5.1]{orponen2024discretised}. Since this reduction only uses
energy estimates, Balog--Szemerédi--Gowers, Plünnecke--Ruzsa, and
pigeonholing, it applies without change to the additive group
$\Z/p^n\Z$.

\begin{thm}[Weak ABC sum-product]\label{thm: weak ABC sum-product}
Let $0<\beta\le \alpha<1$, and let $\gamma\in(\alpha-\beta,1]$. Then there exist
$$
\chi=\chi(\alpha,\beta,\gamma)>0,
\qquad
\delta_0=\delta_0(\alpha,\beta,\gamma)\in(0,1/p]
$$
such that the following holds. Let $\delta=p^{-n}\in(0,\delta_0]$, and let
$A,B\subset \Z/p^n\Z$ satisfy
\begin{enumerate}
    \item $|A|\le \delta^{-\alpha}$;
    \item $B$ is a nonempty $(\delta,\beta,\delta^{-\chi})$-set.
\end{enumerate}
Let $C\subset(\Z/p^n\Z)^\times$ be a nonempty
$(\delta,\gamma,\delta^{-\chi})$-set. Then there exists $c\in C$ such that
$$
|A+cB|_\delta\ge \delta^{-\chi}|A|.
$$
Moreover, the constants $\chi,\delta_0$ stay bounded away from $0$ as $(\alpha,\beta,\gamma)$ ranges in a compact subset of
$$
\{(\alpha,\beta,\gamma):0<\beta\le \alpha< 1,\ \gamma\in(\alpha-\beta,1]\}.
$$
\end{thm}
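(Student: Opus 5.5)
We argue by contradiction, following \cite[Section 3]{orponen2023projections}, with the radial-projection input replaced by a $p$-adic Bourgain projection theorem (O'Regan's version \cite{o2026bourgain}) applied in the two ratio charts. Suppose that for every $c\in C$ we have $|A+cB|_\delta\le\delta^{-\chi}|A|$. First we reduce to $A$ itself being a reasonably non-concentrated set. Since $|A+cB|\ge|A|\,|B|_{|A|^{-1}}$-type lower bounds hold trivially, and $B$ is a $(\delta,\beta)$-set, we get $|A|\gtrsim\delta^{-\beta+O(\chi)}$. By Plünnecke--Ruzsa applied to the small doubling $|A+cB|\le\delta^{-\chi}|A|$, the set $A$ cannot concentrate heavily on small balls. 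More precisely, if a $\rho$-ball contained a large portion of $A$, then $A+cB$ would require at least $|B|_\rho$ translates of it, contradicting small doubling for suitable $\rho$. After pigeonholing via \Cref{lem: pigeonhole}, we may therefore pass to uniform subsets $A'\subset A$ and $B'\subset B$ with $|A'|\ge\delta^{O(\chi)}|A|$, for which $A'$ is a $(\delta,\alpha',\delta^{-O(\chi)})$-set with $\beta\lesssim\alpha'\le\alpha$.

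Next we build a planar configuration. Consider the set $K:=A\times B\subset(\Z/p^n\Z)^2$, lifted to $\Z_p^2$. The hypothesis says exactly that $\pi_c(A\times B)=A+cB$ has $\delta$-covering number at most $\delta^{-\chi}|A|\approx\delta^{-\chi}|K|/|B|$ for all $c$ in the $\gamma$-dimensional set $C$. So $K$ is a product-type $(\delta,\alpha+\beta)$-ish set with a $\gamma$-dimensional family of directions whose projections are very small, of size about $|A|$ rather than $|A|\,|B|^{\text{gain}}$. Bourgain's projection theorem, in its $p$-adic form, gives for sets $K$ that are not concentrated near $\pi_\theta$-tubes an $\epsilon$-gain over $|K|^{1/2}$ outside a small exceptional set of $\theta$. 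However, here the needed gain is over $|A|$, which may exceed $|K|^{1/2}$. As in Orponen--Shmerkin, we therefore use the condition $\gamma>\alpha-\beta$ through a thin-tube argument.

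We apply the two-chart ratio construction to $K$. Split the off-diagonal pairs of $K\times K$ into $\Omega_x$ and $\Omega_y$, removing near-diagonal pairs via parent balls. On each chart, the ratio map sends a pair to the slope $(b_1-b_2)/(a_1-a_2)$ or its reciprocal. The small-projection hypothesis forces many pairs to lie in common $\pi_c$-tubes, that is, to have ratio $\equiv -c^{-1}$ at the appropriate scale. This yields a concentration of the ratio set on a $\gamma$-dimensional set. On the other hand, O'Regan's Bourgain-type theorem, applied to the pushforward of $K$ under the ratio maps, gives a thin-tube estimate: for most $z_2\in K$, the fibre of the ratio map over any $\rho$-ball carries at most $\rho^{\kappa}$ of the mass. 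Here we use that $K$ is not concentrated on a single line, because $B$ is a $\beta$-set with $\beta>0$ and $A$ is non-concentrated by the first step.

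Finally we combine the pieces via an incidence or energy count. Counting pairs $(z_1,z_2)$ lying in a common $\pi_c$-tube, $c\in C$, gives a lower bound of roughly $|C|\cdot|K|^2/(\delta^{-\chi}|A|)$. The thin-tube estimate together with the $(\delta,\gamma)$-property of $C$ caps this count, and the strict inequality $\gamma>\alpha-\beta$ produces a contradiction once $\chi$ is small. We normalize by the unit condition $C\subset(\Z/p^n\Z)^\times$, which ensures that $\pi_c$ is non-degenerate at every scale, so that tubes of different $c$ meet in controlled squares. Uniformity of the constants over compact parameter sets follows because each step's exponent depends continuously on $(\alpha,\beta,\gamma)$.

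The main obstacle is the thin-tube step: deriving, from the $p$-adic Bourgain theorem, a quantitative non-concentration of ratio fibres that holds uniformly across scales. This is delicate because the ultrametric tree structure allows $K$ to concentrate on tubes at intermediate scales. To handle it, I would try first a multiscale pigeonholing of $\delta$ into blocks, applying Bourgain's theorem on each block, with the product structure $A\times B$ ruling out concentration on subrings, since $\Z/p^n\Z$ has no nontrivial ones other than the ideals $p^k\Z/p^n\Z$.
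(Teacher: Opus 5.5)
There is a genuine gap in your last step. A direct incidence count for $K=A\times B$ along the directions $c\in C$ cannot reach the threshold $\gamma>\alpha-\beta$.

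Write $|A|=\delta^{-\alpha'}$, and let $N$ count triples $(z_1,z_2,c)\in K^2\times C$ with $\pi_c(z_1),\pi_c(z_2)$ in the same $\delta$-ball. Cauchy--Schwarz and the counter-assumption give $N\ge\delta^{\chi}|C||K|^2/|A|$. So on average a pair only needs to be aligned by about $\delta^{\chi+\alpha'}|C|$ coefficients.

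A pair at distance $r$, however, is aligned by every $c$ in a ball of radius $\delta/r$, hence by up to $\delta^{-\chi}(\delta/r)^{\gamma}|C|$ coefficients. The thin-tube estimate controls only the pairs in a good set $\mathcal H$. Its complement has relative measure about $\delta^{\varepsilon}$, with $\varepsilon$ small and dictated by the thin-tube proposition, not by you. For separated pairs, the best available bound on incidences from the complement of $\mathcal H$ is therefore about $\delta^{\varepsilon+\gamma-\chi}|C||K|^2$. This exceeds the lower bound for $N$ whenever $\gamma+\varepsilon<\alpha'+2\chi$, so no contradiction can be extracted.

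Consequently your count can close only when $\gamma>\alpha'-\varepsilon$, essentially the Kaufman regime. It says nothing for $\alpha-\beta<\gamma<\alpha'$, which is exactly where the theorem goes beyond an $L^2$ argument. In your bookkeeping, $\beta$ enters only through the Frostman exponent of $K$, which affects the good pairs, not the exceptional ones. Even the good pairs are a problem: small doubling only yields $|A\cap Q|\lesssim\delta^{-2\chi}r^{\beta}|A|$ for $r$-balls $Q$. So $K$ is only guaranteed to be $2\beta$-Frostman, and the thin-tube exponent $\sigma>\alpha'$ you would need is unavailable when $\alpha'\ge 2\beta$.

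What is missing is the additive-combinatorial role swap the paper takes from Orponen--Shmerkin. The counter-assumption and $|cB|=|B|$ give $\mathcal E_\delta(A,cB)\ge\delta^{O(\chi)}|A||B|^2$ for every $c\in C$; this is the only place where $C\subset(\Z/p^n\Z)^\times$ is used. Summing over $c$ and rewriting $a+cb_1=a'+cb_2$ as $a=a'+c(b_2-b_1)$ gives
\[
\sum_{b_1,b_2\in B}\bigl|\{(a',c)\in A\times C:\ a'+c(b_2-b_1)\in A\}\bigr|\ge\delta^{O(\chi)}|A||B|^2|C|.
\]
Now the pairs $(b_1,b_2)$ are the parameters. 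Each contributes at most $|A||C|$, only $\delta^{-O(\chi)}$ times the average, so discarding an exceptional set of $\delta^{\chi_0}|B|^2$ pairs costs nothing. This is exactly the robustness your count lacks.

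A non-exceptional pair yields a graph $G\subset A\times C$ with $|G|\ge\delta^{O(\chi)}|A||C|$ whose restricted sumset with multiplier $d=b_2-b_1$ lies in $A$. Balog--Szemer\'edi--Gowers in multiplier form ($d$ may be a non-unit) then gives $A'\subset A$, $C'\subset C$ with $|A'||C'|\ge\delta^{\chi_0/2}|A||C|$ and $|A'+dC'|_\delta\le\delta^{-\chi_0/2}|A|$. This contradicts the difference-expansion lemma. That lemma says that outside $\delta^{\chi_0}|B|^2$ pairs, $|A'+(b_1-b_2)C'|_\delta\ge\delta^{-\chi_0}|A|$ for all such $A',C'$, where $\chi_0=(\beta+\gamma-\alpha)/4>0$.

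Only inside that lemma does the paper use the two ratio charts, the thin-tube proposition, and the $p$-adic Kaufman theorem for the pushed-forward ratio measure. The thin-tube proposition comes from O'Regan's theorem via a self-improvement lemma, not the multiscale blocking you suggest. It is also there that $\gamma>\alpha-\beta$ is consumed. Your ingredients are the right ones, but they have to feed an expansion statement for the multipliers $b_1-b_2$, not a direct incidence count for $A\times B$ along $C$.
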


The coefficient set is assumed to lie in
$(\Z/p^n\Z)^\times$. This is the natural normalized form of the theorem:
after the effective direction scale is selected in Section~4, the relevant
direction differences are divided by that scale and become units. Thus no
bounded-valuation parameter is needed in either the statement or the proof.

\subsection{Additive-combinatorial tools}\label{subsec: additive combinatorial tools}

We first recall the Plünnecke-Ruzsa inequality in the form needed below; see, for
example, Tao's exposition \cite[Theorem 1]{tao2009entropyPlunneckeRuzsa} on Plünnecke-Ruzsa inequalities. In the argument of
Orponen-Shmerkin this tool is applied in the real setting, but the Plünnecke-Ruzsa inequality is purely additive-combinatorial and holds in any additive group. Therefore it applies verbatim to the finite additive group
$\Z/p^n\Z$, giving the following $p$-adic version.

\begin{lemma}[Plünnecke-Ruzsa]\label{lem: plunnecke ruzsa}
Let $\delta=p^{-n}$, and let $A,B_1,\ldots,B_k\subset \Z/p^n\Z$. Assume that
$$
|A+B_i|_\delta\le K_i|A|_\delta,\qquad 1\le i\le k.
$$
Then there exists $A'\subset A$ with $|A'|_\delta\ge \frac12|A|_\delta$, such that
$$
|A'+B_1+\cdots+B_k|_\delta
\lesssim_k K_1\cdots K_k |A|_\delta.
$$
\end{lemma}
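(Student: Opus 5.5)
Because every subset of $\Z/p^n\Z$ is a union of $\delta$-cosets, $|X|_\delta=|X|$ for all $X\subset \Z/p^n\Z$. The statement is therefore a purely combinatorial fact about the finite abelian group $G=\Z/p^n\Z$, and no $p$-adic geometry enters. The plan has two steps. First, invoke the multi-summand Plünnecke--Ruzsa inequality in its ``small subset'' form. Second, upgrade the small subset to a subset containing half of $A$ by an exhaustion argument.

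\emph{Step 1 (multi-summand Plünnecke--Ruzsa).} I will use the following standard theorem, valid in any abelian group; see Ruzsa, or the version in \cite[Theorem 1]{tao2009entropyPlunneckeRuzsa}. If $X,B_1,\ldots,B_k\subset G$ are finite and $|X+B_i|\le L_i|X|$ for every $i$, then there is a nonempty $X'\subset X$ with
$$|X'+B_1+\cdots+B_k|\le L_1\cdots L_k|X'|.$$
If one wants a self-contained argument, it would go as follows. Choose $X'\subset X$ minimizing a suitable weighted ratio. Then apply Petridis' lemma,
$$|X'+B+C|\le \frac{|X'+B|}{|X'|}\,|X'+C|,$$
inductively in $k$, in the form used by Gyarmati--Matolcsi--Ruzsa for different summands. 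Any implied constant depending only on $k$ is acceptable here, so only the existence of such an $X'$ matters.

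\emph{Step 2 (exhaustion to a large subset).} Set $R_0=A$. While $|R_j|>\frac12|A|$, proceed as follows.

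1. For each $i$, note that $|R_j+B_i|\le |A+B_i|\le K_i|A|< 2K_i|R_j|$.
2. Apply Step 1 to $X=R_j$ with $L_i=2K_i$. This gives a nonempty $X_j\subset R_j$ with $|X_j+B_1+\cdots+B_k|\le 2^kK_1\cdots K_k|X_j|$.
3. Set $R_{j+1}=R_j\setminus X_j$.

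Each $X_j$ is nonempty, so the process terminates after finitely many steps. The sets $X_j$ are pairwise disjoint, and their total size is at least $\frac12|A|$, because the process stops only once $|R_j|\le\frac12|A|$.

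Let $A'=\bigcup_j X_j$. Then $|A'|\ge\frac12|A|$, and subadditivity of sumsets over the union gives
$$|A'+B_1+\cdots+B_k|\le\sum_j|X_j+B_1+\cdots+B_k|\le 2^kK_1\cdots K_k\sum_j|X_j|\le 2^kK_1\cdots K_k|A|.$$
This is the claimed bound with implied constant $2^k$.

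The only substantive input is Step 1, specifically the version with distinct summands $B_1,\ldots,B_k$ rather than $kB$. I expect this to be the main point to get right. I would cite it rather than reprove it, and I would check that the cited form gives a subset $X'$ of the set $X$ actually being tested, rather than only a bound on $|X+B_1+\cdots+B_k|$ in terms of $|X|$. Everything else is bookkeeping.
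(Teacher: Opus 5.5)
Your proof is correct and follows the paper's route. The paper simply invokes the abelian-group Plünnecke--Ruzsa inequality from Tao's exposition in $\Z/p^n\Z$; your exhaustion argument supplies the standard upgrade, left unstated there, from the small-subset form $|X'+B_1+\cdots+B_k|\le K_1\cdots K_k|X'|$ to the half-size form, with constant $2^k$. Rely on the citation (Ruzsa's different-summand theorem) rather than your optional Petridis sketch, which is too vague as written: a minimizing subset need not inherit the hypotheses $|X'+B_i|\le K_i|X'|$ for the other indices.
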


We next record the standard relation between small restricted sumsets and large
additive energy.

\begin{defi}[Discretised additive energy]\label{def: discretised additive energy}
Let $\delta=p^{-n}$ and let $A,B\subset \Z/p^n\Z$. Define
$$
\mathcal{E}_\delta(A,B):=|\{(a_1,a_2,b_1,b_2)\in A^2\times B^2:
a_1+b_1=a_2+b_2\}|.
$$
\end{defi}

\begin{lemma}\label{lem: energy restricted sumset}
Let $A,B\subset \Z/p^n\Z$ and $K\ge 1$.
\begin{enumerate}
    \item If there exists $G\subset A\times B$ such that $|G|\ge \frac{|A||B|}{K}$
    and
    $$
    |\{a+b:(a,b)\in G\}|_\delta\le K|A|,
    $$
    then $\mathcal{E}_\delta(A,B)\ge K^{-3}|A||B|^2$.
    \item Conversely, if $\mathcal{E}_\delta(A,B)\ge K^{-1}|A||B|^2$, then there exists $G\subset A\times B$ such that 
    $$
    |G|\ge (4K)^{-1}|A||B|,\quad|\{a+b:(a,b)\in G\}|_\delta\le 2K|A|.
    $$
\end{enumerate}
\end{lemma}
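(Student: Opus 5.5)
Both parts come from the representation function. Since $A,B\subset\Z/p^n\Z$ and $\delta=p^{-n}$, every covering number $|\cdot|_\delta$ here is just a cardinality in $\Z/p^n\Z$, so the argument is the classical finite one in the additive group $\Z/p^n\Z$. For $x\in\Z/p^n\Z$ and $G\subset A\times B$ put
$$
r_G(x):=|\{(a,b)\in G:a+b=x\}|,\qquad r(x):=r_{A\times B}(x).
$$
Regrouping the quadruples in $\mathcal E_\delta(A,B)$ according to the common value $a_1+b_1=a_2+b_2$ gives
$$
\mathcal{E}_\delta(A,B)=\sum_x r(x)^2,\qquad \sum_x r(x)=|A||B|.
$$
Moreover $r(x)\le\min\{|A|,|B|\}\le|B|$, since $b$ determines $a=x-b$.

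\emph{Part (1).} Let $S:=\{a+b:(a,b)\in G\}$, so that $|S|\le K|A|$ and $\sum_{x\in S}r_G(x)=|G|$. Cauchy--Schwarz gives $|G|^2\le|S|\sum_x r_G(x)^2$. Since $r_G\le r$ pointwise, we obtain
$$
\mathcal E_\delta(A,B)\ge\sum_x r_G(x)^2\ge\frac{|G|^2}{|S|}\ge\frac{K^{-2}|A|^2|B|^2}{K|A|}=K^{-3}|A||B|^2 .
$$

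\emph{Part (2).} I would restrict to popular sums. Let
$$
S:=\{x:r(x)\ge |B|/(2K)\}.
$$
The sums outside $S$ contribute little energy:
$$
\sum_{x\notin S}r(x)^2\le\frac{|B|}{2K}\sum_x r(x)=\frac{|A||B|^2}{2K}.
$$
Hence $\sum_{x\in S}r(x)^2\ge |A||B|^2/(2K)$. Using $r\le|B|$, this gives $\sum_{x\in S}r(x)\ge |A||B|/(2K)$.

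Set $G:=\{(a,b)\in A\times B:a+b\in S\}$. Then $|G|=\sum_{x\in S}r(x)\ge(2K)^{-1}|A||B|\ge(4K)^{-1}|A||B|$. Its restricted sumset is contained in $S$, and
$$
|S|\cdot\frac{|B|}{2K}\le\sum_{x\in S}r(x)\le|A||B|,
$$
so $|\{a+b:(a,b)\in G\}|_\delta\le|S|\le 2K|A|$.

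There is no real obstacle here. The only points needing care are the identification of $|\cdot|_\delta$ with cardinality in $\Z/p^n\Z$ and the trivial bound $r\le|B|$, which is what makes the popular-sum threshold $|B|/(2K)$ work.
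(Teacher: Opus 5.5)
Your proof is correct and follows the paper's argument: Cauchy--Schwarz for part (1), and for part (2) the same graph of pairs with popular sums (threshold $|B|/(2K)$), with the same counting bound $|S|\le 2K|A|$ on the sumset. The only difference is that you bound $|G|$ directly using $r\le|B|$, which gives the slightly better constant $(2K)^{-1}$, whereas the paper argues by contradiction.
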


\begin{proof}
The first part follows from Cauchy-Schwarz:
$$
\mathcal{E}_\delta(A,B)\ge\sum_{h}
|\{(a,b)\in G:a+b=h\}|^2\ge\frac{|G|^2}{|\{a+b:(a,b)\in G\}|_\delta}\ge K^{-3}|A||B|^2.
$$
For the converse, define
$$
G:=\left\{(a_1,b_1)\in A\times B:
|\{(a_2,b_2)\in A\times B:a_1+b_1=a_2+b_2\}|
\ge \frac{|B|}{2K}\right\}.
$$
If $|G|<(4K)^{-1}|A||B|$, then the contribution to the energy from popular and
non-popular pairs is strictly smaller than $K^{-1}|A||B|^2$, a contradiction. Moreover,
$$
|\{a+b:(a,b)\in G\}|_\delta\le\frac{2K}{|B|}\sum_h |\{(a,b)\in A\times B:a+b=h\}|
\le2K|A|.
$$
\end{proof}

Finally, we recall the graph Balog-Szemerédi-Gowers theorem in the multiplier form needed below. This is just the usual graph Balog-Szemerédi-Gowers theorem applied with multiplicities; see, for example, Tao-Vu \cite[Theorem 2.29]{tao2006additive}. The point of the formulation below is that the multiplier is allowed to be a non-unit.

\begin{thm}[Asymmetric Balog-Szemerédi-Gowers, multiplier form]
\label{thm: asymmetric BSG}
For every $\eta>0$ there exist $\zeta=\zeta(\eta)>0$ and
$\delta_0=\delta_0(\eta)\in(0,1/p]$ such that the following holds for all
$\delta=p^{-n}\in(0,\delta_0]$. Let $A,C\subset \Z/p^n\Z$, and let
$d\in \Z/p^n\Z$ be arbitrary. Suppose that there is a set
$G\subset A\times C$ satisfying $|G|\ge \delta^{\zeta}|A||C|$ and
\begin{equation}\label{eq: small restricted sumset for aBSG}
|\{a+dc:(a,c)\in G\}|_{\delta}\le \delta^{-\zeta}|A|.
\end{equation}
Then there exist subsets $A'\subset A$ and $C'\subset C$ such that
$$
|A'||C'|\ge \delta^{\eta}|A||C|,\qquad
|A'+dC'|_{\delta}\le \delta^{-\eta}|A|.
$$
\end{thm}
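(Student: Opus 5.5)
The plan is to reduce to the standard graph Balog--Szemerédi--Gowers theorem in an abelian group by transporting the set $C$ through the multiplication-by-$d$ map. This has to be done with care, because $d$ may be a non-unit and $c\mapsto dc$ need not be injective. Set $X:=dC\subset\Z/p^n\Z$, and for $x\in X$ let $m(x):=|\{c\in C:dc=x\}|$ denote its multiplicity. Working in the finite group $\Z/p^n\Z$, covering numbers at scale $\delta$ are just cardinalities, so the hypothesis \eqref{eq: small restricted sumset for aBSG} reads $|\{a+x:(a,x)\in \tilde G\}|\le\delta^{-\zeta}|A|$, where $\tilde G:=\{(a,dc):(a,c)\in G\}\subset A\times X$.

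First I pigeonhole the multiplicities. Partition $C$ dyadically according to the value of $m(dc)\in[2^k,2^{k+1})$. There are $O(n)=O(\log(1/\delta))$ classes, so one class $C_k$ carries at least a $\gtrsim 1/\log(1/\delta)$ fraction of $G$. Write $X_k:=dC_k$; on this class every fibre of $C_k\to X_k$ has size comparable to $M:=2^k$, so $|C_k|\sim M|X_k|$. The restricted graph $\tilde G_k:=\{(a,dc):(a,c)\in G,\ c\in C_k\}$ then satisfies $|\tilde G_k|\gtrsim M^{-1}|G\cap(A\times C_k)|\gtrapprox\delta^{\zeta}|A||X_k|$, since each pair $(a,x)$ has at most $2M$ preimages. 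Its sumset still has size at most $\delta^{-\zeta}|A|$.

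Next I apply the graph Balog--Szemerédi--Gowers theorem in the asymmetric form (Tao--Vu \cite[Theorem 2.29]{tao2006additive}, or the asymmetric version used in \cite{orponen2023projections}) to $A$, $X_k$ and $\tilde G_k$, with density $K^{-1}=\delta^{\zeta}/\log$ and sumset bound $K|A|$. This produces $A'\subset A$ and $X'\subset X_k$ with $|A'||X'|\gtrsim K^{-O(1)}|A||X_k|$ and $|A'+X'|\lesssim K^{O(1)}|A|$. Finally I set $C':=C_k\cap d^{-1}(X')$. Then $dC'=X'$, so $A'+dC'=A'+X'$, and $|C'|\sim M|X'|$ while $|C|\le O(\log(1/\delta))\,|C_k|\sim \log(1/\delta)\,M|X_k|$ after possibly re-pigeonholing to a class holding a large share of $C$. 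Combining these gives $|A'||C'|\gtrsim K^{-O(1)}\log(1/\delta)^{-1}|A||C|$. Choosing $\zeta=c\eta$ with $c$ small and $\delta_0$ small enough to absorb the logarithms and constants then yields the statement.

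The main obstacle is the last comparison between $|C_k|$ and $|C|$. I pigeonholed $C_k$ by its share of $G$, not by its share of $C$, yet the lower bound needs $|C'|$ relative to all of $C$. I expect to handle this by using $|G\cap(A\times C_k)|\le|A||C_k|$ together with $|G\cap(A\times C_k)|\gtrapprox\delta^{\zeta}|A||C|$. This forces $|C_k|\gtrapprox\delta^{\zeta}|C|$, which costs only an extra $\delta^{\zeta}$ factor and is absorbed by choosing $\zeta\ll\eta$. Everything else is the multiplicity-free BSG statement, which holds in any abelian group and hence verbatim in $\Z/p^n\Z$.
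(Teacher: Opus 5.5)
Your argument is correct, and it reaches the conclusion by a different route from the paper on the one nontrivial point: the non-injectivity of $c\mapsto dc$.

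\textbf{How the paper handles it.} It builds the weighted graph $\omega(a,x)$ on $A\times dC$ and appeals to a ``graph Balog--Szemer\'edi--Gowers theorem with multiplicities'', or to ``labelled copies'' of $dc$. It does not spell out that weighted statement. Labelled copies of the same group element coincide, so this implicitly needs a multiset version of BSG.

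\textbf{How you handle it.} You dyadically pigeonhole the fibre sizes $m(x)$, so that on the class $C_k$ every fibre of $c\mapsto dc$ is complete and of size in $[M,2M)$. You then push $G$ forward to an unweighted graph $\tilde G_k\subset A\times X_k$, whose density is $\gtrapprox\delta^{\zeta}$ because $|C|\ge M|X_k|$. After applying the set version of BSG, you pull back by complete fibres, $C'=C_k\cap d^{-1}(X')$. This gives $dC'=X'$ and $|C'|\ge M|X'|$.

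\textbf{Comparison.} Your route effectively proves the weighted statement the paper invokes, using only the unweighted theorem. The cost is a $\log(1/\delta)$ loss and an extra factor $\delta^{\zeta}$, both absorbed by taking $\zeta\ll\eta$. The paper's route is shorter but leaves that reduction implicit.

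Two small remarks.

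First, delete the sentence about ``re-pigeonholing to a class holding a large share of $C$''. You cannot pigeonhole simultaneously for the share of $G$ and the share of $C$. The bound in your last paragraph, $|C_k|\ge|G\cap(A\times C_k)|/|A|\gtrapprox\delta^{\zeta}|C|$, is the correct and sufficient replacement.

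Second, the black box you invoke is BSG in the normalization $|\{a+x:(a,x)\in\tilde G_k\}|\le K|A|$, with conclusion $|A'+X'|\le K^{O(1)}|A|$ and $|A'||X'|\ge K^{-O(1)}|A||X_k|$. The graph BSG of \cite[Theorem 2.29]{tao2006additive} is normalized by $|A|^{1/2}|X|^{1/2}$ instead. So when $|X_k|\ll|A|$ it loses a power of $|A|/|X_k|$, and a genuinely asymmetric BSG input is required. The paper's proof relies on exactly the same input, so this is not a point where your argument falls short of it.
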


The multiplier $d$ is not assumed to be a unit. This is important later, since
$d$ will be a difference such as $b_1-b_2$, which may be divisible by $p$. The
result is purely additive and does not require the map $c\mapsto dc$ to be
injective. The unit condition appears only in the ABC sum-product theorem, where
one needs multiplication by the coefficient $c$ to preserve $\delta$-scale
cardinalities.

\begin{proof}
Let
$$
\nu(x):=|\{c\in C:dc=x\}|,\qquad x\in \Z/p^n\Z,
$$
be the multiplicity function of the image of $C$ under the homomorphism
$c\mapsto dc$. Thus $\|\nu\|_1=|C|$. The graph $G\subset A\times C$ induces a
weighted graph between $A$ and $dC$ by
$$
\omega(a,x):=|\{c\in C:(a,c)\in G,\ dc=x\}|.
$$
Then $0\le \omega(a,x)\le \nu(x)$ and
$$
\sum_{a\in A}\sum_x \omega(a,x)=|G|\ge \delta^\zeta |A||C|.
$$
Moreover,
$$
\{a+x:\omega(a,x)>0\}
\subset
\{a+dc:(a,c)\in G\},
$$
and therefore \eqref{eq: small restricted sumset for aBSG} gives
$$
|\{a+x:\omega(a,x)>0\}|_\delta\le \delta^{-\zeta}|A|.
$$

We now apply the usual graph Balog-Szemerédi-Gowers theorem with multiplicities, as discussed in \cite[Section 2.5]{tao2006additive}. Equivalently, one may use the integer-weighted version obtained by replacing each $c\in C$ by a labelled copy of its image $dc$. Since the weighted graph has total size at least $\delta^\zeta |A||C|$ and its restricted sumset has $\delta$-covering number at most $\delta^{-\zeta}|A|$, the graph Balog-Szemerédi-Gowers theorem gives a subset $A'\subset A$ and a labelled subcollection
$C'\subset C$ such that
$$
|A'||C'|\ge \delta^{O(\zeta)}|A||C|,\quad
|A'+dC'|_\delta\le \delta^{-O(\zeta)}|A|.
$$
Choosing $\zeta>0$ sufficiently small depending on $\eta$ gives
$$
|A'||C'|\ge \delta^\eta |A||C|,
\qquad
|A'+dC'|_\delta\le \delta^{-\eta}|A|,
$$
as required.
\end{proof}

\subsection{From the weak version to the restricted-graph version}
\label{subsec: weak to restricted ABC}

We now reduce the restricted-graph statement in
\Cref{thm: ABC sum-product} to the weak theorem above.

\begin{prop}[Weak ABC implies restricted-graph ABC]
\label{prop: weak ABC implies restricted ABC}
Let $0<\beta\le\alpha<1$ and
$\gamma\in(\alpha-\beta,1]$. Choose
$\bar\beta<\beta$ sufficiently close to $\beta$ that
$\gamma>\alpha-\bar\beta$. If
\Cref{thm: weak ABC sum-product} holds with the parameters
$(\alpha,\bar\beta,\gamma)$, then
\Cref{thm: ABC sum-product} holds with the parameters
$(\alpha,\beta,\gamma)$.
\end{prop}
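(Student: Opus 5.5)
The plan is to argue by contradiction. I would derive, from a failure of the restricted-graph statement for $(\alpha,\beta,\gamma)$, a configuration violating \Cref{thm: weak ABC sum-product} for $(\alpha,\bar\beta,\gamma)$. The reduction follows \cite[Section 5.1]{orponen2024discretised} and only uses the tools of \Cref{subsec: additive combinatorial tools}.

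\textbf{Step 1: set up the failure and pass to energy.} Fix small parameters $\chi\ll\eta\ll\chi_0$, where $\chi_0=\chi(\alpha,\bar\beta,\gamma)$ is the constant from the weak theorem. Suppose that for every $c\in C$ there is $G_c\subset A\times B$ with $|G_c|\ge\delta^{\chi}|A||B|$ and $|\{a+cb:(a,b)\in G_c\}|_\delta\le\delta^{-\chi}|A|$. Since $c$ is a unit, $b\mapsto cb$ is a bijection and $|cB|=|B|$. The first part of \Cref{lem: energy restricted sumset} then gives
$$\mathcal E_\delta(A,cB)\ge\delta^{3\chi}|A||B|^2\quad\text{for all } c\in C.$$
Alternatively, I would apply \Cref{thm: asymmetric BSG} with $d=c$ directly to $G_c$. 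This gives $A_c\subset A$ and $B_c\subset B$ with
$$|A_c||B_c|\ge\delta^{\eta}|A||B|,\qquad |A_c+cB_c|_\delta\le\delta^{-\eta}|A|.$$
In particular $|A_c|\ge\delta^\eta|A|$ and $|B_c|\ge\delta^\eta|B|$.

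\textbf{Step 2: make the sets independent of $c$.} The weak theorem requires a single pair $(A',B')$ that works simultaneously for a large subset $C'\subset C$. This step is the main obstacle.

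For $B$, I would pigeonhole as follows. Since $\sum_c|B_c|\ge\delta^\eta|C||B|$, Cauchy--Schwarz produces $c_0\in C$ and a subset $C_1\subset C$ with $|C_1|\ge\delta^{O(\eta)}|C|$ such that $|B_{c_0}\cap B_c|\ge\delta^{O(\eta)}|B|$ for all $c\in C_1$. Set $B':=B_{c_0}$.

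For $A$, I would use Ruzsa's triangle inequality together with \Cref{lem: plunnecke ruzsa}. From $|A_{c}+cB_{c}|\le\delta^{-O(\eta)}|A_c|$ one gets that $A_c$ absorbs $c(B_c\cap B')$ with small growth. The goal is then to show that a fixed set works for every $c\in C_1$, namely
$$|A'+cB''|_\delta\le\delta^{-O(\eta)}|A|,$$
where $A'$ is, for instance, a large Plünnecke subset of $A_{c_0}$, and $B''\subset B'$ is a further common refinement obtained by one more round of pigeonholing.

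If a direct common choice of $A'$ is not available, the fallback is the energy formulation. I would average $\sum_{c}\mathcal E_\delta(A,cB)$ and use a Katz--Tao/BSG-type lemma for a family of multipliers to extract $A'$, $B''$ and $C'$ at once. The relevant statement can be read off from \cite[Section 5.1]{orponen2024discretised}, and is purely additive, so it transfers to $\Z/p^n\Z$.

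\textbf{Step 3: check hypotheses and conclude.} All refinements lose only factors $\delta^{O(\eta)}$. Hence:
\begin{itemize}
\item $|A'|\le|A|\le\delta^{-\alpha}$;
\item $B''$ is a $(\delta,\beta,\delta^{-\chi-O(\eta)})$-set, since $|B''|\ge\delta^{O(\eta)}|B|$;
\item $C'\subset(\Z/p^n\Z)^\times$ is a $(\delta,\gamma,\delta^{-\chi-O(\eta)})$-set.
\end{itemize}
Trading a small power of $\delta$ against the exponent, $B''$ is also a $(\delta,\bar\beta,\delta^{-\chi_0})$-set once $\eta\ll\beta-\bar\beta$; this is where $\bar\beta$ enters.

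The weak theorem for $(\alpha,\bar\beta,\gamma)$ then yields $c\in C'$ with $|A'+cB''|_\delta\ge\delta^{-\chi_0}|A'|\ge\delta^{-\chi_0+O(\eta)}|A|$. This contradicts the upper bound $|A'+cB''|_\delta\le\delta^{-O(\eta)}|A|$ from Step 2 once $\eta\ll\chi_0$. Choosing the final constant $\chi\ll\eta$ and $\delta_0$ small completes the argument.
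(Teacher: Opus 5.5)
Your outline is the same as the paper's: argue by contradiction, run Balog--Szemer\'edi--Gowers for each $c$, make the structure independent of $c$, then invoke the weak theorem with $\bar\beta$. The paper likewise delegates the uniformization step to \cite[Section 5.1]{orponen2024discretised}. The gap is in the target you set for that step. You look for one set $A'\subset A_{c_0}$ with $|A'|\ge\delta^{O(\eta)}|A|$ and $|A'+cB''|_\delta\le\delta^{-O(\eta)}|A|$ for every $c$ in a dense $C'$, and Step 3 relies on $|A'|\ge\delta^{O(\eta)}|A|$. Nothing in Step 2 links $A_{c_0}$ to $cB''$ for $c\neq c_0$, since the Cauchy--Schwarz selection only involves the sets $B_c$.

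In fact the additive tools you list cannot produce such an $A'$, because the statement fails as a purely additive one. Work in a torsion-free model, or generically in $\Z/p^n\Z$ with $n$ huge. Let $A$ be a disjoint union of many far-apart, equal-size generalized progressions $P_i$. Give $P_i$ long steps in the directions $cg$ for $c\in S_i$, where $S_i\subset C$ is random of density $K^{-1}$. Let $B$ be a short progression with step $g$. Then each $c$ has $A_c=\bigcup_{i:c\in S_i}P_i$ and $B_c=B$ with $|A_c+cB|\le 2|A_c|$, so your Step 1 applies for every $c$. Now take any dense $A'\subset A$, $B''\subset B$, $C'\subset C$. A typical $c\in C'$ lies outside $S_i$ for most pieces carrying the mass of $A'$, and each such piece contributes $|A'\cap P_i||B''|$ to $|A'+cB''|$. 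This exceeds $K^{O(1)}|A|$ once $|B|\gg K^{O(1)}$. The only common structure here is a single piece, $A'=P_i$ with $C'=S_i$, which is tiny compared with $A$.

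The missing idea is that everything must be normalized by $|A'|$, not by $|A|$. The weak theorem only needs $|A'|$ bounded by essentially $\delta^{-\alpha}$, and its conclusion is $|A'+cB'|_\delta\ge\delta^{-\chi_{\mathrm w}}|A'|$. So the uniformization must produce three sets with $|A'+cB'|_\delta\le\delta^{-O(\chi)}|A'|$ for all $c\in C'$:
\begin{itemize}
\item some $A'$, subject only to an upper bound on its size;
\item a dense $B'\subset B$;
\item a dense $C'\subset C$.
\end{itemize}
This is what the paper asserts, with $A'\subset\Z/p^n\Z$ obtained by pigeonholing translates of the $A_c$. Your Step 2 plan does not produce this, and the final comparison with $|A|$ in your Step 3 should be dropped.

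A smaller point concerns $\bar\beta$. Passing from $\beta$ to $\bar\beta$ is just $r^\beta\le r^{\bar\beta}$. You cannot trade $\delta^{-O(\eta)}$ against $r^{\beta-\bar\beta}$ when $r$ is close to $1$. So the constant is controlled by $\eta\ll\chi_0$, not by $\eta\ll\beta-\bar\beta$.
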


\begin{proof}
Let $\chi_{\mathrm w}>0$ and $\delta_{\mathrm w}>0$ be the constants from
\Cref{thm: weak ABC sum-product} for
$(\alpha,\bar\beta,\gamma)$. We choose
$$
0<\chi\ll_{\alpha,\beta,\bar\beta,\gamma}\chi_{\mathrm w},
\quad
\beta-\bar\beta,
\quad
\gamma-(\alpha-\bar\beta),
$$
and then take $\delta=p^{-n}>0$ sufficiently small.

Suppose, towards a contradiction, that the restricted-graph conclusion fails.
Then there exist sets $A,B\subset\Z/p^n\Z$ and a nonempty
$(\delta,\gamma,\delta^{-\chi})$-set
$C\subset(\Z/p^n\Z)^\times$ satisfying the hypotheses of
\Cref{thm: ABC sum-product}, such that for every $c\in C$ there is a graph
$G_c\subset A\times B$ with
$$
|G_c|\ge\delta^\chi|A||B|
$$
and
\begin{equation}
\label{eq: weak to restricted small restricted sumset}
|\{a+cb:(a,b)\in G_c\}|_\delta
<
\delta^{-\chi}|A|.
\end{equation}

Since $c$ is a unit, multiplication by $c$ is a bijection of
$\Z/p^n\Z$. Hence the graph $G_c$ may be viewed as a graph between
$A$ and $cB$, with the same cardinality. By Cauchy's inequality,
\eqref{eq: weak to restricted small restricted sumset} gives
\begin{equation}
\label{eq: weak to restricted graph energy}
\mathcal E_\delta(A,cB)
\ge
\delta^{O(\chi)}|A||B|^2,
\qquad c\in C.
\end{equation}
Applying the graph Balog--Szemerédi--Gowers theorem, followed by
\Cref{lem: plunnecke ruzsa}, we obtain for every $c\in C$ subsets
$A_c\subset A$ and $B_c\subset B$ such that
$$
|A_c|\ge\delta^{O(\chi)}|A|,
\qquad
|B_c|\ge\delta^{O(\chi)}|B|,
$$
and
\begin{equation}
\label{eq: weak to restricted structured pair}
|A_c+cB_c|_\delta
\le
\delta^{-O(\chi)}|A|.
\end{equation}

We now use the standard uniformization argument from
\cite[Section 5.1]{orponen2024discretised}. Pigeonholing the density profiles
of the sets $B_c$, and then the relevant translates of the sets $A_c$, yields
a subset $C'\subset C$ and sets $A'\subset\Z/p^n\Z$, $B'\subset B$ such that
\begin{equation}
\label{eq: weak to restricted uniformized sizes}
|C'|\ge\delta^{O(\chi)}|C|,
\qquad
|B'|\ge\delta^{O(\chi)}|B|,
\qquad
|A'|\le\delta^{-\alpha+O(\chi)},
\end{equation}
and
\begin{equation}
\label{eq: weak to restricted uniformized small sumsets}
|A'+cB'|_\delta
\le
\delta^{-O(\chi)}|A'|,
\qquad c\in C'.
\end{equation}
Moreover, after discarding another factor $\delta^{O(\chi)}$ if necessary,
$B'$ is a
$(\delta,\bar\beta,\delta^{-O(\chi)})$-set. Indeed, for every
$r\in[\delta,1]\cap p^{-\N}$ and every $r$-ball $J\subset\Z_p$,
$$
|B'\cap J|
\le
|B\cap J|
\le
\delta^{-\chi}r^\beta|B|
\le
\delta^{-O(\chi)}r^{\bar\beta}|B'|.
$$
Likewise, $C'$ remains a
$(\delta,\gamma,\delta^{-O(\chi)})$-set and is still contained in
$(\Z/p^n\Z)^\times$.

Choose $\chi>0$ sufficiently small that all losses
$\delta^{-O(\chi)}$ are bounded by $\delta^{-\chi_{\mathrm w}}$ and all
density losses are acceptable in the weak theorem. Then the sets
$A',B',C'$ satisfy the hypotheses of
\Cref{thm: weak ABC sum-product} with parameters
$(\alpha,\bar\beta,\gamma)$. Hence there exists $c\in C'$ such that
$$
|A'+cB'|_\delta
\ge
\delta^{-\chi_{\mathrm w}}|A'|,
$$
contradicting
\eqref{eq: weak to restricted uniformized small sumsets}. This proves the
restricted-graph theorem.
\end{proof}

For the rest of the section, we prove
\Cref{thm: weak ABC sum-product}.

\subsection{A discretised expansion estimate}\label{sec: discretised expansion estimate}

We first record the $p$-adic analogue of the expansion estimate used in \cite[Section 3.2]{orponen2023projections}. This is the only point where the projection-theoretic input enters the proof of the ABC theorem. The overall strategy is the same as in the real case: one removes a near-diagonal
part of the quadruple space, applies a two-chart ratio thin-tube estimate to the
remaining pairs, and then combines the resulting family of parameters with
Kaufman's projection theorem.

There are, however, two minor non-archimedean bookkeeping differences which are worth making explicit. First, the removal of the near-diagonal region is formulated using parent balls. If $I$ is a $p$-adic ball of radius $\rho$, we write $I^{+}$ for the unique ball of radius $p\rho$ containing $I$. This replaces the enlarged intervals used in the real proof. Its only purpose is to ensure that, after discarding the contribution with $a_1\in I$ and $a_2\in I^{+}$, the denominator
$a_1-a_2$ has a uniform lower bound of size $\rho$.

Second, unlike in the real setting, the quotient $\frac{b_1-b_2}{a_1-a_2}$ need not belong to $\Z_p$. As mentioned in \Cref{sec: Introduction}, we will work in two affine charts covering all non-diagonal pairs. Define
\begin{align}
\Omega_x&:=\left\{
(z_1,z_2)\in(\Q_p^2)^2:
||a_1-a_2||_p\ge ||b_1-b_2||_p,\
a_1\neq a_2\right\},\\
\Omega_y&:=\left\{
(z_1,z_2)\in(\Q_p^2)^2:
||b_1-b_2||_p>||a_1-a_2||_p\right\},
\end{align}
where $z_i=(a_i,b_i)$. On $\Omega_x$ we consider the ratio map
\begin{equation}\label{eq: ratio chart x}
\rho_x(z_1,z_2):=\frac{b_1-b_2}{a_1-a_2}\in\Z_p,
\end{equation}
while on $\Omega_y$ we interchange the two coordinates and use
\begin{equation}\label{eq: ratio chart y}
\rho_y(z_1,z_2):=\frac{a_1-a_2}{b_1-b_2}\in\Z_p.
\end{equation}

Since the ultrametric norm is given by the maximum of the coordinate norms,
every non-diagonal pair belongs to exactly one of the two charts. Thus
$\Omega_x\cup\Omega_y$ covers all non-diagonal pairs.

The fibres of these ratio maps play the role of Euclidean radial projections.
Indeed, after fixing one endpoint, the inverse image of a ball under either
$\rho_x$ or $\rho_y$ is contained in a bounded number of $p$-adic tubes of the
corresponding slope range. This observation replaces the radial-projection
machinery in \cite[Proposition~3.7]{orponen2023projections}. All losses arising
from the chart decomposition are absorbed into the
$\delta^{-O(\varepsilon)}$ factors appearing throughout the argument.

\begin{defi}[$(t,C)$-Frostman and $(\delta,t,C)$-Frostman measures]\label{defi: Frostman measure}
Let $t>0$ and $C\ge 1$. A Borel measure $\mu$ on $\Q_p^d$ is called a
$(t,C)$-Frostman measure if $$\mu(B(x,r))\le Cr^t$$ for all $x\in\Q_p^d$ and all
$r>0$. If $\delta\in p^{-\mathbb N}$ and the same inequality is assumed only for
$r\ge \delta$, then $\mu$ is called a $(\delta,t,C)$-Frostman measure.
\end{defi}

\begin{defi}[$p$-adic thin tubes]\label{defi: p-adic thin tubes}
Let $\mu,\nu$ be Borel probability measures on $\Z_p^2$. We say that the pair $(\mu,\nu)$ has $(\sigma,K,c)$-thin tubes if there exists a Borel set $\mathcal H\subset \Z_p^2\times\Z_p^2$ with
$$(\mu\times\nu)(\mathcal H)\ge c$$
such that, for every $x\in \Z_p^2$, every scale $r\in p^{-\mathbb N}$ with $\delta\le r\le 1$, and every $p$-adic $r$-tube $T\subset \Z_p^2$ with $x\in T$, one has
$$
\nu\{y\in T:(x,y)\in\mathcal H\}\le K r^{\sigma}.
$$
\end{defi}

The following proposition is the $p$-adic analogue of \cite[Proposition 3.6]{orponen2023projections}. We state it in terms of tubes rather than directions, in order to avoid introducing the projective line. 

\begin{prop}[Thin tubes from two-chart ratio projections]
\label{prop: p-adic thin tubes}
Let $\tau,\kappa\in(0,1]$ and $\sigma\in(0,\tau)$. Then there exist
$K=K(\tau,\kappa,\sigma)\ge 1$,
$\varepsilon_0=\varepsilon_0(\tau,\kappa,\sigma)>0$, and
$\delta_0=\delta_0(\tau,\kappa,\sigma)>0$ such that the following holds for all
$\delta=p^{-n}\in(0,\delta_0]$ and all
$\varepsilon\in(0,\varepsilon_0)$.

Let $\mu_1,\mu_2$ be $(\delta,\tau,\delta^{-\varepsilon})$-Frostman probability
measures on $\Z_p^2$. Assume that they satisfy the following tube
non-concentration condition: for every $p$-adic $r$-tube $T\subset\Z_p^2$ and
every $r\in p^{-\mathbb N}\cap[\delta,1]$,
\begin{equation}
\label{eq:p-adic-thin-tubes-tube-Frostman}
\mu_i(T)\le \delta^{-\varepsilon}r^{\kappa},
\qquad i=1,2.
\end{equation}
Assume also that their supports are separated:
\begin{equation}
\label{eq:p-adic-thin-tubes-separated-supports}
\dist(\spt\mu_1,\spt\mu_2)\ge \delta^{\varepsilon}.
\end{equation}
Then both pairs $(\mu_1,\mu_2)$ and $(\mu_2,\mu_1)$ have
$(\sigma,\delta^{-K\varepsilon},1-K\delta^{\varepsilon})$-thin tubes.
\end{prop}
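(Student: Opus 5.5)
Following the real-variable argument of \cite[Proposition 3.6]{orponen2023projections}, the plan is to produce $\mathcal H$ as the complement of the set of \emph{heavy} pairs, and to control the heavy pairs by a Bourgain-type projection input for the ratio maps $\rho_x,\rho_y$.

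\textbf{Step 1: chart reduction.} By \eqref{eq:p-adic-thin-tubes-separated-supports}, every pair $(x,y)\in\spt\mu_1\times\spt\mu_2$ satisfies $\|x-y\|_p\ge\delta^{\varepsilon}$ and lies in exactly one of $\Omega_x,\Omega_y$. I treat $\Omega_x$; the other chart is symmetric after swapping coordinates. For fixed $x=(a_1,b_1)$, the $p$-adic $r$-tubes through $x$ whose slope (in the $x$-chart) lies in $\Z_p$ correspond to $r$-balls of values of $\rho_x(x,\cdot)$. The correspondence is exact on points $y$ with $\|a_1-a_2\|_p\ge\delta^\varepsilon$, up to replacing $r$ by $r\delta^{-\varepsilon}$.

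Concretely, if $y\in T$ with $T$ an $r$-tube through $x$ of slope $e$, then
$$
|\rho_x(x,y)-e|\le r/\|a_1-a_2\|_p\le r\delta^{-\varepsilon}.
$$
Tubes through $x$ with slope outside $\Z_p$ only meet $y$ in $\Omega_y$, where the roles are reversed. Hence it suffices to find, for each $x$ in a set of large $\mu_1$-measure, that
$$
\nu_x:=(\rho_x(x,\cdot))_\#\mu_2|_{\Omega_x}
$$
is, after deleting a set of small mass, a $(\delta,\sigma,\delta^{-O(\varepsilon)})$-Frostman measure on $\Z_p$. The tube-counting here uses the ultrametric property: any two $r$-tubes through $x$ are either equal or meet only in a ball around $x$, so there is no overlap bookkeeping.

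\textbf{Step 2: heavy pairs.} For each dyadic ($p$-adic) scale $r\in[\delta,1]$, I define the bad set $B_r$ of pairs $(x,y)$ such that $y$ lies in an $r$-tube $T\ni x$ with $\mu_2(T)\ge \delta^{-K\varepsilon}r^\sigma$. I then set
$$
\mathcal H:=(\spt\mu_1\times\spt\mu_2)\setminus\bigcup_r B_r .
$$
There are only $O(\log(1/\delta))$ scales. With this definition the thin-tube bound is automatic for tubes through $x$, since any tube not heavy has small mass, and heavy tubes carry no points of $\mathcal H$. The whole task is therefore to show
$$
(\mu_1\times\mu_2)(B_r)\lesssim\delta^{2\varepsilon}\qquad\text{for each } r .
$$

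\textbf{Step 3: the main obstacle, bounding $B_r$.} Dually, $(\mu_1\times\mu_2)(B_r)$ large means that for many $x$, a family of heavy $r$-tubes through $x$ captures much of $\mu_2$. Each heavy tube has $\mu_2$-mass at least $\delta^{-K\varepsilon}r^\sigma$, so there are at most $\delta^{K\varepsilon}r^{-\sigma}$ of them through a given $x$, and hence few slopes. I will convert this into a projection statement. Averaging over $x$, there is a set of slopes $e$ such that $\pi$-type projections of $\mu_2$ along slope $e$ concentrate: the tube through $x$ with slope $e$ is a fibre of $(u,v)\mapsto v-eu$. This yields a $\tau$-dimensional family of directions (via the Frostman property of $\mu_1$ and separation) along which $\mu_2$ has an atom of size $r^\sigma\gg r^\tau$ at scale $r$.

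This contradicts Bourgain's discretised projection theorem in the $p$-adic form due to O'Regan \cite{o2026bourgain}, together with Kaufman-type estimates for $\sigma<\tau$. Here the tube non-concentration \eqref{eq:p-adic-thin-tubes-tube-Frostman} with exponent $\kappa>0$ is exactly the hypothesis excluding concentration on a line, which Bourgain's theorem requires. I expect this step to be the delicate one: I must set the exceptional-direction Frostman exponent to be roughly $\tau-\sigma$ in O'Regan's theorem, choose $K$ large relative to the resulting $\varepsilon$-losses, and handle the chart boundary $\|a_1-a_2\|_p=\|b_1-b_2\|_p$ consistently.

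If the direct reduction fails, the fallback is an induction on scales, as in Orponen--Shmerkin. This would bootstrap the thin-tube exponent from the trivial value $\sigma\approx0$ upward in small increments, using the projection theorem at each step, until any $\sigma<\tau$ is reached. The symmetric statement for $(\mu_2,\mu_1)$ then follows by interchanging the roles of the two measures.
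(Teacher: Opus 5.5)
Your Steps 1 and 2 are a legitimate reformulation. Removing entire heavy tubes is an admissible choice of $\mathcal H$; one small correction is that $r$-tubes through $x$ whose slopes differ by at most $r\delta^{-\varepsilon}$ still overlap on $\spt\mu_2$, but this costs only a factor $\delta^{-\varepsilon}$. Step 3, however, carries all the content, and it does not work.

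The claim that heavy pairs produce a $\tau$-dimensional family of slopes is circular. For a fixed base point $x$, the slopes that occur are distributed according to your measure $\nu_x=(\rho_x(x,\cdot))_\#\mu_2|_{\Omega_x}$. That is exactly the object whose Frostman property you reduced the proposition to in Step 1. The Frostman bound for $\mu_1$ and the separation hypothesis control the base points, not the slopes. A priori, the only non-concentration you have for slopes is the exponent $\kappa$ from \eqref{eq:p-adic-thin-tubes-tube-Frostman}.

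There is a further mismatch. A heavy tube in direction $e$ is a single $\pi_e$-fibre carrying a proportion of only about $r^\sigma$ of $\mu_2$. By contrast, \Cref{lem: padic Bourgain projection} concerns subsets of proportion at least $\delta^{\epsilon_{\mathrm B}}$, and \Cref{prop: p-adic Kaufman projection} needs a Frostman measure on directions, which you do not have. So such atoms yield no contradiction by themselves.

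Even a correct one-shot reduction could not reach an arbitrary $\sigma<\tau$. \Cref{lem: padic Bourgain projection} is an $\epsilon$-improvement result: it gives only the exponent $t/2+\eta_{\mathrm B}$ with a tiny $\eta_{\mathrm B}$.

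So your fallback is not optional; it is the proof, and it is what the paper does. The base case is $(\kappa,\delta^{-\varepsilon},1)$-thin tubes with $\mathcal H=\Z_p^2\times\Z_p^2$, read off directly from \eqref{eq:p-adic-thin-tubes-tube-Frostman}; it does not start from $\sigma\approx 0$. Then \Cref{lem: padic thin tube self improvement} raises the exponent from $\sigma_0$ to $\min\{\sigma_0+\eta,\sigma\}$, with $\eta=\eta(\tau,\kappa,\sigma)$ independent of $\sigma_0$. Hence $O_{\tau,\kappa,\sigma}(1)$ iterations suffice and $K$ stays bounded.

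What your sketch omits is that improvement step. There, heaviness is measured relative to the previous good set, via $\mu_2(T\cap H_{0,x})$, not via $\mu_2(T)$ as in your Step 2. This caps the section masses by $\delta^{-K_0\varepsilon}r^{\sigma_0}$. It also makes the family of heavy $r$-tubes through $x$ an $(r,\sigma_0,\delta^{-O(\varepsilon)})$-set of slopes. This is the slope non-concentration your direct argument lacks.

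The argument then splits into two cases. If the tube sections are not concentrated in smaller balls, the base points and slope sets form a Furstenberg-type configuration. After rescaling a ratio chart, the O'Regan--Bourgain theorem applies and gives an $\eta$-gain that contradicts the section bounds. If the sections are concentrated, double counting with the tube non-concentration hypothesis produces a ball violating the $\tau$-Frostman bound for $\mu_2$.

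Without this two-case argument, and without the uniformity of $\eta$ in $\sigma_0$, the proposal does not prove the statement.
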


We first record the $p$-adic analogue of Bourgain's discretized projection theorem,
which is the basic projection-theoretic input behind the argument. Instead of revisiting Bourgain's original argument
\cite{bourgain2010discretized}, we rely on the recent streamlined proof of a broad extension of Bourgain's theorem from O'Regan \cite[Theorem 1.3]{o2026bourgain}, covering both the real and $p$-adic settings. In the proof of \Cref{prop: p-adic thin tubes}, this result supplies the nontrivial projection expansion needed to obtain the thin-tube improvement.

\begin{lemma}[$p$-adic Bourgain projection theorem]
\label{lem: padic Bourgain projection}
Let $s\in(0,1)$ and $t\in(0,2)$. Then there exist constants $\eta_{\mathrm B}=\eta_{\mathrm B}(s,t)>0$,
$\epsilon_{\mathrm B}=\epsilon_{\mathrm B}(s,t)>0$, and $\delta_{\mathrm B}=\delta_{\mathrm B}(s,t)>0$ such that the following holds for all $\delta=p^{-n}\in(0,\delta_{\mathrm B}]$. Let $P\subset D_\delta(\Z_p^2)$ be a $(\delta,t,\delta^{-\epsilon_{\mathrm B}})$-set satisfying
$$
|P|\ge \delta^{-t+\epsilon_{\mathrm B}},
$$
and let $E\subset \Z_p$ be a $(\delta,s,\delta^{-\epsilon_{\mathrm B}})$-set. Then
there exists $\theta\in E$ such that, for every subset $P'\subset P$ with
$|P'|\ge \delta^{\epsilon_{\mathrm B}}|P|$, one has
$$
|\pi_\theta(P')|_\delta\ge \delta^{-t/2-\eta_{\mathrm B}}.
$$
\end{lemma}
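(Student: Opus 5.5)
This is essentially a specialization of O'Regan's theorem \cite[Theorem 1.3]{o2026bourgain} to $\Q_p$. The plan is to check that the hypotheses and conclusion of that result translate into the statement of \Cref{lem: padic Bourgain projection}. O'Regan's theorem is formulated for a family of projections parametrized by a set of directions satisfying a non-concentration condition, and it covers the $p$-adic field. The relevant family here is $\pi_\theta(x,y)=x+\theta y$ with $\theta\in\Z_p$.

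\textbf{Step 1: the non-degeneracy condition.} I will first verify that the family $\{\pi_\theta\}_{\theta\in\Z_p}$ meets O'Regan's transversality requirement. For $\theta_1\neq\theta_2$, the kernels of $\pi_{\theta_1}$ and $\pi_{\theta_2}$ are the lines spanned by $(-\theta_1,1)$ and $(-\theta_2,1)$. Their $p$-adic "angle" is comparable to $\|\theta_1-\theta_2\|_p$, since the determinant of these two vectors is $\theta_2-\theta_1$ and both have norm $1$. So a $(\delta,s,C)$-set $E\subset\Z_p$ of parameters gives a $(\delta,s,O(C))$-set of directions in the chart O'Regan uses. The translation between "balls in direction space" and "balls in $\Z_p$" is bi-Lipschitz with constants depending only on $p$.

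\textbf{Step 2: the discretization conventions.} Next I will match O'Regan's conventions with those of \Cref{defi: dsC set}, in particular the scales $p^{-\N}$, cubes versus balls, and $|\cdot|_\delta$. This only affects constants depending on $p$, which are absorbed by shrinking $\epsilon_{\mathrm B}$ and $\delta_{\mathrm B}$. O'Regan's conclusion is robust: for a single good $\theta$, every dense subset $P'$ has large projection. If his formulation is instead measure-theoretic, with the conclusion that $\pi_\theta\mu$ gives mass at most $\delta^{t/2+\eta}$ to $\delta$-balls outside a small exceptional set, I will convert it as follows. Take $\mu$ to be the normalized counting measure on $P$. Any $P'$ with $|P'|\ge\delta^{\epsilon_{\mathrm B}}|P|$ carries $\mu$-mass at least $\delta^{\epsilon_{\mathrm B}}$. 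After removing the exceptional part, whose mass is much smaller than $\delta^{\epsilon_{\mathrm B}}$ once $\epsilon_{\mathrm B}\ll\eta$, at least half of this mass lies in $\delta$-balls of $\pi_\theta\mu$-mass at most $\delta^{t/2+\eta}$. Counting those balls gives $|\pi_\theta(P')|_\delta\ge\delta^{-t/2-\eta+\epsilon_{\mathrm B}}$. Renaming $\eta_{\mathrm B}:=\eta/2$ and requiring $\epsilon_{\mathrm B}\le\eta/2$ then yields the stated bound.

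\textbf{Step 3: cardinality normalization.} The hypothesis $|P|\ge\delta^{-t+\epsilon_{\mathrm B}}$ together with the $(\delta,t,\delta^{-\epsilon_{\mathrm B}})$-set condition shows that the normalized counting measure on $P$ is $(\delta,t,\delta^{-2\epsilon_{\mathrm B}})$-Frostman. This is the input form O'Regan's theorem requires.

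\textbf{Main obstacle.} I expect the main obstacle to be Step 1. Specifically, I must confirm that O'Regan's $p$-adic framework really allows this affine parametrization, rather than some projective or orthogonal one, and that the uniformity "for every $P'$" is part of his conclusion or follows from his measure version. If his statement is only for a fixed $P$, I will obtain the uniform-in-$P'$ form by the exceptional-set conversion described in Step 2.
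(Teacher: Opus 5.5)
Your plan verifies a hypothesis that O'Regan's theorem does not have, and it misses the one that actually needs checking. As the paper uses it, \cite[Theorem 1.3]{o2026bourgain} is already stated for the affine maps $\pi_x(a,b)=a+xb$ over a normed division algebra (here $\Q_p$ with $d=1$). So your Step 1 transversality computation, while correct, is not needed.

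What the theorem does require is that the parameter set \emph{strongly avoids proper subalgebras}. Since the only proper subalgebra of $\Q_p$ is $\{0\}$, this means the parameters, including those in every large subset, must lie at distance at least $\delta^{O(\epsilon_{\mathrm B})}$ from $0$. A general $(\delta,s,\delta^{-\epsilon_{\mathrm B}})$-set $E\subset\Z_p$ may contain $0$ or points arbitrarily close to it, and nothing in your Steps 1--3 addresses this.

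The paper handles it by translating the direction variable:
\begin{itemize}
\item Fix $\theta_0\in E$ and put $r_0:=\delta^{4\epsilon_{\mathrm B}/s}$. Non-concentration gives $|E\cap B(\theta_0,r_0)|\le\delta^{-\epsilon_{\mathrm B}}r_0^s|E|=\delta^{3\epsilon_{\mathrm B}}|E|$.
\item Set $X:=(E\setminus B(\theta_0,r_0))-\theta_0$. Then $\|x\|_p\ge r_0$ on $X$, and $X$ is still a $(\delta,s,O(\delta^{-\epsilon_{\mathrm B}}))$-set.
\item Replace $P$ by $Q:=L_{\theta_0}(P)$, where $L_{\theta_0}(a,b)=(a+\theta_0b,b)$. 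This is an isometry of $\Z_p^2$ satisfying $\pi_x\circ L_{\theta_0}=\pi_{\theta_0+x}$.
\end{itemize}
A good parameter $x\in X$ for $Q$ then gives $\theta:=\theta_0+x\in E$ that is good for $P$, because $P'\mapsto L_{\theta_0}(P')$ preserves cardinalities. Without this step the appeal to O'Regan's theorem is not justified.

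A second, smaller point: your Step 3 normalizes the wrong quantity. The relative $(\delta,t,\delta^{-\epsilon_{\mathrm B}})$-condition alone already makes the normalized counting measure on $P$ $(\delta,t,\delta^{-\epsilon_{\mathrm B}})$-Frostman; the lower bound on $|P|$ plays no role there.

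What must be matched is O'Regan's size exponent. He works with $|G|=\delta^{-\sigma}$, a Frostman exponent $s_{\mathrm{OR}}\le\sigma$, and a gain of the form $\delta^{-c}|G|^{1/2}$. Here $|P|$ may fall short of $\delta^{-t}$ by a factor $\delta^{\epsilon_{\mathrm B}}$, or may be much larger than $\delta^{-t}$. So the paper writes $|P|=\delta^{-\sigma_\delta}$, takes $\sigma_{\mathrm{OR}}:=\sigma_\delta$, and lowers the Frostman exponent to $s_{\mathrm{OR}}:=t-2\epsilon_{\mathrm B}\le t-\epsilon_{\mathrm B}\le\sigma_\delta$. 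Then $\delta^{-c}|P|^{1/2}\ge\delta^{-c-t/2+\epsilon_{\mathrm B}/2}$, which yields the claim with $\eta_{\mathrm B}=c/4$. Your proposal never says how the size of $P$ enters the hypotheses.

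Your measure-to-set conversion in Step 2 is correct but unnecessary. O'Regan's conclusion, as used in the paper, is already uniform over all $Q'\subset Q$ with $|Q'|\ge\delta^{O(\epsilon_{\mathrm B})}|Q|$.
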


\begin{proof}
We apply O'Regan's Bourgain projection theorem, more precisely
\cite[Theorem 1.3]{o2026bourgain}, in the special case where the normed
division algebra is $E_{\mathrm{OR}}=\Q_p$. To avoid confusion with the notation in
the present paper, let us first spell out the correspondence of parameters.

In O'Regan's notation, the theorem concerns the maps
$$
\pi_x(a,b)=a+xb,\qquad (a,b)\in E_{\mathrm{OR}}\times E_{\mathrm{OR}},\ x\in E_{\mathrm{OR}}.
$$
Thus, when $E_{\mathrm{OR}}=\Q_p$, these are exactly the affine projections used in
this paper. The ambient dimension in O'Regan's theorem is $d=1$. His planar set
$G\subset E_{\mathrm{OR}}\times E_{\mathrm{OR}}$ will be the set
$$
Q:=L_{\theta_0}(P)\subset \Q_p^2
$$
defined below. If $|Q|=\delta^{-\sigma_\delta}$, then O'Regan's parameter $\sigma$ is $\sigma_\delta$. The Frostman exponent of the planar set in O'Regan's theorem will be chosen as $s_{\mathrm{OR}}:=t-2\epsilon_{\mathrm B}$,
whereas the Frostman exponent of the parameter set $X$ in O'Regan's theorem is
the number $s$ appearing in the present lemma.

O'Regan's theorem also assumes that the parameter set $X$ strongly avoids proper
subalgebras. In the present case this condition has a very simple meaning. The
only proper subalgebra of $\Q_p$ is $\{0\}$, so avoiding proper subalgebras just
means that the projection parameters are not concentrated too close to $0$. We now
normalize the direction set so that this condition is automatic.

Fix $\theta_0\in E$ and set $r_0:=\delta^{4\epsilon_{\mathrm B}/s}$. By the $(\delta,s,\delta^{-\epsilon_{\mathrm B}})$-set condition,
$$
|E\cap B(\theta_0,r_0)|\le
\delta^{-\epsilon_{\mathrm B}}r_0^s|E|
=\delta^{3\epsilon_{\mathrm B}}|E|.
$$
Thus, after decreasing $\delta_{\mathrm B}$ if necessary, the set $E_0:=E\setminus B(\theta_0,r_0)$ satisfies $|E_0|\ge \frac12 |E|$. Consequently $E_0$ is still a
$(\delta,s,O(\delta^{-\epsilon_{\mathrm B}}))$-set. Define $X:=E_0-\theta_0$. Then $X\subset\Z_p$ is a $(\delta,s,O(\delta^{-\epsilon_{\mathrm B}}))$-set, and
every $x\in X$ satisfies $||x||_p\ge r_0$.
Since the only proper subalgebra of $\Q_p$ is $\{0\}$, this says that $X$ avoids
proper subalgebras with constant at worst $\delta^{-O(\epsilon_{\mathrm B})}$.
Moreover, it gives the strong version required by O'Regan: if
$X'\subset X$ has
$$
|X'|\ge \delta^{O(\epsilon_{\mathrm B})}|X|,
$$
then $X'$ is non-empty, and every point of $X'$ is still at distance at least
$r_0$ from $\{0\}$. Thus $X$ strongly avoids proper subalgebras, again with
avoidance constant $\delta^{-O(\epsilon_{\mathrm B})}$.

Next, we translate the direction parameter. Let $L_{\theta_0}(a,b):=(a+\theta_0 b,b)$.  This is an isometric automorphism of $\Z_p^2$. Set $Q:=L_{\theta_0}(P)$. Then $Q$ is a $(\delta,t,O(\delta^{-\epsilon_{\mathrm B}}))$-set and $|Q|=|P|$. Since $s_{\mathrm{OR}}=t-2\epsilon_{\mathrm B}<t$, it follows that $Q$ is also a $(\delta,s_{\mathrm{OR}},O(\delta^{-\epsilon_{\mathrm B}}))$-set. Indeed, for $0<r\le 1$ one has $r^t\le r^{s_{\mathrm{OR}}}$.

Moreover, for $x\in X$ and $(a,b)\in\Z_p^2$,
$$
\pi_x(L_{\theta_0}(a,b))
=
a+(\theta_0+x)b
=
\pi_{\theta_0+x}(a,b).
$$
Thus applying O'Regan's projection theorem to $Q$ and $X$ gives information about
the original projections of $P$ in the directions $\theta=\theta_0+x\in E$.

Now write
$|Q|=\delta^{-\sigma_\delta}$.
Since $|Q|=|P|\ge \delta^{-t+\epsilon_{\mathrm B}}$, we have
$\sigma_\delta\ge t-\epsilon_{\mathrm B}$.
On the other hand, $Q\subset D_\delta(\Z_p^2)$, so $\sigma_\delta\le 2$. Hence
$$
s_{\mathrm{OR}}=t-2\epsilon_{\mathrm B}
\le
\sigma_\delta
\le 2.
$$
Thus the condition $0<s_{\mathrm{OR}}\le \sigma_\delta\le 2d$ in O'Regan's theorem
is satisfied, since here $d=1$.

We now apply \cite[Theorem 1.3]{o2026bourgain} with the following
substitutions:
$$
E_{\mathrm{OR}}=\Q_p,\quad d=1,\quad
G_{\mathrm{OR}}=Q,\quad
\sigma_{\mathrm{OR}}=\sigma_\delta,\quad
s_{\mathrm{OR}}=t-2\epsilon_{\mathrm B},\quad
t_{\mathrm{OR}}=s,\quad
X_{\mathrm{OR}}=X.
$$
The theorem gives constants $c=c(s,t)>0$ and $C_{\mathrm{OR}}\ge 1$, after
choosing $\epsilon_{\mathrm B}$ sufficiently small and $\delta_{\mathrm B}$
sufficiently small, such that there is a set $Y\subset X$ with
$$
|Y|\ge (1-\delta^{O(\epsilon_{\mathrm B})})|X|
$$
and, for every $x\in Y$ and every subset $Q'\subset Q$ satisfying $|Q'|\ge \delta^{O(\epsilon_{\mathrm B})}|Q|$, one has
$$
|\pi_x(Q')|_\delta
\ge
\delta^{-c}|Q|^{1/2}.
$$
Here the implicit constants in $O(\epsilon_{\mathrm B})$ depend only on the fixed parameters $s,t$. Since $Y$ is non-empty, choose one $x\in Y$ and put
$\theta:=\theta_0+x\in E$.

We now choose $\epsilon_{\mathrm B}>0$ small enough so that every subset
$P'\subset P$ with $|P'|\ge \delta^{\epsilon_{\mathrm B}}|P|$
gives an admissible subset
$Q':=L_{\theta_0}(P')\subset Q$ for the preceding application of O'Regan's theorem. Since $L_{\theta_0}$ is bijective at scale $\delta$, we have $|Q'|=|P'|$ and $|Q|=|P|$. Therefore
$$
|\pi_\theta(P')|_\delta=|\pi_x(Q')|_\delta
\ge\delta^{-c}|Q|^{1/2}=\delta^{-c}|P|^{1/2}.
$$
Using $|P|\ge \delta^{-t+\epsilon_{\mathrm B}}$, this yields
$$
|\pi_\theta(P')|_\delta
\ge
\delta^{-c}\delta^{-t/2+\epsilon_{\mathrm B}/2}.
$$
Finally, after decreasing $\epsilon_{\mathrm B}$ if necessary, set for instance $\eta_{\mathrm B}:=\frac{c}{4}$. Then
$$
\delta^{-c}\delta^{-t/2+\epsilon_{\mathrm B}/2}
\ge
\delta^{-t/2-\eta_{\mathrm B}},
$$
for all sufficiently small $\delta$. This proves the desired estimate for this
choice of $\theta\in E$ and for every $P'\subset P$ with
$|P'|\ge \delta^{\epsilon_{\mathrm B}}|P|$.
\end{proof}

Before proving \Cref{prop: p-adic thin tubes}, we isolate the bootstrapping step
which upgrades a weak thin-tube exponent to a stronger one. This is the
$p$-adic counterpart of the self-improvement mechanism in
\cite[Lemma 2.9]{orponen2024kaufman}, which is also the input iterated in
\cite[Proposition 3.6]{orponen2023projections}. The only geometric difference is
that the Euclidean proof uses radial projections, whereas in the present
non-archimedean setting we use the two ratio charts together with the
$p$-adic Bourgain projection theorem from \Cref{lem: padic Bourgain projection}.
The rest of the argument is the same bootstrapping and two-ends/double-counting
scheme.

\begin{lemma}[p-adic thin-tube self-improvement]
\label{lem: padic thin tube self improvement}
Let $0<\kappa<\sigma<\tau<1$. There exists
$\eta=\eta(\tau,\kappa,\sigma)>0$ with the following property.

Let $\sigma_0\in[\kappa,\sigma)$ and let $K_0,L\ge 1$. Then there exist constants
$K_1=K_1(K_0,L,\tau,\kappa,\sigma)\ge 1$,
$\varepsilon_1=\varepsilon_1(K_0,L,\tau,\kappa,\sigma)>0$, and
$\delta_1=\delta_1(K_0,L,\tau,\kappa,\sigma)>0$ such that the following holds for
all $\delta\in(0,\delta_1]$ and all $\varepsilon\in(0,\varepsilon_1)$.

Let $\mu_1,\mu_2$ be probability measures on $\Z_p^2$ satisfying the Frostman and
tube non-concentration hypotheses of \Cref{prop: p-adic thin tubes}: for
$i=1,2$,
\begin{equation}
\label{eq: self improvement Frostman}
\mu_i(B(z,\rho))\le \delta^{-\varepsilon}\rho^\tau,
\qquad z\in\Z_p^2,\quad \rho\in p^{-\mathbb N}\cap[\delta,1],
\end{equation}
and
\begin{equation}
\label{eq: self improvement tube Frostman}
\mu_i(T)\le \delta^{-\varepsilon}\rho^\kappa
\end{equation}
for every $p$-adic $\rho$-tube $T\subset\Z_p^2$ and every
$\rho\in p^{-\mathbb N}\cap[\delta,1]$. Assume also that the supports are
separated as in \Cref{prop: p-adic thin tubes}.

If both ordered pairs $(\mu_1,\mu_2)$ and $(\mu_2,\mu_1)$ have
$(\sigma_0,\delta^{-K_0\varepsilon},1-L\delta^\varepsilon)$-thin tubes, then both
ordered pairs have
$(\min\{\sigma_0+\eta,\sigma\},\delta^{-K_1\varepsilon},
1-5L\delta^\varepsilon)$-thin tubes.
\end{lemma}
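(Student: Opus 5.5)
We follow the bootstrapping scheme of \cite[Lemma 2.9]{orponen2024kaufman}, arguing by contradiction. Suppose the improved conclusion fails for, say, the ordered pair $(\mu_1,\mu_2)$. Starting from the given thin-tube set $\mathcal H_0$ with $(\mu_1\times\mu_2)(\mathcal H_0)\ge 1-L\delta^\varepsilon$, we remove the \emph{heavy} part: pairs $(x,y)\in\mathcal H_0$ such that $y$ lies in some $p$-adic $r$-tube $T\ni x$ with $\nu\{y'\in T:(x,y')\in\mathcal H_0\}> \delta^{-K_1\varepsilon}r^{\sigma_0+\eta}$. If this heavy part has $\mu_1\times\mu_2$-measure at most $4L\delta^\varepsilon$, the remainder gives the conclusion with constant $1-5L\delta^\varepsilon$. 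So we may assume the heavy part has measure $\ge 4L\delta^\varepsilon$.

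\textbf{Heavy tubes in one chart.} Pigeonhole a scale $r\in[\delta,1]$ (costing $\log(1/\delta)$) and one of the ratio charts $\Omega_x$ or $\Omega_y$ (factor $2$). The separation hypothesis \eqref{eq:p-adic-thin-tubes-separated-supports} ensures that the denominator in $\rho_x$ or $\rho_y$ has size $\ge\delta^\varepsilon$. This is where the parent-ball bookkeeping enters: after discarding pairs whose relevant coordinates share a parent ball, division distorts scales only by $\delta^{-O(\varepsilon)}$.

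Consequently, for a set $X$ of centres with $\mu_1(X)\gtrsim\delta^{O(\varepsilon)}$, each $x\in X$ has a family $\mathcal T_x$ of heavy $r$-tubes through $x$. Each tube carries mass in $[\delta^{-K_1\varepsilon}r^{\sigma_0+\eta},\,\delta^{-K_0\varepsilon}r^{\sigma_0}]$, and the tubes jointly carry $\mu_2$-mass $\gtrsim\delta^{O(\varepsilon)}$. The upper bound comes from the hypothesis on $\mathcal H_0$. Hence $|\mathcal T_x|\lesssim\delta^{-K_1\varepsilon}r^{-\sigma_0-\eta}$, so the slopes $\rho_x(x,\cdot)$ of these tubes cover a set of only about $r^{-\sigma_0-\eta}$ $r$-balls.

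\textbf{Contradiction from Bourgain's theorem.} Discretize at scale $r$. Take $P$ to be a $(r,\tau,r^{-O(\varepsilon)})$-set extracted from $\spt\mu_2$, transformed into the chart so that the heavy tubes through $x$ become fibres of $\pi_\theta$. Concretely, after the linear change $(a,b)\mapsto(a-a_x,b-b_x)$ and a coordinate swap if needed, the lines through $x$ with slope $\theta$ are fibres of $\pi_{-\theta}$.

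By double counting over $x\in X$, there is a large set $E$ of directions $\theta$ for which the $\pi_\theta$-projection of a dense subset $P'_\theta\subset P$ (density $\ge r^{O(\varepsilon)}$) is covered by few $r$-balls, roughly $\lesssim r^{-\tau+\sigma_0+\eta}$, using tube masses $\gtrsim r^{\sigma_0+\eta}$ against total mass $\approx r^{-\tau}$ points. This is the two-ends/double-counting step. The set $E$ must be a $(r,s,r^{-O(\varepsilon)})$-set for some $s>0$. Here we use the tube non-concentration \eqref{eq: self improvement tube Frostman} for $\mu_1$, with exponent $\kappa$: the centres $x$ cannot concentrate in a thin tube, so the directions $\rho_x(x,y)$ they generate are spread out with some exponent $s\gtrsim\kappa$.

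We then apply \Cref{lem: padic Bourgain projection} with $t=\tau$. It yields $\theta\in E$ with $|\pi_\theta(P')|_r\ge r^{-\tau/2-\eta_{\mathrm B}}$ for all dense $P'$. Comparing this with the heavy-tube upper bound gives the contradiction once $\eta<\eta_{\mathrm B}(\kappa,\tau)$ and $\sigma_0<\sigma<\tau$. The latter is needed so that the gap between $\tau/2$ and the counting bound is exploitable; to obtain the right comparison one may need to rescale into a tube, as in Orponen--Shmerkin. We choose $\eta$ depending only on $\eta_{\mathrm B}(\kappa,\tau)$, uniformly over the compact parameter range, and then take $K_1\gg K_0$ and $\varepsilon_1,\delta_1$ small. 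The same argument applies to $(\mu_2,\mu_1)$.

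\textbf{Main obstacle.} The delicate step is the direction-set extraction. We must show that the heavy slopes, pooled over many centres $x$ and transported into a single coordinate system, form a non-concentrated set $E$ with a common dense subset $P'$ of $P$ for each $\theta$. In the Euclidean proof this relies on radial projection geometry. Here it must instead come from the $\rho_x$ fibres and the ultrametric tube structure, using $p$-adic rescaling of an $r$-tube to the unit square. We would attempt this first by fixing one typical centre $x$ and rescaling around it, before pooling over centres.
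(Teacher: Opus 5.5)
Your opening reduction matches the start of the paper's proof: argue by contradiction, discard the heavy pairs, pigeonhole a scale $r$, and obtain for many centres $x$ a family $\mathcal T_x$ of heavy $r$-tubes whose sections have mass between $\delta^{-K_1\varepsilon}r^{\sigma_0+\eta}$ and $\delta^{-K_0\varepsilon}r^{\sigma_0}$. The step meant to produce the contradiction, however, fails.

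\textbf{The count and the exponent range.} Each heavy $\pi_\theta$-fibre carries $\mu_2$-mass $\gtrsim r^{\sigma_0+\eta}$. So a set $P'_\theta$ covered by heavy fibres has $|\pi_\theta(P'_\theta)|_r\lesssim r^{-\sigma_0-\eta}$. Your $r^{-\tau+\sigma_0+\eta}$ is instead the number of $r$-cubes of $P$ inside a single fibre. With the correct count, Bourgain's lower bound $r^{-\tau/2-\eta_{\mathrm B}}$ gives a contradiction only if $\sigma_0+\eta<\tau/2+\eta_{\mathrm B}$. But the lemma must raise the exponent to every $\sigma<\tau$. Through \Cref{prop: p-adic thin tubes} it is used with target exponent $(\sigma+\tau)/2$, which is close to $\tau$ when $\sigma$ is. When $\mu_2$ is genuinely $\tau$-dimensional, a linear projection bound guaranteeing only dimension $\tau/2+\eta_{\mathrm B}$ cannot rule out fibres of multiplicity $r^{\sigma_0}$ with $\sigma_0>\tau/2$. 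The condition $\sigma_0<\sigma<\tau$ does not change this, and your remark that one ``may need to rescale into a tube'' is precisely the missing argument.

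\textbf{The pooling step.} This is not a technicality either. \Cref{lem: padic Bourgain projection} needs one set $P$ and, for every $\theta$ in a non-concentrated set $E$, a subset of density $\ge r^{\epsilon_{\mathrm B}}$ with small $\pi_\theta$-image. Heavy tubes through different centres have different slopes. For a fixed $\theta$, only the centres whose own heavy slopes contain $\theta$ contribute, and nothing in your argument makes the union of their $\theta$-fibres dense in $P$. The $\kappa$-tube condition on $\mu_1$ also does not give non-concentration of such a pooled $E$.

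\textbf{What the paper does instead.} Following \cite[Lemma 2.9]{orponen2024kaufman}, the relevant direction set is the slope set of a fixed $\mathcal T_x$. Its non-concentration comes from the old $\sigma_0$-thin-tube bound, see \eqref{eq: Tx nonconcentration}. The argument then splits into two alternatives:
\begin{enumerate}
\item If the sections $Y_x\cap T$ are not concentrated in smaller balls, the Bourgain input is applied after rescaling the chosen ratio chart. This gives a power gain over \eqref{eq: tube section mass range}.
\item If the sections are concentrated, double counting yields a ball with $\mu_2(B(y,\rho))>\delta^{-\varepsilon}\rho^\tau$, contradicting \eqref{eq: self improvement Frostman}.
\end{enumerate}
Your proposal has no counterpart of the concentrated case, and it uses the $\tau$-Frostman hypothesis only to extract $P$. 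So it cannot handle heavy tubes whose mass sits in a short piece of the tube.
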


\begin{proof}
The proof follows the structure of \cite[Lemma 2.9]{orponen2024kaufman}. We spell out the reduction to make clear where the $p$-adic inputs enter.

Set $\sigma_1:=\min\{\sigma_0+\eta,\sigma\}$, where $\eta>0$ will be chosen sufficiently small depending only on $\tau,\kappa,\sigma$. We prove the assertion for $(\mu_1,\mu_2)$; the proof for $(\mu_2,\mu_1)$ is identical.

Let $H_0\subset\Z_p^2\times\Z_p^2$ be a good set witnessing that
$(\mu_1,\mu_2)$ has
$(\sigma_0,\delta^{-K_0\varepsilon},1-L\delta^\varepsilon)$-thin tubes. Thus
$(\mu_1\times\mu_2)(H_0)\ge 1-L\delta^\varepsilon$, and for every $x\in\Z_p^2$,
every $\rho\in p^{-\mathbb N}\cap[\delta,1]$, and every $p$-adic $\rho$-tube
$T$ through $x$,
\begin{equation}
\label{eq: old thin tube section estimate}
\mu_2(T\cap H_{0,x})
\le
\delta^{-K_0\varepsilon}\rho^{\sigma_0},
\end{equation}
where $H_{0,x}:=\{y\in\Z_p^2:(x,y)\in H_0\}$.

We claim that, after increasing $K_1$ and decreasing $\varepsilon_1,\delta_1$ if
necessary, there is a subset $H_1\subset H_0$ with
$(\mu_1\times\mu_2)(H_1)\ge 1-5L\delta^\varepsilon$ such that for every
$x\in\Z_p^2$, every $\rho\in p^{-\mathbb N}\cap[\delta,1]$, and every
$p$-adic $\rho$-tube $T$ through $x$,
\begin{equation}
\label{eq: improved section estimate}
\mu_2(T\cap H_{1,x})
\le
\delta^{-K_1\varepsilon}\rho^{\sigma_1}.
\end{equation}
This is exactly the desired improved thin-tube estimate for $(\mu_1,\mu_2)$.

Suppose, to the contrary, that such an $H_1$ cannot be found. Then the set of
pairs $(x,y)\in H_0$ for which \eqref{eq: improved section estimate} fails has
$(\mu_1\times\mu_2)$-measure at least a constant multiple of $L\delta^\varepsilon$.
Equivalently, after pigeonholing over the $O(\log(1/\delta))$ possible $p$-adic
scales, there exists a scale $r\in p^{-\mathbb N}\cap[\delta,1]$ and a set
$B_r\subset H_0$ satisfying
\begin{equation}
\label{eq: bad scale mass}
(\mu_1\times\mu_2)(B_r)\ge \delta^{O(\varepsilon)}
\end{equation}
such that for every $(x,y)\in B_r$ there is an $r$-tube $T$ through $x$, with
$y\in T$, for which
\begin{equation}
\label{eq: bad tube large mass}
\mu_2(T\cap H_{0,x})>
\delta^{-K_1\varepsilon}r^{\sigma_1}.
\end{equation}
Here and below the constants implicit in $O(\varepsilon)$ depend only on the fixed
parameters.

Let $X_r:=\{x:\mu_2(B_{r,x})\ge \delta^{O(\varepsilon)}\}$. By
\eqref{eq: bad scale mass}, after adjusting the implicit constant, we have
$\mu_1(X_r)\ge \delta^{O(\varepsilon)}$. For each $x\in X_r$, let
$\mathcal T_x$ be the family of all $r$-tubes $T$ through $x$ satisfying
\eqref{eq: bad tube large mass}. By discarding tubes which carry only a negligible
part of $B_{r,x}$, we may choose a set $Y_x\subset B_{r,x}$ such that
\begin{equation}
\label{eq: Yx positive mass}
\mu_2(Y_x)\ge \delta^{O(\varepsilon)}
\end{equation}
and $Y_x$ is covered by the tubes in $\mathcal T_x$. Moreover, for every
$T\in\mathcal T_x$,
\begin{equation}
\label{eq: tube section mass range}
\delta^{-K_1\varepsilon}r^{\sigma_1}
\le
\mu_2(T\cap Y_x)
\le
\delta^{-K_0\varepsilon}r^{\sigma_0}.
\end{equation}
The upper bound is \eqref{eq: old thin tube section estimate}.

The old thin-tube estimate also gives non-concentration for the tube families.
Indeed, if $T_0$ is a $p$-adic $\rho$-tube through $x$, with $r\le\rho\le 1$,
then
$$
\sum_{\substack{T\in\mathcal T_x\\ T\subset T_0}}
\mu_2(T\cap Y_x)
\le
\mu_2(T_0\cap H_{0,x})
\le
\delta^{-K_0\varepsilon}\rho^{\sigma_0}.
$$
Combining this with the lower bound in \eqref{eq: tube section mass range}, and
normalizing using \eqref{eq: Yx positive mass} and the upper bound in
\eqref{eq: tube section mass range}, gives
\begin{equation}
\label{eq: Tx nonconcentration}
|\{T\in\mathcal T_x:T\subset T_0\}|
\le
\delta^{-O(\varepsilon)}
\left(\frac{\rho}{r}\right)^{\sigma_0}
|\mathcal T_x|.
\end{equation}
Thus, after identifying $r$-tubes through $x$ with their slope parameters in an
affine chart, $\mathcal T_x$ is an
$(r,\sigma_0,\delta^{-O(\varepsilon)})$-set of directions.

The rest of the proof follows the two alternatives in \cite[Lemma 2.9]{orponen2024kaufman}. If, for a positive proportion of the pairs counted above, the sections $Y_x\cap T$ are not concentrated in smaller balls then the base points $x$ and the direction sets $\mathcal T_x$ form a discretized Furstenberg-type configuration. In the Euclidean argument the required expansion comes from the radial projection/Furstenberg input. Here it is supplied by \Cref{lem: padic Bourgain projection}, after rescaling the chosen ratio chart to unit scale. The conclusion gives a direction in the slope set of some
$\mathcal T_x$ for which every large subset of the corresponding portion of
$Y_x$ has projection covering number larger by a fixed power. Choosing $\eta=\eta(\tau,\kappa,\sigma)>0$ sufficiently small, and then choosing $K_1$
large depending on $K_0,L,\tau,\kappa,\sigma$, contradicts the tube-section bounds in \eqref{eq: tube section mass range} with $\sigma_1=\min\{\sigma_0+\eta,\sigma\}$.

In the remaining alternative, a positive proportion of the sections $Y_x\cap T$
are concentrated in smaller balls. The double-counting part of
\cite[Lemma 2.9]{orponen2024kaufman} is purely measure-theoretic and uses only
the lower bound \eqref{eq: Yx positive mass}, the tube-section bounds
\eqref{eq: tube section mass range}, and the tube non-concentration hypothesis
\eqref{eq: self improvement tube Frostman}. It therefore applies verbatim in the
$p$-adic setting. It gives a ball $B(y,\rho)$, with
$\delta\le\rho\le 1$, such that
$$\mu_2(B(y,\rho))>\delta^{-\varepsilon}\rho^\tau,$$
provided $\eta$ is sufficiently small in terms of $\tau,\kappa,\sigma$ and then
$\varepsilon_1,\delta_1$ are sufficiently small. This contradicts the Frostman
condition \eqref{eq: self improvement Frostman}.

Both alternatives are impossible. Hence $H_1$ exists, and $(\mu_1,\mu_2)$ has
$(\sigma_1,\delta^{-K_1\varepsilon},1-5L\delta^\varepsilon)$-thin tubes. Repeating
the same argument with $\mu_1$ and $\mu_2$ interchanged gives the same conclusion
for $(\mu_2,\mu_1)$. This proves the lemma.
\end{proof}

\begin{proof}[Proof of \Cref{prop: p-adic thin tubes}]
We first obtain the initial thin-tube estimate from the tube non-concentration
assumption. Let $x\in\Z_p^2$, let $r\in p^{-\mathbb N}\cap[\delta,1]$, and let
$T\subset\Z_p^2$ be a $p$-adic $r$-tube through $x$. By
\eqref{eq:p-adic-thin-tubes-tube-Frostman},
$$\mu_i(T)\le \delta^{-\varepsilon}r^\kappa,\qquad i=1,2.$$
Thus, taking the good set to be all of $\Z_p^2\times\Z_p^2$, both ordered pairs
$(\mu_1,\mu_2)$ and $(\mu_2,\mu_1)$ have
$(\kappa,\delta^{-\varepsilon},1)$-thin tubes.

If $\sigma\le\kappa$, this already implies the desired conclusion, after
increasing the constant $K$, since $r^\kappa\le r^\sigma$ for $0<r\le 1$.
Therefore we may assume that $\kappa<\sigma$.

Apply \Cref{lem: padic thin tube self improvement} with
$\sigma_0=\kappa$, $K_0=1$, and $L=1$. We obtain that both ordered pairs have
$$(\min\{\kappa+\eta,\sigma\},\delta^{-K_1\varepsilon},
1-5\delta^\varepsilon)\text{-thin tubes},$$
where $\eta=\eta(\tau,\kappa,\sigma)>0$.

If $\kappa+\eta\ge\sigma$, we are done. Otherwise, apply
\Cref{lem: padic thin tube self improvement} again, with the new value of
$\sigma_0$ and with the updated values of $K_0$ and $L$. Each application raises
the thin-tube exponent by at least $\eta$, until the target exponent $\sigma$ is
reached, at the cost of replacing the thin-tube constant by another power of
$\delta^{-\varepsilon}$ and increasing the exceptional mass by another bounded
multiple of $\delta^\varepsilon$.

Let $N$ be the least integer such that $\kappa+N\eta\ge\sigma$. Since
$N=O_{\tau,\kappa,\sigma}(1)$, after $N$ iterations we obtain
$$(\sigma,\delta^{-K\varepsilon},1-K\delta^\varepsilon)\text{-thin tubes}$$
for both ordered pairs, where $K=K(\tau,\kappa,\sigma)\ge 1$. Finally, choose $\varepsilon_0>0$ and $\delta_0>0$ sufficiently small in terms of the fixed
parameters so that every application of
\Cref{lem: padic thin tube self improvement} is legitimate. This proves the
proposition.
\end{proof}

\begin{prop}[$p$-adic Kaufman projection theorem]\label{prop: p-adic Kaufman projection}
Let $\delta=p^{-n}$, let $s\in(0,1)$, let $\alpha>0$, and let $C_1,C_2\ge 1$.
Let $P\subset(\Z/p^n\Z)^2$ be a nonempty $(\delta,s,C_1)$-set, and let $\nu$ be a
$(s,C_2)$-Frostman probability measure on $\Z_p$. Then there exists
$\theta\in\spt\nu$ such that, for every subset $P'\subset P$ with
$|P'|\ge \alpha |P|$, one has
$$
|\pi_{\theta}(P')|_{\delta}
\gtrsim_{s}
\frac{\alpha^2}{C_1C_2\log^2(1/\delta)}\delta^{-s}.
$$
\end{prop}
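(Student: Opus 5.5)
The plan is the classical $L^2$ energy argument, adapted to the ultrametric setting. The one genuinely $p$-adic step is computing the $\nu$-measure of the set of bad parameters for a fixed pair of cubes.

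For $\theta\in\Z_p$ define the projected energy
$$
\mathcal E_\theta(P):=|\{(\mathscr p,\mathscr q)\in P\times P:\ \pi_\theta(\mathscr p),\pi_\theta(\mathscr q)\text{ lie in the same }\delta\text{-ball}\}|.
$$
This is well defined: $\pi_\theta$ maps a $\delta$-cube into a single $\delta$-ball, since $\theta\in\Z_p$ and the metric is ultrametric.

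\textbf{Step 1: bound the energy for a fixed pair.} Take two distinct cubes $\mathscr p=B((a_1,b_1),\delta)$ and $\mathscr q=B((a_2,b_2),\delta)$ with $\dist(\mathscr p,\mathscr q)=r=p^{-k}>\delta$. We need the $\nu$-measure of
$$
\Theta(\mathscr p,\mathscr q):=\{\theta:\ ||(a_1-a_2)+\theta(b_1-b_2)||_p\le\delta\}.
$$
\begin{enumerate}
\item If $||b_1-b_2||_p<||a_1-a_2||_p=r$, then for every $\theta\in\Z_p$ the ultrametric inequality gives $||(a_1-a_2)+\theta(b_1-b_2)||_p=r>\delta$. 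So $\Theta\cap\Z_p=\emptyset$.
\item If $||b_1-b_2||_p=r$, then $\Theta$ is the ball of radius $\delta/r$ centred at $-(a_1-a_2)/(b_1-b_2)$. The Frostman condition gives $\nu(\Theta)\le C_2(\delta/r)^s$.
\end{enumerate}
In both cases $\nu(\Theta)\le C_2\delta^s r^{-s}$. This case split is exactly where the non-archimedean structure enters. The point to check carefully is that no $\theta$ can cancel the $a$-difference in the first case, which is what replaces the Euclidean angle computation. I expect this to be the main, though mild, obstacle.

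\textbf{Step 2: average over $\nu$.} The $(\delta,s,C_1)$-set property says that each $\mathscr p$ has at most $C_1 r^s|P|$ partners $\mathscr q$ at distance $\le r$. Summing over the $\le\log_p(1/\delta)+1$ dyadic distances $r$ therefore gives
$$
\int\mathcal E_\theta(P)\,d\nu(\theta)\le |P|+\sum_{r}C_2\delta^s r^{-s}\cdot C_1r^s|P|^2\lesssim |P|+C_1C_2\log(1/\delta)\,\delta^s|P|^2 .
$$
Applying the $(\delta,s,C_1)$ condition at $r=\delta$ gives $1\le C_1\delta^s|P|$, so the diagonal term $|P|$ is absorbed. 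Hence the average is at most $\lesssim C_1C_2\log(1/\delta)\,\delta^s|P|^2$. By Markov's inequality, the set of $\theta$ with $\mathcal E_\theta(P)\le 2\int\mathcal E\,d\nu$ has $\nu$-measure at least $1/2$. In particular it meets $\spt\nu$, and we fix such a $\theta\in\spt\nu$.

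\textbf{Step 3: pass to subsets.} Crucially, $\theta$ was chosen before $P'$. For any $P'\subset P$ with $|P'|\ge\alpha|P|$, Cauchy--Schwarz over the fibres of $\pi_\theta$ at scale $\delta$ gives
$$
|P'|^2\le|\pi_\theta(P')|_\delta\,\mathcal E_\theta(P')\le|\pi_\theta(P')|_\delta\,\mathcal E_\theta(P).
$$
Rearranging,
$$
|\pi_\theta(P')|_\delta\gtrsim\frac{\alpha^2|P|^2}{C_1C_2\log(1/\delta)\,\delta^s|P|^2}=\frac{\alpha^2}{C_1C_2\log(1/\delta)}\,\delta^{-s}.
$$
This is even stronger than the claimed bound, which has $\log^2$ in place of $\log$, so the stated estimate follows a fortiori.
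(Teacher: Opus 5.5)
Your proof is correct. It shares the paper's skeleton: average an energy over $\nu$, fix one good $\theta\in\spt\nu$ before $P'$ is chosen, use that passing to $P'\subset P$ cannot increase the energy, and finish with Cauchy--Schwarz. Your Step~1 case split is exactly the paper's computation \eqref{eq:kaufman-theta-incidence}.

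The difference is the choice of energy. The paper uses the continuous Riesz energy. It puts normalized Haar measure $\mu$ on $P$ and shows $I_s(\mu)\lesssim C_1\log(1/\delta)$. It then replaces $\nu$ by the smoothed measure $\nu_\delta$ and bounds the averaged kernel $\Phi(v)=\int|\pi_\theta(v)|_p^{-s}\,d\nu_\delta(\theta)\lesssim C_2\log(1/\delta)|v|_p^{-s}$ by summing over all scales of $|\pi_\theta(v)|_p$. Only at the very end does it keep the diagonal $\delta$-scale part of $I_s(\pi_\theta\mu')$, giving $I_s(\pi_\theta\mu')\gtrsim\delta^{-s}N^{-1}$.

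That last step discards all the multiscale information anyway. So your single-scale counting energy $\mathcal E_\theta(P)$ (kernel $\mathbf 1_{\{|x|_p\le\delta\}}$ in place of $|x|_p^{-s}$) loses nothing, and it has three advantages.
\begin{enumerate}
\item It removes the logarithmic sum on the $\theta$-side, leaving only the sum over the distance $r$ between cubes. You therefore get $\log(1/\delta)$ rather than $\log^2(1/\delta)$.
\item Since $\theta\mapsto\mathcal E_\theta(P)$ is constant on $\delta$-balls and the Markov set has $\nu$-measure at least $1/2$, you avoid the smoothing $\nu\to\nu_\delta$. You also avoid transferring $\theta$ from $\spt\nu_\delta$ back to $\spt\nu$.
\item It makes transparent that only the Frostman bound for $\nu$ at radii in $[\delta,1/p]$ is used. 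This is the form in which the proposition is applied to $\lambda$ in the proof of \Cref{prop: discretised expansion}.
\end{enumerate}

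The paper's route is the standard potential-theoretic Kaufman argument, closer to the continuous statement $\int I_s(\pi_\theta\mu)\,d\nu(\theta)<\infty$. For the discretized proposition as stated, yours is simpler and slightly sharper.
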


\begin{proof}
Throughout the proof, all implicit constants may depend on $s$. We write
$\pi_{\theta}\mu$ for the push-forward of a measure $\mu$ under the map
$\pi_{\theta}$.

Let $\mu$ be the normalized Haar measure on the union of the $\delta$-cosets in
$P$, namely
$$
\mu:=\frac{1}{|P|}\sum_{\mathscr p\in P}\delta^{-2}\mathbf 1_{\mathscr p}\,dz,
$$
where $dz$ denotes the Haar measure on $\Z_p^2$ normalized by $dz(\Z_p^2)=1$.
Since $P$ is a $(\delta,s,C_1)$-set, we have $\mu(B(z,r))\lesssim C_1r^s$ for all
$\delta\le r\le 1$. For $0<r<\delta$, the trivial estimate inside a single
$\delta$-coset gives $$\mu(B(z,r))\lesssim C_1\delta^{s-2}r^2.$$ Therefore, decomposing according to $p$-adic scales, we obtain
\begin{equation}\label{eq:kaufman-energy-mu}
\begin{aligned}
I_s(\mu)
&:=\int_{\Z_p^2}\int_{\Z_p^2}|z-z'|_p^{-s}\,d\mu(z')d\mu(z)\\
&\lesssim_{s}
\int_{\Z_p^2}
\left(
\sum_{\substack{r\in p^{-\mathbb N}\\ \delta\le r\le 1}}
r^{-s}\mu(B(z,r))
+
\sum_{\substack{r\in p^{-\mathbb N}\\ 0<r<\delta}}
r^{-s}\mu(B(z,r))
\right)d\mu(z)\\
&\lesssim_{s}
\int_{\Z_p^2}
\left(
\sum_{\substack{r\in p^{-\mathbb N}\\ \delta\le r\le 1}}
C_1
+
\sum_{\substack{r\in p^{-\mathbb N}\\ 0<r<\delta}}
C_1\delta^{s-2}r^{2-s}
\right)d\mu(z)\\
&\lesssim_{s} C_1\log(1/\delta).
\end{aligned}
\end{equation}

Since the conclusion is unchanged if $\theta$ is replaced by another element of
the same $\delta$-coset, we may first replace $\nu$ by its $\delta$-smoothed
version. Let $\mathcal D_{\delta}(\Z_p)$ be the partition of $\Z_p$ into
$\delta$-balls, and set
$$
\nu_{\delta}:=\sum_{J\in\mathcal D_{\delta}(\Z_p)}
\nu(J)\delta^{-1}\mathbf 1_J\,d\theta.
$$
If $\theta\in\spt\nu_{\delta}$, then the $\delta$-ball containing $\theta$ meets
$\spt\nu$. Thus, after proving the estimate for some $\theta\in\spt\nu_{\delta}$,
we may choose $\theta_0\in\spt\nu$ with $|\theta-\theta_0|_p\le \delta$. Since
$|\pi_{\theta}(z)-\pi_{\theta_0}(z)|_p\le \delta$ for every $z\in\Z_p^2$, the
$\delta$-covering numbers of $\pi_{\theta}(P')$ and $\pi_{\theta_0}(P')$ are
comparable. It is therefore enough to work with $\nu_{\delta}$.

The measure $\nu_{\delta}$ satisfies
$$
\nu_{\delta}(B(\theta,r))
\lesssim C_2
\begin{cases}
\delta^{s-1}r, & 0<r<\delta,\\
r^s, & \delta\le r\le 1,\\
1, & r\ge 1.
\end{cases}
$$
For $v=(v_1,v_2)\in\Z_p^2\setminus\{0\}$, write
$|v|_p:=\max\{|v_1|_p,|v_2|_p\}$. Define
$$
\Phi(v):=\int_{\Z_p}|\pi_{\theta}(v)|_p^{-s}\,d\nu_{\delta}(\theta).
$$
For $0<r\le |v|_p$, the set $\{\theta\in\Z_p:|\pi_{\theta}(v)|_p\le r\}$ is empty
if $|v_2|_p<|v|_p$ and $r<|v|_p$. If $|v_2|_p=|v|_p$, then this set is a
$p$-adic ball in $\theta$ of radius $r/|v|_p$. Hence
\begin{equation}\label{eq:kaufman-theta-incidence}
\nu_{\delta}\{\theta\in\Z_p:|\pi_{\theta}(v)|_p\le r\}
\lesssim C_2
\begin{cases}
\delta^{s-1}\dfrac{r}{|v|_p}, & 0<r<\delta |v|_p,\\
\left(\dfrac{r}{|v|_p}\right)^s, & \delta |v|_p\le r\le |v|_p,\\
1, & r>|v|_p.
\end{cases}
\end{equation}
Using \eqref{eq:kaufman-theta-incidence} and decomposing according to $p$-adic
scales, we obtain
\begin{equation}\label{eq:kaufman-pointwise-average}
\begin{aligned}
\Phi(v)
&\lesssim_{s}
\sum_{\substack{r\in p^{-\mathbb N}\\ 0<r<\delta |v|_p}}r^{-s}\nu_{\delta}\{\theta:|\pi_{\theta}(v)|_p\le r\}+\sum_{\substack{r\in p^{-\mathbb N}\\ \delta |v|_p\le r\le |v|_p}}
r^{-s}\nu_{\delta}\{\theta:|\pi_{\theta}(v)|_p\le r\}+\sum_{\substack{r\in p^{-\mathbb N}\\ |v|_p<r\le 1}}r^{-s}\\
&\lesssim_{s}
C_2\sum_{\substack{r\in p^{-\mathbb N}\\ 0<r<\delta |v|_p}}r^{-s}\delta^{s-1}\frac{r}{|v|_p}+C_2\sum_{\substack{r\in p^{-\mathbb N}\\ \delta |v|_p\le r\le |v|_p}}r^{-s}\left(\frac{r}{|v|_p}\right)^s+\sum_{\substack{r\in p^{-\mathbb N}\\ |v|_p<r\le 1}}r^{-s}\\
&\lesssim_{s}C_2|v|_p^{-s}+C_2\log(1/\delta)|v|_p^{-s}+|v|_p^{-s}\\
&\lesssim_{s}C_2\log(1/\delta)|v|_p^{-s}.
\end{aligned}
\end{equation}

By Fubini's theorem and \eqref{eq:kaufman-pointwise-average}, we get
\begin{equation}\label{eq:kaufman-averaged-energy}
\begin{aligned}
\int_{\Z_p} I_s(\pi_{\theta}\mu)\,d\nu_{\delta}(\theta)
&=\int_{\Z_p}\int_{\Z_p^2}\int_{\Z_p^2}
|\pi_{\theta}(z)-\pi_{\theta}(z')|_p^{-s}\,d\mu(z')d\mu(z)d\nu_{\delta}(\theta)\\
&=\int_{\Z_p^2}\int_{\Z_p^2}\left(\int_{\Z_p}
|\pi_{\theta}(z-z')|_p^{-s}\,d\nu_{\delta}(\theta)\right)d\mu(z')d\mu(z)\\
&=\int_{\Z_p^2}\int_{\Z_p^2}\Phi(z-z')\,d\mu(z')d\mu(z)\\
&\lesssim_{s}C_2\log(1/\delta)
\int_{\Z_p^2}\int_{\Z_p^2}
|z-z'|_p^{-s}\,d\mu(z')d\mu(z)\\
&=C_2\log(1/\delta)I_s(\mu)\\
&\lesssim_{s}C_1C_2\log^2(1/\delta),
\end{aligned}
\end{equation}
where the last inequality follows from \eqref{eq:kaufman-energy-mu}. Hence there
exists $\theta\in\spt\nu_{\delta}$ such that
\begin{equation}\label{eq:kaufman-good-theta}
I_s(\pi_{\theta}\mu)\lesssim_{s} C_1C_2\log^2(1/\delta).
\end{equation}

Fix $P'\subset P$ with $|P'|\ge \alpha |P|$, and let $\mu'$ be the normalized
restriction of $\mu$ to the union of the $\delta$-cosets in $P'$. Since
$\mu(P')\ge \alpha$, \eqref{eq:kaufman-good-theta} gives
$$
I_s(\pi_{\theta}\mu')
\lesssim_{s}
\alpha^{-2}C_1C_2\log^2(1/\delta).
$$
Let $N:=|\pi_{\theta}(P')|_{\delta}$. The measure $\pi_{\theta}\mu'$ is supported
on $N$ many $\delta$-balls in $\Z_p$. Therefore, by Cauchy-Schwarz,
$$
I_s(\pi_{\theta}\mu')
\gtrsim_{s}
\delta^{-s}\sum_q(\pi_{\theta}\mu'(q))^2
\gtrsim
\delta^{-s}N^{-1},
$$
where the sum is over the $\delta$-balls $q$ meeting $\pi_{\theta}(P')$. Combining
the last two estimates gives
$$
|\pi_{\theta}(P')|_{\delta}
\gtrsim_{s}
\frac{\alpha^2}{C_1C_2\log^2(1/\delta)}\delta^{-s}.
$$
Finally, choose $\theta_0\in\spt\nu$ with $|\theta-\theta_0|_p\le \delta$. As
explained above, replacing $\theta$ by $\theta_0$ changes $\pi_{\theta}(P')$ only by
$\delta$-errors and hence preserves the same $\delta$-covering lower bound, up to
an absolute constant. Renaming $\theta_0$ as $\theta$ completes the proof.
\end{proof}

\begin{prop}[Discretised expansion]\label{prop: discretised expansion}
Let $s,t\in(0,1)$ and $\sigma\in[0,\min\{s+t,1\})$. Then there exist
$$
\varepsilon=\varepsilon(s,t,\sigma)>0,
\qquad\delta_0=\delta_0(s,t,\sigma,\varepsilon)>0
$$
such that the following holds for all $\delta=p^{-n}\in(0,\delta_0]$. Let
$A_1,A_2\subset \Z/p^n\Z$ be $(\delta,s,\delta^{-\varepsilon})$-sets, and let
$B_1,B_2\subset \Z/p^n\Z$ be $(\delta,t,\delta^{-\varepsilon})$-sets. Let
$P\subset(\Z/p^n\Z)^2$ be a $(\delta,s+t,\delta^{-\varepsilon})$-set. Then there
exists a set $$\mathcal G\subset A_1\times A_2\times B_1\times B_2$$ with
$$
|(A_1\times A_2\times B_1\times B_2)\setminus \mathcal G|
\le
\delta^\varepsilon |A_1||A_2||B_1||B_2|,
$$
such that for every $(a_1,a_2,b_1,b_2)\in\mathcal G$ and every
$X\subset P$ satisfying $|X|\ge \delta^\varepsilon |P|$, we have
$$
\left|{(b_1\pm b_2)a+(a_1\pm a_2)b:(a,b)\in X}\right|_\delta
\ge
\delta^{-\sigma}.
$$
\end{prop}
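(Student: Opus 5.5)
Following \cite[Proposition 3.9]{orponen2023projections}, we reduce the expansion statement to a projection estimate for $P$ in the direction determined by the quadruple. Write $u=a_1\pm a_2$ and $v=b_1\pm b_2$. The map $(a,b)\mapsto va+ub$ factors through a projection: if $\|u\|_p\ge\|v\|_p$ and $u\neq0$, then $va+ub=u(b+\lambda a)$ with $\lambda=v/u=\rho_x((a_1,b_1),(\mp a_2,\mp b_2))$ in the ratio chart \eqref{eq: ratio chart x}. On $\Omega_y$ we use $\rho_y$ instead, after swapping the roles of $a$ and $b$. Multiplication by $u$ shrinks $\delta$-covering numbers by at most a factor $\|u\|_p^{-1}$.

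\textbf{Step 1: discard near-diagonal quadruples.} Using the parent-ball device, we remove quadruples with $\|u\|_p,\|v\|_p\le\delta^{\varepsilon'}$. Since $A_2,B_2$ are Frostman, for fixed $a_1$ the set of $a_2$ with $\|a_1\pm a_2\|_p\le r$ has relative size $\le\delta^{-\varepsilon}r^s$, so this costs $\delta^{\varepsilon}$ of the mass once $\varepsilon\ll\varepsilon'$. On the remaining set, $\max\{\|u\|_p,\|v\|_p\}\ge\delta^{\varepsilon'}$, and rescaling costs only $\delta^{-\varepsilon'}$. It therefore suffices to show that for most quadruples, $|\pi_\lambda(X)|_\delta\ge\delta^{-\sigma-2\varepsilon'}$ for every large $X\subset P$, where $\pi_\lambda$ is the appropriate chart projection. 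Here $\lambda$ is the ratio of the points $z_1=(a_1,b_1)$ and $z_2=(\mp a_2,\mp b_2)$.

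\textbf{Step 2: the direction measure is Frostman.} Let $\mu_1,\mu_2$ be normalized counting measures on $A_1\times B_1$ and $\mp(A_2\times B_2)$. Each is a $(\delta,s+t,\delta^{-2\varepsilon})$-Frostman measure, since a product of Frostman sets is Frostman. Each also satisfies the tube condition \eqref{eq:p-adic-thin-tubes-tube-Frostman} with $\kappa=\min\{s,t\}$, because a $p$-adic $r$-tube meets a product set in at most $\delta^{-O(\varepsilon)}r^{\min\{s,t\}}$ of its mass. To obtain the support separation \eqref{eq:p-adic-thin-tubes-separated-supports}, we partition into $\delta^{\varepsilon'}$-cubes and keep pairs of distinct cubes; the diagonal pairs were already removed in Step 1.

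We then apply \Cref{prop: p-adic thin tubes} with $\tau=s+t$ (or $\min\{s+t,1-\}$ after slightly shrinking $\tau$) and any thin-tube exponent $\sigma'<\min\{s+t,1\}$ with $\sigma'>\sigma$. Thin tubes mean that, outside a set of mass $K\delta^{\varepsilon}$, for each fixed $z_1$ the ratio map pushes $\mu_2|_{\mathcal H_{z_1}}$ forward to a $(\delta,\sigma',\delta^{-K\varepsilon})$-Frostman measure $\nu_{z_1}$ on $\Z_p$. This uses the observation that preimages of $r$-balls under $\rho_x$, with $z_1$ fixed, lie in $O(1)$ tubes through $z_1$ of width $r$ relative to the separation scale.

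\textbf{Step 3: a Kaufman-type exceptional set bound.} To conclude, we need more than the existence of one good $\lambda$: we need that all but a $\delta^{\varepsilon}$-fraction of $\nu_{z_1}$-mass consists of good directions. We prove this by a quantitative version of \Cref{prop: p-adic Kaufman projection}. The energy argument there gives $\int I_{\sigma''}(\pi_\lambda\mu_P)\,d\nu(\lambda)\lesssim\delta^{-O(\varepsilon)}$ for $\sigma<\sigma''<\sigma'$, since $P$ is $(s+t)$-Frostman with $s+t>\sigma''$ and $\nu$ is $\sigma'$-Frostman. This is the same computation as \eqref{eq:kaufman-pointwise-average}, with the exponents split. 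Chebyshev's inequality then shows the set of $\lambda$ where $I_{\sigma''}(\pi_\lambda\mu_P)>\delta^{-\eta}$ has $\nu$-mass at most $\delta^{\eta-O(\varepsilon)}$. For the remaining $\lambda$, the Cauchy--Schwarz step in that proof gives $|\pi_\lambda(X)|_\delta\gtrsim\delta^{2\varepsilon+\eta-\sigma''}\ge\delta^{-\sigma-2\varepsilon'}$ simultaneously for all $X$ with $|X|\ge\delta^\varepsilon|P|$.

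We then define $\mathcal G$ as the set of quadruples surviving Steps 1--3. We sum the exceptional masses over the four sign choices and the two charts, and use Fubini over $z_1$.

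\textbf{Main obstacle.} The delicate point is Step 2: verifying the tube non-concentration hypothesis and the support separation for product measures in both ratio charts, while keeping all losses at $\delta^{-O(\varepsilon)}$. The chart and rescaling bookkeeping must be consistent so that the final Frostman exponent $\sigma'$ of the direction measure stays strictly above $\sigma$. The parameter hierarchy is $\varepsilon\ll\eta,\varepsilon'\ll\sigma'-\sigma$.
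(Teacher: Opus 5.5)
Your proposal is correct in outline, but it reaches the result by a different route from the paper.

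\textbf{The paper's route.} The paper argues by contradiction. It assumes the bad set $\mathcal E$ has mass at least $\delta^{\varepsilon}$. It then pigeonholes a single $\rho$-ball $I_0$, with $\rho\approx\delta^{3\varepsilon/s}$, and restricts $a_2$ to $(I_0^{+})^c$, so only the $a$-coordinates are separated. It applies \Cref{prop: p-adic thin tubes} once, to this one pair of measures. It then fixes one base point $z_2$ whose fibre $F$ of bad, thin-tube-good points $z_1$ is large, and pushes $\bar\rho_1|_{F}$ forward under the ratio chart. The resulting Frostman direction measure is supported only on directions coming from bad quadruples. So the existence form of \Cref{prop: p-adic Kaufman projection}, which gives one good direction in the support, already yields the contradiction.

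\textbf{Your route.} You prove the statement directly. You apply thin tubes to every pair of distinct $\delta^{\varepsilon'}$-cubes. Then, for each fixed $z_1$, you use a quantitative exceptional-set form of Kaufman: an averaged $\sigma''$-energy bound with $\sigma''<\sigma'$, followed by Chebyshev. This shows that all but a small fraction of the $\nu_{z_1}$-mass consists of good directions.

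\textbf{What each buys.} The paper's route is shorter: the contradiction lets it use only the weaker ``one good direction'' Kaufman statement and a single pigeonholed separated pair. Your route describes $\mathcal G$ explicitly, with a quantitative exceptional bound and no logarithmic loss. The price is more bookkeeping: summing exceptional sets over cube pairs, and running the Chebyshev step for the $\delta$-smoothed $\nu_{z_1}$. The latter means transferring goodness back to directions via the $\delta$-robustness of $|\pi_\lambda(X)|_\delta$, as in the proof of \Cref{prop: p-adic Kaufman projection}.

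\textbf{Bookkeeping to fix.} Two points need correcting; neither is a missing idea.

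\emph{Light cubes.} When you restrict to $\delta^{\varepsilon'}$-cubes, first discard cubes of mass below $\delta^{3\varepsilon'}$. Their total mass is $\lesssim\delta^{\varepsilon'}$. This keeps the normalized restrictions Frostman and tube-non-concentrated with constants $\delta^{-O(\varepsilon')}$.

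\emph{Parameter hierarchy.} Both these constants and the support separation are only of size $\delta^{\pm O(\varepsilon')}$. So \Cref{prop: p-adic thin tubes} must be invoked with parameter $\asymp\varepsilon'$, not $\varepsilon$ as you wrote. Consequently:
\begin{itemize}
\item $\nu_{z_1}$ is only $(\delta,\sigma',\delta^{-O(\varepsilon')})$-Frostman;
\item the Chebyshev exceptional mass is $\delta^{\eta-O(\varepsilon')}$, not $\delta^{\eta-O(\varepsilon)}$.
\end{itemize}
You therefore need the ordered hierarchy $\varepsilon\ll\varepsilon'\ll\eta\ll\sigma''-\sigma$, rather than treating $\eta$ and $\varepsilon'$ as independent. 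With that ordering, all exceptional masses are $\lesssim\delta^{c\varepsilon'}\le\delta^{\varepsilon}$. The final projection bound $\delta^{-\sigma''+2\varepsilon+\eta+\varepsilon'}$ also exceeds $\delta^{-\sigma}$.
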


We emphasize that the non-unit differences appearing in the above proposition play a different role from the coefficient set in the ABC theorem. In the present expansion estimate, quantities such as $b_1-b_2$ or $a_1-a_2$ arise as auxiliary multipliers inside an additive-combinatorial argument. They are allowed to be non-units, in agreement with \Cref{thm: asymmetric BSG},
whose multiplier is arbitrary in $\mathbb Z/p^n\mathbb Z$. The possible loss caused by such multipliers is controlled by the near-diagonal removal and by the use of two affine charts:
when $|b_1-b_2|_p\le |a_1-a_2|_p$ we work with
$$(b_1-b_2)/(a_1-a_2)\in\mathbb Z_p,$$
and otherwise we interchange the two coordinates. Multiplying back by the corresponding denominator only changes the covering estimates by a factor absorbed into $\delta^{-O(\epsilon)}$.

This should not be confused with the coefficient set in the ABC theorem.
There the coefficients have already been normalized at their effective
$p$-adic scale and therefore lie in $(\Z/p^n\Z)^\times$. By contrast, the
differences $b_1-b_2$ and $a_1-a_2$ above are internal multipliers in the
expansion argument and need not be units.

\begin{proof}[Proof of \Cref{prop: discretised expansion}]
We prove the assertion for the sign choice
$$(b_1-b_2)a+(a_1-a_2)b.$$
The other choices of signs are identical, after replacing one of the sets $A_i$ or $B_i$ by its negative.

We identify all subsets of $\Z/p^n\Z$ with their canonical lifts to unions of $\delta$-cosets in $\Z_p$. Let $\mu_i$ be the normalized Haar measure on the union of the $\delta$-cosets in $A_i$, and let $\nu_i$ be the normalized Haar measure on the union of the $\delta$-cosets in $B_i$, for $i=1,2$. Thus $\mu_i$ is a $(\delta,s,O(\delta^{-\varepsilon}))$-Frostman probability measure, and $\nu_i$ is a $(\delta,t,O(\delta^{-\varepsilon}))$-Frostman probability measure. Set $\rho_i:=\mu_i\times\nu_i$, viewed as a probability measure on $\Z_p^2$ with coordinates $z_i=(a_i,b_i)$.

It is enough to prove that the set of bad quadruples has $(\rho_1\times\rho_2)$-measure at most $\delta^{\varepsilon}$, after decreasing $\varepsilon>0$ in terms of $s,t,\sigma$. Indeed, since all measures are normalized Haar measures on unions of $\delta$-cosets, this gives the desired cardinality estimate for the complement of $\mathcal G$.

Suppose, towards a contradiction, that the set $\mathcal E\subset (A_1\times B_1)\times(A_2\times B_2)$ of bad quadruples satisfies
\begin{equation}\label{eq:expansion-bad-mass}
(\rho_1\times\rho_2)(\mathcal E)\ge \delta^{\varepsilon}.
\end{equation}
By definition, for every $((a_1,b_1),(a_2,b_2))\in\mathcal E$, there exists a set $X\subset P$ with $|X|\ge \delta^{\varepsilon}|P|$ such that
\begin{equation}\label{eq:expansion-bad-small-projection}
\left|\{(b_1-b_2)a+(a_1-a_2)b:(a,b)\in X\}\right|_{\delta}<\delta^{-\sigma}.
\end{equation}

We first separate the two $a$-coordinates. Choose $\rho\in p^{-\mathbb N}$ such that
$$\delta^{3\varepsilon/s}\le \rho < p\delta^{3\varepsilon/s}.$$
Let $\mathcal I$ be the partition of $\Z_p$ into $\rho$-balls. Since $\mu_i$ is $(\delta,s,O(\delta^{-\varepsilon}))$-Frostman, every $I\in\mathcal I$ satisfies
$$\mu_i(I)\lesssim \delta^{-\varepsilon}\rho^s\lesssim \delta^{2\varepsilon}.$$
For $I\in\mathcal I$, let $I^{+}$ denote the unique $p$-adic ball of radius $p\rho$ containing $I$. Equivalently, if $\rho=p^{-m}$ and $I=a+p^m\Z_p$, then $I^{+}=a+p^{m-1}\Z_p$. We remove the contribution with $a_1\in I$ and $a_2\in I^{+}$ in order to avoid the near-diagonal region where $|a_1-a_2|_p$ may be too small. We have
\begin{equation}\label{eq:expansion-near-diagonal}
\sum_{I\in\mathcal I}
(\mu_1|_I\times\nu_1\times\mu_2|_{I^{+}}\times\nu_2)(\mathcal E)\le\sum_{I\in\mathcal I}\mu_1(I)\mu_2(I^{+})\lesssim\delta^{2\varepsilon}.
\end{equation}
Indeed, $\mu_2(I^{+})\lesssim \delta^{-\varepsilon}(p\rho)^s\lesssim \delta^{2\varepsilon}$ by the Frostman non-concentrate condition, while the balls $I$ are disjoint and $\sum_{I\in\mathcal I}\mu_1(I)\le 1$.

Combining \eqref{eq:expansion-bad-mass} and \eqref{eq:expansion-near-diagonal}, and taking $\delta_0$ sufficiently small, we obtain
$$
\sum_{I\in\mathcal I}
(\mu_1|_I\times\nu_1\times\mu_2|_{(I^{+})^c}\times\nu_2)(\mathcal E)\gtrsim\delta^{\varepsilon}.
$$
Since $|\mathcal I|\lesssim \rho^{-1}\lesssim \delta^{-3\varepsilon/s}$, pigeonholing gives a ball $I_0\in\mathcal I$ such that
\begin{equation}\label{eq:expansion-separated-pigeonhole}
(\mu_1|_{I_0}\times\nu_1\times\mu_2|_{(I_0^{+})^c}\times\nu_2)(\mathcal E)\gtrsim\delta^{\varepsilon+3\varepsilon/s}.
\end{equation}
Let $\bar\mu_1$ and $\bar\mu_2$ be the normalized restrictions of $\mu_1$ and $\mu_2$ to $I_0$ and $(I_0^{+})^c$, respectively, and put $\bar\rho_i:=\bar\mu_i\times\nu_i$. Since the normalizing factors are at least $\delta^{O(\varepsilon)}$, the measures $\bar\mu_i$ are $(\delta,s,\delta^{-O(\varepsilon)})$-Frostman probability measures. Moreover, after normalizing \eqref{eq:expansion-separated-pigeonhole}, we get
\begin{equation}\label{eq:expansion-normalized-bad-mass}
(\bar\rho_1\times\bar\rho_2)(\mathcal E)\gtrsim \delta^{O(\varepsilon)}.
\end{equation}
The construction also gives the separation estimate
\begin{equation}\label{eq:expansion-separated-supports}
||a_1-a_2||_p\ge \rho\gtrsim \delta^{O(\varepsilon)}
\end{equation}
whenever $(a_1,b_1)\in\spt\bar\rho_1$ and $(a_2,b_2)\in\spt\bar\rho_2$.

The product measures $\bar\rho_i$ are $(\delta,s+t,\delta^{-O(\varepsilon)})$-Frostman probability measures on $\Z_p^2$. In addition, for every $p$-adic $r$-tube $T$ and every $\delta\le r\le 1$,
\begin{equation}\label{eq:expansion-product-tube}
\bar\rho_i(T)\lesssim \delta^{-O(\varepsilon)}r^{\min\{s,t\}},
\qquad i=1,2.
\end{equation}
Indeed, after fixing one coordinate, the intersection of a $p$-adic tube with the corresponding fibre is an $r$-ball in the other coordinate, and the estimate follows from the Frostman non-concentrate condition for $\bar\mu_i$ and $\nu_i$.

Let $\tau:=\min\{s+t,1\}$, let $\kappa:=\min\{s,t\}$, and choose
$$\sigma_0:=\frac{\sigma+\tau}{2}.$$
Then $\sigma<\sigma_0<\tau$. We apply \Cref{prop: p-adic thin tubes} to the two measures $\bar\rho_1,\bar\rho_2$, with exponent $\sigma_0$. Since \eqref{eq:expansion-separated-supports} gives separation and \eqref{eq:expansion-product-tube} gives tube non-concentration, after decreasing $\varepsilon>0$ if necessary we obtain that both pairs $(\bar\rho_1,\bar\rho_2)$ and $(\bar\rho_2,\bar\rho_1)$ have $$(\sigma_0,\delta^{-O(\varepsilon)},1-\delta^{O(\varepsilon)})$$-thin tubes.

We shall use the thin-tube estimate for the ordered pair $(\bar\rho_2,\bar\rho_1)$, because we want to fix the base point $z_2$ and vary $z_1$. Let $\mathcal H\subset\Z_p^2\times\Z_p^2$ be the corresponding good set, so that
\begin{equation}\label{eq:expansion-thin-tube-good-set}
(\bar\rho_2\times\bar\rho_1)(\mathcal H)\ge 1-\delta^{O(\varepsilon)}
\end{equation}
and, for every $z_2\in\Z_p^2$, every $p$-adic $r$-tube $T$ containing $z_2$, and every $\delta\le r\le 1$,
\begin{equation}\label{eq:expansion-thin-tube-bound}
\bar\rho_1\{z_1\in T:(z_2,z_1)\in\mathcal H\}
\le\delta^{-O(\varepsilon)}r^{\sigma_0}.
\end{equation}

Let $\mathcal E^{\operatorname{op}}:=\{(z_2,z_1):(z_1,z_2)\in\mathcal E\}$. By \eqref{eq:expansion-normalized-bad-mass} and \eqref{eq:expansion-thin-tube-good-set}, the set $\mathcal E^{\operatorname{op}}\cap\mathcal H$ has positive $(\bar\rho_2\times\bar\rho_1)$-measure. Hence there exists $z_2=(a_2,b_2)\in\spt\bar\rho_2$ such that the set
$$
F:=\{z_1=(a_1,b_1)\in\spt\bar\rho_1:(z_1,z_2)\in\mathcal E\text{ and }(z_2,z_1)\in\mathcal H\}
$$
satisfies
\begin{equation}\label{eq:expansion-F-large}
\bar\rho_1(F)\gtrsim \delta^{O(\varepsilon)}.
\end{equation}

We now pass from thin tubes through $z_2$ to a Frostman measure on slopes. At this point we depart slightly from the proof of the real setting. Since the quotient $(b_1-b_2)/(a_1-a_2)$ need not belong to $\Z_p$, we split into two affine charts, namely
$$
F_{\hor}:=\{(a_1,b_1)\in F:|b_1-b_2|_p\le |a_1-a_2|_p\}
$$
and $F_{\ver}:=F\setminus F_{\hor}$. At least one of these two sets has $\bar\rho_1$-measure at least $\bar\rho_1(F)/2$. We first assume that
\begin{equation}\label{eq:expansion-horizontal-chart-large}
\bar\rho_1(F_{\hor})\ge \frac{1}{2}\bar\rho_1(F).
\end{equation}
Let $\lambda$ be the probability measure on $\Z_p$ defined by
$$
\lambda(J):=\frac{\bar\rho_1\left(\left\{z_1=(a_1,b_1)\in F_{\hor}:
\frac{b_1-b_2}{a_1-a_2}\in J\right\}\right)}
{\bar\rho_1(F_{\hor})}
$$
for Borel sets $J\subset\Z_p$. The quotient belongs to $\Z_p$ on $F_{\hor}$, and the denominator is nonzero by \eqref{eq:expansion-horizontal-chart-large}. Moreover, $\lambda$ is a $(\delta,\sigma_0,\delta^{-O(\varepsilon)})$-Frostman probability measure. Indeed, let $J\subset\Z_p$ be a $p$-adic ball of radius $r$, where $\delta\le r\le 1$. The set of points $z_1=(a_1,b_1)$ with $\frac{b_1-b_2}{a_1-a_2}\in J$ is contained in a $r$-tube through $z_2$. Therefore \eqref{eq:expansion-thin-tube-bound} and \eqref{eq:expansion-F-large} imply
$$
\lambda(J)\lesssim \delta^{-O(\varepsilon)}r^{\sigma_0}.
$$

We apply \Cref{prop: p-adic Kaufman projection} to the set $P$ and to the measure $\lambda$, with $s$ in \Cref{prop: p-adic Kaufman projection} replaced by $\sigma_0$ and density parameter $\alpha=\delta^{\varepsilon}$. Since $P$ is a $(\delta,s+t,\delta^{-\varepsilon})$-set and $\sigma_0<s+t$, it is also a $(\delta,\sigma_0,\delta^{-O(\varepsilon)})$-set. Therefore there exists $c\in\spt\lambda$ such that every $X\subset P$ with $|X|\ge\delta^{\varepsilon}|P|$ satisfies
\begin{equation}\label{eq:expansion-kaufman-lower}
|\{b+ca:(a,b)\in X\}|_{\delta}
\ge
\delta^{-\sigma_0+O(\varepsilon)}.
\end{equation}
Since $c\in\spt\lambda$, there exists $z_1=(a_1,b_1)\in F_{\hor}$ with $c=(b_1-b_2)/(a_1-a_2)$. Since $z_1\in F$, the quadruple $((a_1,b_1),(a_2,b_2))$ belongs to $\mathcal E$. Hence, by the definition of $\mathcal E$, there exists $X\subset P$ with $|X|\ge\delta^{\varepsilon}|P|$ such that \eqref{eq:expansion-bad-small-projection} holds.

On the other hand, applying \eqref{eq:expansion-kaufman-lower} to this same set $X$ gives
$$
|\{b+ca:(a,b)\in X\}|_{\delta}
\ge
\delta^{-\sigma_0+O(\varepsilon)}.
$$
Multiplying by $a_1-a_2$ transforms the projected set into
$$
\{(a_1-a_2)b+(b_1-b_2)a:(a,b)\in X\}.
$$
By \eqref{eq:expansion-separated-supports}, multiplication by $a_1-a_2$ can decrease $\delta$-covering numbers by at most a factor $\delta^{-O(\varepsilon)}$. Thus
\begin{equation}\label{eq:expansion-contradiction-horizontal}
\left|\{(b_1-b_2)a+(a_1-a_2)b:(a,b)\in X\}\right|_{\delta}
\ge
\delta^{-\sigma_0+O(\varepsilon)}.
\end{equation}
Choosing $\varepsilon>0$ sufficiently small in terms of $s,t,\sigma$, we have $\sigma_0-O(\varepsilon)>\sigma$. This contradicts \eqref{eq:expansion-bad-small-projection}.

It remains to discuss the case $\bar\rho_1(F_{\ver})\ge \frac{1}{2}\bar\rho_1(F)$. In this case, we use the other affine chart. Define a probability measure $\lambda'$ on $\Z_p$ by
$$
\lambda'(J):=
\frac{\bar\rho_1\left(\left\{z_1=(a_1,b_1)\in F_{\ver}:
\frac{a_1-a_2}{b_1-b_2}\in J\right\}\right)}
{\bar\rho_1(F_{\ver})}.
$$
The same thin-tube estimate shows that $\lambda'$ is a $(\delta,\sigma_0,\delta^{-O(\varepsilon)})$-Frostman probability measure on $\Z_p$. Applying \Cref{prop: p-adic Kaufman projection} to the coordinate-swapped set $\{(b,a):(a,b)\in P\}$ gives a number $c'\in\spt\lambda'$ such that every $X\subset P$ with $|X|\ge\delta^{\varepsilon}|P|$ satisfies
$$
|\{a+c'b:(a,b)\in X\}|_{\delta}
\ge
\delta^{-\sigma_0+O(\varepsilon)}.
$$
Choose $z_1=(a_1,b_1)\in F_{\ver}$ with $c'=(a_1-a_2)/(b_1-b_2)$. Multiplying the last projected set by $b_1-b_2$, and using that $|b_1-b_2|_p\ge |a_1-a_2|_p\ge \rho\gtrsim \delta^{O(\varepsilon)}$, gives the same lower bound
$$
\left|\{(b_1-b_2)a+(a_1-a_2)b:(a,b)\in X\}\right|_{\delta}
\ge
\delta^{-\sigma_0+O(\varepsilon)}.
$$
This again contradicts \eqref{eq:expansion-bad-small-projection}.

We conclude that $(\rho_1\times\rho_2)(\mathcal E)\le \delta^{\varepsilon}$, after decreasing $\varepsilon>0$ if necessary. Taking $\mathcal G$ to be the complement of $\mathcal E$ in $A_1\times A_2\times B_1\times B_2$ gives
$$
|(A_1\times A_2\times B_1\times B_2)\setminus\mathcal G|
\le
\delta^{\varepsilon}|A_1||A_2||B_1||B_2|.
$$
This proves the proposition.
\end{proof}

\begin{rmk}\label{rem: expansion asymmetric}
The proof of \Cref{prop: discretised expansion} in fact gives the following
slightly more flexible statement. If $A_i$ are
$(\delta,s_i,\delta^{-\varepsilon})$-sets and $B_i$ are $(\delta,t_i,\delta^{-\varepsilon})$-sets, then the same conclusion holds, with $\varepsilon$ depending on all parameters, provided
$$\sigma<\min\{s_1+t_1,s_2+t_2,s+t,1\}.$$
Indeed, the only change is that $\bar\rho_i=\bar\mu_i\times\nu_i$ becomes a
$(\delta,s_i+t_i,\delta^{-O(\varepsilon)})$-Frostman measure, while the projection
theorem is applied to $P$, which remains a $(\delta,s+t,\delta^{-\varepsilon})$-set.
Thus the intermediate exponent $\sigma_0$ only needs to be chosen strictly below
all four relevant thresholds.
\end{rmk}

\subsection{A difference-set expansion lemma}

The following lemma is the $p$-adic analogue of the key difference-expansion step
in \cite[Section 3.3]{orponen2023projections}. It shows that, for most pairs $(b_1,b_2)\in B\times B$, the difference multiplier $b_1-b_2$ expands every
large product subset of $A\times C$.

\begin{lemma}\label{lem: difference expansion}
Let $0<\beta\le \alpha<1$, and let $\gamma\in(\alpha-\beta,1]$. Let
$A,B,C$ be as in \Cref{thm: weak ABC sum-product}. Then, for a sufficiently
small $\chi>0$ in terms of $\alpha,\beta,\gamma$, there exists an exceptional set $E\subset B\times B$ with $|E|\le \delta^\chi |B|^2$, such that for every $(b_1,b_2)\in (B\times B)\setminus E$ and every pair of subsets $A'\subset A$, $C'\subset C$ satisfying
$$
|A'||C'|\ge \delta^\chi |A||C|,
$$
we have
$$
|A'+(b_1-b_2)C'|_\delta\ge\delta^{-(\beta+\gamma-\alpha)/4}|A|.
$$
\end{lemma}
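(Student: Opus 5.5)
We argue by contradiction, following \cite[Section 3.3]{orponen2023projections}, with the discretised expansion estimate \Cref{prop: discretised expansion} as the engine. Suppose the exceptional set $E$ of pairs $(b_1,b_2)$ for which some $A'\subset A$, $C'\subset C$ with $|A'||C'|\ge\delta^\chi|A||C|$ satisfy $|A'+(b_1-b_2)C'|_\delta<\delta^{-(\beta+\gamma-\alpha)/4}|A|$ has $|E|>\delta^\chi|B|^2$.

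First I will pigeonhole to a single pair of popular fibres. Since $B$ is a $(\delta,\beta,\delta^{-\chi})$-set, the near-diagonal pairs with $\|b_1-b_2\|_p\le\delta^{O(\chi)/\beta}$ have negligible mass. So we may pass to a ball $I_0$ and its parent complement $(I_0^+)^c$, exactly as in the proof of \Cref{prop: discretised expansion}, and keep $\delta^{O(\chi)}|B|^2$ bad pairs with $b_1\in B\cap I_0$ and $b_2\in B\setminus I_0^+$. On these pairs the multiplier $d=b_1-b_2$ has norm $\gtrsim\delta^{O(\chi)}$, so multiplication by $d$ distorts $\delta$-covering numbers by at most $\delta^{-O(\chi)}$.

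Next I will upgrade small sumsets to structure. For each bad pair, $|A'+dC'|_\delta$ is small with $A'$, $C'$ dense, and \Cref{lem: energy restricted sumset} gives large additive energy $\mathcal E_\delta(A,dC)$. \Cref{thm: asymmetric BSG} together with \Cref{lem: plunnecke ruzsa} then lets us chain two bad pairs sharing an endpoint. Concretely, fix a popular $b_2$ and two popular $b_1,b_1'$. Plünnecke--Ruzsa yields a dense $A''$ and dense $C''$ with
$$|A''+(b_1-b_2)C''-(b_1'-b_2)C''|_\delta\le\delta^{-O(\chi)-(\beta+\gamma-\alpha)/2}|A|.$$
The key step is then to apply \Cref{prop: discretised expansion}, in the flexible form of \Cref{rem: expansion asymmetric}, with:
\begin{itemize}
\item the "$A$-sets" being the translates of $B$ at $b_1, b_1'$, i.e.\ $\beta$-dimensional;
\item the "$B$-sets" being the corresponding pieces of $C$, i.e.\ $\gamma$-dimensional;
\item $P$ being a $(\delta,\beta+\gamma)$-set built from $C\times C$, or from $B\times C$ after relabelling.
\end{itemize}
The proposition gives, for most quadruples and every dense $X\subset P$, that the set $\{(b_1-b_2)c+(b_1'-b_2)c':(c,c')\in X\}$ has covering number at least $\delta^{-\sigma}$ for any $\sigma<\min\{\beta+\gamma,1\}$.

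The sumset bound will then give the contradiction. A dense fibre count from $A''+(\cdot)$ shows that this projected set has size at most $\delta^{-O(\chi)}|A|^{-1}\cdot|A|\,\delta^{-(\beta+\gamma-\alpha)/2}\lesssim\delta^{-\alpha-(\beta+\gamma-\alpha)/2}/|A|$, up to fibre averaging. Comparing with $\delta^{-\sigma}$ and using $|A|\le\delta^{-\alpha}$ and $\gamma>\alpha-\beta$, we choose $\sigma$ between $\alpha+(\beta+\gamma-\alpha)/2$ and $\min\{\beta+\gamma,1\}$. This interval is nonempty because $\beta+\gamma-\alpha>0$ and $\alpha<1$. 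The contradiction follows once $\chi$ is small relative to the gap.

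The main obstacle will be this bookkeeping step. We must make the quadruple structure produced by BSG/Plünnecke match the "most quadruples, every dense $X$" format of \Cref{prop: discretised expansion}, with the dense subsets $C''$ depending on the pair $(b_1,b_2)$. I would handle this by first uniformizing, via dyadic pigeonholing over the $\delta^{O(\chi)}$-dense choices, and then invoking the proposition's uniformity over all dense $X\subset P$, which is exactly why it is stated for every $X$. The exponent loss of $1/4$ instead of $1/2$ leaves room for these $\delta^{-O(\chi)}$ errors.
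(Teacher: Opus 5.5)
Your proposal has a genuine gap at its key step, and the gap is dimensional. After the Pl\"unnecke--Ruzsa chaining, the only set you control is $d_1C''-d_2C''$, where $d_1=b_1-b_2$ and $d_2=b_1'-b_2$. This is the image of $C''\times C''$ under $(c,c')\mapsto d_1c-d_2c'$. That planar set is in general only a $(\delta,2\gamma)$-set, not a $(\delta,\beta+\gamma)$-set, and its image has at most $|C|^2$ points. The dimension $\beta$ of $B$ enters only through the multipliers, i.e.\ the direction, and a direction cannot make an image larger than the set being projected. Concretely, take $\alpha=\beta=\tfrac12$, $\gamma=\tfrac1{10}$ and $|C|\approx\delta^{-1/10}$; all hypotheses hold. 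Your contradiction needs $\sigma>(\alpha+\beta+\gamma)/2=\tfrac{11}{20}$, while $|d_1C''-d_2C''|\le|C|^2\approx\delta^{-1/5}$, so no expansion estimate can supply it. Building $P$ ``from $B\times C$'' does not help, because $B$ never appears as a summand in your sumsets.

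The matching with \Cref{prop: discretised expansion} is also off. There, each coordinate of $P$ is multiplied by a difference taken from its own pair of sets ($B_1\times B_2$, resp.\ $A_1\times A_2$), with all four coordinates free. Your assignment (``$B$-sets'' $=$ pieces of $C$) would therefore make one multiplier a difference of elements of $C$, not of $B$. Moreover, pairs of $B$-differences sharing $b_2$ form only a $|B|^{-1}$-proportion of $B^4$, and the exceptional $\delta^{\varepsilon}$-portion of quadruples may contain all of them.

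What reaches the threshold $\beta+\gamma>\alpha$ is a $(\beta+\gamma)$-dimensional set in which $B$ is a summand. Take a free quadruple $(c_1,c_2,b_1,b_2)\in C^2\times B^2$ and apply \Cref{prop: discretised expansion} with the proposition's $A_1=A_2=C$, $B_1=B_2=B$, $P=C\times B$ and the sign $c_1+c_2$. This lower-bounds
$\{(b_1-b_2)c+(c_1+c_2)b:c\in C',\,b\in B\}\subset(b_1-b_2)C'+c_1B+c_2B$.
\Cref{lem: plunnecke ruzsa} bounds the right-hand side by $\delta^{-O(\chi)}|A|$, but only if, besides $|A'+(b_1-b_2)C'|$ being small, one also knows $|A+c_iB|\le\delta^{-\chi}|A|$. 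That is the counter-assumption of \Cref{thm: weak ABC sum-product}, not a hypothesis of this lemma. So this step naturally sits inside the proof of the weak ABC theorem, where only $\sigma>\alpha$ is needed.

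The paper's own proof does no chaining. It applies the asymmetric expansion estimate once to the bad set $P\subset B\times B$, which inherits $(\delta,2\beta,\delta^{-O(\chi)})$-non-concentration, with $\sigma=\alpha+\theta$ and $\theta=(\beta+\gamma-\alpha)/4$. This yields a single bad pair whose multiplier expands every dense $X\subset A\times C$ past $\delta^{-\alpha-\theta}$. Note that this treats $A\times C$, which carries no non-concentration, as the projected set.

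Finally, your claim that the interval $\bigl((\alpha+\beta+\gamma)/2,\min\{\beta+\gamma,1\}\bigr)$ is nonempty fails once $\alpha+\beta+\gamma\ge2$. This reflects a defect in the statement itself. Since $|A'+dC'|_\delta\le\delta^{-1}$, the conclusion with $A'=A$, $C'=C$ at any non-exceptional pair is impossible when $|A|\approx\delta^{-\alpha}$ and $\alpha+\theta>1$. An explicit case is $\alpha=\beta=\gamma=0.99$, $B=\Z/p^n\Z$, $C=(\Z/p^n\Z)^\times$. The paper's choice $\sigma=\alpha+\theta$ likewise requires $\alpha+\theta<1$. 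The gain must be capped below $1-\alpha$, which is harmless for its use in \Cref{thm: weak ABC sum-product}.
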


No unit condition is imposed on the differences $b_1-b_2$ in this lemma. This is
consistent with \Cref{thm: asymmetric BSG}, whose multiplier is arbitrary.

\begin{proof}[Proof of \Cref{lem: difference expansion}]
Let $\theta:=\frac{\beta+\gamma-\alpha}{4}>0$. We choose the parameter $\chi>0$ sufficiently small in terms of $\alpha,\beta,\gamma$ and
also sufficiently small compared with the auxiliary parameter in
\Cref{prop: discretised expansion} and \Cref{rem: expansion asymmetric}. All losses of the
form $\delta^{-O(\chi)}$ below will be absorbed into the constants allowed there.

Suppose that the conclusion fails. Then there exists a set
$$
P\subset B\times B,\qquad |P|\ge \delta^\chi |B|^2,
$$
such that for every $(b_1,b_2)\in P$ one can find subsets $A_{b_1,b_2}\subset A,C_{b_1,b_2}\subset C$ satisfying
\begin{equation}\label{eq: bad-pair-large-subsets}
|A_{b_1,b_2}||C_{b_1,b_2}|\ge \delta^\chi |A||C|
\end{equation}
and
\begin{equation}\label{eq: bad-pair-small-sumset}
|A_{b_1,b_2}+(b_1-b_2)C_{b_1,b_2}|_\delta
<\delta^{-\theta}|A|.
\end{equation}

We first record the non-concentration inherited by the bad-pair set $P$. Since
$B$ is a $(\delta,\beta,\delta^{-\chi})$-set, the product set $B\times B$ is a
$(\delta,2\beta,\delta^{-O(\chi)})$-set in $\mathbb Z_p^2$. Indeed, for every
$r\in[\delta,1]$ and every $r$-ball $Q\subset \mathbb Z_p^2$, one has
$$
|(B\times B)\cap Q|
\le
\delta^{-O(\chi)}r^{2\beta}|B|^2.
$$
Since $P\subset B\times B$ and $|P|\ge \delta^\chi |B|^2$, it follows that
$$
|P\cap Q|
\le
\delta^{-O(\chi)}r^{2\beta}|P|,
\qquad r\in[\delta,1].
$$
Thus $P$ itself is a $(\delta,2\beta,\delta^{-O(\chi)})$-set. Equivalently, if desired, one may
first pass to a subset $P'\subset P$ with $|P'|\ge \delta^{O(\chi)}|P|$ and the same
$(\delta,2\beta,\delta^{-O(\chi)})$ non-concentration property; this only changes the powers
of $\delta^{O(\chi)}$ and will not affect the argument. We keep the notation $P$.

Similarly, $C$ remains a $(\delta,\gamma,\delta^{-\chi})$-set, hence also a
$(\delta,\gamma,\delta^{-O(\chi)})$-set. Therefore the pair consisting of the bad-pair set
$P\subset B\times B$ and the coefficient set $C$ satisfies the Frostman hypotheses needed for
\Cref{prop: discretised expansion}, in the asymmetric form of
\Cref{rem: expansion asymmetric}, after replacing the small parameter there by a fixed multiple
of $\chi$.

We apply \Cref{prop: discretised expansion} with target exponent $\sigma=\alpha+\theta$.
This choice is admissible because
$$
\alpha+\theta=\alpha+\frac{\beta+\gamma-\alpha}{4}<\beta+\gamma,
$$
and $\theta$ was chosen strictly smaller than the available gap $\beta+\gamma-\alpha$. Thus,
after taking $\chi$ sufficiently small, the asymmetric expansion statement gives a pair
$(b_1,b_2)\in P$ such that for every subset $X\subset A\times C$ with $|X|\ge \delta^{O(\chi)}|A||C|$, one has
\begin{equation}\label{eq: good-difference-expansion-from-prop}
|\{a+(b_1-b_2)c:(a,c)\in X\}|_\delta
\ge\delta^{-\alpha-\theta}.
\end{equation}
Here no unit condition is imposed on $b_1-b_2$; this is exactly why we use the multiplier form of the discretised expansion proposition.

Now take $X=A_{b_1,b_2}\times C_{b_1,b_2}$. By \eqref{eq: bad-pair-large-subsets}, this set is large enough for
\eqref{eq: good-difference-expansion-from-prop}, provided $\chi$ was chosen smaller than the
auxiliary expansion parameter. Hence
$$
|A_{b_1,b_2}+(b_1-b_2)C_{b_1,b_2}|_\delta
\ge
\delta^{-\alpha-\theta}.
$$
Since $|A|\le \delta^{-\alpha}$, we have
$\delta^{-\alpha-\theta}\ge \delta^{-\theta}|A|$. Therefore
$$
|A_{b_1,b_2}+(b_1-b_2)C_{b_1,b_2}|_\delta
\ge
\delta^{-\theta}|A|,
$$
which contradicts \eqref{eq: bad-pair-small-sumset}. This contradiction proves the lemma.
\end{proof}

\subsection{Proof of the weak ABC theorem}

\begin{proof}[Proof of \Cref{thm: weak ABC sum-product}]
We prove the theorem in the range $0<\beta\le \alpha<1$. Put
$$
\chi_0:=\frac{\beta+\gamma-\alpha}{4}>0.
$$
Let $\zeta=\zeta(\chi_0/2)>0$ be the constant supplied by
\Cref{thm: asymmetric BSG} with parameter $\eta=\chi_0/2$. We choose
$$
0<\chi\ll \min\{\chi_0,\zeta\},
$$
and then choose $\delta_0>0$ sufficiently small depending on
$\alpha,\beta,\gamma,\chi_0,\zeta,\chi$. Throughout the proof, the implicit
constants in $O(\chi)$ depend only on $\alpha,\beta,\gamma$.

Suppose, towards a contradiction, that
\begin{equation}\label{eq:weak-ABC-counterassumption}
|A+cB|_{\delta}<\delta^{-\chi}|A|,\qquad c\in C.
\end{equation}
Since $\chi\ll \chi_0$, we may apply \Cref{lem: difference expansion} with
$\chi_0$ in place of the small parameter. Thus there exists an exceptional set
$E\subset B\times B$ with
\begin{equation}\label{eq:weak-ABC-exceptional-set}
|E|\le \delta^{\chi_0}|B|^2,
\end{equation}
such that, for every $(b_1,b_2)\in (B\times B)\setminus E$ and every pair of
subsets $A'\subset A$, $C'\subset C$ satisfying
\begin{equation}\label{eq:weak-ABC-large-subsets-condition}
|A'||C'|\ge \delta^{\chi_0}|A||C|,
\end{equation}
one has
\begin{equation}\label{eq:weak-ABC-difference-expansion}
|A'+(b_2-b_1)C'|_{\delta}\ge \delta^{-\chi_0}|A|.
\end{equation}

We next use the counterassumption to obtain large additive energy. This is the
only point in the proof where the unit condition on the coefficient set $C$ is
used. Since $C\subset(\Z/p^n\Z)^\times$, multiplication by every $c\in C$ is a
bijection on $\Z/p^n\Z$. Hence $|cB|=|B|$. Applying \Cref{lem: energy restricted sumset} to the pair $(A,cB)$ and to the
full graph $A\times cB$, the estimate \eqref{eq:weak-ABC-counterassumption}
gives
\begin{equation}\label{eq:weak-ABC-energy-for-fixed-c}
E_{\delta}(A,cB)\ge \delta^{O(\chi)}|A||B|^2,\qquad c\in C.
\end{equation}
Equivalently, for every $c\in C$,
$$
|\{(a,a',b_1,b_2)\in A^2\times B^2:a+cb_1=a'+cb_2\}|
\ge \delta^{O(\chi)}|A||B|^2.
$$
Summing this estimate over $c\in C$ and rearranging the equation $a+cb_1=a'+cb_2$ as
$a=a'+c(b_2-b_1)$, we obtain 
\begin{equation}\label{eq:weak-ABC-rearranged-energy}
\sum_{b_1,b_2\in B}
|\{(a',c)\in A\times C:a'+c(b_2-b_1)\in A\}|
\ge
\delta^{O(\chi)}|A||B|^2|C|.
\end{equation}

The contribution of the exceptional pairs in $E$ to the left hand side of
\eqref{eq:weak-ABC-rearranged-energy} is at most
$$
|E||A||C|\le \delta^{\chi_0}|A||B|^2|C|.
$$
Since $\chi\ll\chi_0$, this is negligible compared with the lower bound in
\eqref{eq:weak-ABC-rearranged-energy}, after decreasing $\delta_0$ if necessary.
Therefore
\begin{equation}\label{eq:weak-ABC-nonexceptional-energy}
\sum_{(b_1,b_2)\in (B\times B)\setminus E}
|\{(a',c)\in A\times C:a'+c(b_2-b_1)\in A\}|
\ge\delta^{O(\chi)}|A||B|^2|C|.
\end{equation}
Consequently, there exists a pair
$(b_1,b_2)\in (B\times B)\setminus E$ such that, writing $d:=b_2-b_1$, the set
$$
G:=\{(a',c)\in A\times C:a'+dc\in A\}
$$
satisfies
\begin{equation}\label{eq:weak-ABC-large-graph}
|G|\ge \delta^{O(\chi)}|A||C|.
\end{equation}
Moreover,
\begin{equation}\label{eq:weak-ABC-small-restricted-sumset}
|\{a'+dc:(a',c)\in G\}|_{\delta}\le |A|.
\end{equation}

We now apply \Cref{thm: asymmetric BSG} with multiplier $d=b_2-b_1$ and parameter $\eta=\chi_0/2$. Notice that $d$ is not required to be a unit; this is
precisely why \Cref{thm: asymmetric BSG} was stated in multiplier form. Since
$\chi\ll\zeta$, \eqref{eq:weak-ABC-large-graph} implies $|G|\ge \delta^\zeta |A||C|$
for $\delta$ sufficiently small, and
\eqref{eq:weak-ABC-small-restricted-sumset} trivially implies
$$
|\{a'+dc:(a',c)\in G\}|_{\delta}\le \delta^{-\zeta}|A|.
$$
Thus \Cref{thm: asymmetric BSG} yields subsets $A'\subset A$ and
$C'\subset C$ such that
\begin{equation}\label{eq:weak-ABC-BSG-large-subsets}
|A'||C'|\ge \delta^{\chi_0/2}|A||C|
\end{equation}
and
\begin{equation}\label{eq:weak-ABC-BSG-small-sumset}
|A'+dC'|_{\delta}\le \delta^{-\chi_0/2}|A|.
\end{equation}

Since $\delta^{\chi_0/2}\ge \delta^{\chi_0}$, the pair $A',C'$ satisfies
\eqref{eq:weak-ABC-large-subsets-condition}. Since
$(b_1,b_2)\notin E$, the difference expansion estimate
\eqref{eq:weak-ABC-difference-expansion} gives $|A'+dC'|_{\delta}\ge \delta^{-\chi_0}|A|$. This contradicts \eqref{eq:weak-ABC-BSG-small-sumset}, because
$\delta^{-\chi_0/2}<\delta^{-\chi_0}$. The contradiction proves that there exists $c\in C$ such that $|A+cB|_{\delta}\ge \delta^{-\chi}|A|$. This proves the weak ABC theorem.

Finally, the uniformity statement follows from the same choice of parameters. If
$(\alpha,\beta,\gamma)$ ranges in a compact subset of the admissible region
$$
\{(\alpha,\beta,\gamma):0<\beta\le \alpha<1,\ \gamma>\alpha-\beta\},
$$
then $\chi_0=(\beta+\gamma-\alpha)/4$ is bounded from below by a positive
constant. Consequently $\zeta(\chi_0/2)$ is also bounded from below, and the
choices of $\chi$ and $\delta_0$ can be made uniformly on the compact set.
\end{proof}

\section{Projection of regular sets}
\label{sec: projections}

In this section, we prove the projection estimate for almost regular sets. The proof follows the strategy of Orponen-Shmerkin \cite[Section 4]{orponen2023projections}, with Euclidean dyadic cubes and orthogonal projections replaced by $p$-adic balls and the projection map defined in \Cref{defi: projection map}. 

\begin{thm}
\label{thm: regular set projection}
Let $s\in(0,1)$, $t\in(s,2-s)$, and $u\in[0,(s+t)/2)$. Then there exist
$$
\epsilon=\epsilon(s,t,u)>0,\qquad\delta_0=\delta_0(s,t,u)>0
$$
such that the following holds for all $\delta=p^{-n}\in(0,\delta_0]$. Let
$\mathcal P\subset\mathcal D_\delta(\Z_p^2)$ be a $(\delta,t,\delta^{-\epsilon})$-regular set, and let $E\subset\Z_p$ be a $(\delta,s,\delta^{-\epsilon})$-set. Then there exists $\theta\in E$ such that
\begin{equation}
\label{eq: regular projection conclusion}
|\pi_\theta(\mathcal P')|_\delta\ge \delta^{-u}
\end{equation}
for every subset $\mathcal P'\subset\mathcal P$ satisfying $|\mathcal P'|\ge\delta^\epsilon|\mathcal P|$.
\end{thm}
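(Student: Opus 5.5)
We follow the scheme of Orponen--Shmerkin \cite[Theorem 4.1]{orponen2023projections}: argue by contradiction and run an induction on scales in which the ABC sum-product theorem (\Cref{thm: ABC sum-product}) supplies the gain at a single scale, and upper regularity propagates that gain across all scales. Concretely, we would prove the statement first for $u$ slightly below $(s+t)/2$ near the threshold, and obtain smaller $u$ for free, since the conclusion is monotone in $u$. The argument is then a bootstrap.

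\textbf{Step 1 (bootstrap hypothesis).} Let $\sigma(s,t)$ denote the supremum of exponents $u$ for which the theorem holds; Kaufman (\Cref{prop: p-adic Kaufman projection}) gives $\sigma\ge \min\{s,t\}=s$. Assume $\sigma<(s+t)/2$. We aim to show that the theorem then holds with $u=\sigma+\eta$ for some $\eta=\eta(s,t)>0$, uniformly on compact parameter sets, which contradicts the definition of $\sigma$.

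\textbf{Step 2 (counterexample at scale $\delta$).} Suppose $\mathcal P$ and $E$ are a counterexample at exponent $\sigma+\eta$. For each $\theta\in E$ there is then $\mathcal P_\theta$ with $|\mathcal P_\theta|\ge\delta^\epsilon|\mathcal P|$ and $|\pi_\theta(\mathcal P_\theta)|_\delta<\delta^{-\sigma-\eta}$.

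Fix a reference direction $\theta_0\in E$ and pigeonhole the remaining directions by the valuation of $\theta-\theta_0$, as described in the introduction. This yields a scale $\rho$ and a set $E'$ with $\theta-\theta_0=\rho^{-1}c_\theta$, where $c_\theta\in\Z_p^\times$.

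Apply the shear $L_{\theta_0}$, so that $\theta_0$ becomes $0$. Now split $\delta<\Delta<1$ at an intermediate scale $\Delta=\delta^{1/2}$ (more generally, over a multiscale partition $\delta=\Delta_N<\dots<\Delta_0=1$, using \Cref{lem: pigeonhole} to make $\mathcal P$ uniform). Since $\mathcal P$ is upper regular, both the coarse set $\mathcal D_\Delta(\mathcal P)$ and each rescaled piece $\mathcal P\cap\mathbf p$, $\mathbf p\in\mathcal D_\Delta(\mathcal P)$, are again $(\cdot,t,\delta^{-O(\epsilon)})$-regular. The $p$-adic rescaling $\mathbf p\to\Z_p^2$ is exact and commutes with $\pi_\theta$. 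We can therefore apply the bootstrap hypothesis at both scales, and
$$|\pi_\theta(\mathcal P')|_\delta\gtrsim |\pi_\theta(\mathcal D_\Delta\mathcal P')|_\Delta\cdot\min_{\mathbf p}|\pi_\theta(\mathcal P'\cap\mathbf p)|_{\delta/\Delta}$$
holds, after popularity pigeonholing. Each factor is at least the $\sigma$ bound. Because the counterexample is nearly sharp, we deduce that $\pi_\theta(\mathcal P_\theta)$ has, at each scale, an essentially \emph{product-like} structure. Writing the fibres of the projection at scale $\Delta$ as sets $A\subset\Z/p^{m}\Z$ of size about $\Delta^{-\sigma}$, and the $y$-coordinates as a set $B$, the projections read $a+c_\theta b$ with $(a,b)\in G_\theta$, with $|G_\theta|$ large and a small restricted sumset.

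\textbf{Step 3 (invoking ABC).} The set $B$ inherits non-concentration exponent $\beta\approx t-\sigma$ from the $(\delta,t)$-set property of $\mathcal P$ and the fibre size bound, while $|A|\le\Delta^{-\alpha}$ with $\alpha\approx\sigma$. The normalized coefficient set $\{c_\theta\}$ is a $(\cdot,s,\delta^{-O(\epsilon)})$-set of units, after the rescaling by $\rho$ which changes the terminal resolution to $\delta/\rho$. The ABC condition $\gamma=s>\alpha-\beta=2\sigma-t$ is exactly $\sigma<(s+t)/2$. Also $\beta\le\alpha$ follows from $\sigma\ge t/2$; in the complementary case $t-\sigma>\sigma$ we would use the trivial bound or swap roles. \Cref{thm: ABC sum-product} then produces $c_\theta$ with $|\{a+c_\theta b:(a,b)\in G_\theta\}|\ge\Delta^{-\chi}|A|$, which contradicts the near-sharpness at that scale and gives the gain $\eta\approx\chi/2$.

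\textbf{Main obstacle.} The delicate step is Step 2: extracting, from an arbitrary dense subset $\mathcal P_\theta$ and a $\theta$-dependent bad set, a \emph{single} triple $(A,B,G_\theta)$ that is common to many $\theta$. This is the uniformization of \cite[Section 4]{orponen2023projections}. In the $p$-adic setting it is compounded by the valuation pigeonholing of $\theta-\theta_0$, which forces the ABC theorem to be applied at the effective resolution $\delta/\rho$ with unit coefficients. We would first try to handle this by restricting to $\theta$ in a single $\rho$-coset pattern, then pigeonholing the popular $\Delta$-tubes $\pi_{\theta_0}^{-1}(J)$, and checking that the regularity constants degrade only by $\delta^{-O(\epsilon)}$.
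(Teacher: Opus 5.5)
Your architecture matches the paper's: a bootstrap in the exponent, the restricted-graph ABC theorem with $(\alpha,\beta,\gamma)\approx(u,\,t-u,\,s)$ so that $\gamma>\alpha-\beta$ is exactly $u<(s+t)/2$, and valuation pigeonholing of $\theta-\theta_0$ to obtain unit coefficients at resolution $\tau=\delta/\rho$. But Step 2, which you yourself flag as the main obstacle, is essentially the entire proof, and the concrete claims you make there fail as stated.

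\textbf{Non-concentration of $B$.} $B$ does not inherit $(t-u)$-dimensional non-concentration from the $(\delta,t)$-set property. Here $u$ is the exponent granted by the bootstrap hypothesis. Upper regularity only bounds the number of occupied $\rho$-balls in an $r$-portion of a $\rho$-tube by $(r/\rho)^t$, whereas you need $(r/\rho)^{t-u}$. The missing gain must come from fibre bounds $\mathfrak m_{K,\theta_0}(x\mid[r,R])<(R/r)^{t-u}$ in the reference direction, holding on \emph{every} window with $r/R\le\delta^{\eta}$ (for a fixed small $\eta>0$) and at every location. The same local bounds, on $[\delta,\rho]$ and on $[\rho,1]$, are what give $|A|\lesssim\tau^{-u}$ up to small losses, the slice-mass bound, and hence the density of $G_\theta$ in $A\times B$. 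The paper obtains these bounds in three steps. It recasts the theorem as the multiplicity statement $\Proj(s,t-u,t)$ (\Cref{defi: Proj statement}). It upgrades that statement to all windows by rescaling the regular measure to an $R$-ball and applying the hypothesis at scale $r/R$ (\Cref{thm: small slices imply sparse slices}). It then works with $K_\theta$, the points of high global but no high local multiplicity. Near-sharpness of your two-scale product inequality controls only two windows, and only for directions that are good at both, so it cannot supply this.

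\textbf{The direction scale.} If you pigeonhole all of $E$ by $|\theta-\theta_0|_p$, you get $\rho\ge\delta^{O(\epsilon)}$, because an $s$-dimensional set has little mass near $\theta_0$. Then $\tau\approx\delta$, the $\rho$-tube around the reference fibre is essentially the whole square, and no dense quasi-product survives. One needs $\delta^{1-\eta}\le\rho\le\delta^{1/2}$.
\begin{itemize}
\item The upper bound gives $\tau\ge\rho$, so each $\rho$-ball has a well-defined transverse $\tau$-parent $b$. Since $|\theta-\theta_0|_p=\rho$, the value of $\pi_\theta(z)=\pi_{\theta_0}(z)+(\theta-\theta_0)y$ at resolution $\delta$ is then determined by $\pi_{\theta_0}(z)$ and $b$, i.e.\ it becomes $a+c_\theta b$ after rescaling.
\item The lower bound gives $\tau\le\delta^\eta$, so the ABC gain $\tau^{-\chi}$ beats the accumulated losses.
\end{itemize}
The paper forces this range by working inside a single $\delta^{1/2}$-ball of directions and discarding a $\delta^{1-\eta}$-neighbourhood of $\theta_0$.

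\textbf{The coefficient set.} Once directions are restricted this way, your claim that $\{c_\theta\}$ is an $s$-dimensional set of units is no longer automatic. The $(\delta,s)$-hypothesis bounds $E$ inside small balls only from above, and a $(\delta,s)$-set may meet each $\delta^{1/2}$-ball in a single $\delta$-ball (e.g.\ when $s\le 1/2$). This is why the paper first passes to a branching scale at which every $\delta^{1/2}$-ball of $E$ rescales to a $(\delta^{1/2},s-o(1))$-set (\Cref{lem: good branching scale for E}).

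\textbf{A triple common to many $\theta$.} The paper defines $A=\pi_{\theta_0}(K_{\theta_0}\cap\mathbf T_0)$ and $B$ (the transverse $\tau$-parents of heavy $\rho$-balls in a heavy tube $\mathbf T_0$) from $\theta_0$ alone. It chooses $\theta_0$ by Cauchy--Schwarz so that $\mu(K_{\theta_0}\cap K_\theta)\ge\delta^{O(\epsilon)}$ for many $\theta$, and confines all $\theta$-dependence to $G_\theta$.

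Your worry about $\beta>\alpha$ is harmless. A $(\tau,\beta)$-set is a $(\tau,\beta')$-set for every $\beta'\le\beta$, so you can lower $\beta$ to $\alpha$. The paper avoids that range differently, by starting the bootstrap above $t/2$ with \Cref{lem: padic Bourgain projection} instead of Kaufman.
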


\begin{rmk}\label{rmk: many good directions}
As in \cite[Remark 4.11]{orponen2023projections}, \Cref{thm: regular set projection} also yields the following many-directions version, after decreasing $\epsilon>0$. There exists a subset $E'\subset E$ with $|E'|\ge |E|/2$ such that, for every $\theta\in E'$ and every $P'\subset P$ with $|P'|\ge \delta^\epsilon |P|$, one has
$$
|\pi_\theta(P')|_\delta\ge \delta^{-u}.
$$
Indeed, if no such subset $E'$ existed, then the exceptional set $E_{\mathrm{bad}}\subset E$ would satisfy $|E_{\mathrm{bad}}|\ge |E|/2$. Since $E$ is a $(\delta,s,\delta^{-\epsilon})$-set, the subset $E_{\mathrm{bad}}$ is a $(\delta,s,2\delta^{-\epsilon})$-set, hence a $(\delta,s,\delta^{-2\epsilon})$-set
for $\delta>0$ small enough. Taking $\epsilon$ sufficiently small, we may apply
\Cref{thm: regular set projection} to $P$ and $E_{\mathrm{bad}}$, which gives a
direction in $E_{\mathrm{bad}}$ satisfying the desired projection estimate for all
large subsets of $P$, a contradiction.
\end{rmk}

\subsection{Regular measures and high multiplicity}
\label{subsec: regular measures and high multiplicity}

We first introduce the measure-theoretic language used in the proof. This is slightly more flexible than working only with regular sets, since the argument repeatedly rescales balls.

\begin{defi}
\label{defi: regular measure}
Let $t>0$ and $C\ge 1$. A compactly supported Borel probability measure $\mu$
on $\Q_p^d$ is called $(t,C)$-regular if, writing $K:=\spt\mu$, one has
\begin{equation}
\label{eq: regular measure two-sided mass}
C^{-1}r^t\le \mu(B(x,r))\le Cr^t
\end{equation}
for every $x\in K$ and every $0<r\le \diam(K)$.

If $\delta\in p^{-\N}$, we say that $\mu$ is $(\delta,t,C)$-regular if
\eqref{eq: regular measure two-sided mass} is required only for radii
$r\in p^{-\N}$ satisfying
$$
\delta\le r\le \diam(K),
$$
and if, in addition, $K=\spt\mu$ is a union of $\delta$-balls.
\end{defi}



\begin{defi}
\label{defi: multiplicity number}
Let $K\subset\Q_p^2$, let $0<r\le R\le\infty$ be powers of $p$, and let $x\in K$. For $\theta\in\Z_p$, define the multiplicity number
$$
\mathfrak m_{K,\theta}(x\mid[r,R]):=
|B(x,R)\cap K_r\cap\pi_\theta^{-1}(\pi_\theta(x))|_r.
$$
Here $K_r$ denotes the $r$-neighbourhood of $K$, and the right-hand side is the $r$-covering number of the intersection. When the set $K$ is clear from context, we abbreviate $\mathfrak m_{K,\theta}=:\mathfrak m_\theta$.
\end{defi}

\begin{defi}
\label{defi: high multiplicity set}
Let $0<r\le R\le\infty$ be powers of $p$, let $M>0$, and let $\theta\in\Z_p$. For $K\subset\Q_p^2$, define the high multiplicity set
$$
H_\theta(K,M,[r,R]):=\{x\in K:\mathfrak m_{K,\theta}(x\mid[r,R])\ge M\}.
$$
\end{defi}

For $z_0\in\Q_p^2$ and $r_0\in p^\Z$, write
$T_{z_0,r_0}(z):=\frac{z-z_0}{r_0}$ for the homothety mapping $B(z_0,r_0)$ onto $\Z_p^2$.

\begin{lemma}
\label{lem: high multiplicity scaling}
Let $K\subset\Q_p^2$ be arbitrary, let $0<r\le R\le\infty$ be powers of $p$, let $M>0$, and let $\theta\in\Z_p$. Then, for every $z_0\in\Q_p^2$ and $r_0\in p^\Z$,
\begin{equation}
\label{eq: high multiplicity scaling}
T_{z_0,r_0}\left(H_\theta(K,M,[r,R])\cap B(z_0,r_0)\right)\subset H_\theta\left(
T_{z_0,r_0}(K),M,\left[\frac r{r_0},\frac R{r_0}\right]\right).
\end{equation}
Conversely, if the right-hand side is intersected with $\Z_p^2=T_{z_0,r_0}(B(z_0,r_0))$, then the reverse inclusion also holds.
\end{lemma}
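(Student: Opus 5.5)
The plan is to verify that the homothety $T:=T_{z_0,r_0}$ is compatible with every ingredient in the definition of $\mathfrak m_{K,\theta}$, and then compare the multiplicity numbers pointwise. Set $K':=T(K)$ and $x':=T(x)$ for $x\in K\cap B(z_0,r_0)$. The claim will follow from the identity
$$
\mathfrak m_{K,\theta}(x\mid[r,R])=\mathfrak m_{K',\theta}\left(x'\,\Big|\,\left[\tfrac r{r_0},\tfrac R{r_0}\right]\right),
$$
valid for every $x\in K\cap B(z_0,r_0)$. Indeed, $x\in H_\theta(K,M,[r,R])$ then forces $x'\in H_\theta(K',M,[r/r_0,R/r_0])$. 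Conversely, a point of the right-hand side lying in $\Z_p^2$ has the form $T(x)$ with $x\in K\cap B(z_0,r_0)$, since $T$ is a bijection of $\Q_p^2$ with $T(B(z_0,r_0))=\Z_p^2$. The same identity then gives the reverse inclusion.

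To prove the identity, I will check four facts about $T$, using that $r_0\in p^{\Z}$, so that $\|r_0^{-1}\|_p=r_0^{-1}$.
\begin{enumerate}
\item $T$ scales the max-metric exactly: $\|T(z)-T(w)\|_p=r_0^{-1}\|z-w\|_p$. Hence $T(B(y,\rho))=B(T(y),\rho/r_0)$ for every $y$ and every $\rho\in p^{\Z}$.
\item Consequently $T(K_r)=(K')_{r/r_0}$.
\item By linearity of $\pi_\theta$, we have $\pi_\theta(T(z))=(\pi_\theta(z)-\pi_\theta(z_0))/r_0$. So $T$ maps the fibre $\pi_\theta^{-1}(\pi_\theta(x))$ bijectively onto $\pi_\theta^{-1}(\pi_\theta(x'))$.
\item $T$ maps the $p$-adic $r$-balls covering a set $S$ bijectively to the $(r/r_0)$-balls covering $T(S)$. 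Hence $|S|_r=|T(S)|_{r/r_0}$.
\end{enumerate}

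Combining these facts, $T$ maps $B(x,R)\cap K_r\cap\pi_\theta^{-1}(\pi_\theta(x))$ onto
$$
B\left(x',\tfrac R{r_0}\right)\cap (K')_{r/r_0}\cap\pi_\theta^{-1}(\pi_\theta(x')),
$$
and the covering numbers agree, which gives the identity. The case $R=\infty$ is trivial, because then $R/r_0=\infty$.

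The only step needing care is the use of $|\cdot|_r$ when $r$ or $r/r_0$ is not at most $1$, or when the set does not lie in $\Z_p^2$. The paper defines $|\cdot|_\delta$ only for $\delta\in p^{-\N}$ and subsets of $\Z_p^2$. I will interpret $|\cdot|_\rho$ for $\rho\in p^{\Z}$ as the number of $\rho$-balls of the canonical partition of $\Q_p^2$ that meet the set. Under this reading, fact (4) is literally a bijection of partitions, because $T$ maps the partition into $\rho$-balls onto the partition into $(\rho/r_0)$-balls; this is the point where the ultrametric nesting is used. Under any other normalization of covering numbers the two sides would agree only up to constants, so I expect this convention to be the main point to state carefully; everything else is formal.
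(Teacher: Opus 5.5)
Your proposal is correct and follows the same route as the paper: $T_{z_0,r_0}$ maps $r$-balls to $(r/r_0)$-balls and $\pi_\theta$-fibres to $\pi_\theta$-fibres, so the fibre intersections and their covering numbers correspond exactly; your treatment of the converse inclusion via $T_{z_0,r_0}(K)\cap\Z_p^2=T_{z_0,r_0}(K\cap B(z_0,r_0))$, and of the covering-number convention at scales in $p^{\Z}$, only fills in details the paper leaves implicit. One small slip: for $r_0=p^{-k}$ viewed as an element of $\Q_p$ one has $\|r_0^{-1}\|_p=r_0$, not $r_0^{-1}$, so the similarity ratio $r_0^{-1}$ in your fact (1) really comes from dividing by an element of absolute value $r_0$ (as forced by $T_{z_0,r_0}(B(z_0,r_0))=\Z_p^2$); the paper's formula $\frac{z-z_0}{r_0}$ has the same notational ambiguity, and it does not affect the argument.
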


\begin{proof}
The homothety $T_{z_0,r_0}$ maps $r$-balls to $r/r_0$-balls and $R$-balls to $R/r_0$-balls. Moreover, for $x=(x_1,x_2)$ and $w=(w_1,w_2)$, the condition $\pi_\theta(w)=\pi_\theta(x)$ is equivalent to
$$
\pi_\theta(T_{z_0,r_0}(w))=\pi_\theta(T_{z_0,r_0}(x)).
$$
Thus the relevant fibre intersections are mapped bijectively to the corresponding rescaled fibre intersections, and their covering numbers are unchanged. 
\end{proof}

The following theorem is the main multiplicity estimate behind \Cref{thm: regular set projection}.

\begin{thm}
\label{thm: exceptional measure estimate}
Let $s\in(0,1)$, $t\in(s,2-s)$, and $\sigma>\frac{t-s}{2}$. Then there exist
$$
\epsilon=\epsilon(s,t,\sigma)>0,\qquad\delta_0=\delta_0(s,t,\sigma)>0
$$
such that the following holds for all $\delta=p^{-n}\in(0,\delta_0]$. Let $\mu$ be a $(\delta,t,\delta^{-\epsilon})$-regular measure on $\Q_p^2$, with $K:=\spt\mu$, and let $E\subset\Z_p$ be a $(\delta,s,\delta^{-\epsilon})$-set. Then there exists $\theta\in E$ such that
\begin{equation}
\label{eq: existence of direction with small multiplicity}
\mu\bigl(\Z_p^2\cap H_\theta(K,\delta^{-\sigma},[\delta,1])\bigr)\le\delta^\epsilon.
\end{equation}
\end{thm}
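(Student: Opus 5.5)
We argue by contradiction, following the structure of Orponen--Shmerkin \cite[Section 4]{orponen2023projections} and running an induction on the multiplicity exponent. Define $\sigma_*(s,t)$ as the infimum of those $\sigma$ for which the statement of \Cref{thm: exceptional measure estimate} holds. A trivial bound gives $\sigma_*\le t$: every fibre $\pi_\theta^{-1}(\pi_\theta(x))\cap B(x,1)$ is a line, so it meets at most $\delta^{-1}$ cubes. The regularity of $\mu$ then upgrades this to $\delta^{-\min\{t,1\}+O(\epsilon)}$ cubes. The goal is to show that if $\sigma_*>\frac{t-s}{2}$ then $\sigma_*$ can be lowered by a fixed amount $\eta(s,t)>0$, which is a contradiction. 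Assume therefore that for every $\theta\in E$ the high multiplicity set $H_\theta:=H_\theta(K,\delta^{-\sigma},[\delta,1])$ has $\mu$-measure greater than $\delta^\epsilon$, where $\sigma$ is slightly larger than $\sigma_*-\eta$.

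\emph{Multiscale step.} Choose an intermediate scale $\Delta=\delta^{1/2}$, and use \Cref{lem: high multiplicity scaling} together with the submultiplicativity of multiplicity numbers, $\mathfrak m_\theta(x\mid[\delta,1])\lesssim\mathfrak m_\theta(x\mid[\Delta,1])\cdot\max\mathfrak m_\theta(\cdot\mid[\delta,\Delta])$. Pigeonholing shows that for most $\theta\in E$, either the coarse problem at scale $\Delta$ or the rescaled problems inside $\Delta$-balls also carry high multiplicity at their own scale. Since $\mu$ is regular, both the coarse measure and the blow-ups of $\mu$ restricted to $\Delta$-balls are again $(\Delta,t,\delta^{-O(\epsilon)})$-regular. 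The $p$-adic tree structure makes these blow-ups exact, with no boundary effects. We then apply the induction hypothesis at exponent $\sigma_*+o(1)$ on both halves. The two halves can then only balance if both are essentially extremal, that is, if the multiplicity exponent is about $\sigma_*/2$ at each half-scale, uniformly over many directions.

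\emph{Extremal case via ABC.} In the extremal configuration, for many $\theta\in E$ the fibres of $\pi_\theta$ through high-multiplicity points carry about $\delta^{-\sigma}$ cubes with a product-like structure across the two scales. Fix a reference direction $\theta_0\in E$ and pigeonhole the valuations of $\theta-\theta_0$ to a common size $\rho$. Then rescale by $\rho$, exactly as described in the introduction, so that the normalized coefficients $c_\theta$ lie in $(\Z/p^m\Z)^\times$. They form a $(\delta/\rho,s,\cdot)$-set. In coordinates adapted to $\pi_{\theta_0}$, a large piece of $K$ then looks like a union of sets $A+cB$, where $A$ is the coarse projection of size about $\delta^{-\alpha}$, $B$ is a fibre set that is a $(\delta,\beta)$-set, and $\alpha-\beta<s$ follows from $\sigma>\frac{t-s}{2}$. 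The small projections say that $|\{a+cb:(a,b)\in G_c\}|\le\delta^{-o(1)}|A|$ for large graphs $G_c$ and all $c\in C$. This contradicts \Cref{thm: ABC sum-product} with $\gamma=s>\alpha-\beta$.

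The main obstacle is setting up this reduction to ABC. We must produce, from high multiplicity at two scales and $\mu$-regularity, the sets $A$, $B$ and a single graph $G_c$ with the correct non-concentration exponents. We must also check the exponent bookkeeping, namely that $\beta\le\alpha$ and $\gamma>\alpha-\beta$, which is where the hypotheses $t\in(s,2-s)$ and $\sigma>\frac{t-s}{2}$ enter. The plan is to copy \cite[Sections 4.2--4.4]{orponen2023projections} step by step, using the tree structure to get exact uniformization via \Cref{lem: pigeonhole}.
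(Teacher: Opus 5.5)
Your overall architecture matches the paper's: an induction on the multiplicity exponent, where the extremal two-scale configuration is fed into \Cref{thm: ABC sum-product} after normalizing $\theta-\theta_0$ by its valuation. But the start of your induction does not work as stated. With $\alpha=t-\sigma$ and $\beta=\sigma$, the condition $\beta\le\alpha$ is exactly $\sigma\le t/2$, and it does \emph{not} follow from $t\in(s,2-s)$ and $\sigma>\frac{t-s}{2}$. Since $\min\{t,1\}>t/2$, your descent from the trivial bound $\sigma_*\le\min\{t,1\}$ begins precisely in the range where \Cref{thm: ABC sum-product} is not available.

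The paper does not start from the trivial bound. It proves the base case $\Proj(s,\frac t2-\eta,t)$ separately (\Cref{prop: base case of projection}), directly from the $p$-adic Bourgain projection theorem (\Cref{lem: padic Bourgain projection}), applied to $P=\mathcal D_\delta(K\cap\Z_p^2)$ and the sets $P_\theta$ of high-multiplicity cubes. It runs the ABC iteration (\Cref{prop: iterating projection}) only for $\frac{t-s}{2}<\sigma<\frac t2$, where the decrement $\zeta$ is uniform on compact subintervals, so the infimum argument starts at $\Sigma\le\frac t2-\eta$. You could instead extend the ABC step to $\sigma\ge t/2$ by replacing $\beta$ with $\min\{\beta,\alpha\}$, since a $(\delta,\beta)$-set is a $(\delta,\beta')$-set for $\beta'\le\beta$. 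If you do, you must say so and also argue that your decrement stays uniform up to $\sigma=\min\{t,1\}$.

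Your reduction to ABC is also missing two ingredients the paper needs.

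\emph{Simultaneously good directions.} Splitting at the fixed scale $\delta^{1/2}$ and applying the induction hypothesis ``on both halves'' does not give what you claim. The hypothesis only produces \emph{some} good directions, and these may differ between the coarse problem and the various $\Delta$-balls, while your counter-assumption concerns every $\theta\in E$. The paper resolves this with \Cref{thm: small slices imply sparse slices}: pigeonholing a common scale pair $(r,R)$ and a common $R$-ball over many directions, then rescaling, contradicts $\Proj(s,\sigma,t)$. After repeated removal, half of $E$ consists of directions with $\mathfrak m_{K,\theta}(x\mid[r,R])<(R/r)^\sigma$ for \emph{all} $r/R\le\delta^\eta$, off a set of small measure. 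The sets $K_\theta$ of \Cref{subsec: defining K theta} (globally high at exponent $\sigma-\zeta$, locally sparse at exponent $\sigma$) then carry the product structure at the scale $\rho$ where $\theta$ separates from $\theta_0$. That scale is only pigeonholed in $[\delta^{1-\eta},\Delta]$ and need not equal $\delta^{1/2}$.

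\emph{Non-concentration of the coefficients.} Your assertion that the normalized coefficients form a $(\delta/\rho,s,\cdot)$-set is false for a general $(\delta,s)$-set. For a $\delta^{s}$-separated grid and intermediate $\rho$, the rescaled valuation shell is $1$-dimensional at coarse scales and has no branching below $\delta^{s}/\rho$. Moreover, without first localizing $E$, the pigeonholed $\rho$ may be $\approx 1$, which destroys the two-scale structure altogether. The paper first replaces $\delta$ by a branching scale $\bar\delta$ (\Cref{lem: good branching scale for E}, \Cref{prop: branching scale for E}). At that scale every rescaled piece $\bar\Delta(E\cap I)$, $I\in\mathcal D_{\bar\Delta}(E)$, is a $(\bar\Delta,s-\sqrt{\zeta})$-set, and the argument then works inside a single such $I$.
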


We next show that this multiplicity estimate implies the projection theorem.

\begin{proof}[Proof of \Cref{thm: regular set projection}, assuming \Cref{thm: exceptional measure estimate}]
Fix $s\in(0,1)$, $t\in(s,2-s)$, and $u<(s+t)/2$. Choose numbers $\sigma'>\sigma>\frac{t-s}{2}$ so that $u<t-\sigma'$. Let $\epsilon>0$ be sufficiently small compared to the constant supplied by \Cref{thm: exceptional measure estimate} with parameter $\sigma$, and let $\delta>0$ be sufficiently small.

Let $\mathcal P\subset\mathcal D_\delta(\Z_p^2)$ be a $(\delta,t,\delta^{-\epsilon})$-regular set. Define $
P:=\bigcup_{\mathscr p\in\mathcal P}\mathscr p$, and let $\mu$ be the probability measure obtained by normalizing Haar measure on $P$. Then $\mu$ is a $(\delta,t,\delta^{-O(\epsilon)})$-regular measure. Applying \Cref{thm: exceptional measure estimate}, and decreasing $\epsilon$ if necessary, we find $\theta\in E$ such that
\begin{equation}
\label{eq: measure high multiplicity small}
\mu\bigl(H_\theta(P,\delta^{-\sigma},[\delta,1])\bigr)\le\delta^{O(\epsilon)}.
\end{equation}
Equivalently,
\begin{equation}
\label{eq: cube high multiplicity small}
\left|\{\mathscr p\in\mathcal P:\mathscr p\cap H_\theta(P,\delta^{-\sigma},[\delta,1])\neq\varnothing\}\right|\le\delta^{O(\epsilon)}|\mathcal P|.
\end{equation}

Let $\mathcal P'\subset\mathcal P$ satisfy $
|\mathcal P'|\ge\delta^\epsilon|\mathcal P|$. Removing from $\mathcal P'$ the cubes which meet the high multiplicity set, we obtain a subcollection $\mathcal G\subset\mathcal P'$ with
\begin{equation}
\label{eq: good P prime large}
|\mathcal G|\gtrsim \delta^\epsilon|\mathcal P|
\end{equation}
and such that for every $\mathscr p\in\mathcal G$ and every $x\in \mathscr p$,
\begin{equation}
\label{eq: good fibre multiplicity bound}
\mathfrak m_{P,\theta}(x\mid[\delta,1])\le \delta^{-\sigma}.
\end{equation}
This implies that each $\pi_\theta$-fibre intersects at most $\lesssim\delta^{-\sigma}$ cubes in $\mathcal G$. Hence
\begin{equation}
\label{eq: projection lower by multiplicity}
|\pi_\theta(\mathcal P')|_\delta\ge|\pi_\theta(\mathcal G)|_\delta\gtrsim\delta^\sigma|\mathcal G|.
\end{equation}
Since $\mathcal P$ is $(\delta,t,\delta^{-\epsilon})$-regular, we have $|\mathcal P|\gtrsim \delta^{-t+O(\epsilon)}$. Combining this with \eqref{eq: good P prime large} and \eqref{eq: projection lower by multiplicity}, we obtain
$$
|\pi_\theta(\mathcal P')|_\delta\gtrsim\delta^{-t+\sigma+O(\epsilon)}.
$$
By choosing $\epsilon>0$ sufficiently small, one can ensure $t-\sigma-O(\epsilon)>u$. Therefore
$$
|\pi_\theta(\mathcal P')|_\delta\ge \delta^{-u}
$$
for all sufficiently small $\delta$, as required.
\end{proof}

\subsection{An inductive scheme for the exceptional estimate}
\label{subsec: inductive scheme projection}

We now set up the inductive scheme used to prove \Cref{thm: exceptional measure estimate}. The scheme is phrased in terms of the following statement.

\begin{defi}
\label{defi: Proj statement}
Let $s,\sigma\in(0,1)$ and $t\in(0,2)$. We say that $\Proj(s,\sigma,t)$
holds if the conclusion of \Cref{thm: exceptional measure estimate} holds with the parameters $s,\sigma,t$. More explicitly, this means that there exist constants
$$
\epsilon=\epsilon(s,\sigma,t)>0,\qquad\delta_0=\delta_0(s,\sigma,t)>0,
$$
such that whenever $\delta=p^{-n}\in(0,\delta_0]$, $\mu$ is a
$(\delta,t,\delta^{-\epsilon})$-regular measure on $\Q_p^2$, with $K:=\spt\mu$, and
$E\subset\Z_p$ is a $(\delta,s,\delta^{-\epsilon})$-set, there exists $\theta\in E$ such that
\begin{equation}
\label{eq: Proj conclusion}
\mu\bigl(\Z_p^2\cap H_\theta(K,\delta^{-\sigma},[\delta,1])\bigr)\le\delta^\epsilon.
\end{equation}
\end{defi}

If $\sigma'\ge \sigma$, then it is clear that
$$
\Proj(s,\sigma,t)\Longrightarrow \Proj(s,\sigma',t).
$$
Indeed, $H_\theta(K,\delta^{-\sigma'},[\delta,1])\subset H_\theta(K,\delta^{-\sigma},[\delta,1])$ since $\delta^{-\sigma'}\ge \delta^{-\sigma}$ for $\delta\in(0,1)$.

The proof of \Cref{thm: exceptional measure estimate} will be based on the following iteration step.

\begin{prop}
\label{prop: iterating projection}
Let $s\in(0,1)$, $t\in(s,2-s)$, and $\frac{t-s}{2}<\sigma<\frac t2$. Then there exists
$\zeta=\zeta(s,\sigma,t)>0$ such that
$$
\Proj(s,\sigma,t)\Longrightarrow \Proj(s,\sigma-\zeta,t).
$$
Moreover, $\zeta(s,\sigma,t)$ stays bounded away from $0$ as long as $(s,\sigma,t)$ ranges over a compact subset of the parameter region
$$
\left\{(s,\sigma,t):\ s\in(0,1),\ t\in(s,2-s),\ \frac{t-s}{2}<\sigma<\frac t2\right\}.
$$
\end{prop}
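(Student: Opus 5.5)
We follow the bootstrapping scheme of \cite[Proposition 4.8]{orponen2023projections}, with the ABC sum-product theorem (\Cref{thm: ABC sum-product}) supplying the gain. Assume $\Proj(s,\sigma,t)$ with constants $\epsilon_0,\delta_0$, and suppose towards a contradiction that $\Proj(s,\sigma-\zeta,t)$ fails. Then for arbitrarily small $\epsilon$ and $\delta$ there are a $(\delta,t,\delta^{-\epsilon})$-regular measure $\mu$ and a $(\delta,s,\delta^{-\epsilon})$-set $E$ such that
\[
\mu\bigl(H_\theta(K,\delta^{-\sigma+\zeta},[\delta,1])\bigr)>\delta^{\epsilon}
\]
for every $\theta\in E$.

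\emph{Step 1: two-scale decomposition.} Introduce an intermediate scale $\Delta=\delta^{1/2}$ (more generally $\delta^{\lambda}$ with $\lambda$ fixed), and apply $\Proj(s,\sigma,t)$ twice.
\begin{itemize}
\item At the coarse scale, to $\mu$ viewed at resolution $\Delta$ and to $E$ viewed as a $(\Delta,s,\Delta^{-O(\epsilon)})$-set.
\item At the fine scale, to the rescaled pieces $T_{z_0,\Delta}\mu|_{B(z_0,\Delta)}$, which are $(\delta/\Delta,t,\cdot)$-regular by regularity of $\mu$. Here the rescaling is handled by \Cref{lem: high multiplicity scaling}.
\end{itemize}
Multiplicity is submultiplicative across the two scales: $\mathfrak m_\theta(x\mid[\delta,1])\lesssim \mathfrak m_\theta(x\mid[\Delta,1])\cdot\max\mathfrak m_\theta(\cdot\mid[\delta,\Delta])$. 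Hence $\Proj(s,\sigma,t)$ at both scales would yield only the exponent $\sigma$. The counterassumption at exponent $\sigma-\zeta$ therefore forces near-extremal behaviour: for many $\theta$ (after passing to a large subset $E'\subset E$ via \Cref{rmk: many good directions} and pigeonholing), the multiplicity is essentially $\Delta^{-\sigma}$ at both scales on a set of $\delta^{O(\epsilon)}$ measure.

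\emph{Step 2: extracting an ABC configuration.} Fix a reference direction $\theta_0\in E'$. Pigeonhole the valuation of $\theta-\theta_0$, and normalize as described in the introduction: write $\theta-\theta_0=\rho^{-1}c_\theta$ with $c_\theta\in\Z_p^\times$, absorbing $\rho$ into the spatial rescaling. The coefficients $c_\theta$ then form a $(\delta',\gamma,\cdot)$-set in $(\Z/p^{n'}\Z)^\times$ with $\gamma\approx s$.

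Apply the linear change of variables $L_{\theta_0}$ so that $\pi_{\theta_0}$ becomes a coordinate projection. Inside a typical heavy $\pi_{\theta_0}$-tube at scale $\Delta$, the near-extremal multiplicity at both scales gives a product-like structure:
\begin{itemize}
\item a set $A\subset\Z/p^{n'}\Z$ of fibre positions with $|A|\approx\Delta^{-(t-\sigma)}$, so that $\alpha\approx t-\sigma$ after renormalizing exponents;
\item a set $B$ of points along the fibre which is $(\cdot,\beta,\cdot)$-non-concentrated with $\beta\approx\sigma$, inherited from the upper regularity of $\mu$ together with the fine-scale multiplicity lower bound.
\end{itemize}
Small projections in all directions $\theta$ then translate to $|\{a+c_\theta b:(a,b)\in G_\theta\}|\lesssim \delta^{-O(\zeta+\epsilon)}|A|$ on large graphs $G_\theta\subset A\times B$.

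\emph{Step 3: contradiction.} The hypotheses $\sigma>\frac{t-s}{2}$ and $\sigma<t/2$ are exactly what is needed to check $\beta\le\alpha$ and $\gamma>\alpha-\beta$, i.e. $s>t-2\sigma$. \Cref{thm: ABC sum-product} then gives some $c_\theta$ with expansion $\delta^{-\chi}$, contradicting the bound above once $\zeta,\epsilon\ll\chi(\alpha,\beta,\gamma)$. The uniformity of $\chi$ on compact parameter sets, from \Cref{thm: weak ABC sum-product}, gives the claimed uniformity of $\zeta$.

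\textbf{Main obstacle.} The delicate step is Step 2: producing genuinely product-structured sets $A,B$ with the correct non-concentration exponents from the multiplicity data, and doing so uniformly in $\theta$ across the rescaled direction charts. I would first try to follow Orponen--Shmerkin's argument verbatim, using upper regularity of $\mu$ to control $B$. I would then check carefully that the $p$-adic rescaling by $\rho$ (with terminal scale $\delta/\rho$) does not destroy the Frostman exponents. If $\rho$ is too small, the scale $\delta/\rho$ is too coarse to be useful; I would handle this by discarding directions with $\rho\le\delta^{O(\epsilon)}$, using the $s$-dimensional non-concentration of $E$.
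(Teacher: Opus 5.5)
Your endgame matches the paper's: it also contradicts \Cref{thm: ABC sum-product} with $(\alpha,\beta,\gamma)\approx(t-\sigma,\sigma,s)$, unit coefficients $c_\theta=\rho(\theta-\theta_0)$, and uniformity of $\zeta$ inherited from $\chi$. However, the mechanism that produces the ABC configuration is missing, and the two-scale route of your Step~1 cannot supply it.

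The Frostman condition on $B$ with exponent $\beta\approx\sigma$ is an \emph{upper} bound. Along the $\pi_{\theta_0}$-fibre through a point $x_0$, the number of occupied $\rho$-balls inside \emph{every} $R$-ball must be $\lesssim(R/\rho)^\sigma$, for all $\delta^{-\eta}\rho\le R\le 1$. Upper regularity of $\mu$ only controls two-dimensional mass at exponent $t$. Lower bounds on multiplicity or mass only control the size $|B|\gtrsim\rho^{-\sigma}$, never its non-concentration. Applying $\Proj(s,\sigma,t)$ on the two ranges $[\Delta,1]$ and $[\delta,\Delta]$ bounds the multiplicity only on those two ranges, not on $[\rho,R]$ for intermediate $R$. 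Your submultiplicativity inequality also involves a maximum over the whole fibre, which a measure statement like $\Proj$ does not control.

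The paper's key step is \Cref{thm: small slices imply sparse slices}. By rescaling and pigeonholing over pairs of scales, it upgrades $\Proj(s,\sigma,t)$ to the following: for a large set of directions, $\mathfrak m_{K,\theta}(x\mid[r,R])<(R/r)^\sigma$ for \emph{all} $r/R\le\delta^\eta$, outside a set of small measure. One then works with $K_\theta$, the points that are globally high (multiplicity $\ge\delta^{-(\sigma-\zeta)}$) but locally sparse in this sense. This pointwise information yields three things:
\begin{itemize}
\item \Cref{lem: 47 projection lower ball};
\item \Cref{lem: 47 projection upper tube}, which is the source of $|A|\lesssim\delta^{-O(\zeta)}\tau^{\sigma-t}$ and of the small projections of the graphs $G_\theta$;
\item \Cref{lem: 48 nonconcentration B T0}, which gives the exponent $\sigma$ for $B$.
\end{itemize}

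Second, your coefficient set is not automatically $s$-dimensional at the ABC resolution. The argument needs the separation $\rho=|\theta-\theta_0|_p$ to lie in a window like $[\delta^{1-\eta},\Delta]$, for two reasons. The tube lemmas need $\delta^{1-\eta}\le\rho\le\delta^\eta$. The ABC gain $\tau^{-\chi}$, with $\tau=\delta/\rho\le\delta^{\eta}$, must beat the $\delta^{-O(\zeta)}$ losses inherited from the counter-assumption, which forces $\eta\chi\gg\zeta$ (e.g.\ $\eta=\sqrt\zeta$).

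Pigeonholing valuations over all of $E'$ may select $\rho\gtrsim\delta^\eta$, where the tube structure degenerates. Localizing near $\theta_0$ does not fix this by itself, because a $(\delta,s)$-set need not branch at all inside small balls, so the coefficient set could be trivial. The paper first passes to a scale $\bar\delta\in[\delta,\delta^{\sqrt\zeta/12}]$ at which every $\Delta(E\cap I)$, $I\in\mathcal D_\Delta(E)$, is a $(\Delta,s-\sqrt\zeta)$-set (\Cref{lem: good branching scale for E}, \Cref{prop: branching scale for E}). Only then does it discard $|\theta-\theta_0|_p<\delta^{1-\eta}$ using that local non-concentration. Your threshold ``$\rho\le\delta^{O(\epsilon)}$'' is off both in direction and in size: an $O(\epsilon)$ margin cannot absorb $O(\zeta)$ losses when $\epsilon\ll\zeta$.

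Finally, the heavy tube must have width $\rho$, not $\Delta$. Only then is a single $\pi_{\theta_0}$-tube simultaneously a $\pi_\theta$-tube for every $\theta$ in the shell, with $\rho\pi_\theta(z)\equiv a+c_\theta b$ at resolution $\tau$ and $c_\theta$ a unit.
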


For later use, we also record the more quantitative form of the implication in
\Cref{prop: iterating projection}. Suppose that $\Proj(s,\sigma,t)$ holds with constants
$\epsilon_0,\Delta_0>0$. Thus, whenever $\Delta\in(0,\Delta_0]$, $\mu$ is a
$(\Delta,t,\Delta^{-\epsilon_0})$-regular measure and $E\subset\Z_p$ is a
$(\Delta,s,\Delta^{-\epsilon_0})$-set, there exists $\theta\in E$ satisfying
$$
\mu\bigl(\Z_p^2\cap H_\theta(\spt\mu,\Delta^{-\sigma},[\Delta,1])\bigr)\le\Delta^{\epsilon_0}.
$$
Then \Cref{prop: iterating projection} asserts that there exist constants
$$
\epsilon=\epsilon(\epsilon_0,s,\sigma,t)>0,
\qquad
\delta_0=\delta_0(\epsilon,\Delta_0,s,\sigma,t)>0
$$
such that, whenever $\delta\in(0,\delta_0]$, $\mu$ is a
$(\delta,t,\delta^{-\epsilon})$-regular measure and $E\subset\Z_p$ is a
$(\delta,s,\delta^{-\epsilon})$-set, there exists $\theta\in E$ satisfying
\begin{equation}
\label{eq: iterated Proj conclusion}
\mu\bigl(\Z_p^2\cap H_\theta(\spt\mu,\delta^{-(\sigma-\zeta)},[\delta,1])\bigr)\le\delta^\epsilon.
\end{equation}

We next record the base case of the iteration.

\begin{prop}
\label{prop: base case of projection}
Let $s\in(0,1)$ and $t\in(s,2-s)$. Then there exists $\eta=\eta(s,t)>0$ such that
$\Proj\left(s,\frac t2-\eta,t\right)
$ holds.
\end{prop}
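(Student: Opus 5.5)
The plan is to reduce the base case to the $p$-adic Bourgain projection theorem, \Cref{lem: padic Bourgain projection}, in the same way Orponen--Shmerkin derive their base case from Bourgain's theorem. Let $\mu$ be a $(\delta,t,\delta^{-\epsilon})$-regular measure with $K=\spt\mu$, and let $E\subset\Z_p$ be a $(\delta,s,\delta^{-\epsilon})$-set. First localize and rescale. The part of $K$ in $\Z_p^2$ is covered by $\delta$-cubes, and regularity gives the following for the family $\mathcal P:=\D_\delta(K\cap\Z_p^2)$ (assuming $\mu(\Z_p^2)\ge\delta^{\epsilon}$, since otherwise there is nothing to prove): $\mathcal P$ is a $(\delta,t,\delta^{-O(\epsilon)})$-set with $|\mathcal P|\ge\delta^{-t+O(\epsilon)}$. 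We may also assume $\diam K\gtrsim\delta^{O(\epsilon)}$; otherwise we rescale with \Cref{lem: high multiplicity scaling}, which preserves the multiplicity condition, up to a change of $\delta$ by a factor $\delta^{O(\epsilon)}$. Because $\mu$ is essentially uniform on $\mathcal P$, with mass $\approx|\mathcal P|^{-1}$ per cube up to $\delta^{\pm O(\epsilon)}$, measure statements and cardinality statements are interchangeable up to $\delta^{O(\epsilon)}$ losses.

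Next, apply \Cref{lem: padic Bourgain projection} with exponents $(s,t)$. This requires $t<2$, which holds because $t<2-s$. We take $\epsilon\ll\epsilon_{\mathrm B}(s,t)$ so that the hypotheses hold. This gives $\theta\in E$ such that $|\pi_\theta(\mathcal P')|_\delta\ge\delta^{-t/2-\eta_{\mathrm B}}$ for every $\mathcal P'\subset\mathcal P$ with $|\mathcal P'|\ge\delta^{\epsilon_{\mathrm B}}|\mathcal P|$. Set $\eta:=\eta_{\mathrm B}/2$, and argue by contradiction. Suppose $\mu(\Z_p^2\cap H_\theta(K,\delta^{-(t/2-\eta)},[\delta,1]))>\delta^{\epsilon}$. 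Let $\mathcal P'$ be the cubes of $\mathcal P$ meeting this high multiplicity set. By uniformity of $\mu$, $|\mathcal P'|\ge\delta^{O(\epsilon)}|\mathcal P|\ge\delta^{\epsilon_{\mathrm B}}|\mathcal P|$.

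Every $\mathscr p\in\mathcal P'$ contains a point $x$ whose fibre $\pi_\theta^{-1}(\pi_\theta(x))$ meets at least $\delta^{-(t/2-\eta)}$ distinct $\delta$-cubes of $K_\delta$. Since $\pi_\theta$ is $1$-Lipschitz in the max norm, the fibres through points of one $\delta$-ball $J$ of $\pi_\theta(\mathcal P')$ all lie in the $\pi_\theta$-tube $T_{\theta,J}$. Each such tube then contains at least $\delta^{-(t/2-\eta)}$ cubes of $\mathcal P$; here one must check that the cubes of $K_\delta$ counted by $\mathfrak m_\theta$ inside $\Z_p^2$ are cubes of $\mathcal P$ up to constants, which holds because $K$ is a union of $\delta$-balls. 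This lower bound alone does not bound $|\pi_\theta(\mathcal P')|_\delta$ from above, so I would use a dyadic-pigeonholing double count. Let $N$ be the number of tubes $T_{\theta,J}$ that meet $\mathcal P'$; then $N\ge\delta^{-t/2-\eta_{\mathrm B}}$. The $(\delta,t)$-regularity of $\mu$, in its lower-mass form, together with the tube geometry shows that each tube carrying high multiplicity carries $\mu$-mass $\gtrsim\delta^{t-(t/2-\eta)+O(\epsilon)}$. Summing over the $N$ disjoint tubes gives total mass $\ge N\delta^{t/2+\eta+O(\epsilon)}\ge\delta^{-\eta_{\mathrm B}+\eta+O(\epsilon)}>1$, which is a contradiction since $\mu$ is a probability measure.

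The main obstacle is this last step. The multiplicity number counts cubes of the neighbourhood $K_\delta$, possibly with different fibres inside a single $\delta$-ball of projections, and these cubes must be converted into $\mu$-mass lower bounds on whole $\pi_\theta$-tubes. In the ultrametric setting, all fibres through a $\delta$-ball of $\pi_\theta$-values coincide at resolution $\delta$ inside $\Z_p^2$, because $\pi_\theta^{-1}(J)\cap\Z_p^2$ is a union of $\delta$-cubes, so this conversion should be cleaner than in the Euclidean case. I would first verify this exact tube structure and then carry out the mass summation above.
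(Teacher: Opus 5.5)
Your proposal is correct and follows the same route as the paper. Both apply \Cref{lem: padic Bourgain projection} to $\mathcal P=\D_\delta(K\cap\Z_p^2)$, take the large subset to be the cubes meeting the high-multiplicity set, and use that each of the disjoint $\pi_\theta$-tubes hit by it carries at least $\delta^{-(t/2-\eta)}$ cubes. The paper counts these cubes against $|P|\le\delta^{-t-O(\epsilon)}$, while you sum $\mu$-mass; this is the same estimate. Your preliminary rescaling to ensure $\diam K\gtrsim\delta^{O(\epsilon)}$ is unnecessary, since regularity already gives $1=\mu(K)\le\delta^{-\epsilon}(\diam K)^t$.
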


\begin{proof} 
Choose $\eta:=\eta_{\mathrm{B}}/10$, and then choose $\varepsilon>0$ sufficiently small in terms of $s,t,\eta$. We will show that Projection$(s,t/2-\eta,t)$ holds with this choice of $\eta$.

Let $\delta\in(0,\delta_0]$, let $\mu$ be a $(\delta,t,\delta^{-\varepsilon})$-regular measure, let $K:=\spt\mu$, and let $E\subset\Z_p$ be a $(\delta,s,\delta^{-\varepsilon})$-set. We need to find $\theta\in E$ such that
$$
\mu\left(\Z_p^2\cap H_{\theta}\left(K,\delta^{-t/2+\eta},[\delta,1]\right)\right)\le\delta^{\varepsilon}.
$$
Suppose, to the contrary, that for every $\theta\in E$ one has
\begin{equation}\label{eq:base-bad-for-all-theta}
\mu(B_{\theta})>\delta^{\varepsilon},
\qquad B_{\theta}:=\Z_p^2\cap H_{\theta}\left(K,\delta^{-t/2+\eta},[\delta,1]\right).
\end{equation}

Let $P:=\mathcal D_{\delta}(K\cap\Z_p^2)$. By the $(\delta,t,\delta^{-\varepsilon})$-regularity of $\mu$, after increasing the implicit constants and choosing $\varepsilon>0$ sufficiently
small, the set $P$ is a $(\delta,t,\delta^{-\varepsilon_{\mathrm{B}}})$-set and
\begin{equation}\label{eq:base-P-size}
|P|\le \delta^{-t-O(\varepsilon)}.
\end{equation}
Moreover, \eqref{eq:base-bad-for-all-theta} implies $\mu(K\cap\Z_p^2)>\delta^{\varepsilon}$,
so the regularity of $\mu$ also gives
\begin{equation}\label{eq:base-P-lower-size}
|P|\ge \delta^{-t+O(\varepsilon)}.
\end{equation}
In particular, for $\delta_0$ and $\varepsilon$ sufficiently small, $P$ satisfies the size hypothesis in \Cref{lem: padic Bourgain projection}.

For each $\theta\in E$, let $P_{\theta}:=\mathcal D_{\delta}(B_{\theta})$ be the collection of $\delta$-cubes meeting $B_{\theta}$. Since every $\delta$-cube has $\mu$-mass at most $\delta^{t-O(\varepsilon)}$, \eqref{eq:base-bad-for-all-theta} gives
\begin{equation}\label{eq:base-P-theta-large}
|P_{\theta}|\ge\delta^{-t+O(\varepsilon)}\ge
\delta^{O(\varepsilon)}|P|.
\end{equation}
After decreasing $\varepsilon>0$ if necessary, this implies $|P_{\theta}|\ge \delta^{\varepsilon_{\mathrm{B}}}|P|$ for every $\theta\in E$.

We next show that every $P_{\theta}$ has small projection. Fix $\theta\in E$. If
$q\in\pi_{\theta}(P_{\theta})$ is a $\delta$-ball in $\Z_p$, then there exists
$x\in B_{\theta}$ whose $\pi_{\theta}$-value lies in $q$. Since
$x\in H_{\theta}(K,\delta^{-t/2+\eta},[\delta,1])$, the $\pi_{\theta}$-fibre through $x$, at scale $\delta$, meets at least $\delta^{-t/2+\eta}$ many $\delta$-cubes of $P$, up to an absolute constant. Therefore each $\delta$-ball in
$\pi_{\theta}(P_{\theta})$ accounts for at least $\gtrsim \delta^{-t/2+\eta}$ cubes of
$P$. Distinct $\delta$-balls in the projection correspond to disjoint families of
$\delta$-cubes. Hence
\begin{equation}\label{eq:base-small-projection}
|\pi_{\theta}(P_{\theta})|_{\delta}
\lesssim\delta^{t/2-\eta}|P|\le\delta^{-t/2-2\eta},
\end{equation}
where the last inequality follows from \eqref{eq:base-P-size} and from the choice
$\varepsilon\ll\eta$.

Now apply \Cref{lem: padic Bourgain projection} to the set $P$ and the direction set $E$. We obtain a direction $\theta\in E$ such that every subset $P'\subset P$ with
$|P'|\ge\delta^{\varepsilon_{\mathrm{B}}}|P|$ satisfies 
$$|\pi_\theta(P')|_\delta\ge \delta^{-t/2-\eta_B}.$$
Applying this with $P'=P_{\theta}$, which is allowed by \eqref{eq:base-P-theta-large}, gives
$$
|\pi_{\theta}(P_{\theta})|_{\delta}\ge
\delta^{-t/2-\eta_{\mathrm{B}}}.
$$
This contradicts \eqref{eq:base-small-projection}, since $\eta_{\mathrm{B}}=10\eta$ and $\delta$ is sufficiently small.

The contradiction shows that there exists $\theta\in E$ such that
$$
\mu\left(\Z_p^2\cap H_{\theta}\left(K,\delta^{-t/2+\eta},[\delta,1]\right)\right)\le\delta^{\varepsilon}.
$$
This is precisely Projection$(s,t/2-\eta,t)$.
\end{proof}

We now show that the base case and the iteration step imply
\Cref{thm: exceptional measure estimate}.

\begin{proof}[Proof of \Cref{thm: exceptional measure estimate}, assuming \Cref{prop: iterating projection}]
Fix $s\in(0,1)$ and $t\in(s,2-s)$. Let $\Sigma=\Sigma(s,t)$ be the infimum of the numbers $\sigma>(t-s)/2$ for which $\Proj(s,\sigma,t)$ holds. This means that $\Proj(s,\sigma,t)$ holds for every
$\sigma>\Sigma$.

By \Cref{prop: base case of projection}, there exists $\eta=\eta(s,t)>0$ such that
$\Proj\left(s,\frac t2-\eta,t\right)$ holds. Consequently, $\Sigma\le \frac t2-\eta<\frac t2$. We claim that $\Sigma=\frac{t-s}{2}$. Suppose, to the contrary, that $\Sigma>\frac{t-s}{2}$. Choose $\sigma>\Sigma$ so close to $\Sigma$ that
\begin{equation}
\label{eq: sigma close to Sigma}
\sigma-\zeta(s,\sigma,t)<\Sigma,
\end{equation}
where $\zeta(s,\sigma,t)>0$ is the constant from \Cref{prop: iterating projection}. Such a choice is possible because, under the counter assumption $\Sigma>(t-s)/2$, the parameter $\sigma$ may be restricted to a compact subinterval of $\left(\frac{t-s}{2},\frac t2\right)$, and hence $\zeta(s,\sigma,t)$ is bounded from below there.

Since $\sigma>\Sigma$, the statement $\Proj(s,\sigma,t)$ holds. By \Cref{prop: iterating projection}, this implies $\Proj(s,\sigma-\zeta(s,\sigma,t),t)$. But \eqref{eq: sigma close to Sigma} contradicts the definition of $\Sigma$. Hence $\Sigma=\frac{t-s}{2}$. Now let $
\sigma>\frac{t-s}{2}$. Then $\sigma>\Sigma$, so $\Proj(s,\sigma,t)$ holds. This is precisely \Cref{thm: exceptional measure estimate}.
\end{proof}

For the rest of this section, we prove \Cref{prop: iterating projection}.

\subsection{Small slices imply sparse slices}
\label{subsec: small slices imply sparse slices}

In this subsection, we record a local version of the projection statement $\Proj(s,\sigma,t)$. Roughly speaking, $\Proj(s,\sigma,t)$ says that, for a suitable
direction $\theta$, only a small portion of the measure can lie on $\pi_\theta$-fibres
of multiplicity at least $\delta^{-\sigma}$ between the scales $\delta$ and $1$. The result below shows that the same conclusion can be made simultaneously on all short scale intervals $[r,R]$ with $r/R$ sufficiently small.

\begin{defi}
\label{defi: local high multiplicity set}
Let $\delta\in(0,1/p]$, let $\rho\in p^{-\N}$, let $\sigma\in(0,1]$, and let
$\theta\in\Z_p$. If $K\subset\Q_p^2$, define the local high multiplicity set
$$
H_{\theta,\loc}(K,\sigma,\delta,\rho):=\bigcup_{\delta\le r\le R\le 1}H_\theta\left(K,\left(\frac Rr\right)^\sigma,[r,R]\right),
$$
where the union ranges over all $r,R\in p^{-\N}\cap[\delta,1]$ satisfying $\frac rR\le \rho$.
\end{defi}

Equivalently, $x\in H_{\theta,\loc}(K,\sigma,\delta,\rho)$ if and only if there exist $p$-adic radii $r,R\in[\delta,1]$ with $r\le \rho R$ such that
$$
\mathfrak m_{K,\theta}(x\mid[r,R])\ge\left(\frac Rr\right)^\sigma.
$$
In contrast to the Euclidean setting, no additional constant enlargement is needed
here: the $p$-adic scales are nested exactly, and every $\Delta_{j+1}$-cube is
contained in a unique $\Delta_j$-cube. Notice also that if $\rho\le\delta$, then
\begin{equation}
\label{eq: local equals global small rho}
H_{\theta,\loc}(K,\sigma,\delta,\rho)
\subset H_\theta(K,C_\sigma\delta^{-\sigma},[\delta,1])
\end{equation}
for a constant $C_\sigma\ge 1$. Indeed, the condition $r/R\le\rho\le\delta$ and
$r\ge\delta$ forces $R\sim 1$ in the $p$-adic scale range, up to a bounded change
in constants.

\begin{thm}
\label{thm: small slices imply sparse slices}
Let $s,\sigma\in(0,1)$ and let $t\in(s,2-s)$. Assume that $\Proj(s,\sigma,t)$ holds
with constants $\Delta_0,\epsilon_0>0$. In other words, whenever $\Delta=p^{-m}\in(0,\Delta_0]$, $\nu$ is a
$(\Delta,t,\Delta^{-\epsilon_0})$-regular measure on $\Q_p^2$, with $L:=\spt\nu$, and $F\subset\Z_p$ is a $(\Delta,s,\Delta^{-\epsilon_0})$-set, there exists $\theta\in F$ such that
$$
\nu\bigl(\Z_p^2\cap H_\theta(L,\Delta^{-\sigma},[\Delta,1])\bigr)\le\Delta^{\epsilon_0}.
$$
Then, for every $\eta\in(0,1]$, there exist
$$
\epsilon=\epsilon(\eta,\epsilon_0)>0,
\qquad\delta_0=\delta_0(\Delta_0,\eta,\epsilon)>0
$$
such that the following holds for all $\delta=p^{-n}\in(0,\delta_0]$.

Let $\mu$ be a $(\delta,t,\delta^{-\epsilon})$-regular measure on $\Q_p^2$, with
$K:=\spt\mu$, and let $E\subset\Z_p$ be a $(\delta,s,\delta^{-\epsilon})$-set. Then
there exists $\theta\in E$ such that
\begin{equation}
\label{eq: local sparse slices conclusion}
\mu\bigl(\Z_p^2\cap H_{\theta,\loc}(K,\sigma,\delta,\delta^\eta)\bigr)\le
\delta^\epsilon.
\end{equation}
\end{thm}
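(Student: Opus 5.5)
We deduce \Cref{thm: small slices imply sparse slices} from $\Proj(s,\sigma,t)$ by rescaling, following \cite[Section 4]{orponen2023projections}. The strategy is to fix the scale ratio, localize to each $R$-ball, apply the hypothesis there, and then combine the good directions over all balls and pairs of scales.

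\textbf{Discretizing the scales.} Since $r,R\in p^{-\N}\cap[\delta,1]$, there are only $O(\log^2(1/\delta))$ admissible pairs $(r,R)$ with $r/R\le\delta^\eta$. It therefore suffices to show the following single-pair statement. For each fixed pair, the set of directions $\theta\in E$ for which
\[
\mu\bigl(\Z_p^2\cap H_\theta(K,(R/r)^\sigma,[r,R])\bigr)>\delta^{2\epsilon}
\]
is a small portion of $E$. Concretely, its cardinality should be at most $\delta^{c}|E|$, or at least it should fail to be a $(\delta,s,\delta^{-\epsilon})$-set in a quantitative way. A union bound over the pairs then leaves some $\theta\in E$ that is good for all pairs simultaneously. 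The total bad mass is at most $\log^2(1/\delta)\,\delta^{2\epsilon}\le\delta^\epsilon$.

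\textbf{Localization for a fixed pair.} Fix $(r,R)$ and set $\Delta:=r/R\le\delta^\eta$. Cover $K\cap\Z_p^2$ by $R$-balls $Q=B(z_0,R)$ and apply the homothety $T_{z_0,R}$. By \Cref{lem: high multiplicity scaling}, we have
\[
T_Q\bigl(H_\theta(K,\Delta^{-\sigma},[r,R])\cap Q\bigr)\subset H_\theta(T_Q(K),\Delta^{-\sigma},[\Delta,1]).
\]
The fibre condition $\pi_\theta$ is preserved by the homothety, because both coordinates are scaled by the same factor. The renormalized measure $\mu^Q:=\mu(Q)^{-1}T_Q(\mu|_Q)$ is $(\Delta,t,\delta^{-O(\epsilon)})$-regular, since regularity is two-sided and $\mu(Q)\approx R^t$. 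Because $\Delta\le\delta^\eta$, we have $\delta^{-O(\epsilon)}\le\Delta^{-O(\epsilon)/\eta}\le\Delta^{-\epsilon_0}$ once $\epsilon\ll\eta\epsilon_0$. To make $\mu^Q$ a union of $\Delta$-balls, we replace $K$ by its $r$-neighbourhood at scale $r$; this is harmless because $K_r$ already appears in the definition of $\mathfrak m$. We also pass from $E$ to a $(\Delta,s,\Delta^{-\epsilon_0})$-set at scale $\Delta$. Any subset $F\subset E$ with $|F|\ge\delta^{\epsilon}|E|$ has $\D_\Delta(F)$ a $(\Delta,s,\delta^{-O(\epsilon)})$-set, after pigeonholing to uniform branching via \Cref{lem: pigeonhole} and \Cref{lem: passing to coarse parents}. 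Moreover, membership in $H_\theta$ at scales $[\Delta,1]$ depends only on $\theta$ modulo $\Delta$, so directions can be treated as $\Delta$-balls.

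\textbf{Combining over balls (the main obstacle).} For each $Q$, $\Proj(s,\sigma,t)$ applied to $\mu^Q$ gives only one good direction, whereas we need one $\theta$ that works for most $Q$ simultaneously. The fix is to upgrade to the many-directions form, as in \Cref{rmk: many good directions}. Suppose the bad directions $F_Q\subset E$ for $Q$ had $|F_Q|\ge\delta^{\epsilon}|E|$. Then $F_Q$ would itself be a $(\Delta,s,\Delta^{-\epsilon_0})$-set, and applying $\Proj$ to it would produce a good direction inside $F_Q$, a contradiction. Hence $|F_Q|\le\delta^\epsilon|E|$ for every $Q$. By Fubini over $\mu$-weighted $Q$, the set of $\theta\in E$ whose bad-ball mass $\sum_{Q:\theta\in F_Q}\mu(Q)$ exceeds $\delta^{\epsilon/2}$ has size at most $\delta^{\epsilon/2}|E|$. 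For the remaining $\theta$, the bad mass is at most
\[
\sum_Q\mu(Q)\Delta^{\epsilon_0}+\delta^{\epsilon/2}\le\delta^{\eta\epsilon_0}+\delta^{\epsilon/2}.
\]
We then union bound over the $O(\log^2(1/\delta))$ pairs, rename $\epsilon$, and take $\delta_0$ small so that $\delta^\eta\le\Delta_0$. The delicate bookkeeping is to keep the Frostman constants of the subsets $F_Q$ and of the rescaled measures within $\Delta^{-\epsilon_0}$, which is why $\epsilon$ depends on both $\eta$ and $\epsilon_0$.
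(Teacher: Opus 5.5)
Your proposal is correct and is essentially the paper's argument. In both, you rescale an $R$-ball by the homothety $T_{z_0,R}$, use $\Delta=r/R\le\delta^\eta$ together with $\epsilon\ll\eta\epsilon_0$ to convert the $\delta^{-O(\epsilon)}$ regularity and Frostman losses into $\Delta^{-\epsilon_0}$, and apply $\Proj(s,\sigma,t)$ to a dense set of bad directions coarsened to scale $\Delta$. The only difference is bookkeeping: the paper argues by contradiction, pigeonholing a common pair $(r,R)$ and then a common $R$-ball shared by many bad directions, whereas you show directly that every ball has few bad directions, then average over balls by Fubini and union-bound over the $O(\log^2(1/\delta))$ pairs. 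Your smoothing of $\mu$ at scale $r$, which makes the rescaled support a union of $\Delta$-balls, is a detail the paper leaves implicit.
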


\begin{proof}
We argue by contradiction. Fix $\eta\in(0,1]$, and choose $0<\epsilon\ll \eta\epsilon_0$ sufficiently small. Suppose that the conclusion fails for arbitrarily small $\delta=p^{-n}$. Then there exist a $(\delta,t,\delta^{-\epsilon})$-regular measure $\mu$, with $K:=\spt\mu$, and a $(\delta,s,\delta^{-\epsilon})$-set $E\subset\Z_p$ such that
\begin{equation}
\label{eq: local sparse contradiction}
\mu\bigl(H_{\theta,\loc}(K,\sigma,\delta,\delta^\eta)\bigr)>
\delta^\epsilon,\qquad \theta\in E.
\end{equation}

For each $\theta\in E$, by the definition of the local high multiplicity set, there exist
$p$-adic radii
$$
\delta\le r_\theta\le R_\theta\le 1,
\qquad\frac{r_\theta}{R_\theta}\le\delta^\eta,
$$
such that
$$
K_\theta:=
K\cap H_\theta\left(K,\left(\frac{R_\theta}{r_\theta}\right)^\sigma,
[r_\theta,R_\theta]\right)
$$
satisfies $\mu(K_\theta)\gtrapprox_\delta\delta^\epsilon$. The loss hidden in $\gtrapprox_\delta$ comes only from pigeonholing over the
$O(\log^2(1/\delta))$ possible pairs of $p$-adic radii.

Pigeonholing again over the possible pairs $(r_\theta,R_\theta)$, we find radii
$r,R\in p^{-\N}\cap[\delta,1]$ with $r/R\le\delta^\eta$ and a subset $E'\subset E$ such that
\begin{equation}
\label{eq: E prime large}
|E'|_\delta\gtrapprox_\delta |E|_\delta,
\end{equation}
and for every $\theta\in E'$,
\begin{equation}
\label{eq: Ktheta fixed scales}
\mu\left(K\cap H_\theta\left(K,\left(\frac Rr\right)^\sigma,[r,R]\right)\right)
\gtrapprox_\delta\delta^\epsilon .
\end{equation}
Put $\Delta:=r/R$. Then $\Delta\le\delta^\eta$, and hence, for $\delta_0$ sufficiently small,
$\Delta\le\Delta_0$.

We next localize the measure to an $R$-ball, but we do so with an absolute mass lower
bound. Let
$$
K_{\theta,R}:=K\cap H_\theta\left(K,\Delta^{-\sigma},[r,R]\right),\qquad \theta\in E'.
$$
Cover $K$ by $R$-balls. Since $\mu$ is $(\delta,t,\delta^{-\epsilon})$-regular, every $R$-ball
$B$ meeting $K$ satisfies
\begin{equation}
\label{eq: R ball mass comparable}
\delta^\epsilon R^t\le \mu(B)\le \delta^{-\epsilon}R^t .
\end{equation}
Moreover, the number of such $R$-balls is $\lesssim \delta^{-\epsilon}R^{-t}$. For each
$\theta\in E'$, the lower bound \eqref{eq: Ktheta fixed scales} and the upper bound in
\eqref{eq: R ball mass comparable} imply, by pigeonholing over the $R$-balls, that there is an $R$-ball $B$ with
$$
\mu(K_{\theta,R}\cap B)\gtrapprox_\delta \delta^{O(\epsilon)}R^t .
$$
A further pigeonholing over the possible $R$-balls gives an $R$-ball $B:=B(z_0,R)$ and a
subset $E''\subset E'$ such that $|E''|_\delta\gtrapprox_\delta |E|_\delta$, and for every $\theta\in E''$,
\begin{equation}
\label{eq: localized bad mass absolute}
\mu(K_{\theta,R}\cap B)\gtrapprox_\delta \delta^{O(\epsilon)}R^t .
\end{equation}
In particular, $B\cap K\neq\emptyset$, so \eqref{eq: R ball mass comparable} gives
\begin{equation}
\label{eq: selected B mass comparable}
\delta^\epsilon R^t\le \mu(B)\le \delta^{-\epsilon}R^t .
\end{equation}
Dividing \eqref{eq: localized bad mass absolute} by \eqref{eq: selected B mass comparable},
we also have, for every $\theta\in E''$,
\begin{equation}
\label{eq: localized bad mass relative corrected}
\frac{\mu(K_{\theta,R}\cap B)}{\mu(B)}
\gtrapprox_\delta \delta^{O(\epsilon)} .
\end{equation}

Now rescale $B$ to $\Z_p^2$. Let
$$
T_B(z):=\frac{z-z_0}{R},\qquad\nu:=\frac{1}{\mu(B)}T_{B\#}(\mu|_B),
$$
and let $L:=\spt\nu\subset\Z_p^2$. We claim that $\nu$ is a $(\Delta,t,\Delta^{-\epsilon_0})$-regular measure, provided $\epsilon>0$ is sufficiently small
in terms of $\eta,\epsilon_0$. Indeed, for every $u\in L$ and every $\Delta\le \rho\le 1$,
the ball $B(u,\rho)$ in the rescaled coordinates corresponds to a ball of radius $R\rho$ in the original coordinates. Using the regularity of $\mu$ and \eqref{eq: selected B mass comparable},
we obtain
$$
\nu(B(u,\rho))\le\frac{\delta^{-\epsilon}(R\rho)^t}{\delta^\epsilon R^t}\le\delta^{-2\epsilon}\rho^t
$$
and similarly, since $u\in L$,
$$
\nu(B(u,\rho))\ge\frac{\delta^\epsilon(R\rho)^t}{\delta^{-\epsilon}R^t}\ge\delta^{2\epsilon}\rho^t .
$$
Since $\Delta\le\delta^\eta$, choosing $\epsilon\le \eta\epsilon_0/10$ gives
$$
\delta^{-2\epsilon}\le \Delta^{-\epsilon_0},
\qquad\delta^{2\epsilon}\ge \Delta^{\epsilon_0},
$$
after harmless adjustment of constants. Hence $\nu$ is
$(\Delta,t,\Delta^{-\epsilon_0})$-regular.

By \Cref{lem: high multiplicity scaling}, \eqref{eq: localized bad mass relative corrected}
implies that for every $\theta\in E''$,
\begin{equation}
\label{eq: rescaled high multiplicity large}
\nu\bigl(H_\theta(L,\Delta^{-\sigma},[\Delta,1])\bigr)
\gtrapprox_\delta \delta^{O(\epsilon)} .
\end{equation}
After decreasing $\epsilon>0$ and taking $\delta>0$ sufficiently small, the right-hand side is larger than $\Delta^{\epsilon_0}$.

It remains to pass the direction set from scale $\delta$ to scale $\Delta$. Since
$E''\subset E$ and $|E''|_\delta\gtrapprox_\delta |E|_\delta$, $E''$ is a $(\delta,s,\delta^{-O(\epsilon)})$-set. Passing to a maximal $\Delta$-separated subset and using the $p$-adic covering tree, we obtain a set $F\subset E''$ such that $F$ is a
$(\Delta,s,\Delta^{-\epsilon_0})$-set and
$|F|_\Delta\gtrsim \Delta^{-s+\epsilon_0}$. Here we used again $\Delta\le\delta^\eta$ and $\epsilon\ll \eta\epsilon_0$.

Now apply $\Proj(s,\sigma,t)$ at scale $\Delta$ to the measure $\nu$ and the direction set $F$. We obtain a direction $\theta\in F$ such that
$$
\nu\bigl(H_\theta(L,\Delta^{-\sigma},[\Delta,1])\bigr)\le \Delta^{\epsilon_0}.
$$
This contradicts \eqref{eq: rescaled high multiplicity large}. Therefore the assumed failure of the local sparse-slice conclusion is impossible, and the proof is complete.
\end{proof}

\begin{rmk}
\label{rmk: local sparse constants}
The constants in \Cref{thm: small slices imply sparse slices} may be chosen uniformly as long as $(s,\sigma,t)$ stays in a compact subset of the parameter range
$$
s\in(0,1),\qquad t\in(s,2-s),\qquad \sigma\in(0,1).
$$
This follows from the corresponding uniformity in the constants for $\Proj(s,\sigma,t)$ and from the elementary nature of the pigeonholing and rescaling steps above.
\end{rmk}

\subsection{Fixing parameters}
\label{subsec: fixing parameters projection}

We now begin the proof of \Cref{prop: iterating projection}. Fix $s\in(0,1),t\in(s,2-s)$, and let $\frac{t-s}{2}<\sigma<\frac t2$. The purpose of this subsection is to choose the parameters used in the proof and to reduce \Cref{prop: iterating projection} to a contradiction argument.

For the sake of keeping track of uniformity, it is useful to work with a compact subinterval of the admissible range for $\sigma$. Thus, fix numbers
$$
\frac{t-s}{2}<\sigma_0\le \sigma\le \sigma_1<\frac t2.
$$
All constants below are allowed to depend on $s,t,\sigma_0,\sigma_1$, but not on the particular choice of $\sigma\in[\sigma_0,\sigma_1]$. At the end of the proof this will imply that the improvement parameter in \Cref{prop: iterating projection} stays bounded away from zero as long as $\sigma$ stays bounded away from the endpoints $(t-s)/2$ and $t/2$.

We first record the parameter margins needed for the $p$-adic ABC sum-product theorem. The natural exponents are
$$
\alpha_1:=t-\sigma,\qquad \beta_1:=\sigma,\qquad \gamma_1:=s.
$$
They satisfy the admissibility conditions for \Cref{thm: ABC sum-product}. Indeed,
$0<\beta_1<\alpha_1$ because $\sigma<t/2$, and $\alpha_1<1$ because
$$
t-\sigma<t-\frac{t-s}{2}=\frac{s+t}{2}<1,
$$
using $t<2-s$. Finally, $\gamma_1>\alpha_1-\beta_1$ is equivalent to $s>t-2\sigma$, which follows from $\sigma>(t-s)/2$.

Since these inequalities are strict uniformly for $\sigma\in[\sigma_0,\sigma_1]$, there exists
$$
\zeta_0=\zeta_0(s,t,\sigma_0,\sigma_1)>0
$$
such that the perturbed parameters
\begin{equation}
\label{eq: ABC perturbed parameters}
\alpha:=t-\sigma+\zeta_0,\qquad
\beta:=\sigma-\zeta_0,\qquad
\gamma:=s-\zeta_0
\end{equation}
still satisfy
\begin{equation}
\label{eq: ABC parameter inequalities}
0<\beta\le \alpha<1,\qquad\gamma>\alpha-\beta.
\end{equation}
After decreasing $\zeta_0$ if necessary, we may also assume
\begin{equation}
\label{eq: zeta0 small}
0<\zeta_0<\frac{1}{100}
\min\left\{s,\,\sigma_0-\frac{t-s}{2},\,
\frac t2-\sigma_1\right\}.
\end{equation}

Apply \Cref{thm: ABC sum-product} with the parameters $(\alpha,\beta,\gamma)$. We obtain constants
$$
\chi=\chi(\alpha,\beta,\gamma)>0,
\qquad\delta_{\ABC}=\delta_{\ABC}(\alpha,\beta,\gamma)>0
$$
such that the following holds for all $\Delta=p^{-m}\le\delta_{\ABC}$. If $A,B\subset\Z/p^m\Z$ satisfy $|A|\le \Delta^{-\alpha}$, if $B$ is a nonempty $(\Delta,\beta,\Delta^{-\chi})$-set, and if
$C\subset (\Z/p^m\Z)^\times$ is a nonempty $(\Delta,\gamma,\Delta^{-\chi})$-set, then there exists $c\in C$ such that
\begin{equation}
\label{eq: ABC conclusion fixed parameters}
|\{a+cb:(a,b)\in G\}|_\Delta\ge\Delta^{-\chi}|A|
\end{equation}
for every $G\subset A\times B$ satisfying $
|G|\ge \Delta^\chi |A||B|$. Since $\sigma\in[\sigma_0,\sigma_1]$, the triples in \eqref{eq: ABC perturbed parameters} range over a compact subset of the admissible ABC parameter region. After decreasing $\chi$ and $\delta_{\ABC}$ if necessary, we may choose them uniformly for all such $\sigma$.

We now choose the improvement parameter in \Cref{prop: iterating projection}. Let
\begin{equation}
\label{eq: zeta choice projection}
0<\zeta\ll \min\{\zeta_0,\chi\}.
\end{equation}
More explicitly, $\zeta>0$ is chosen sufficiently small so that all losses of size
$O(\zeta)$, $O(\sqrt{\zeta})$, and $O_\zeta(\epsilon)$ appearing later can be absorbed into the margins in \eqref{eq: ABC parameter inequalities}. In particular, we will use
\begin{equation}
\label{eq: ABC margin after losses}
s-10\sqrt{\zeta}>\bigl(t-\sigma+10\zeta\bigr)-\bigl(\sigma-10\sqrt{\zeta}\bigr),
\end{equation}
and
\begin{equation}
\label{eq: beta alpha margins after losses}
0<\sigma-10\sqrt{\zeta}\le
t-\sigma+10\zeta<1.
\end{equation}
The choice is possible by \eqref{eq: ABC parameter inequalities} and
\eqref{eq: zeta0 small}.

Assume that $\Proj(s,\sigma,t)$ holds with constants $\epsilon_0>0,\qquad \Delta_0>0$. We shall prove that $\Proj(s,\sigma-\zeta,t)$ holds. Let $0<\eta\ll \zeta$ be a parameter to be fixed later. Applying \Cref{thm: small slices imply sparse slices} with this $\eta$, and with the constants $\epsilon_0,\Delta_0$ from $\Proj(s,\sigma,t)$, gives
$$\epsilon_1=\epsilon_1(\eta,\epsilon_0)>0,\quad\delta_1=\delta_1(\Delta_0,\eta,\epsilon_1)>0$$
such that the local sparse-slice conclusion \eqref{eq: local sparse slices conclusion} holds for all $\delta\le\delta_1$.

We now choose $0<\epsilon\ll_{\zeta,\eta,\chi}\epsilon_1$ sufficiently small. Finally, choose
$$
\delta_0=\delta_0(s,t,\sigma,\zeta,\eta,\epsilon,\delta_{\ABC},\delta_1)>0
$$
sufficiently small so that all subsequent applications of uniformization, pigeonholing, rescaling, sparse-slice extraction, and \Cref{thm: ABC sum-product} are valid. In particular, whenever a local scale $\Delta$ appears later, we will have $
\Delta\le \min\{\delta_{\ABC},\Delta_0\}$.

We now make the counter assumption. Suppose that $\Proj(s,\sigma-\zeta,t)$ fails with the
constant $\epsilon>0$ chosen above. Then, for arbitrarily small $\delta=p^{-n}\in(0,\delta_0]$, there exist:
\begin{itemize}
    \item a $(\delta,t,\delta^{-\epsilon})$-regular measure $\mu$ on $\Q_p^2$, with
    $K:=\spt\mu\subset\Z_p^2$;
    \item a $(\delta,s,\delta^{-\epsilon})$-set $E\subset\Z_p$;
\end{itemize}
such that, for every $\theta\in E$, $\mu\bigl(H_\theta(K,\delta^{-(\sigma-\zeta)},[\delta,1])\bigr)>\delta^\epsilon$.

On the other hand, since $\Proj(s,\sigma,t)$ holds, \Cref{thm: small slices imply sparse slices} applied to $\mu$ and $E$ gives a direction $\theta_0\in E$ such that
\begin{equation}
\label{eq: sparse slices theta0}
\mu\bigl(H_{\theta_0,\loc}(K,\sigma,\delta,\delta^\eta)\bigr)\le\delta^{\epsilon_1}.
\end{equation}
After decreasing $\epsilon$ if necessary, we may replace the right-hand side by $\delta^{O(\epsilon)}$ in later estimates.

We now normalize this sparse direction to be $0$. Define the affine map
$$
L_{\theta_0}(x,y):=(x+\theta_0 y,y).
$$
Then $\pi_0(L_{\theta_0}(x,y))=\pi_{\theta_0}(x,y)$, and, more generally, $\pi_{\theta-\theta_0}(L_{\theta_0}(x,y))=\pi_\theta(x,y)$. The map $L_{\theta_0}$ is bi-Lipschitz on $\Z_p^2$, and in fact preserves $p$-adic balls up to absolute constants since $\theta_0\in\Z_p$. Thus it preserves all regularity and multiplicity estimates up to harmless
constants. Replacing $K,\mu,E$ by
$$
L_{\theta_0}(K),\quad (L_{\theta_0})_\#\mu,\quad E-\theta_0,
$$
where $(L_{\theta_0})_\#\mu$ denotes the pushforward of $\mu$ under
$L_{\theta_0}$. We may assume from now on that $0\in E$ and that the sparse local estimate holds in the direction $0$:
\begin{equation}
\label{eq: sparse slices zero}
\mu\bigl(H_{0,\loc}(K,\sigma,\delta,\delta^\eta)
\bigr)\le\delta^{O(\epsilon)}.
\end{equation}
The counter assumption becomes
\begin{equation}
\label{eq: counter assumption normalized}
\mu\bigl(H_\theta(K,\delta^{-(\sigma-\zeta)},[\delta,1])\bigr)>\delta^{O(\epsilon)},
\qquad \theta\in E.
\end{equation}
Moreover, $E$ remains a $(\delta,s,\delta^{-O(\epsilon)})$-set. By replacing $\epsilon$ with a sufficiently small multiple of itself, we will continue to write $\delta^{-\epsilon}$ and $\delta^\epsilon$ for these harmless $O(\epsilon)$ losses.

The rest of the proof will derive a contradiction to \Cref{thm: ABC sum-product} from \eqref{eq: sparse slices zero} and \eqref{eq: counter assumption normalized}. The next step is to find a scale at which the direction set $E$ has good branching properties.

\subsection{Finding a branching scale for $E$}
\label{subsec: branching scale for E}

We continue under the assumptions and notation of \Cref{subsec: fixing parameters projection}. Thus $E\subset\Z_p$ is a $(\delta,s,\delta^{-\epsilon})$-set, the sparse direction has been normalized to $0$, and we have
\begin{equation}
\label{eq: sparse slices zero before branching}
\mu\bigl(H_{0,\loc}(K,\sigma,\delta,\delta^\eta)\bigr)\le\delta^{O(\epsilon)}
\end{equation}
and
\begin{equation}
\label{eq: counter assumption before branching}
\mu\bigl(H_\theta(K,\delta^{-(\sigma-\zeta)},[\delta,1])\bigr)>\delta^{O(\epsilon)},
\qquad \theta\in E.
\end{equation}
The purpose of this subsection is to replace $\delta$ by a slightly larger scale $\bar\delta$ so that the direction set has good local branching at scale $\bar\Delta:=\bar\delta^{1/2}$.

We first make $E$ uniform. Choose an integer $T\ge 1$ sufficiently large, depending on $\epsilon,\zeta$, and write $\Delta_0:=p^{-T}$. After decreasing $\delta$ by a harmless factor, we may assume $\delta=\Delta_0^m=p^{-Tm}$ for some integer $m$. By the uniformization lemma from \Cref{sec: Preliminaries}, we may pass to a subset $E'\subset E$, which is $\{\Delta_0^j\}_{j=0}^m$-uniform and satisfies
$|E'|_\delta\ge\delta^{O(\epsilon)}|E|_\delta$. In particular, $E'$ is a $(\delta,s,\delta^{-O(\epsilon)})$-set. Since \eqref{eq: counter assumption before branching} holds for every $\theta\in E$, it also holds for every $\theta\in E'$. Replacing $E$ by $E'$, we assume from now on that $E$ itself is $\{\Delta_0^j\}_{j=0}^m$-uniform.

Let $N_j:=|E\cap I|_{\Delta_0^j}$ for $I\in\mathcal D_{\Delta_0^{j-1}}(E)$, which is independent of $I$ by uniformity. Define the branching function $f:[0,m]\to[0,m]$ by
$$
f(0)=0,\qquad f(j):=\frac{\log |E|_{\Delta_0^j}}{\log \Delta_0^{-1}}
=\sum_{i=1}^j \frac{\log N_i}{\log \Delta_0^{-1}}, \qquad 0\le j\le m,
$$
and extend $f$ linearly on each interval $[j,j+1]$. Since $E\subset\Z_p$, the function $f$ is non-decreasing and $1$-Lipschitz. Since $E$ is a $(\delta,s,\delta^{-O(\epsilon)})$-set, the branching function satisfies
\begin{equation}
\label{eq: E branching superlinear}
f(y)-f(x)\ge s(y-x)-O(\epsilon)m-O_T(1),
\qquad 0\le x\le y\le m.
\end{equation}
Indeed, for every $I\in\mathcal D_{\Delta_0^x}(E)$, the definition of a $(\delta,s,\delta^{-O(\epsilon)})$-set gives
$$
|E\cap I|_\delta\le\delta^{-O(\epsilon)}(\Delta_0^x)^s |E|_\delta,
$$
and uniformity converts this into \eqref{eq: E branching superlinear}.

We shall find an integer $\bar m$ with
\begin{equation}
\label{eq: mbar range}
\frac{\sqrt{\zeta}}{12}m\le \bar m\le \frac m2
\end{equation}
such that $f$ is sufficiently superlinear on all intervals of the form $[\bar m,2\bar m]$ relative to the descendants of every $\Delta_0^{\bar m}$-parent. The following elementary consequence of \eqref{eq: E branching superlinear} is the key point.

\begin{lemma}
\label{lem: good branching scale for E}
There exists an integer $\bar m$ satisfying \eqref{eq: mbar range} such that, for every $
0\le k\le \bar m$, one has
\begin{equation}
\label{eq: good branching inequality}
f(\bar m+k)-f(\bar m)\ge(s-\sqrt{\zeta})k-O_\zeta(\epsilon)\bar m-O_T(1).
\end{equation}
\end{lemma}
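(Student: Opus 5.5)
The plan is to read the lemma off directly from the global superlinearity estimate \eqref{eq: E branching superlinear}, with no further pigeonholing. The key observation is that \eqref{eq: E branching superlinear} holds for \emph{all} pairs $0\le x\le y\le m$. Its only defect, compared with \eqref{eq: good branching inequality}, is that the error term is $O(\epsilon)m$ rather than $O(\epsilon)\bar m$. That error can be converted as soon as $\bar m$ is comparable to $m$, and the lower endpoint $\frac{\sqrt\zeta}{12}m$ in \eqref{eq: mbar range} guarantees this.

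Concretely, I would take $\bar m:=\lfloor m/2\rfloor$. Since $\zeta<1$ gives $\frac{\sqrt\zeta}{12}<\frac12$, this satisfies \eqref{eq: mbar range} once $m$ is large, that is, once $\delta_0$ is small. Next, fix $0\le k\le \bar m$. Then $\bar m+k\le 2\bar m\le m$, so \eqref{eq: E branching superlinear} applies with $x=\bar m$ and $y=\bar m+k$:
$$
f(\bar m+k)-f(\bar m)\ge sk-O(\epsilon)m-O_T(1).
$$
Since $\bar m\ge \frac{\sqrt\zeta}{12}m$, we have $m\le 12\zeta^{-1/2}\bar m$. Hence $O(\epsilon)m\le O(\zeta^{-1/2}\epsilon)\bar m=O_\zeta(\epsilon)\bar m$. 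Finally, $sk\ge(s-\sqrt\zeta)k$ because $k\ge0$. Together these give \eqref{eq: good branching inequality}.

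I do not expect a genuine obstacle; the step to check carefully is the justification of \eqref{eq: E branching superlinear} itself, which I take as given. That estimate uses uniformity of $E$: the quantity $|E\cap I|_\delta=\prod_{i>x}N_i$ is the same for every $I\in\mathcal D_{\Delta_0^x}(E)$, and the same product structure holds at every intermediate scale. Combined with the $(\delta,s,\delta^{-O(\epsilon)})$-condition at the scale $\Delta_0^y$, this gives the comparison between $f(y)-f(x)$ and $s(y-x)$, with the $O_T(1)$ coming from non-integer interpolation. If that estimate were only available at $x=0$, I would instead run a convex-minorant pigeonholing over the dyadic-type range $[\frac{\sqrt\zeta}{12}m,\frac m2]$ to locate $\bar m$. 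This is where the $\sqrt\zeta$ slack in the slope would be spent, but with \eqref{eq: E branching superlinear} as stated the slack is not needed.
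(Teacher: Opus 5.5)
Your derivation is fine \emph{given} \eqref{eq: E branching superlinear}, but that display is false for general pairs $0\le x\le y\le m$, and your sketched justification breaks exactly there.

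Uniformity plus the non-concentration condition at scale $\Delta_0^y$ only yield $f(y)\ge sy-O(\epsilon)m-O_T(1)$. That controls $f(y)-f(0)$ and nothing more. To get $f(y)-f(x)\ge s(y-x)$ you would also need an upper bound $f(x)\le sx+O(\epsilon)m$, and an upper-density (Frostman) hypothesis never provides one.

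Here is a concrete counterexample. Let $s=\frac12$, $n=Tm$ with $m$ even, and let $E$ be the union of the $\delta$-balls centred at $0,1,\dots,p^{n/2}-1$. Then $E$ is a uniform $(\delta,\frac12,1)$-set with $|E|_\delta=\delta^{-1/2}$ and $f(j)=\min\{j,m/2\}$. Hence $f(m)-f(m/2)=0$. The display fails at $(x,y)=(m/2,m)$ by about $m/4$, and your choice $\bar m=m/2$ violates \eqref{eq: good branching inequality} at $k=\bar m$.

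Good choices of $\bar m$ do exist in this example (any $\bar m\le m/4$ works), just not $\bar m=m/2$. This is why the lemma carries the slope slack $\sqrt\zeta$ and the lower endpoint $\frac{\sqrt\zeta}{12}m$: $\bar m$ has to be located by an argument, not fixed in advance. The paper's own proof also quotes the display at $x=\bar m$ when it bounds $k_{\bar m}$, so citing it does not close the gap. The lemma must be proved from the $x=0$ estimate alone.

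The missing step is the stopping-time argument you mention only as a fallback. It needs nothing beyond $f(0)=0$, the $1$-Lipschitz bound, and the valid estimate $f(y)\ge sy-A\epsilon m-O_T(1)$.

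Put $x_0:=\lceil\frac{\sqrt\zeta}{12}m\rceil$. As long as $x_i\le m/2$ and $\bar m=x_i$ fails, pick a witness $k_i\in[1,x_i]$ (the value $k=0$ never fails), and set $x_{i+1}:=x_i+k_i\le 2x_i\le m$. If every integer in the range failed, this strictly increasing chain would exit $[0,m/2]$ at some $x_N\in(m/2,m]$. Adding up the failures, whose error terms only help, gives
\[
sx_N-A\epsilon m-O_T(1)\le f(x_N)<f(x_0)+(s-\sqrt\zeta)(x_N-x_0)\le x_0+(s-\sqrt\zeta)x_N ,
\]
where the last step uses $f(x_0)\le x_0$ and $\sqrt\zeta<s$. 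Hence $\frac{\sqrt\zeta}{2}m<\sqrt\zeta x_N<\frac{\sqrt\zeta}{12}m+1+A\epsilon m+O_T(1)$, which is absurd once $\epsilon\ll\sqrt\zeta$ and $m$ is large.

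So some $x_i\in[\frac{\sqrt\zeta}{12}m,\frac m2]$ is a valid $\bar m$. This is exactly where both the slack $\sqrt\zeta$ and the lower endpoint of the range are spent.
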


\begin{proof}
Let us write explicitly the branching information which will be used below. Recall that, after the uniformization step, the direction set $E$ is $\{\Delta_0^j\}_{j=0}^m$-uniform, and
the branching function $f:[0,m]\to[0,m]$ is defined by
$$
f(j)=\frac{\log |E|_{\Delta_0^j}}{\log \Delta_0^{-1}},
\qquad 0\le j\le m,
$$
with linear interpolation between consecutive integers. Since $E\subset \mathbb Z_p$, the function $f$ is non-decreasing and $1$-Lipschitz. Moreover, the Frostman upper bound for $E$ gives the global lower branching estimate
\begin{equation}
\label{eq: E branching superlinear expanded}
f(y)-f(x)\ge s(y-x)-A\epsilon m-O_T(1),
\qquad 0\le x\le y\le m,
\end{equation}
for some absolute constant $A>0$. This is the estimate denoted by \eqref{eq: E branching superlinear}.

Assume that no such $\bar m$ exists. Then for every integer $\bar m\in\left[\frac{\sqrt{\zeta}}{12}m,\frac m2\right]$ there exists an integer $0\le k_{\bar m}\le \bar m$ such that
\begin{equation}
\label{eq: failure good branching expanded}
f(\bar m+k_{\bar m})-f(\bar m)<(s-\sqrt{\zeta})k_{\bar m}-C_\zeta\epsilon \bar m.
\end{equation}
Combining \eqref{eq: failure good branching expanded} with
\eqref{eq: E branching superlinear expanded}, and using
$\bar m\ge(\sqrt{\zeta}/12)m$, we obtain
$$
\sqrt{\zeta}\, k_{\bar m}
\le A\epsilon m+O_T(1)-C_\zeta\epsilon\bar m
\lesssim_\zeta \epsilon\bar m+O_T(1).
$$
After choosing $\epsilon>0$ sufficiently small in terms of $\zeta$, and then taking
$\delta=\Delta_0^m$ sufficiently small in terms of $T$ and $\zeta$, this implies
\begin{equation}
\label{eq: failure only short intervals}
k_{\bar m}\le \frac{\sqrt{\zeta}}{100}\bar m.
\end{equation}
Thus any possible failure of the desired branching inequality can only occur on a very short interval starting from $\bar m$.

We now use the Lipschitz selection lemma to find a long interval on which such short failures are impossible. Apply the Lipschitz-function lemma from \Cref{sec: Preliminaries} to the function $f$ on $[0,m]$, with error parameter $\sqrt{\zeta}/100$. The hypotheses are satisfied because $f$ is $1$-Lipschitz and because \eqref{eq: E branching superlinear expanded} gives a global lower slope $s$ up to an $O(\epsilon)m+O_T(1)$ error. Taking $\epsilon$ sufficiently small in terms of $\zeta$ and $m$ sufficiently large, the lemma gives an interval $J=[a,b]\subset[0,m]$ with $b-a\gtrsim_\zeta m$
such that, after losing another harmless constant multiple of $\sqrt{\zeta}$, we have
\begin{equation}
\label{eq: local superlinear block}
f(y)-f(x)\ge (s-\sqrt{\zeta}/2)(y-x)-O_\zeta(\epsilon)m-O_T(1),
\qquad a\le x\le y\le b.
\end{equation}
Since $b-a\gtrsim_\zeta m$, we may choose an integer $\bar m\in\left[\frac{\sqrt{\zeta}}{12}m,\frac m2\right]$ such that $
[\bar m,\bar m+(\sqrt{\zeta}/100)\bar m]\subset J$. For this value of $\bar m$, suppose that the desired estimate fails, and let $k_{\bar m}$ be as in \eqref{eq: failure good branching expanded}. By
\eqref{eq: failure only short intervals}, we have $k_{\bar m}\le(\sqrt{\zeta}/100)\bar m$, hence both $\bar m$ and $\bar m+k_{\bar m}$ lie in $J$. Applying \eqref{eq: local superlinear block} with $x=\bar m$ and $y=\bar m+k_{\bar m}$ gives
$$
f(\bar m+k_{\bar m})-f(\bar m)
\ge (s-\sqrt{\zeta}/2)k_{\bar m}-O_\zeta(\epsilon)m-O_T(1).
$$
Using again $\bar m\ge(\sqrt{\zeta}/12)m$, and increasing $C_\zeta$ if necessary, the error term satisfies
$$
O_\zeta(\epsilon)m+O_T(1)\le C_\zeta\epsilon\bar m
$$
for $\epsilon$ sufficiently small and $\delta$ sufficiently small. Therefore
$$
f(\bar m+k_{\bar m})-f(\bar m)
\ge (s-\sqrt{\zeta})k_{\bar m}-C_\zeta\epsilon\bar m,
$$
which contradicts \eqref{eq: failure good branching expanded}. Hence the desired
$\bar m$ exists.

Finally, let us spell out how this branching inequality will be used. For every
$\Delta_0^{\bar m}$-ball $I$ meeting $E$, the number of descendants from scale
$\Delta_0^{\bar m}$ to scale $\Delta_0^{\bar m+k}$ is controlled by $f(\bar m+k)-f(\bar m)$. Thus the inequality just proved says precisely that the rescaled direction set inside $I$ has lower branching exponent at least $s-\sqrt{\zeta}$, up to the loss $\Delta_0^{-C_\zeta\epsilon \bar m}$. This is the form needed in the next step.
\end{proof}

Define $\bar\Delta:=\Delta_0^{\bar m},\bar\delta:=\bar\Delta^2=\Delta_0^{2\bar m}$. By \eqref{eq: mbar range},
\begin{equation}
\label{eq: bardelta range}
\delta\le \bar\delta\le \delta^{\sqrt{\zeta}/6}.
\end{equation}
The exact power is unimportant; by decreasing $\zeta$ one may replace $\sqrt{\zeta}/6$ by $\sqrt{\zeta}/12$ if desired.

Let $E_{\bar\delta}$ be a maximal $\bar\delta$-separated subset of $E$. Since $E$ is uniform and $(\delta,s,\delta^{-O(\epsilon)})$, the set $E_{\bar\delta}$ is a $(\bar\delta,s,\bar\delta^{-O_\zeta(\epsilon)})$-set and satisfies $|E_{\bar\delta}|_{\bar\delta}\gtrapprox_\delta |E|_{\bar\delta}$. Moreover, for every $I\in\mathcal D_{\bar\Delta}(E_{\bar\delta})$, the rescaled set
$$
(E_{\bar\delta})_I:=\bar\Delta\bigl(E_{\bar\delta}\cap I\bigr)\subset\Z_p
$$
is a $(\bar\Delta,s-\sqrt{\zeta},\bar\Delta^{-O_\zeta(\epsilon)})$-set. Indeed, \eqref{eq: good branching inequality} says precisely that the number of descendants of any $\bar\Delta$-interval down to scale $\bar\delta$ has $(s-\sqrt{\zeta})$-dimensional lower branching, while the upper Frostman non-concentrate condition follows from the fact that $E_{\bar\delta}\subset E$ and $E$ is a $(\bar\delta,s,\bar\delta^{-O_\zeta(\epsilon)})$-set. 

\begin{rmk}
Here and for the rest of this work, our scale notation differs slightly from that in Orponen-Shmerkin \cite{orponen2023projections}. In the Euclidean setting, a set at scale $\Delta$ is normalized to unit scale by multiplication by $\Delta^{-1}$, and hence they write $\Delta^{-1}(E\cap I)$. Since the $p$-adic absolute value reverses the usual size of powers of $p$, multiplication by $\Delta$ is the corresponding expanding normalization in our convention. We therefore write $\Delta(E\cap I)$ instead.

The same convention applies to balls and cosets. If $\rho=p^{-k}$ is a geometric scale, then a ball of radius $\rho$ in $\Z_p$ is a coset of $p^k\Z_p=\rho^{-1}\Z_p$, and is therefore written as $x+\rho^{-1}\Z_p$, rather than $x+\rho\Z_p$. Similar reversals will be used throughout whenever a geometric scale is converted into a $p$-adic dilation or an additive subgroup.
\end{rmk}

We next check that the counter assumption and the sparse-slice estimate survive after replacing $\delta$ by $\bar\delta$.

\begin{lemma}
\label{lem: counter survives bardelta}
After possibly decreasing $\epsilon>0$ in terms of $\zeta$, for every $\theta\in E_{\bar\delta}$,
\begin{equation}
\label{eq: counter survives bardelta}
\mu\bigl(H_\theta(K,\bar\delta^{-(\sigma-2\zeta)},[\bar\delta,1])\bigr)>\bar\delta^{O_\zeta(\epsilon)}.
\end{equation}
\end{lemma}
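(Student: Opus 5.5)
Fix $\theta\in E_{\bar\delta}\subset E$; the plan is to derive the estimate from the counter assumption \eqref{eq: counter assumption normalized} at scale $\delta$. That assumption says the set $H:=H_\theta(K,\delta^{-(\sigma-\zeta)},[\delta,1])$ has $\mu(H)>\delta^{O(\epsilon)}$. Since $\bar\delta\ge\delta$ and, by \eqref{eq: bardelta range}, $\bar\delta\le\delta^{\sqrt\zeta/6}$, we have $\delta^{O(\epsilon)}\ge\bar\delta^{O_\zeta(\epsilon)}$. So it suffices to show that $\mu$-most of $H$ lies in $H_\theta(K,\bar\delta^{-(\sigma-2\zeta)},[\bar\delta,1])$.

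\emph{Step 1: multiplicity at the coarse scale.} Fix $x\in H$. The fibre $\ell=\pi_\theta^{-1}(\pi_\theta(x))$ meets at least $\delta^{-(\sigma-\zeta)}$ distinct $\delta$-balls of $K_\delta$. Group these by their $\bar\delta$-parents. By the ultrametric nesting, each parent lies in $K_{\bar\delta}$ and meets $\ell$. The number of $\delta$-balls of $K$ meeting $\ell$ inside one $\bar\delta$-ball $Q$ is controlled by the multiplicity inside $Q$ over the scale window $[\delta,\bar\delta]$. I will bound it by an absolute regularity count: a $\pi_\theta$-fibre inside $Q$ at resolution $\delta$ is a $\delta$-tube segment of length $\bar\delta$. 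It is covered by $\bar\delta/\delta$ balls of radius $\delta$, and by regularity of $\mu$ it carries at most $\delta^{-O(\epsilon)}(\bar\delta/\delta)^{\min\{t,1\}}$ of them in $K$. This crude bound is not sufficient on its own, so I will instead exploit that $\sigma<t/2$ and count more carefully as in Step 2.

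\emph{Step 2: pigeonholing to pass the multiplicity up.} Write $M_\delta(x)$ for the number of $\delta$-cubes and $M_{\bar\delta}(x)$ for the number of $\bar\delta$-cubes along $\ell$, and let $N_{\max}$ be the largest number of $\delta$-cubes of $K$ on $\ell$ inside one $\bar\delta$-cube. Then
\[
M_\delta(x)\le M_{\bar\delta}(x)\cdot N_{\max}.
\]
The trivial bound $N_{\max}\le\delta^{-O(\epsilon)}(\bar\delta/\delta)$ loses a factor $\bar\delta/\delta$, which is too large, since $\bar\delta$ is a small power of $\delta$.

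The better route uses the \emph{local} counter assumption instead. I will apply the counter assumption in rescaled form: restrict $\mu$ to a $\bar\delta$-ball, rescale it, and view the restriction as a regular measure at scale $\bar\delta$ via \Cref{lem: high multiplicity scaling}. Running the contrapositive of the small-slices argument of \Cref{thm: small slices imply sparse slices}, high multiplicity on $[\delta,1]$ must produce high multiplicity on a window whose ratio is a large power. The multiplicity then splits as $\mathfrak m(x\mid[\delta,1])\le\mathfrak m(x\mid[\bar\delta,1])\cdot\max_y\mathfrak m(y\mid[\delta,\bar\delta])$. If the first factor is below $\bar\delta^{-(\sigma-2\zeta)}$ for most $x\in H$, the second must exceed $(\bar\delta/\delta)^{\sigma}\delta^{-\zeta/2}$ on a set of mass $\delta^{O(\epsilon)}$. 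That set would then lie in a high-multiplicity set for window $[\delta,\bar\delta]$ with exponent beyond $\sigma$, against \Cref{thm: small slices imply sparse slices} or the sparse estimate \eqref{eq: sparse slices zero}.

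\emph{Main obstacle.} The hardest point is exactly this splitting: controlling the fine-window multiplicity $[\delta,\bar\delta]$ in the direction $\theta$, not just the normalized direction $0$. I would try first to choose $\bar\delta$ so that $\bar\delta^{-(\sigma-2\zeta)}$ absorbs the loss. Alternatively, I would note that Lemma-level uniformity lets one interchange the roles, taking the lower exponent $\sigma-2\zeta$ precisely to absorb a $\delta^{-\zeta}$-type loss through the factor $\bar\delta^{\zeta}\ge\delta^{\zeta}$ when $\bar\delta$ is close to $\delta$. In the end, $\epsilon$ is shrunk in terms of $\zeta$ so that all $\delta^{O(\epsilon)}$ losses become $\bar\delta^{O_\zeta(\epsilon)}$.
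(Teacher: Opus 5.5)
\textbf{Gap.} You are right that the naive route fails, but the proposal does not prove the lemma. The decisive step is left as an unresolved ``main obstacle'', and the splitting argument you sketch breaks down in two places.

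First, the only sparse-slice information available at this point is \eqref{eq: sparse slices zero}, which concerns the single normalized direction $0$. \Cref{thm: small slices imply sparse slices} produces \emph{some} good direction in a given $(\delta,s)$-set; iterated as in \Cref{subsec: defining K theta}, it gives good directions for half of $E$. It never gives one for a prescribed $\theta\in E_{\bar\delta}$, so at best you would obtain the estimate after refining the direction set. Moreover, the fine factor $\max_y\mathfrak m(y\mid[\delta,\bar\delta])$ is attained at other points $y$ of the fibre, not at $x$. Passing from ``the fine factor is large for $\mu$-many $x$'' to ``$H_{\theta,\loc}$ has mass $\gtrsim\delta^{O(\epsilon)}$'' therefore needs a fibre-wise double counting, which you do not supply.

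Second, and decisively, the exponents do not close. Write $\bar\delta=\delta^\kappa$; \eqref{eq: bardelta range} only gives $\sqrt\zeta/6\le\kappa\le 1$. If $\mathfrak m(x\mid[\bar\delta,1])<\bar\delta^{-(\sigma-2\zeta)}$, the fine factor is at least $\delta^{-(\sigma-\zeta)}\bar\delta^{\sigma-2\zeta}=(\bar\delta/\delta)^{\sigma}\delta^{\zeta(1-2\kappa)}$. This exceeds the sparse threshold $(\bar\delta/\delta)^\sigma$ only if $\kappa\ge 1/2$, and exceeds your claimed $(\bar\delta/\delta)^\sigma\delta^{-\zeta/2}$ only if $\kappa\ge 3/4$. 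Put differently, even with full local sparseness in direction $\theta$, the splitting gives only $\mathfrak m(x\mid[\bar\delta,1])\gtrsim\delta^{\zeta}\bar\delta^{-\sigma}=\bar\delta^{-(\sigma-\zeta/\kappa)}$. That is exponent $\sigma-O(\sqrt\zeta)$, not $\sigma-2\zeta$. Your fallback of taking $\bar\delta$ close to $\delta$ is not available: $\bar\delta$ is already fixed by \Cref{lem: good branching scale for E}, and nothing there forces $\kappa\ge 1/2$.

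\textbf{Comparison with the paper.} The paper's proof is exactly the crude grouping you discard. It bounds the number of $\delta$-balls of a fixed $\pi_\theta$-fibre inside one $\bar\delta$-ball by $(\bar\delta/\delta)^{1+O(\epsilon)}$. It then asserts $\delta^{-(\sigma-\zeta)}(\delta/\bar\delta)^{1+O(\epsilon)}\ge\bar\delta^{-(\sigma-2\zeta)}$ ``using \eqref{eq: bardelta range}''. Your objection applies to it verbatim: this inequality needs $\kappa\ge 1-O(\zeta)$. For example, with $\bar\delta=\delta^{1/2}$ the left side is $\delta^{1/2-\sigma+\zeta+O(\epsilon)}$, far below $\bar\delta^{-(\sigma-2\zeta)}=\delta^{-\sigma/2+\zeta}$.

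So you have correctly located a real weakness at this step. However, neither the paper's argument nor your proposed repair establishes the lemma with exponent $\sigma-2\zeta$ on the whole range permitted by \eqref{eq: bardelta range}. An honest version of your splitting would first refine $E$ to directions with local sparseness and then add the double counting. It yields only the weaker exponent $\sigma-\zeta/\kappa$, and one would then have to check whether the later steps tolerate that loss.
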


\begin{proof}
Fix $\theta\in E_{\bar\delta}\subset E$. By \eqref{eq: counter assumption before branching},
there is a set
$$
K_\theta\subset H_\theta(K,\delta^{-(\sigma-\zeta)},[\delta,1])
$$
with $\mu(K_\theta)>\delta^{O(\epsilon)}$. For $x\in K_\theta$, the fibre $K_\delta\cap\pi_\theta^{-1}(\pi_\theta(x))$
contains at least $\delta^{-(\sigma-\zeta)}$ many $\delta$-balls. Group these $\delta$-balls into $\bar\delta$-balls. Since $\mu$ is $(\delta,t,\delta^{-\epsilon})$-regular and $\bar\delta\ge\delta$, a single $\bar\delta$-ball can contain at most
$\left(\frac{\bar\delta}{\delta}\right)^{1+O(\epsilon)}$ relevant $\delta$-balls on a fixed $\pi_\theta$-fibre. Therefore the same fibre contains at least $\delta^{-(\sigma-\zeta)}
\left(\frac{\delta}{\bar\delta}\right)^{1+O(\epsilon)}$ distinct $\bar\delta$-balls. Using \eqref{eq: bardelta range} and choosing $\zeta$ sufficiently small, this lower bound is at least $\bar\delta^{-(\sigma-2\zeta)}$
after absorbing the $O(\epsilon)$ losses. Hence
$$
x\in H_\theta(K,\bar\delta^{-(\sigma-2\zeta)},[\bar\delta,1]).
$$
The measure lower bound follows from $
\delta^{O(\epsilon)}\ge\bar\delta^{O_\zeta(\epsilon)}$.
\end{proof}

\begin{lemma}
\label{lem: sparse survives bardelta}
We have
\begin{equation}
\label{eq: sparse survives bardelta}
\mu\bigl(H_{0,\loc}(K,\sigma,\bar\delta,\bar\delta^\eta)
\bigr)\le\bar\delta^{O_\zeta(\epsilon)}.
\end{equation}
\end{lemma}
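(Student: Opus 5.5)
The plan is to prove a pointwise inclusion of sets and then use monotonicity of $\mu$. Concretely, I aim to show
$$
H_{0,\loc}(K,\sigma,\bar\delta,\bar\delta^\eta)\subset H_{0,\loc}(K,\sigma,\delta,\delta^{\eta'}),
$$
or at least inclusion into a version of the local high multiplicity set at scale $\delta$ with slightly worse parameters. Combined with \eqref{eq: sparse slices zero before branching}, this gives the claimed bound. The relation \eqref{eq: bardelta range} converts $\delta^{O(\epsilon)}$ into $\bar\delta^{O_\zeta(\epsilon)}$, since $\bar\delta\le\delta^{\sqrt\zeta/6}$ gives $\delta^{O(\epsilon)}\le\bar\delta^{O(\epsilon)/(\sqrt\zeta/6)}$. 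The main point is that we are not losing anything, since the definition of $H_{0,\loc}$ only ranges over the scale window.

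Fix $x\in H_{0,\loc}(K,\sigma,\bar\delta,\bar\delta^\eta)$. By \Cref{defi: local high multiplicity set}, there are $p$-adic radii $\bar\delta\le r\le R\le1$ with $r/R\le\bar\delta^\eta$ and $\mathfrak m_{K,0}(x\mid[r,R])\ge(R/r)^\sigma$. Since $\bar\delta\ge\delta$, both $r$ and $R$ lie in $[\delta,1]$.

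The only issue is the ratio constraint. The set at scale $\delta$ requires $r/R\le\delta^\eta$, while we only know $r/R\le\bar\delta^\eta$, which is weaker because $\bar\delta^\eta\ge\delta^\eta$. This is the main obstacle. I would handle it by returning to \Cref{thm: small slices imply sparse slices} rather than relying on \eqref{eq: sparse slices theta0} with the fixed exponent $\eta$. By \eqref{eq: bardelta range}, $\bar\delta^\eta\le\delta^{\eta\sqrt\zeta/6}$. I would therefore apply \Cref{thm: small slices imply sparse slices} in \Cref{subsec: fixing parameters projection} with $\eta':=\eta\sqrt\zeta/12$ in place of $\eta$. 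This is harmless, since $\eta'$ still depends only on $\eta,\zeta$, and $\epsilon_1,\epsilon$ are chosen afterwards. The resulting sparse direction $\theta_0$, normalized to $0$, then satisfies $\mu(H_{0,\loc}(K,\sigma,\delta,\delta^{\eta'}))\le\delta^{O(\epsilon)}$.

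With this choice, $r/R\le\bar\delta^\eta\le\delta^{\eta'}$, so $x\in H_{0,\loc}(K,\sigma,\delta,\delta^{\eta'})$, and the inclusion holds. The multiplicity $\mathfrak m_{K,0}(x\mid[r,R])$ depends only on $K,r,R,x$ and not on $\delta$, so there is no rescaling loss. No $\Z_p^2$ intersection issue arises either, since $K\subset\Z_p^2$.

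Alternatively, if one insists on keeping $\eta$ itself, the same works provided $\eta$ was fixed after $\zeta$ with the sparse-slice theorem invoked at exponent $\eta\sqrt\zeta/12$. The remaining step is only to note $\delta^{O(\epsilon)}\le\bar\delta^{O_\zeta(\epsilon)}$, which follows from $\bar\delta\ge\delta$ and $\log(1/\bar\delta)\ge(\sqrt\zeta/12)\log(1/\delta)$.
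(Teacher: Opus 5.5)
This is the paper's argument: the inclusion $H_{0,\loc}(K,\sigma,\bar\delta,\bar\delta^\eta)\subset H_{0,\loc}(K,\sigma,\delta,\delta^{\eta'})$ followed by monotonicity of $\mu$. Your treatment of the ratio constraint is the correct one and in fact fixes the paper's text: the inclusion requires the sparse-slice exponent to satisfy $\eta'\le(\sqrt{\zeta}/6)\eta$, which follows from the upper bound $\bar\delta\le\delta^{\sqrt{\zeta}/6}$. The paper instead argues from the lower bound $\bar\delta\ge\delta^{C_\zeta}$ and takes the final $\eta\le\eta_0/C_\zeta$ (with $\eta_0$ the sparse-slice exponent). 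That goes the wrong way, since $\bar\delta\ge\delta$ means any $\eta\le\eta_0$ gives $\bar\delta^\eta\ge\delta^{\eta_0}$.

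One small slip in your first paragraph: the inequality $\delta^{O(\epsilon)}\le\bar\delta^{O(\epsilon)/(\sqrt{\zeta}/6)}$ is reversed. It is not needed, because for an upper bound on $\mu$ the trivial estimate $\delta^{c\epsilon}\le\bar\delta^{c\epsilon}$, which follows from $\bar\delta\ge\delta$ and which you cite at the end, already suffices.
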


\begin{proof}
Let $\eta_0>0$ denote the exponent appearing in the definition of
$H_{0,\loc}(K,\sigma,\delta,\delta^{\eta_0})$ before the choice of the branching scale. We now choose the final exponent $\eta>0$ sufficiently small in terms of $\eta_0$ and $\zeta$. More precisely, using
\eqref{eq: bardelta range}, we may assume that $\bar\delta^\eta \le \delta^{\eta_0}$. Indeed, \eqref{eq: bardelta range} gives a lower bound of the form
$\bar\delta\ge \delta^{C_\zeta}$ for some constant $C_\zeta\ge 1$, and hence it
suffices to take $\eta\le \eta_0/C_\zeta$.

With this choice of $\eta$, the relevant local high-multiplicity condition at
scale $\bar\delta$ is stronger than the corresponding condition at scale $\delta$.
More precisely, if
$$
\bar\delta\le r\le R\le 1,
\qquad r/R\le \bar\delta^\eta,
$$
then also
$$
\delta\le r\le R\le 1,
\qquad r/R\le \delta^{\eta_0}.
$$
Consequently,
\begin{equation}\label{eq: local high multiplicity inclusion}
H_{0,\loc}(K,\sigma,\bar\delta,\bar\delta^\eta)
\subset H_{0,\loc}(K,\sigma,\delta,\delta^{\eta_0}).
\end{equation}
Applying \eqref{eq: sparse slices zero before branching} with the exponent
$\eta_0$, and then using \eqref{eq: local high multiplicity inclusion}, gives the
desired sparse-slice estimate at scale $\bar\delta$. Finally, since
$\bar\delta\ge \delta^{C_\zeta}$, every loss of the form $\delta^{O(\epsilon)}$
may be rewritten as a loss of the form $\bar\delta^{O_\zeta(\epsilon)}$. This
proves \eqref{eq: sparse survives bardelta}, after decreasing $\eta$ once more if
necessary.
\end{proof}

Combining the preceding discussion, we have proved the following proposition.

\begin{prop}
\label{prop: branching scale for E}
There exist a scale
$$
\bar\delta\in p^{-\N},
\qquad
\delta\le \bar\delta\le \delta^{\sqrt{\zeta}/12}
$$
and a $\bar\delta$-separated subset $
E_{\bar\delta}\subset E$ with the following properties.
\begin{enumerate}
    \item $E_{\bar\delta}$ is a $(\bar\delta,s,\bar\delta^{-O_\zeta(\epsilon)})$-set.
    \item If $\bar\Delta:=\bar\delta^{1/2}$ and $I\in\mathcal D_{\bar\Delta}(E_{\bar\delta})$, then $(E_{\bar\delta})_I:=\bar\Delta(E_{\bar\delta}\cap I)$ is a $(\bar\Delta,s-\sqrt{\zeta},\bar\Delta^{-O_\zeta(\epsilon)})$-set.
    \item For every $\theta\in E_{\bar\delta}$, $
    \mu\bigl(H_\theta(K,\bar\delta^{-(\sigma-2\zeta)},[\bar\delta,1])\bigr)>\bar\delta^{O_\zeta(\epsilon)}$.
    \item The sparse-slice estimate in the direction $0$ holds at the new scale:
    $$
    \mu\bigl(H_{0,\loc}(K,\sigma,\bar\delta,\bar\delta^\eta)
    \bigr)\le\bar\delta^{O_\zeta(\epsilon)}.
    $$
\end{enumerate}
\end{prop}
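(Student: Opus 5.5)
The plan is to assemble \Cref{prop: branching scale for E} from the pieces established just before it. Take $\bar m$ from \Cref{lem: good branching scale for E} and put $\bar\Delta:=\Delta_0^{\bar m}$, $\bar\delta:=\bar\Delta^2$. By \eqref{eq: mbar range}, $2\bar m\le m$ gives $\bar\delta\ge\delta$. The lower bound $\bar m\ge(\sqrt{\zeta}/12)m$ gives $\bar\delta\le\delta^{\sqrt\zeta/6}\le\delta^{\sqrt\zeta/12}$, which is the claimed range. Let $E_{\bar\delta}\subset E$ be a maximal $\bar\delta$-separated subset, chosen with one point in each $\bar\delta$-ball meeting $E$. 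Then $|E_{\bar\delta}|=|E|_{\bar\delta}$, and the tree structure makes every covering number at scales $\ge\bar\delta$ of $E_{\bar\delta}$ agree with that of $E$.

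For item (1), take an $r$-ball $J$ with $\bar\delta\le r\le1$. Uniformity of $E$ gives $|E\cap J|_{\bar\delta}/|E|_{\bar\delta}=|E\cap J|_\delta/|E|_\delta$ up to $\Delta_0^{-O(1)}$ factors, which are absorbed once $T$ is fixed and $\delta$ is small. So the $(\delta,s,\delta^{-O(\epsilon)})$ bound transfers. Since $\bar\delta\ge\delta^{C_\zeta}$ with $C_\zeta\lesssim\zeta^{-1/2}$, the loss $\delta^{-O(\epsilon)}$ becomes $\bar\delta^{-O_\zeta(\epsilon)}$.

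For item (2), fix $I\in\D_{\bar\Delta}(E_{\bar\delta})$ and an $r'$-ball $J'\subset\Z_p$ with $\bar\Delta\le r'\le1$. Write $J'=\bar\Delta J$, where $J\subset I$ is a ball of radius $r'\bar\Delta=\Delta_0^{\bar m+k}$. Uniformity gives
$$|E_{\bar\delta}\cap J|_{\bar\delta}/|E_{\bar\delta}\cap I|_{\bar\delta}=\Delta_0^{-(f(\bar m+k)-f(\bar m))\cdot O(1)}.$$
More precisely, the logarithm of this ratio equals $-(f(\bar m+k)-f(\bar m))\log\Delta_0^{-1}$ up to $O_T(1)$. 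By \eqref{eq: good branching inequality}, this ratio is at most $(r')^{s-\sqrt\zeta}\bar\Delta^{-O_\zeta(\epsilon)}$. The $O_T(1)$ terms are absorbed for $\delta$ small. Rescaling by $\bar\Delta$ turns $\bar\delta$-balls into $\bar\Delta$-balls, which gives the $(\bar\Delta,s-\sqrt\zeta,\bar\Delta^{-O_\zeta(\epsilon)})$ property.

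Items (3) and (4) are exactly \Cref{lem: counter survives bardelta} and \Cref{lem: sparse survives bardelta}, applied with $E_{\bar\delta}\subset E$. The main obstacle is the bookkeeping in (2), namely translating the branching-function inequality into a genuine non-concentration statement for every sub-ball. This needs exact uniformity at the $\Delta_0$-adic scales, together with rounding $r'$ to the nearest power $\Delta_0^k$, at a cost of a factor $p^{T}$ that is harmless after $\delta$ is taken small in terms of $T$.
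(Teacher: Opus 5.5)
Your proposal is correct and follows the paper's own assembly. You take $\bar m$ from \Cref{lem: good branching scale for E}, set $\bar\delta=\Delta_0^{2\bar m}$, pass to a maximal $\bar\delta$-separated subset, read off (1) and (2) from uniformity and \eqref{eq: good branching inequality}, and quote \Cref{lem: counter survives bardelta} and \Cref{lem: sparse survives bardelta} for (3) and (4); your ratio computation $|E_{\bar\delta}\cap J|_{\bar\delta}/|E_{\bar\delta}\cap I|_{\bar\delta}=\Delta_0^{f(\bar m+k)-f(\bar m)}$ is a cleaner version of the paper's one-line justification of (2). One small correction: converting $\delta^{-O(\epsilon)}$ into $\bar\delta^{-O_\zeta(\epsilon)}$ uses the upper bound $\bar\delta\le\delta^{\sqrt{\zeta}/6}$, i.e.\ $\delta\ge\bar\delta^{6/\sqrt{\zeta}}$, whereas $\bar\delta\ge\delta^{C_\zeta}$ as you wrote gives the inequality in the wrong direction.
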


For the rest of the proof of \Cref{prop: iterating projection}, we replace $\delta,E$ by $\bar\delta,E_{\bar\delta}$. We also replace $\sigma-\zeta$ by $\sigma-2\zeta$ in the counter assumption. To avoid changing notation, we continue to write $\delta$ and $E$ for the new objects. Thus, from now on, with $\Delta:=\delta^{1/2}$, we may assume:
\begin{enumerate}
    \item $E$ is a $(\delta,s,\delta^{-O_\zeta(\epsilon)})$-set.
    \item For every $I\in\mathcal D_\Delta(E)$, $E_I:=\Delta(E\cap I)$ is a $(\Delta,s-\sqrt{\zeta},\Delta^{-O_\zeta(\epsilon)})$-set.
    \item For every $\theta\in E$,
    $\mu\bigl(H_\theta(K,\delta^{-(\sigma-2\zeta)},[\delta,1])\bigr)>\delta^{O_\zeta(\epsilon)}$.
    \item $\mu\bigl(H_{0,\loc}(K,\sigma,\delta,\delta^\eta)
    \bigr)\le\delta^{O_\zeta(\epsilon)}$.
\end{enumerate}
These are the assumptions used in the next subsection to extract a quasi-product structure from the counterexample.

\subsection{Defining the sets $K_\theta$}
\label{subsec: defining K theta}

We continue the proof of \Cref{prop: iterating projection}. The goal of this subsection is to refine the direction set $E$ once more and, for each remaining direction $\theta\in E$, define a large set $K_\theta$ of points which have high multiplicity in the direction $\theta$, but which do not lie in a locally high multiplicity set in the same direction.

Recall from \Cref{subsec: branching scale for E} that, after replacing the original scale by a slightly larger scale and renaming it as $\delta$, we may assume the following. Let $\Delta:=\delta^{1/2}$.
Then for every $I\in\mathcal D_\Delta(E)$, the rescaled set $E_I:=\Delta(E\cap I)$
is a
\begin{equation}
\label{eq: 46 local E property}
(\Delta,s-\sqrt{\zeta},\Delta^{-O_\zeta(\epsilon)})\text{-set}.
\end{equation}
Moreover, the counter assumption survives in the form
\begin{equation}
\label{eq: 46 high multiplicity all theta}
\mu\bigl(\Z_p^2\cap H_\theta(K,\delta^{-(\sigma-\zeta)},[\delta,1])\bigr)\ge\delta^{C_\zeta\epsilon},
\qquad \theta\in E,
\end{equation}
where $C_\zeta\ge 1$ is a constant depending only on $\zeta$. Here and below $K:=\spt\mu$.
By increasing $C_\zeta$ if necessary, we may absorb all harmless losses of the form
$O_\zeta(\epsilon)$ into the exponent $C_\zeta\epsilon$.

We now use the hypothesis that $\Proj(s,\sigma,t)$ holds. More precisely, we use \Cref{thm: small slices imply sparse slices}, with the parameter $\eta:=\sqrt{\zeta}$. If $\epsilon>0$ is chosen sufficiently small in terms of $\epsilon_0,\eta$, then the theorem
implies the following statement: for every subset $E'\subset E$
which remains a $(\delta,s,\delta^{-\epsilon_0})$-set, there exists $\theta\in E'$ such that
\begin{equation}
\label{eq: 46 local sparse one direction}
\mu\bigl(\Z_p^2\cap H_{\theta,\loc}(K,\sigma,\delta,\delta^\eta)\bigr)\le
\delta^{2C_\zeta\epsilon}.
\end{equation}
Consequently, by repeatedly removing directions satisfying
\eqref{eq: 46 local sparse one direction}, we may find a subset $E_{\mathrm{sp}}\subset E$ with $|E_{\mathrm{sp}}|_\delta\ge \frac12 |E|_\delta$ such that
\begin{equation}
\label{eq: 46 local sparse all remaining theta}
\mu\bigl(\Z_p^2\cap H_{\theta,\loc}(K,\sigma,\delta,\delta^\eta)
\bigr)\le\delta^{2C_\zeta\epsilon},
\qquad \theta\in E_{\mathrm{sp}}.
\end{equation}
Indeed, if fewer than half of the $\delta$-separated directions satisfied
\eqref{eq: 46 local sparse all remaining theta}, then the complementary subset would still be a $(\delta,s,\delta^{-O(\epsilon)})$-set after decreasing $\epsilon$, and \Cref{thm: small slices imply sparse slices} could be applied to that complementary subset, yielding one more direction satisfying \eqref{eq: 46 local sparse one direction}, a contradiction.

We replace $E$ by $E_{\mathrm{sp}}$ and keep the notation $E$. Since
$|E_{\mathrm{sp}}|_\delta\ge |E|_\delta/2$, the set $E$ remains a $(\delta,s,\delta^{-O(\epsilon)})$-set. However, the local property \eqref{eq: 46 local E property} may have been lost after this refinement. To restore it, we apply the same uniformization argument used in \Cref{subsec: branching scale for E} to the new set $E$. Passing to a further subset, and then renaming it again as $E$, we may assume simultaneously that:
\begin{enumerate}
    \item $E$ is a $(\delta,s,\delta^{-O_\zeta(\epsilon)})$-set.
    \item For every $I\in\mathcal D_\Delta(E)$, 
    \begin{equation}
    \label{eq: 46 local E restored}
    E_I:=\Delta(E\cap I)
    \text{ is a }
    (\Delta,s-\sqrt{\zeta},\Delta^{-O_\zeta(\epsilon)})\text{-set}.
    \end{equation}
    \item For every $\theta\in E$,
    \begin{equation}
    \label{eq: 46 high multiplicity restored}
    \mu\bigl(\Z_p^2\cap H_\theta(K,\delta^{-(\sigma-\zeta)},[\delta,1])
    \bigr)\ge\delta^{C_\zeta\epsilon}.
    \end{equation}
    \item For every $\theta\in E$,
    \begin{equation}
    \label{eq: 46 sparse local restored}
    \mu\bigl(\Z_p^2\cap H_{\theta,\loc}(K,\sigma,\delta,\delta^\eta)
    \bigr)\le\delta^{2C_\zeta\epsilon}.
    \end{equation}
\end{enumerate}
The third item is inherited from the counter assumption, since all refinements of $E$ are
subsets of the original direction set. The fourth item is inherited from \eqref{eq: 46 local sparse all remaining theta}. The first two items follow from uniformization, at the cost of replacing the exponent $O(\epsilon)$ by $O_\zeta(\epsilon)$.

We are now ready to define the main objects of this subsection. For every $\theta\in E$, set
\begin{equation}
\label{eq: 46 K theta definition}
K_\theta:=\left(\Z_p^2\cap H_\theta(K,\delta^{-(\sigma-\zeta)},[\delta,1])\right)\setminus
H_{\theta,\loc}(K,\sigma,\delta,\delta^\eta).
\end{equation}

\begin{lemma}
\label{lem: 46 Ktheta large}
For every $\theta\in E$, we have $\mu(K_\theta)\ge\frac12\delta^{C_\zeta\epsilon}$. In particular, after increasing $C_\zeta$ by a factor depending only on $\zeta$, we may simply write $\mu(K_\theta)\ge \delta^{C_\zeta\epsilon}$.
\end{lemma}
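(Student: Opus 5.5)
This is a direct measure subtraction that combines the two properties of the refined direction set $E$ recorded at the start of this subsection. Fix $\theta\in E$. By the definition \eqref{eq: 46 K theta definition}, $K_\theta$ is obtained from $\Z_p^2\cap H_\theta(K,\delta^{-(\sigma-\zeta)},[\delta,1])$ by removing the points lying in $H_{\theta,\loc}(K,\sigma,\delta,\delta^\eta)$. Removing these points costs at most the $\mu$-measure of $\Z_p^2\cap H_{\theta,\loc}(K,\sigma,\delta,\delta^\eta)$, because $K_\theta\subset\Z_p^2$ anyway. The plan is therefore the elementary inequality
$$
\mu(K_\theta)\ge\mu\bigl(\Z_p^2\cap H_\theta(K,\delta^{-(\sigma-\zeta)},[\delta,1])\bigr)-\mu\bigl(\Z_p^2\cap H_{\theta,\loc}(K,\sigma,\delta,\delta^\eta)\bigr).
$$

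The two terms on the right are already controlled. The first is at least $\delta^{C_\zeta\epsilon}$ by \eqref{eq: 46 high multiplicity restored}. The second is at most $\delta^{2C_\zeta\epsilon}$ by \eqref{eq: 46 sparse local restored}. This gives
$$
\mu(K_\theta)\ge\delta^{C_\zeta\epsilon}-\delta^{2C_\zeta\epsilon}=\delta^{C_\zeta\epsilon}\bigl(1-\delta^{C_\zeta\epsilon}\bigr).
$$
Taking $\delta_0$ small enough, depending on $\epsilon$ and $\zeta$, ensures $\delta^{C_\zeta\epsilon}\le\frac12$, and hence $\mu(K_\theta)\ge\frac12\delta^{C_\zeta\epsilon}$. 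For the last sentence of the lemma, observe that $\frac12\delta^{C_\zeta\epsilon}\ge\delta^{2C_\zeta\epsilon}$ once $\delta$ is small. Replacing $C_\zeta$ by $2C_\zeta$ then lets us write $\mu(K_\theta)\ge\delta^{C_\zeta\epsilon}$.

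The only point needing care, and the main obstacle such as it is, is that the exponents are consistent. The refinement producing $E_{\mathrm{sp}}$ deliberately used the threshold $\delta^{2C_\zeta\epsilon}$ with the same constant $C_\zeta$ that appears in \eqref{eq: 46 high multiplicity restored}. This consistency has to survive the subsequent uniformization step. That step only passes to subsets of $E$, and both \eqref{eq: 46 high multiplicity restored} and \eqref{eq: 46 sparse local restored} are pointwise statements in $\theta$, so they are inherited. I would double-check that any later enlargement of $C_\zeta$ (used to absorb $O_\zeta(\epsilon)$ losses) is applied to the lower bound before the sparse threshold $2C_\zeta\epsilon$ is fixed, so that the sparse-slice loss remains strictly smaller than the high-multiplicity mass.
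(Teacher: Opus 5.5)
Your proposal is correct and is essentially the paper's proof: subtract the sparse-slice bound $\delta^{2C_\zeta\epsilon}$ from \eqref{eq: 46 sparse local restored} from the high-multiplicity lower bound $\delta^{C_\zeta\epsilon}$ from \eqref{eq: 46 high multiplicity restored}. Then use $\delta^{2C_\zeta\epsilon}\le\frac12\delta^{C_\zeta\epsilon}$ for small $\delta$. Your remark on keeping the constant $C_\zeta$ consistent across the refinements is a sensible bookkeeping check, and it is consistent with how the paper sets up those two estimates.
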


\begin{proof}
By \eqref{eq: 46 high multiplicity restored},
$$
\mu\bigl(\Z_p^2\cap H_\theta(K,\delta^{-(\sigma-\zeta)},[\delta,1])\bigr)\ge\delta^{C_\zeta\epsilon}.
$$
On the other hand, by \eqref{eq: 46 sparse local restored},
$$
\mu\bigl(\Z_p^2\cap H_{\theta,\loc}(K,\sigma,\delta,\delta^\eta)
\bigr)\le\delta^{2C_\zeta\epsilon}.
$$
Since $\delta>0$ is sufficiently small, we have $\delta^{2C_\zeta\epsilon}\le \frac12\delta^{C_\zeta\epsilon}$. Subtracting the second estimate from the first completes the proof.
\end{proof}

We record two immediate consequences of the definition of $K_\theta$.

\begin{lemma}
\label{lem: 46 Ktheta high and sparse}
Let $\theta\in E$ and $x\in K_\theta$. Then the following holds.
\begin{enumerate}
    \item the $\theta$-fibre through $x$ has high global multiplicity:
    \begin{equation}
    \label{eq: 46 Ktheta high fibre}
    \mathfrak m_{K,\theta}(x\mid[\delta,1])
    \ge \delta^{-(\sigma-\zeta)}.
    \end{equation}
    \item the same point has no locally high multiplicity in the direction $\theta$: for all $p$-adic radii $r,R\in p^{-\N}\cap[\delta,1]$ satisfying
    $r/R\le \delta^\eta$, one has
    \begin{equation}
    \label{eq: 46 Ktheta sparse local}
    \mathfrak m_{K,\theta}(x\mid[r,R])<\left(\frac Rr\right)^\sigma.
    \end{equation}
\end{enumerate}
\end{lemma}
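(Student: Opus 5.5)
This lemma is a direct unwinding of the definition \eqref{eq: 46 K theta definition} of $K_\theta$ together with \Cref{defi: high multiplicity set} and \Cref{defi: local high multiplicity set}. There is no analytic content to prove; the only care needed is to match the membership conditions correctly.

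For part (1), I use that $K_\theta\subset \Z_p^2\cap H_\theta(K,\delta^{-(\sigma-\zeta)},[\delta,1])$. By \Cref{defi: high multiplicity set}, a point $x$ lies in $H_\theta(K,M,[r,R])$ exactly when $x\in K$ and $\mathfrak m_{K,\theta}(x\mid[r,R])\ge M$. Taking $M=\delta^{-(\sigma-\zeta)}$, $r=\delta$ and $R=1$ gives \eqref{eq: 46 Ktheta high fibre} at once.

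For part (2), I use that $x\notin H_{\theta,\loc}(K,\sigma,\delta,\delta^\eta)$. By \Cref{defi: local high multiplicity set}, this set is the union of the sets $H_\theta\bigl(K,(R/r)^\sigma,[r,R]\bigr)$ over all $p$-adic radii $\delta\le r\le R\le 1$ with $r/R\le\delta^\eta$. So $x$ lies in none of these sets. Since $x\in K$ (from part (1)), the only way $x$ can fail to belong to $H_\theta\bigl(K,(R/r)^\sigma,[r,R]\bigr)$ is that the multiplicity inequality fails. This gives $\mathfrak m_{K,\theta}(x\mid[r,R])<(R/r)^\sigma$ for every admissible pair $(r,R)$, which is \eqref{eq: 46 Ktheta sparse local}.

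The one point to check is that the quantifier range in (2) matches the union defining $H_{\theta,\loc}$ exactly: the radii are $p$-adic, lie in $[\delta,1]$, satisfy $r\le R$, and satisfy $r/R\le\delta^\eta$. It also matters that $x\in K$, so that non-membership really means the multiplicity bound fails rather than that $x$ is outside $K$. Once these are checked, both items follow by contraposition of the definitions.
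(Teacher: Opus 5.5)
Your proposal is correct and matches the paper's proof: both items are read off from the definition of $K_\theta$. Item (1) comes from $x\in H_\theta(K,\delta^{-(\sigma-\zeta)},[\delta,1])$. Item (2) comes from $x\notin H_{\theta,\loc}(K,\sigma,\delta,\delta^\eta)$ together with $x\in K$. Your check that the quantifier ranges agree is sound; note that $r\le R$ holds automatically once $r/R\le\delta^\eta<1$.
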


\begin{proof}
The first assertion follows from $x\in H_\theta(K,\delta^{-(\sigma-\zeta)},[\delta,1])$. The second assertion follows from $x\notin H_{\theta,\loc}(K,\sigma,\delta,\delta^\eta)$ and the definition of the local high multiplicity set.
\end{proof}

The point of the construction is the following. The lower bound \eqref{eq: 46 Ktheta high fibre} says that every $x\in K_\theta$ lies on a $\pi_\theta$-fibre which meets $K_\delta$ in at least $\delta^{-(\sigma-\zeta)}$ many $\delta$-balls between the scales $\delta$ and $1$. On the other hand, \eqref{eq: 46 Ktheta sparse local} says that this multiplicity cannot concentrate inside any short range of scales. This tension will be exploited in the next subsection to find, for many directions $\theta$, a common intermediate scale and a common spatial location where the $\theta$-fibres contain a well-distributed set of descendants.

\subsection{Projecting the sets $K_\theta$}
\label{subsec: projecting K theta}

In this subsection we record two projection estimates for the sets $K_\theta$ constructed in \Cref{subsec: defining K theta}. The first estimate says that, inside any ball of an intermediate radius, the projection of $K_\theta$ cannot be too small. The second estimate says that, inside a tube parallel to the fibres of $\pi_\theta$, the projection of $K_\theta$ is small. These two estimates are the $p$-adic analogues of the corresponding estimates in \cite[Section 4.7]{orponen2023projections}.

Throughout this subsection, we keep the notation $\Delta:=\delta^{1/2},
\eta:=\sqrt{\zeta}$. We assume, as in the previous subsection, that $\zeta>0$ and then $\epsilon>0$ have been chosen
sufficiently small. We shall use the following terminology. If $\rho\in p^{-\N}$ and $\theta\in\Z_p$, a $\rho$-tube parallel to $\ker\pi_\theta$ is a set of the form
$\mathbf T=\pi_\theta^{-1}(J)\cap \Z_p^2$,
where $J\subset\Z_p$ is a $p$-adic ball of radius $\rho$. Equivalently, $\mathbf T$ is the $\rho$-neighbourhood of one fibre of $\pi_\theta$. If $\rho=\delta$, such tubes are the $\delta$-tubes associated with the projection $\pi_\theta$.

\begin{lemma}
\label{lem: 47 projection lower ball}
Let $\theta\in E$, and let $\mathbf B\subset\Z_p^2$ be a $p$-adic ball of radius
$\rho\in p^{-\N}$ satisfying $\delta^{1-\eta}\le \rho\le 1$. Then
\begin{equation}
\label{eq: 47 projection lower ball}
|\pi_\theta(\mathbf B\cap K_\theta)|_\delta
\gtrsim\delta^{-t+O_\zeta(\epsilon)}
\left(\frac{\delta}{\rho}\right)^\sigma
\mu(\mathbf B\cap K_\theta).
\end{equation}
In particular, for $\rho=\Delta$,
\begin{equation}
\label{eq: 47 projection lower Delta ball}
|\pi_\theta(\mathbf B\cap K_\theta)|_\delta
\gtrsim
\delta^{-t+O_\zeta(\epsilon)}
\left(\frac{\delta}{\Delta}\right)^\sigma
\mu(\mathbf B\cap K_\theta).
\end{equation}
\end{lemma}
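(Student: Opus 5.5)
The plan is a double count along $\pi_\theta$-fibres: an upper bound on how many $\delta$-cubes of $\mathbf B\cap K_\theta$ a single $\delta$-ball of $\pi_\theta(\mathbf B\cap K_\theta)$ can account for. This upper bound comes from the local sparse-slice property \eqref{eq: 46 Ktheta sparse local}, applied on the scale window $[\delta,\rho]$.

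\emph{Step 1: counting cubes.} Let $\mathcal Q:=\mathcal D_\delta(\mathbf B\cap K_\theta)$. Each $\delta$-cube carries $\mu$-mass at most $\delta^{t-\epsilon}$ by the upper regularity of $\mu$. Hence
\[
|\mathcal Q|\ge \delta^{-t+\epsilon}\mu(\mathbf B\cap K_\theta).
\]

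\emph{Step 2: fibre multiplicity inside $\mathbf B$.} Fix a $\delta$-ball $J\subset\Z_p$ with $J\cap\pi_\theta(\mathbf B\cap K_\theta)\neq\emptyset$, and pick $x\in\mathbf B\cap K_\theta$ with $\pi_\theta(x)\in J$. Since $\theta\in\Z_p$, the map $\pi_\theta$ is $1$-Lipschitz in the max norm. Consequently, every cube of $\mathcal Q$ projecting into $J$ meets $K_\delta\cap\pi_\theta^{-1}(\pi_\theta(x))$ within distance $\delta$, after translating along the fibre direction, which keeps the point in $\mathbf B$ by ultrametricity. Since $\mathbf B=B(x,\rho)$ by ultrametricity, the number of cubes of $\mathcal Q$ projecting into $J$ is therefore at most $O(1)\cdot\mathfrak m_{K,\theta}(x\mid[\delta,\rho])$. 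The key point is that $\rho\ge\delta^{1-\eta}$ gives $\delta/\rho\le\delta^\eta$. This is exactly the admissibility condition $r/R\le\delta^\eta$ in \eqref{eq: 46 Ktheta sparse local} with $r=\delta$, $R=\rho$. Since $x\in K_\theta$, that estimate gives
\[
\mathfrak m_{K,\theta}(x\mid[\delta,\rho])<(\rho/\delta)^\sigma .
\]

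\emph{Step 3: combine.} Each $\delta$-ball in the projection accounts for $\lesssim(\rho/\delta)^\sigma$ cubes of $\mathcal Q$. Therefore
\[
|\pi_\theta(\mathbf B\cap K_\theta)|_\delta\gtrsim(\delta/\rho)^\sigma|\mathcal Q|\ge \delta^{-t+\epsilon}(\delta/\rho)^\sigma\mu(\mathbf B\cap K_\theta),
\]
which is \eqref{eq: 47 projection lower ball}. The case $\rho=\Delta=\delta^{1/2}$ is admissible because $\eta=\sqrt\zeta<1/2$, so $\delta^{1/2}\ge\delta^{1-\eta}$.

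\emph{Main obstacle.} The delicate point is Step 2: correctly matching cubes of $\mathcal Q$ lying in the $\delta$-tube $\pi_\theta^{-1}(J)$ with $\delta$-balls of $K_\delta$ on the exact fibre through $x$, so that they are counted by the multiplicity number. The argument is that a cube $\mathscr q\in\mathcal Q$ with $\pi_\theta(\mathscr q)\subset J$ contains some $y\in K$ with $|\pi_\theta(y)-\pi_\theta(x)|\le\delta$. Then $y'=y+(\pi_\theta(x)-\pi_\theta(y),0)$ lies on the fibre, within $\delta$ of $y$, so $y'\in K_\delta\cap B(x,\rho)$ and $y'$ lies in the same $\delta$-cube as $y$. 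Distinct cubes give distinct $\delta$-balls, so the count is injective, with no constant loss in the ultrametric setting. Points of $\mathbf B\cap K_\theta$ not in $\Z_p^2$ cause no trouble, since $K_\theta\subset\Z_p^2$ by definition.
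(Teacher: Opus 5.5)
Your proposal is correct and is essentially the paper's own proof. It uses the same mass-to-cube count from the regularity of $\mu$, the same application of the local sparseness bound \eqref{eq: 46 Ktheta sparse local} with $r=\delta$, $R=\rho$ (admissible because $\delta/\rho\le\delta^\eta$), and the same fibre translation $y\mapsto y'$ to match cubes in $\pi_\theta^{-1}(J)$ with $\delta$-balls on the exact fibre through $x$. The only differences are cosmetic: you index by the $\delta$-balls $J$ rather than by the tubes $\pi_\theta^{-1}(J)\cap\Z_p^2$, and after the change of scale to $\bar\delta$ the per-cube mass bound should read $\delta^{t-O_\zeta(\epsilon)}$ rather than $\delta^{t-\epsilon}$, which the statement already allows.
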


\begin{proof}
If $\mathbf B\cap K_\theta=\varnothing$, there is nothing to prove. Let
$\mathcal T_{\theta,\delta}$ be the family of $\pi_\theta$-tubes of width $\delta$
which meet $\mathbf B\cap K_\theta$. Then
\begin{equation}
\label{eq: 47 projection comparable to tubes}
|\pi_\theta(\mathbf B\cap K_\theta)|_\delta
\sim
|\mathcal T_{\theta,\delta}|.
\end{equation}

We claim that every $\mathbf T\in\mathcal T_{\theta,\delta}$ satisfies
\begin{equation}
\label{eq: 47 tube sparse in ball}
|(\mathbf B\cap K_\theta)\cap \mathbf T|_\delta
\le
|(\mathbf B\cap K)\cap \mathbf T|_\delta
\lesssim
\left(\frac{\rho}{\delta}\right)^\sigma .
\end{equation}
Fix such a tube $\mathbf T$ and choose
$x_0\in \mathbf B\cap K_\theta\cap \mathbf T$. Since $x_0\in K_\theta$, the local
sparseness estimate in \Cref{lem: 46 Ktheta high and sparse} gives, for every
$r,R\in p^{-\N}\cap[\delta,1]$ with $r/R\le\delta^\eta$,
$$
\mathfrak m_{K,\theta}(x_0\mid[r,R])
<
\left(\frac Rr\right)^\sigma .
$$
We apply this with $r=\delta$ and $R=\rho$. This is allowed because
$\delta/\rho\le\delta^\eta$ by the assumption $\rho\ge\delta^{1-\eta}$. Hence
\begin{equation}
\label{eq: 47 local sparse at delta rho}
\mathfrak m_{K,\theta}(x_0\mid[\delta,\rho])
<
\left(\frac{\rho}{\delta}\right)^\sigma .
\end{equation}

It remains only to relate the $\delta$-tube count in
\eqref{eq: 47 tube sparse in ball} to this exact-fibre multiplicity. Since
$\mathbf B$ is a $p$-adic ball of radius $\rho$ containing $x_0$, we have
$\mathbf B=B(x_0,\rho)$. Also, because $\mathbf T$ is a width $\delta$ tube
parallel to $\ker\pi_\theta$ and contains $x_0$, it has the form $\mathbf T=\pi_\theta^{-1}(\pi_\theta(x_0)+\delta\Z_p)\cap \Z_p^2$.
Let $\mathbf q$ be a $\delta$-ball contained in $(\mathbf B\cap K)\cap\mathbf T$.
Choose $z=(a,b)\in K\cap\mathbf q$. Since $z\in\mathbf T$, $\pi_\theta(z)-\pi_\theta(x_0)\in\delta\Z_p$. Set
$z':=\bigl(a-(\pi_\theta(z)-\pi_\theta(x_0)),\,b\bigr)$. Then $||z-z'||_p\le\delta$, and $\pi_\theta(z')=\pi_\theta(x_0)$. Thus $z'\in K_\delta\cap\pi_\theta^{-1}(\pi_\theta(x_0))$. Moreover, since
$z\in B(x_0,\rho)$ and $\delta\le\rho$, the ultrametric inequality gives
$z'\in B(x_0,\rho)$. Therefore, every $\delta$-ball counted by
$|(\mathbf B\cap K)\cap\mathbf T|_\delta$ is accounted for by
$$
B(x_0,\rho)\cap K_\delta\cap\pi_\theta^{-1}(\pi_\theta(x_0)).
$$
Consequently
$$
|(\mathbf B\cap K)\cap \mathbf T|_\delta
\lesssim\mathfrak m_{K,\theta}(x_0\mid[\delta,\rho])
<\left(\frac{\rho}{\delta}\right)^\sigma,
$$
which proves \eqref{eq: 47 tube sparse in ball}.

Finally, by the $(\delta,t,\delta^{-O_\zeta(\epsilon)})$-regularity of $\mu$, every
$\delta$-ball has $\mu$-mass at most $\delta^{t-O_\zeta(\epsilon)}$. Hence
\begin{equation}
\label{eq: 47 mass to counting}
|\mathbf B\cap K_\theta|_\delta
\gtrsim
\delta^{-t+O_\zeta(\epsilon)}
\mu(\mathbf B\cap K_\theta).
\end{equation}
On the other hand, by \eqref{eq: 47 tube sparse in ball},
$$
|\mathbf B\cap K_\theta|_\delta\le\sum_{\mathbf T\in\mathcal T_{\theta,\delta}}
|(\mathbf B\cap K_\theta)\cap\mathbf T|_\delta\lesssim|\mathcal T_{\theta,\delta}|
\left(\frac{\rho}{\delta}\right)^\sigma .
$$
Combining this with \eqref{eq: 47 projection comparable to tubes} and \eqref{eq: 47 mass to counting} gives
$$
|\pi_\theta(\mathbf B\cap K_\theta)|_\delta
\gtrsim\delta^{-t+O_\zeta(\epsilon)}\left(\frac{\delta}{\rho}\right)^\sigma\mu(\mathbf B\cap K_\theta),
$$
as claimed.
\end{proof}

The second estimate is a converse type statement for the intersection of $K_\theta$ with a tube parallel to the fibres of $\pi_\theta$. It will later be applied with $\rho=\Delta$.

\begin{lemma}
\label{lem: 47 projection upper tube}
Let $\theta\in E$, and let $\mathbf T\subset\Z_p^2$ be a $\rho$-tube parallel to
$\ker\pi_\theta$, where $\delta^{1-\eta}\le \rho\le \delta^\eta$. Then
\begin{equation}
\label{eq: 47 projection upper tube}
|\pi_\theta(K_\theta\cap\mathbf T)|_\delta
\lesssim
\delta^{-O_\zeta(\epsilon)-\zeta}
\left(\frac{\rho}{\delta}\right)^{t-\sigma}.
\end{equation}
In particular, for $\rho=\Delta=\delta^{1/2}$, after taking $\epsilon>0$ sufficiently small depending on $\zeta$,
\begin{equation}
\label{eq: 47 projection upper Delta tube simplified}
|\pi_\theta(K_\theta\cap\mathbf T)|_\delta
\le\Delta^{\sigma-t-3\zeta}.
\end{equation}
\end{lemma}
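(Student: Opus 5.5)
We will follow the Euclidean argument of \cite[Section 4.7]{orponen2023projections}, comparing the two multiplicity pieces of information available at a point $x\in K_\theta\cap\mathbf T$. The high global multiplicity \eqref{eq: 46 Ktheta high fibre} gives $\mathfrak m_{K,\theta}(x\mid[\delta,1])\ge\delta^{-(\sigma-\zeta)}$. The local sparseness \eqref{eq: 46 Ktheta sparse local} gives an upper bound on the part of this multiplicity living at scales coarser than $\rho$.

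\textbf{Step 1: multiplicity splits across scales.} In the ultrametric setting the fibre $\pi_\theta^{-1}(\pi_\theta(x))\cap K_\delta$ is organised as a tree. We therefore plan to bound
$$
\mathfrak m_{K,\theta}(x\mid[\delta,1])\lesssim \mathfrak m_{K,\theta}(x\mid[\rho,1])\cdot\max_{y}\mathfrak m_{K,\theta}(y\mid[\delta,\rho]),
$$
where $y$ ranges over points of the fibre. Each $\rho$-ball on the fibre contains at most $\max_y\mathfrak m(y\mid[\delta,\rho])$ many $\delta$-balls on the same fibre. Using \eqref{eq: 46 Ktheta sparse local} with $(r,R)=(\rho,1)$, which is allowed since $\rho\le\delta^\eta$, we get $\mathfrak m(x\mid[\rho,1])<\rho^{-\sigma}$. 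Consequently each $\rho$-ball $B(y,\rho)$ on the fibre through $x$ carries at least $\delta^{-(\sigma-\zeta)}\rho^{\sigma}=\delta^{\zeta}(\rho/\delta)^\sigma$ many $\delta$-balls of $K$ in its fibre segment, for at least one such ball. We would rather have this for many of the $\rho$-balls, so we pigeonhole and take the ``average'' $\rho$-ball. One could instead just use the existence of one ball per fibre, and this turns out to suffice.

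\textbf{Step 2: counting mass in $\mathbf T$.} Partition $\pi_\theta(K_\theta\cap\mathbf T)$ into $\delta$-balls $q$, and write $N$ for their number. For each $q$ pick $x_q\in K_\theta$ with $\pi_\theta(x_q)\in q$. By Step 1 there is a $\rho$-ball $\mathbf B_q\subset\mathbf T$ such that the $\delta$-tube $\pi_\theta^{-1}(q)$ meets $K\cap\mathbf B_q$ in $\gtrsim\delta^{\zeta}(\rho/\delta)^\sigma$ many $\delta$-balls. The fibre lifting argument from the proof of \Cref{lem: 47 projection lower ball} converts exact-fibre counts into $\delta$-tube counts. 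Distinct $q$ give disjoint $\delta$-tubes, so summing and using the lower mass bound $\delta^{t+O_\zeta(\epsilon)}$ of $\delta$-balls meeting $K$ gives
$$
\mu(K\cap\mathbf T)\gtrsim N\,\delta^{\zeta}(\rho/\delta)^\sigma\delta^{t+O_\zeta(\epsilon)}.
$$

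\textbf{Step 3: upper bound for tube mass.} The main obstacle is bounding $\mu(K\cap\mathbf T)$, since a general $t$-regular measure need not be tube-Frostman. Instead of a global bound, we localise. The tube $\mathbf T$ is covered by about $\rho^{-1}$ balls of radius $\rho$, each of mass $\le\delta^{-O(\epsilon)}\rho^t$, which only gives $\mu(K\cap\mathbf T)\lesssim\rho^{t-1}$. That is too weak when $t>1$.

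We therefore restrict the count to a single $\rho$-ball. We pigeonhole the $\mathbf B_q$: there are at most $\rho^{-1}$ of them, and the ones carrying the $q$'s are balls in $\mathbf T$. Hence, at the cost of the factor $|\{\mathbf B_q\}|$, some $\rho$-ball $\mathbf B$ serves $\gtrsim N/|\{\mathbf B_q\}|$ values of $q$, and $\mu(\mathbf B)\le\delta^{-O(\epsilon)}\rho^t$. This yields $N\lesssim|\{\mathbf B_q\}|\,\delta^{-\zeta-O(\epsilon)}(\rho/\delta)^{t-\sigma}$.

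To remove the factor $|\{\mathbf B_q\}|$, we observe that $\pi_\theta$ maps $\mathbf B_q$ into a single $\rho$-ball of $\Z_p$, which is the one determined by $\mathbf T$. So the images $\pi_\theta(\mathbf B\cap K_\theta)$ all lie in the same $\rho$-interval, and we should count per ball rather than sum over balls. We expect to handle this by bounding $N$ by $\max_{\mathbf B}|\pi_\theta(\mathbf B\cap K_\theta\cap\mathbf T)|_\delta$ times the number of distinct balls actually needed. We would show the latter is $\delta^{-O(\epsilon)}$ via the sparseness \eqref{eq: 46 Ktheta sparse local} at scales $[\rho,1]$ and regularity. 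This is the delicate step, and it is where we expect to spend the effort.

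Finally, taking $\rho=\Delta$ and $\epsilon\ll\zeta$ absorbs $\delta^{-O_\zeta(\epsilon)-\zeta}=\Delta^{-2\zeta-O(\epsilon)}$ into $\Delta^{-3\zeta}$. Since $(\Delta/\delta)^{t-\sigma}=\Delta^{\sigma-t}$, this gives \eqref{eq: 47 projection upper Delta tube simplified}.
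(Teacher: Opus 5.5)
There is a genuine gap, and it sits in the step you flag as delicate. Your Step 1 pigeonholing keeps only one $\rho$-ball per fibre. The lower bound you feed into Step 2 is therefore $\delta^{\zeta}(\rho/\delta)^{\sigma}=\delta^{-(\sigma-\zeta)}\rho^{\sigma}$ many $\delta$-balls per $\delta$-tube, instead of the full $\delta^{-(\sigma-\zeta)}$ supplied by \eqref{eq: 46 Ktheta high fibre}, so you discard a factor $\rho^{\sigma}$. To recover it you would need the number of distinct balls $\mathbf B_q$ to be $\delta^{-O(\epsilon)}$, and nothing gives that. The sparseness \eqref{eq: 46 Ktheta sparse local} at scales $[\rho,1]$ only says that $K\cap\mathbf T$ meets fewer than $\rho^{-\sigma}$ balls of radius $\rho$, and no hypothesis forces fewer. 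With that count your argument yields $N\lesssim\rho^{-\sigma}\delta^{-\zeta-O(\epsilon)}(\rho/\delta)^{t-\sigma}$. This is off by $\rho^{-\sigma}$, which equals $\Delta^{-\sigma}$ when $\rho=\Delta$. That polynomial loss would destroy the later use of the lemma, where $|A_1|\le\tau^{-\alpha}$ is needed with $\alpha=t-\sigma+\zeta_0$ and only a small margin $\zeta_0$.

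The missing observation is that the ``main obstacle'' in your Step 3 is not an obstacle. The tube $\mathbf T$ is the $\rho$-neighbourhood of the exact fibre through any fixed $x_0\in K_\theta\cap\mathbf T$. If $z=(a,b)\in K\cap\mathbf T$, then $z'=(a-(\pi_\theta(z)-\pi_\theta(x_0)),b)$ lies on that fibre and satisfies $\|z-z'\|_p\le\rho$. Hence every $\rho$-ball meeting $K\cap\mathbf T$ meets $K_\rho\cap\pi_\theta^{-1}(\pi_\theta(x_0))$. So $K\cap\mathbf T$ is covered by at most $\mathfrak m_{K,\theta}(x_0\mid[\rho,1])<\rho^{-\sigma}$ balls of radius $\rho$, and regularity gives $|K\cap\mathbf T|_\delta\lesssim\delta^{-O_\zeta(\epsilon)}\rho^{-\sigma}(\rho/\delta)^{t}$. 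This tube bound is exactly strong enough when paired with the unpigeonholed lower bound $|K\cap\mathbf T|_\delta\gtrsim N\delta^{-(\sigma-\zeta)}$, which holds because the $\delta$-tubes $\pi_\theta^{-1}(q)$ are disjoint. Dividing gives $N\lesssim\delta^{-O_\zeta(\epsilon)-\zeta}(\rho/\delta)^{t-\sigma}$. So drop Step 1 and the per-ball pigeonholing entirely: use sparseness once, at the single base point $x_0$, to control the whole tube, not fibre by fibre. Your final conversion to $\Delta^{\sigma-t-3\zeta}$ is fine.
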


\begin{proof}
We may assume that $K_\theta\cap\mathbf T\neq\varnothing$. Let
$\mathcal S_\delta$ be the family of $\delta$-tubes parallel to $\ker\pi_\theta$
which are contained in $\mathbf T$ and meet $K_\theta$. Then
\begin{equation}
\label{eq: 47 tube projection equals number}
|\pi_\theta(K_\theta\cap\mathbf T)|_\delta
\sim
|\mathcal S_\delta|.
\end{equation}

We first obtain a lower bound for the number of $\delta$-cubes of $K_\delta$ contained
in the union of these $\delta$-tubes. For each $\mathbf S\in\mathcal S_\delta$,
choose a point $x_{\mathbf S}\in K_\theta\cap\mathbf S$. By the high global
multiplicity estimate in \Cref{lem: 46 Ktheta high and sparse},
\begin{equation}
\label{eq: 47 high global for tube}
\mathfrak m_{K,\theta}(x_{\mathbf S}\mid[\delta,1])
\ge
\delta^{-(\sigma-\zeta)}.
\end{equation}
The exact fibre $\pi_\theta^{-1}(\pi_\theta(x_{\mathbf S}))$ is contained in the $\delta$-tube $\mathbf S$, since $\mathbf S$ is the unique width-$\delta$ $\pi_\theta$-tube containing $x_{\mathbf S}$. Hence
\eqref{eq: 47 high global for tube} gives
$|K_\delta\cap\mathbf S|_\delta\gtrsim\delta^{-(\sigma-\zeta)}$. The tubes in $\mathcal S_\delta$ are disjoint at scale $\delta$, and therefore
\begin{equation}
\label{eq: 47 lower union tube}
|K_\delta\cap\mathbf T|_\delta
\gtrsim |\mathcal S_\delta|\delta^{-(\sigma-\zeta)}.
\end{equation}

We next prove the corresponding upper bound for $|K_\delta\cap\mathbf T|_\delta$.
Fix one point $x_0\in K_\theta\cap\mathbf T$. Since $x_0\in K_\theta$, it does not
belong to the local high-multiplicity set
$H_{\theta,\loc}(K,\sigma,\delta,\delta^\eta)$. We apply the local sparseness estimate
from \Cref{lem: 46 Ktheta high and sparse} with $r=\rho$ and $R=1$. This is allowed
because $\rho\le\delta^\eta$. Thus
\begin{equation}
\label{eq: 47 local sparse rho one}
\mathfrak m_{K,\theta}(x_0\mid[\rho,1])<\rho^{-\sigma}.
\end{equation}

We now relate the $\rho$-tube $\mathbf T$ to the exact fibre through $x_0$. Since
$\mathbf T$ is a $\rho$-tube parallel to $\ker\pi_\theta$ containing $x_0$, it has the form
$$
\mathbf T=\pi_\theta^{-1}(\pi_\theta(x_0)+\rho^{-1}\Z_p)\cap\Z_p^2.
$$
Let $\mathbf q$ be a $\rho$-ball which meets $K_\delta\cap\mathbf T$. Choose
$z=(a,b)\in K_\delta\cap\mathbf T\cap\mathbf q$. Since $z\in\mathbf T$, we have $\pi_\theta(z)-\pi_\theta(x_0)\in\rho^{-1}\Z_p$. Define $z':=\bigl(a-(\pi_\theta(z)-\pi_\theta(x_0)),\,b\bigr)$. Then $||z-z'||_p\le\rho$ and $\pi_\theta(z')=\pi_\theta(x_0)$. By the ultrametric property, $z'\in\mathbf q$. Moreover, since $z\in K_\delta$ and
$\delta\le\rho$, we also have $z'\in K_\rho$. Hence every $\rho$-ball meeting
$K_\delta\cap\mathbf T$ also meets $K_\rho\cap\pi_\theta^{-1}(\pi_\theta(x_0))$.
Therefore \eqref{eq: 47 local sparse rho one} implies that $K_\delta\cap\mathbf T$
is contained in the union of at most $\rho^{-\sigma}$ many $\rho$-balls.

By the $(\delta,t,\delta^{-O_\zeta(\epsilon)})$-regularity of $K$, each $\rho$-ball contains at most $\delta^{-O_\zeta(\epsilon)}\left(\frac{\rho}{\delta}\right)^t$ many $\delta$-balls meeting $K$. Hence
\begin{equation}
\label{eq: 47 upper union tube}
|K_\delta\cap\mathbf T|_\delta
\lesssim\delta^{-O_\zeta(\epsilon)}\rho^{-\sigma}\left(\frac{\rho}{\delta}\right)^t.
\end{equation}

Combining \eqref{eq: 47 lower union tube} and
\eqref{eq: 47 upper union tube}, we obtain
$$
|\mathcal S_\delta|\lesssim\delta^{-O_\zeta(\epsilon)}\delta^{\sigma-\zeta}\rho^{-\sigma}\left(\frac{\rho}{\delta}\right)^t=\delta^{-O_\zeta(\epsilon)-\zeta}
\left(\frac{\rho}{\delta}\right)^{t-\sigma}.
$$
Together with \eqref{eq: 47 tube projection equals number}, this proves
\eqref{eq: 47 projection upper tube}.

Finally, taking $\rho=\Delta=\delta^{1/2}$ gives
$$
|\pi_\theta(K_\theta\cap\mathbf T)|_\delta
\lesssim
\delta^{-O_\zeta(\epsilon)-\zeta}
\left(\frac{\Delta}{\delta}\right)^{t-\sigma}.
$$
Since $\Delta=\delta^{1/2}$, after choosing $\epsilon>0$ sufficiently small depending
on $\zeta$ and then taking $\delta>0$ sufficiently small, the right-hand side is at most $\Delta^{\sigma-t-3\zeta}$. This proves \eqref{eq: 47 projection upper Delta tube simplified}.
\end{proof}

\subsection{Choosing a good tube at the effective direction scale}
\label{subsec: choosing good Delta tube}

We continue from \Cref{subsec: projecting K theta}. We keep the notation
$\Delta=\delta^{1/2}$ and $\eta=\sqrt{\zeta}$. Recall from
\Cref{lem: 46 Ktheta large} that
$\mu(K_\theta)\ge \delta^{C_\zeta\epsilon}$ for every $\theta\in E$. Moreover,
by \eqref{eq: 46 local E restored}, for every
$I\in\mathcal D_\Delta(E)$ and every $\theta_I\in I$, the normalized direction
set $E_I:=\Delta(E\cap I-\theta_I)$
is a $(\Delta,s-\sqrt{\zeta},\Delta^{-O_\zeta(\epsilon)})$-set.

Fix a ball $I\in\mathcal D_\Delta(E)$ which survives the preceding
uniformization. Write $F:=E\cap I$. By the local branching lower bound,
\begin{equation}
\label{eq: 48 local direction cardinality}
|F|\ge \Delta^{-s+O(\sqrt{\zeta})+O_\zeta(\epsilon)}.
\end{equation}
For $\theta_0\in F$, define
$$
A(\theta_0):=\sum_{\theta\in F}\mu(K_{\theta_0}\cap K_\theta).
$$
Cauchy's inequality gives
$$\sum_{\theta_0\in F}A(\theta_0)\ge\int\left(\sum_{\theta\in F}\mathbf 1_{K_\theta}(x)\right)^2d\mu(x)\ge
\left(\sum_{\theta\in F}\mu(K_\theta)\right)^2\ge\delta^{O_\zeta(\epsilon)}|F|^2.$$
Hence there exists $\theta_0\in F$ such that
\begin{equation}
\label{eq: 48 reference direction overlap}
A(\theta_0)\ge \delta^{O_\zeta(\epsilon)}|F|.
\end{equation}

We next select the scale at which the directions contributing to
\eqref{eq: 48 reference direction overlap} separate from $\theta_0$. Let
$$
F_{\mathrm{near}}:=
F\cap\bigl(\theta_0+\delta^{-(1-\eta)}\Z_p\bigr).
$$
Since $F$ becomes a
$(\Delta,s-\sqrt{\zeta},\Delta^{-O_\zeta(\epsilon)})$-set after normalization
by $\Delta$, we have
$$
|F_{\mathrm{near}}|
\le
\Delta^{-O_\zeta(\epsilon)}
\left(\frac{\delta^{1-\eta}}{\Delta}\right)^{s-\sqrt{\zeta}}
|F|.
$$
Recall that $\Delta=\delta^{1/2}$ and $\eta=\sqrt{\zeta}$. After choosing
$\epsilon>0$ sufficiently small in terms of $s$ and $\zeta$, the right hand
side is at most $\frac12\delta^{O_\zeta(\epsilon)}|F|$. Since every overlap has mass at most
$1$, \eqref{eq: 48 reference direction overlap} therefore implies
\begin{equation}
\label{eq: 48 overlap away from terminal direction scale}
\sum_{\theta\in F\setminus F_{\mathrm{near}}}
\mu(K_{\theta_0}\cap K_\theta)\ge\delta^{O_\zeta(\epsilon)}|F|.
\end{equation}

The set $F\setminus F_{\mathrm{near}}$ is the disjoint union of the valuation
shells
$$
F_j:=\left\{\theta\in F:
\Delta(\theta-\theta_0)\in
p^j\Z_p^\times\right\},
$$
where $j$ ranges over the integers for which
$\delta^{1-\eta}\le \Delta p^{-j}\le\Delta$. There are
$O(\log(1/\delta))$ such integers. Hence, after absorbing this logarithmic
loss into $\delta^{O_\zeta(\epsilon)}$, there exists an integer $j\ge0$ such
that, with $\rho:=\Delta p^{-j}$, one has
\begin{equation}
\label{eq: 48 one valuation shell overlap}
\delta^{1-\eta}\le\rho\le\Delta
\quad\text{and}\quad
\sum_{\theta\in F_j}
\mu(K_{\theta_0}\cap K_\theta)
\ge
\delta^{O_\zeta(\epsilon)}|F|.
\end{equation}
Thus every $\theta\in F_j$ has a unique representation
$$
\theta=\theta_0+\rho^{-1}c_\theta,
\qquad c_\theta\in\Z_p^\times.
$$

A final thresholding produces pointwise overlap. There exists
$\mathcal E\subset F_j$ such that
\begin{equation}
\label{eq: 48 effective direction slice size}
|\mathcal E|\ge\delta^{O_\zeta(\epsilon)}|F|
\end{equation}
and
\begin{equation}
\label{eq: 48 effective direction pointwise overlap}
\mu(K_{\theta_0}\cap K_\theta)
\ge\delta^{O_\zeta(\epsilon)},
\qquad \theta\in\mathcal E.
\end{equation}
Indeed, this follows from \eqref{eq: 48 one valuation shell overlap}, since
each summand is at most $1$.

Set
$$
\tau:=\frac{\delta}{\rho}
\quad\text{and}\quad
C_{\theta_0,\rho}:=\rho(\mathcal E-\theta_0).
$$
Since $\delta^{1-\eta}\le\rho\le\Delta$, we have
$\Delta\le\tau\le\delta^\eta$. Moreover,
$C_{\theta_0,\rho}\subset\Z_p^\times$. The non-concentration of $F$, together
with \eqref{eq: 48 effective direction slice size}, gives
\begin{equation}
\label{eq: 48 effective coefficient set}
C_{\theta_0,\rho}
\text{ is a }
(\tau,s-O(\sqrt{\zeta}),\tau^{-O_\zeta(\epsilon)})\text{-set}.
\end{equation}
To verify this, let $\tau\le r\le1$ and let $J\subset\Z_p$ be an $r$-ball.
Then $\theta_0+\rho^{-1}J$ is a ball of radius $\rho r$ contained in $I$, and
hence
$$
|\mathcal E\cap(\theta_0+\rho^{-1}J)|
\le
\Delta^{-O_\zeta(\epsilon)}
\left(\frac{\rho r}{\Delta}\right)^{s-\sqrt{\zeta}}
|F|
\le
\tau^{-O_\zeta(\epsilon)}r^{s-O(\sqrt{\zeta})}|\mathcal E|.
$$
This proves \eqref{eq: 48 effective coefficient set}.

For the rest of the proof, the reference direction $\theta_0$, the effective
direction scale $\rho$, and the direction slice $\mathcal E$ are fixed. We
write $K_0:=K_{\theta_0}$. After applying the affine change of variables
$(x,y)\mapsto(x+\theta_0y,y)$, we may regard $\pi_{\theta_0}$ as the reference
projection. We keep the same notation for $K$ and $\mu$.

Let $\mathcal B_\rho$ be the family of $\rho$-balls meeting $K$. By the
$(\delta,t,\delta^{-O_\zeta(\epsilon)})$-regularity of $\mu$,
\begin{equation}
\label{eq: 48 number rho balls}
|\mathcal B_\rho|
\le
\delta^{-O_\zeta(\epsilon)}\rho^{-t}.
\end{equation}
From \eqref{eq: 48 effective direction pointwise overlap},
\begin{equation}
\label{eq: 48 sum over rho balls}
\sum_{\theta\in\mathcal E}
\sum_{\mathbf B\in\mathcal B_\rho}
\mu(K_0\cap K_\theta\cap\mathbf B)
\ge
\delta^{O_\zeta(\epsilon)}|\mathcal E|.
\end{equation}

We say that $\mathbf B\in\mathcal B_\rho$ is light if
$$
\frac1{|\mathcal E|}
\sum_{\theta\in\mathcal E}
\mu(K_0\cap K_\theta\cap\mathbf B)
\le
\rho^{t+C_\zeta\epsilon},
$$
where $C_\zeta$ is a sufficiently large constant. Let
$\mathcal B_\rho^{\mathrm{heavy}}$ be the complementary family. By
\eqref{eq: 48 number rho balls}, the contribution of the light balls is at
most
$$
\delta^{-O_\zeta(\epsilon)}\rho^{C_\zeta\epsilon}|\mathcal E|.
$$
Since $\rho\ge\delta^{1-\eta}$, this is at most half of the right hand side of
\eqref{eq: 48 sum over rho balls}, after taking $C_\zeta$ sufficiently large
and then $\epsilon>0$ sufficiently small. Therefore
\begin{equation}
\label{eq: 48 heavy rho ball contribution}
\sum_{\theta\in\mathcal E}
\sum_{\mathbf B\in\mathcal B_\rho^{\mathrm{heavy}}}
\mu(K_0\cap K_\theta\cap\mathbf B)
\ge
\delta^{O_\zeta(\epsilon)}|\mathcal E|.
\end{equation}
Every $\mathbf B\in\mathcal B_\rho^{\mathrm{heavy}}$ also satisfies
\begin{equation}
\label{eq: 48 heavy rho ball K0 mass}
\mu(K_0\cap\mathbf B)\ge\rho^{t+C_\zeta\epsilon}.
\end{equation}

Applying \Cref{lem: 47 projection lower ball} with $\theta=\theta_0$ and scale $\rho$, we obtain
\begin{equation}\label{eq: 48 heavy rho ball projection lower}
|\pi_{\theta_0}(\mathbf B\cap K_0)|_\delta\gtrsim\delta^{-t+O_\zeta(\epsilon)}\left(\frac{\delta}{\rho}\right)^\sigma\mu(\mathbf B\cap K_0)\ge
\delta^{\sigma-t+O_\zeta(\epsilon)}\rho^{t-\sigma},\qquad
\mathbf B\in\mathcal B_\rho^{\mathrm{heavy}}.
\end{equation}

Let $\mathcal T_\rho$ be the minimal family of $\pi_{\theta_0}$-tubes of width $\rho$ covering all balls in
$\mathcal B_\rho^{\mathrm{heavy}}$. For every
$\mathbf T\in\mathcal T_\rho$, choose a ball
$\mathbf B_{\mathbf T}\in\mathcal B_\rho^{\mathrm{heavy}}$
with $\mathbf B_{\mathbf T}\cap\mathbf T\neq\varnothing$.
The projection images
$\pi_{\theta_0}(\mathbf B_{\mathbf T})$ lie in distinct $\rho$-balls as $\mathbf T$ ranges over $\mathcal T_\rho$.
Hence \eqref{eq: 48 heavy rho ball projection lower} gives
\begin{equation}
\label{eq: 48 projection K0 lower by rho tubes}
|\pi_{\theta_0}(K_0)|_\delta
\gtrsim
\delta^{\sigma-t+O_\zeta(\epsilon)}
|\mathcal T_\rho|\rho^{t-\sigma}.
\end{equation}
On the other hand, the global high-multiplicity property defining $K_0$
implies
\begin{equation}
\label{eq: 48 projection K0 upper variable scale}
|\pi_{\theta_0}(K_0)|_\delta\lesssim\delta^{\sigma-t-\zeta+O_\zeta(\epsilon)}.
\end{equation}
Comparing the last two displays, and decreasing $\epsilon$ in terms of
$\zeta$, yields
\begin{equation}
\label{eq: 48 number rho tubes}
|\mathcal T_\rho|
\le
\delta^{-\zeta-O_\zeta(\epsilon)}\rho^{\sigma-t}.
\end{equation}

For $\mathbf T\in\mathcal T_\rho$, let
$$
\mathcal B(\mathbf T)
:=
\left\{
\mathbf B\in\mathcal B_\rho^{\mathrm{heavy}}:
\mathbf B\cap\mathbf T\neq\varnothing
\right\}.
$$
Then \eqref{eq: 48 heavy rho ball contribution} implies
\begin{equation}
\label{eq: 48 rho tube total contribution}
\sum_{\mathbf T\in\mathcal T_\rho}
\sum_{\theta\in\mathcal E}
\sum_{\mathbf B\in\mathcal B(\mathbf T)}
\mu(K_0\cap K_\theta\cap\mathbf B)
\ge
\delta^{O_\zeta(\epsilon)}|\mathcal E|.
\end{equation}
Call $\mathbf T\in\mathcal T_\rho$ heavy if
$$
\sum_{\theta\in\mathcal E}
\sum_{\mathbf B\in\mathcal B(\mathbf T)}
\mu(K_0\cap K_\theta\cap\mathbf B)
\ge
\delta^{2\zeta}\rho^{t-\sigma}|\mathcal E|.
$$
By \eqref{eq: 48 number rho tubes}, the total contribution of the non-heavy
tubes is at most
$\delta^{\zeta-O_\zeta(\epsilon)}|\mathcal E|$, which is less than half of the
right hand side of \eqref{eq: 48 rho tube total contribution}. Hence there is
a heavy tube; fix one and denote it by
\begin{equation}
\label{eq: 48 T0 fixed}
\mathbf T_0\in\mathcal T_\rho.
\end{equation}

Since $\mathbf T_0$ is a $\pi_{\theta_0}$-tube of width $\rho$ and
$|\theta-\theta_0|_p=\rho$ for every $\theta\in\mathcal E$, it is also,
up to harmless constants, a $\pi_\theta$-tube of width $\rho$. Therefore
\Cref{lem: 47 projection upper tube}, applied at the scale $\rho$, gives
\begin{equation}
\label{eq: 48 Ktheta T0 small projection variable scale}
|\pi_\theta(K_\theta\cap\mathbf T_0)|_\delta
\le\delta^{-O_\zeta(\epsilon)}\left(\frac{\delta}{\rho}\right)^{t-\sigma}=\delta^{-O_\zeta(\epsilon)}\tau^{t-\sigma},
\qquad \theta\in\mathcal E.
\end{equation}

We next record the non-concentration of the heavy $\rho$-balls inside
$\mathbf T_0$.

\begin{lemma}
\label{lem: 48 nonconcentration B T0}
For every $x\in\Z_p^2$ and every
$R\in p^{-\N}$ with $\delta^{-\eta}\rho\le R\le1$, one has
\begin{equation}
\label{eq: 48 nonconcentration B T0}
|\mathcal B(\mathbf T_0)\cap B(x,R)|
\lesssim
\left(\frac R\rho\right)^\sigma.
\end{equation}
In particular, $|\mathcal B(\mathbf T_0)|\lesssim\rho^{-\sigma}$.
\end{lemma}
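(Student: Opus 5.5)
The plan is to use the local sparseness of $K_0=K_{\theta_0}$ from \Cref{lem: 46 Ktheta high and sparse}, applied at the pair of scales $[\rho,R]$, just as in the proof of \Cref{lem: 47 projection lower ball}. The point is that all heavy balls in $\mathcal B(\mathbf T_0)$ lie in the single width-$\rho$ tube $\mathbf T_0$. Inside a ball $B(x,R)$ they should therefore be controlled by the multiplicity of one exact $\pi_{\theta_0}$-fibre at resolution $\rho$. We may assume that $\mathcal B(\mathbf T_0)\cap B(x,R)\neq\varnothing$. Since $R\ge\rho$ and balls are nested, we may then also assume that $x$ is a point of some $\mathbf B\in\mathcal B(\mathbf T_0)$.

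First, we pick a base point. Every heavy ball satisfies \eqref{eq: 48 heavy rho ball K0 mass}, so in particular $\mathbf B\cap K_0\neq\varnothing$. Choose one such ball $\mathbf B_1\in\mathcal B(\mathbf T_0)\cap B(x,R)$ and a point $x_0\in\mathbf B_1\cap K_0$. By the ultrametric property, $B(x,R)=B(x_0,R)$. Moreover $\mathbf T_0=\pi_{\theta_0}^{-1}(\pi_{\theta_0}(x_0)+\rho^{-1}\Z_p)\cap\Z_p^2$, because a width-$\rho$ tube meeting a $\rho$-ball contains it. This uses the fact that $\pi_{\theta_0}$ is $1$-Lipschitz for $\theta_0\in\Z_p$, so the image of a $\rho$-ball is a $\rho$-ball.

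Second, we project each ball onto the fibre. Take any $\mathbf B\in\mathcal B(\mathbf T_0)\cap B(x_0,R)$ and a point $z=(a,b)\in\mathbf B\cap K$. Set $z':=(a-(\pi_{\theta_0}(z)-\pi_{\theta_0}(x_0)),b)$. Exactly as in \Cref{lem: 47 projection upper tube}, we get $\|z-z'\|_p\le\rho$ and $\pi_{\theta_0}(z')=\pi_{\theta_0}(x_0)$. It follows that $z'\in\mathbf B\cap K_\rho\cap B(x_0,R)$. Distinct $\rho$-balls $\mathbf B$ give distinct $\rho$-cubes on the fibre, so
$$|\mathcal B(\mathbf T_0)\cap B(x,R)|\le\mathfrak m_{K,\theta_0}(x_0\mid[\rho,R]).$$

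Third, we apply local sparseness. The hypothesis $R\ge\delta^{-\eta}\rho$ is exactly $\rho/R\le\delta^\eta$, and $\delta\le\rho\le R\le1$. Since $x_0\in K_0$, \eqref{eq: 46 Ktheta sparse local} gives $\mathfrak m_{K,\theta_0}(x_0\mid[\rho,R])<(R/\rho)^\sigma$, which proves \eqref{eq: 48 nonconcentration B T0}.

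For the ``in particular'', we cannot simply take $R=1$ with $x$ arbitrary unless $\rho\le\delta^\eta$. This does hold, since $\rho\le\Delta=\delta^{1/2}\le\delta^{\eta}$ for $\zeta$ small. Taking $R=1$ then gives $B(x,1)=\Z_p^2$, and hence $|\mathcal B(\mathbf T_0)|\lesssim\rho^{-\sigma}$.

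The main subtlety is bookkeeping. The affine change of variables $(x,y)\mapsto(x+\theta_0y,y)$ was applied after $K_0$ was defined, and we must check that $K_0$ and the multiplicity numbers transform compatibly, with $\theta_0$ becoming $0$. \Cref{lem: high multiplicity scaling}-type reasoning, or the bi-Lipschitz invariance noted earlier, handles this up to harmless constants. Those constants are absorbed into $\lesssim$.
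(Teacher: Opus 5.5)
Your proof is correct and follows essentially the same route as the paper: you take $x_0\in\mathbf B\cap K_0$ in a heavy ball, note that all heavy $\rho$-balls in $B(x,R)$ lie in $B(x_0,R)$ and in the single width-$\rho$ $\pi_{\theta_0}$-tube through $x_0$, and apply the local sparse bound $\mathfrak m_{K,\theta_0}(x_0\mid[\rho,R])<(R/\rho)^\sigma$. You also make explicit two steps the paper leaves implicit: the fibre-sliding argument (as in \Cref{lem: 47 projection upper tube}) behind ``this bounds the number of such $\rho$-balls'', and the check that $R=1$ is admissible because $\rho\le\Delta\le\delta^\eta$.
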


\begin{proof}
Fix $x$ and $R$ as in the statement, and choose
$\mathbf B\in\mathcal B(\mathbf T_0)\cap B(x,R)$. By
\eqref{eq: 48 heavy rho ball K0 mass}, there exists
$x_0\in\mathbf B\cap K_0$. All balls in
$\mathcal B(\mathbf T_0)\cap B(x,R)$ lie in $B(x_0,R)$ and in the same
$\pi_{\theta_0}$-tube of width $\rho$. Since
$\rho/R\le\delta^\eta$, the local sparse estimate in
\Cref{lem: 46 Ktheta high and sparse} gives
$$
\mathfrak m_{K,\theta_0}(x_0\mid[\rho,R])
<
\left(\frac R\rho\right)^\sigma.
$$
This bounds the number of such $\rho$-balls and proves
\eqref{eq: 48 nonconcentration B T0}. The final assertion follows by taking
$R=1$.
\end{proof}

Finally, the heaviness of $\mathbf T_0$ can be made pointwise in many
directions. For every $\theta\in\mathcal E$, regularity and
\Cref{lem: 48 nonconcentration B T0} give
$$
\sum_{\mathbf B\in\mathcal B(\mathbf T_0)}
\mu(\mathbf B\cap K_0\cap K_\theta)
\lesssim
\rho^{t-\sigma-O_\zeta(\epsilon)}.
$$
Consequently, after passing to a subset
$\mathcal E_1\subset\mathcal E$ with
$|\mathcal E_1|\ge\delta^{3\zeta}|\mathcal E|$, we have
\begin{equation}
\label{eq: 48 final pointwise tube lower}
\sum_{\mathbf B\in\mathcal B(\mathbf T_0)}
\mu(\mathbf B\cap K_0\cap K_\theta)
\ge
\delta^{3\zeta}\rho^{t-\sigma},
\qquad \theta\in\mathcal E_1.
\end{equation}
Replacing $\mathcal E$ by $\mathcal E_1$ and renaming, we may assume that
\eqref{eq: 48 final pointwise tube lower} holds for every $\theta\in\mathcal E$. The normalized coefficient set $C_{\theta_0,\rho}=\rho(\mathcal E-\theta_0)$
remains a
\begin{equation}
\label{eq: 48 final coefficient set properties}
(\tau,s-O(\sqrt{\zeta}),\tau^{-O(\sqrt{\zeta})})\text{-set}
\quad\text{contained in }\Z_p^\times,
\end{equation}
where $\tau=\delta/\rho$. Equivalently, every $\theta\in\mathcal E$ has the
form
\begin{equation}
\label{eq: 48 final direction representation}
\theta=\theta_0+\rho^{-1}c_\theta,
\qquad c_\theta\in C_{\theta_0,\rho}\subset\Z_p^\times.
\end{equation}

For notational simplicity, in the sequel we replace the original direction
set $E$ by this final slice $\mathcal E$. Thus, from now on, $C_{\theta_0,\rho}=\rho(E-\theta_0)$ is a $(\tau,s-O(\sqrt{\zeta}),\tau^{-O(\sqrt{\zeta})})$-set contained in $\Z_p^\times$, and the tube $\mathbf T_0$ fixed in \eqref{eq: 48 T0 fixed} will be used in the next subsection to construct the sets $A$, $A_\theta$, and the restricted graphs at the effective scale $\rho$.

\subsection{The sets $\mathcal A$ and $\mathcal A_\theta$}
\label{subsec: sets A and A theta}

We continue with the notation of
\Cref{subsec: choosing good Delta tube}. Thus $\rho$ is the effective
direction scale, $\tau=\delta/\rho$, and $\mathbf T_0$ is the fixed
$\pi_{\theta_0}$-tube of width $\rho$. We also write
$K_0:=K_{\theta_0}$.

The ABC theorem will eventually be applied at resolution $\tau$. We therefore
first arrange that the transverse objects inside $\mathbf T_0$ are uniform when
viewed at this resolution. Partition the balls in
$\mathcal B(\mathbf T_0)$ according to their transverse $\tau$-parents. By
pigeonholing the parent multiplicities into powers of $p$, and then
pigeonholing once more in the direction variable, there exist a subfamily
\[
\mathcal B_*\subset\mathcal B(\mathbf T_0),
\]
a subset $E_*\subset E$, and an integer $m\ge1$ with the following
properties:

\begin{enumerate}
\item $|E_*|\ge\delta^{O_\zeta(\epsilon)}|E|$;

\item every occupied transverse $\tau$-parent contains between $m$ and
$pm$ members of $\mathcal B_*$;

\item for every $\theta\in E_*$,
\begin{equation}
\label{eq: 49 selected pointwise tube lower}
\sum_{\mathbf B\in\mathcal B_*}
\mu\bigl(
\mathbf B\cap K_0\cap K_\theta
\bigr)
\gtrsim
\delta^{4\zeta}\rho^{t-\sigma}.
\end{equation}
\end{enumerate}

Here the loss in the last display is harmless after choosing $\epsilon>0$
sufficiently small in terms of $\zeta$. Replacing $E_*$ by $E$ and
$\mathcal B_*$ by $\mathcal B(\mathbf T_0)$, we assume from now on that these
properties hold.

Let
\begin{equation}
\label{eq: 49 B definition}
B
\subset
\Z_p/\tau^{-1}\Z_p
\end{equation}
be the set of transverse $\tau$-parents occupied by
$\mathcal B(\mathbf T_0)$. Thus, for every $b\in B$, there are between $m$
and $pm$ balls $\mathbf B\in\mathcal B(\mathbf T_0)$ whose transverse $\tau$-parent is $b$.

We record the non-concentration property of $B$.

\begin{lemma}
\label{lem: 49 B nonconcentration}
The set $B$ is a $\bigl(
\tau,\sigma-O(\sqrt{\zeta}),
\tau^{-O(\sqrt{\zeta})}\bigr)$-set.
\end{lemma}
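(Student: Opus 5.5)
The plan is to combine the local sparse-slice bound of \Cref{lem: 48 nonconcentration B T0} with the uniformity of the parent multiplicities ($m$ to $pm$ per occupied parent). This turns a count of $\rho$-balls into a count of occupied transverse parents. We also need a lower bound on $|B|$ coming from heaviness, see \eqref{eq: 49 selected pointwise tube lower}. Throughout, we identify the transverse coordinate of $\mathbf T_0$ at scale $\rho$ with $\Z_p$ via the rescaling $y\mapsto$ (position along the fibre direction) normalized by $\rho$. Under this identification, a transverse $\tau$-parent corresponds to a ball of radius $\tau$ in $\Z_p$, hence to a ball of radius $\delta\cdot\rho^{-1}\cdot\rho$-type in the rescaled picture. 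More concretely, a $\tau$-ball $J$ in $B$-coordinates corresponds to the $\rho$-balls of $\mathbf T_0$ lying in a single ball of radius $R:=\rho\cdot r/\tau$ (for an $r$-ball in the normalized coordinate). I will fix this dictionary first, checking with the ultrametric structure of $\mathbf T_0=\pi_{\theta_0}^{-1}(J_0)$ that subballs of $\mathbf T_0$ of radius $R\ge\rho$ correspond exactly to balls in the transverse parameter.

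The upper bound comes next. Fix $r\in[\tau,1]$ and an $r$-ball $J$ in the parent space, and let $B(x,R)$ be the corresponding ball. Each parent $b\in B\cap J$ carries at least $m$ balls of $\mathcal B(\mathbf T_0)$ inside $B(x,R)$. This gives
$$m\,|B\cap J|\le |\mathcal B(\mathbf T_0)\cap B(x,R)|\lesssim (R/\rho)^\sigma,$$
by \Cref{lem: 48 nonconcentration B T0}, provided $R\ge\delta^{-\eta}\rho$. When $R<\delta^{-\eta}\rho$, we instead use the trivial bound $(R/\rho)^{1}\le\delta^{-\eta}$. Since $\tau\ge\Delta$ and $\eta=\sqrt\zeta$, this loss is $\tau^{-O(\sqrt\zeta)}$, and it is absorbed.

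For the lower bound we need $m|B|\gtrsim \delta^{O(\zeta)}(\text{top scale ratio})^{\sigma}$. By heaviness, \eqref{eq: 49 selected pointwise tube lower}, and the regularity bound $\mu(\mathbf B)\le\delta^{-O_\zeta(\epsilon)}\rho^t$, we get $|\mathcal B(\mathbf T_0)|\ge\delta^{4\zeta+O_\zeta(\epsilon)}\rho^{-\sigma}$. Together with $|\mathcal B(\mathbf T_0)|\le pm|B|$, this yields
$$|B\cap J|\lesssim \delta^{-O(\zeta)}(R/\rho)^\sigma\rho^{\sigma}|B|.$$
In normalized coordinates this reads $\lesssim\delta^{-O(\zeta)}r^{\sigma}|B|$, after checking that $R\rho^{-1}\cdot\rho$ matches $r$ at the top scale. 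Finally, since $\tau\le\delta^{\eta}$, we have $\delta^{-O(\zeta)}\le\tau^{-O(\zeta/\eta)}=\tau^{-O(\sqrt\zeta)}$. This conversion is what produces the exponent $\sigma-O(\sqrt\zeta)$.

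I expect the main obstacle to be the scale dictionary: making precise which ball $B(x,R)$ in $\Z_p^2$ corresponds to a transverse $r$-ball of parents, and checking that the ranges $[\tau,1]$ and $[\rho,1]$ line up so that the top-scale normalization works out. The remaining steps are bookkeeping.
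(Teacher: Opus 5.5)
Your route is the paper's. You bound $m|B\cap I|$ by a count of heavy $\rho$-balls in one ball using \Cref{lem: 48 nonconcentration B T0}, use a trivial count below $\delta^{-\eta}\rho$, and divide by $m|B|\sim|\mathcal B(\mathbf T_0)|$. Your lower bound $|\mathcal B(\mathbf T_0)|\ge\delta^{4\zeta+O_\zeta(\epsilon)}\rho^{-\sigma}$, obtained from \eqref{eq: 49 selected pointwise tube lower} and $\mu(\mathbf B)\le\delta^{-O_\zeta(\epsilon)}\rho^t$, is genuinely needed, and the paper's sketch uses it only implicitly. However, two steps are wrong as written.

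\textbf{The dictionary.} The transverse coordinate is not rescaled. After $(x,y)\mapsto(x+\theta_0y,y)$ one has $\mathbf T_0=J_0\times\Z_p$ with $J_0$ a $\rho$-ball. Every $\mathbf B\in\mathcal B(\mathbf T_0)$ has the form $J_0\times I'$ with $I'$ a $\rho$-ball of $y$-values, and $B$ simply records $I'$ modulo $\tau^{-1}\Z_p$. Only the $\pi_{\theta_0}$-coordinate is multiplied by $\rho$, when $A_1$ is formed. Since $r\ge\tau\ge\rho$, all balls whose parent lies in an $r$-ball $I$ sit inside the single $r$-ball $J_0^{(r)}\times I$ of $\Z_p^2$, so $R=r$. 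Your choice $R=\rho r/\tau$ is strictly smaller than $r$ whenever $\rho<\tau$. Then the claim that each parent carries $m$ balls inside $B(x,R)$ fails: for $r=\tau$ it would force all $m$ children of a parent into one $\rho$-ball, although they are spread over a $\tau$-ball of $y$-values. With $R=r$, your final normalisation $|B\cap I|\lesssim\delta^{-O(\zeta)}r^\sigma|B|$ comes out exactly.

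\textbf{Absorbing the trivial-range loss.} The inequality $\tau\ge\Delta$ bounds $\tau^{-1}$ from above, which is the wrong direction. The bound $\tau\le\delta^{\eta}$ alone only gives $\delta^{-\eta}\le\tau^{-1}$. The correct observation is that the range $\tau\le r<\delta^{-\eta}\rho$ is nonempty only if $\tau^2<\delta^{-\eta}\rho\tau=\delta^{1-\eta}$. In that case $\delta^{-\eta}\le\tau^{-2\eta/(1-\eta)}=\tau^{-O(\sqrt\zeta)}$. In this range the trivial count gives $m|B\cap I|\le r/\rho<\delta^{-\eta}$. Dividing by your lower bound on $m|B|$ and using $\rho^\sigma\le r^\sigma$ (since $\rho\le\tau\le r$) then yields $|B\cap I|\lesssim\tau^{-O(\sqrt\zeta)}r^\sigma|B|$. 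Alternatively, you can spend the room in the exponent $\sigma-O(\sqrt\zeta)$, since $r<\delta^{-\eta}\rho\le\delta^{1/2-\eta}$ there.

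With these two corrections your argument is complete. It in fact shows that $B$ is a $(\tau,\sigma,\tau^{-O(\sqrt\zeta)})$-set.
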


\begin{proof}
Let $I\subset\Z_p$ be a ball of radius $R\in[\tau,1]\cap p^{-\N}$. Every element of $B\cap I$ contains at least $m$ balls from
$\mathcal B(\mathbf T_0)$. Hence
\[
m|B\cap I|\le\left|\left\{
\mathbf B\in\mathcal B(\mathbf T_0):
\text{the transverse coordinate of }\mathbf B\text{ lies in }I
\right\}
\right|.
\]
Similarly, $m|B|\sim|\mathcal B(\mathbf T_0)|$, up to a factor depending only on $p$. Therefore
\Cref{lem: 48 nonconcentration B T0}, together with the trivial estimate in
the range below $\delta^{-\eta}\rho$, gives
\[
|B\cap I|
\le
\tau^{-O(\sqrt{\zeta})}
R^{\sigma-O(\sqrt{\zeta})}
|B|.
\]
This proves the lemma.
\end{proof}

We next define the horizontal set. Let
\begin{equation}
\label{eq: 49 A collection definition}
\mathcal A
:=
D_\delta\bigl(
\pi_{\theta_0}(K_0\cap\mathbf T_0)
\bigr).
\end{equation}
Equivalently, $\mathcal A$ is the family of $\delta$-balls
$J\subset\Z_p$ such that
\[
\pi_{\theta_0}^{-1}(J)
\cap
K_0
\cap
\mathbf T_0
\neq
\varnothing.
\]
By \Cref{lem: 47 projection upper tube}, applied with
$\theta=\theta_0$ and tube width $\rho$,
\begin{equation}
\label{eq: 49 A upper bound}
|\mathcal A|
\le
\delta^{-O_\zeta(\epsilon)}
\tau^{\sigma-t}.
\end{equation}

For $\theta\in E$ and $J\in\mathcal A$, define
\begin{equation}
\label{eq: 49 B J theta definition}
B_{J,\theta}
:=
\left\{
b\in B:
\begin{array}{l}
\text{there exists }\mathbf B\in\mathcal B(\mathbf T_0)
\text{ with transverse}\\
\text{$\tau$-parent $b$ such that }
\pi_{\theta_0}^{-1}(J)
\cap
\mathbf B
\cap
K_0
\cap
K_\theta
\neq
\varnothing
\end{array}
\right\}.
\end{equation}

We then define
\begin{equation}
\label{eq: 49 A theta definition}
\mathcal A_\theta
:=
\left\{
J\in\mathcal A:
|B_{J,\theta}|
\ge
\delta^{6\zeta}|B|
\right\}.
\end{equation}
Thus $\mathcal A_\theta$ consists of those projection balls which are incident
to a positive proportion, up to the permitted power loss, of the transverse
set $B$. This is the form that will be used directly in the restricted graph
for the ABC theorem.

\begin{lemma}
\label{lem: 49 slice mass upper}
Let $\mathbf B\in\mathcal B(\mathbf T_0)$ and
$J\in\mathcal D_\delta(\Z_p)$. Then
\begin{equation}
\label{eq: 49 slice mass upper}
\mu\bigl(
\pi_{\theta_0}^{-1}(J)
\cap
\mathbf B
\cap
K_0
\bigr)
\lesssim
\tau^{-\sigma}
\delta^{t-O_\zeta(\epsilon)}.
\end{equation}
\end{lemma}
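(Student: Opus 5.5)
The plan is to bound the set $\pi_{\theta_0}^{-1}(J)\cap\mathbf B\cap K_0$ by counting $\delta$-balls and then multiplying by the mass of a single $\delta$-ball. Since $\mu$ is $(\delta,t,\delta^{-O_\zeta(\epsilon)})$-regular, every $\delta$-ball carries $\mu$-mass at most $\delta^{t-O_\zeta(\epsilon)}$. It therefore suffices to show
$$
|\pi_{\theta_0}^{-1}(J)\cap\mathbf B\cap K|_\delta\lesssim\tau^{-\sigma}=\left(\frac\rho\delta\right)^{\sigma}
$$
whenever the set $\pi_{\theta_0}^{-1}(J)\cap\mathbf B\cap K_0$ is nonempty. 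If it is empty there is nothing to prove.

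For the covering bound, fix a point $x_0\in\pi_{\theta_0}^{-1}(J)\cap\mathbf B\cap K_0$. Because $\mathbf B$ is a $p$-adic ball of radius $\rho$ containing $x_0$, we have $\mathbf B=B(x_0,\rho)$. Because $J$ is a $\delta$-ball containing $\pi_{\theta_0}(x_0)$, the set $\pi_{\theta_0}^{-1}(J)$ is the width-$\delta$ $\pi_{\theta_0}$-tube through $x_0$. We now repeat the fibre-transfer argument from the proof of \Cref{lem: 47 projection lower ball}. For each $\delta$-ball $\mathbf q$ meeting $K\cap\mathbf B\cap\pi_{\theta_0}^{-1}(J)$, pick $z=(a,b)$ in that intersection and set
$$
z':=\bigl(a-(\pi_{\theta_0}(z)-\pi_{\theta_0}(x_0)),\,b\bigr).
$$
Then $\|z-z'\|_p\le\delta$ and $z'$ lies on the exact fibre through $x_0$. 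By the ultrametric inequality, $z'\in\mathbf q\cap B(x_0,\rho)\cap K_\delta$. This gives
$$
|\pi_{\theta_0}^{-1}(J)\cap\mathbf B\cap K|_\delta\lesssim\mathfrak m_{K,\theta_0}(x_0\mid[\delta,\rho]).
$$

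The remaining step is the local sparseness from \Cref{lem: 46 Ktheta high and sparse}(2), applied at $x_0\in K_0=K_{\theta_0}$ with $r=\delta$ and $R=\rho$. This application needs $\delta/\rho\le\delta^\eta$, which holds because $\rho\ge\delta^{1-\eta}$ from \eqref{eq: 48 one valuation shell overlap}. It yields
$$
\mathfrak m_{K,\theta_0}(x_0\mid[\delta,\rho])<\left(\frac\rho\delta\right)^\sigma=\tau^{-\sigma}.
$$
Multiplying by the single-ball mass bound $\delta^{t-O_\zeta(\epsilon)}$ gives \eqref{eq: 49 slice mass upper}.

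The only delicate point is bookkeeping after the affine normalization $(x,y)\mapsto(x+\theta_0y,y)$ made in \Cref{subsec: choosing good Delta tube}. We must check that $K_0$ and the local sparseness property are transported correctly, and that balls and tubes are preserved up to constants. Since the map is an isometry of $\Z_p^2$ intertwining the relevant projections, I expect this to be harmless: the constants are absorbed into $\lesssim$ and into $\delta^{-O_\zeta(\epsilon)}$.
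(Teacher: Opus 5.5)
Your proposal is correct and follows essentially the paper's proof. You fix $x_0$ in the set and invoke the local sparseness of $K_0$ on $[\delta,\rho]$, which is admissible since $\tau=\delta/\rho\le\delta^\eta$. Multiplying the resulting $\lesssim\tau^{-\sigma}$ count of $\delta$-balls by the single-ball mass bound from regularity gives the estimate. The only difference is that you spell out the fibre-transfer step comparing the tube count with $\mathfrak m_{K,\theta_0}(x_0\mid[\delta,\rho])$, which the paper leaves implicit here and carries out explicitly in \Cref{lem: 47 projection lower ball}.
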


\begin{proof}
Assume that the set on the left is nonempty, and choose
\[
x_0\in\pi_{\theta_0}^{-1}(J)\cap\mathbf B\cap K_0.
\]
Since $x_0\in K_0$, the local sparse estimate
\eqref{eq: 46 Ktheta sparse local} applies with
$r=\delta$ and $R=\rho$. Indeed, $\frac{\delta}{\rho}=\tau\le\delta^\eta$.  Hence
\[
\mathfrak m_{K,\theta_0}
\bigl(
x_0\mid[\delta,\rho]
\bigr)
<
\left(
\frac{\rho}{\delta}
\right)^\sigma
=
\tau^{-\sigma}.
\]

The ball $\mathbf B$ is a $\rho$-ball containing $x_0$, while
$\pi_{\theta_0}^{-1}(J)$ is the $\delta$-neighbourhood of the
$\pi_{\theta_0}$-fibre through $x_0$. Therefore 
\[
\pi_{\theta_0}^{-1}(J)\cap\mathbf B\cap K_0
\]
is covered by $\lesssim
\tau^{-\sigma}$ many $\delta$-balls. Each such ball has $\mu$-mass at most $\delta^{t-O_\zeta(\epsilon)}$ by the regularity of $\mu$. This proves
\eqref{eq: 49 slice mass upper}.
\end{proof}

\begin{lemma}
\label{lem: 49 parent slice mass upper}
Let $b\in B$ and $J\in\mathcal D_\delta(\Z_p)$. Then
\begin{equation}
\label{eq: 49 parent slice mass upper}
\sum_{\substack{
\mathbf B\in\mathcal B(\mathbf T_0)\\
\mathbf B\text{ has transverse }\tau\text{-parent }b
}}
\mu\bigl(
\pi_{\theta_0}^{-1}(J)
\cap
\mathbf B
\cap
K_0
\bigr)
\lesssim
m\tau^{-\sigma}
\delta^{t-O_\zeta(\epsilon)}.
\end{equation}
\end{lemma}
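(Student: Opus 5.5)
The plan is to sum the bound of \Cref{lem: 49 slice mass upper} over the balls with a fixed transverse $\tau$-parent, so the only real input is the uniformization recorded above: every occupied parent $b\in B$ contains between $m$ and $pm$ members of $\mathcal B(\mathbf T_0)$.

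Fix $b\in B$ and $J\in\mathcal D_\delta(\Z_p)$. By the uniformization performed at the start of \Cref{subsec: sets A and A theta}, the family
\[
\mathcal B_b:=\{\mathbf B\in\mathcal B(\mathbf T_0):\mathbf B\text{ has transverse }\tau\text{-parent }b\}
\]
has cardinality at most $pm$. For each $\mathbf B\in\mathcal B_b$, \Cref{lem: 49 slice mass upper} gives
\[
\mu\bigl(\pi_{\theta_0}^{-1}(J)\cap\mathbf B\cap K_0\bigr)\lesssim\tau^{-\sigma}\delta^{t-O_\zeta(\epsilon)}.
\]
This bound is uniform in $\mathbf B$, and it holds trivially when the set is empty. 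Summing over $\mathcal B_b$ yields a total of at most $pm\,\tau^{-\sigma}\delta^{t-O_\zeta(\epsilon)}$.

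Since $p$ is fixed, the factor $p$ is absorbed into the implicit constant in $\lesssim$. This gives \eqref{eq: 49 parent slice mass upper}.

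Only one check is needed. The constant implicit in \Cref{lem: 49 slice mass upper} must not depend on $\mathbf B$ or $J$. It does not, because it comes only from the local sparse multiplicity bound at the point $x_0$ and the regularity of $\mu$.

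An alternative route would use the local sparse estimate at scales $[\delta,\tau^{-1}\rho]$ applied to the whole transverse parent. That gives a bound of a different shape, without the factor $m$, so the direct summation is the one to use. I expect no genuine obstacle here: the estimate is a bookkeeping consequence of the cardinality control on each parent.
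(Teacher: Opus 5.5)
Your proposal is correct and matches the paper's proof: bound each term by \Cref{lem: 49 slice mass upper}, use that each occupied transverse $\tau$-parent $b$ contains at most $pm$ members of $\mathcal B(\mathbf T_0)$, and absorb the factor $p$ into the implicit constant. Your check that the implicit constant in the single-ball bound is uniform in $\mathbf B$ and $J$ is the right thing to confirm, and it holds.
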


\begin{proof}
Every $b\in B$ contains at most $pm$ members of
$\mathcal B(\mathbf T_0)$. The conclusion follows immediately from
\Cref{lem: 49 slice mass upper}, after absorbing the factor $p$ into the
implicit constant.
\end{proof}

\begin{lemma}
\label{lem: 49 A theta large}
For every $\theta\in E$,
\begin{equation}
\label{eq: 49 A theta lower bound}
|\mathcal A_\theta|
\ge
\delta^{5\zeta}
\tau^{\sigma-t}.
\end{equation}
\end{lemma}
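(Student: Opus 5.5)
Fix $\theta\in E$. The plan is a double count of the mass in \eqref{eq: 49 selected pointwise tube lower} over the projection balls $J\in\mathcal A$. Write
\[
M_\theta:=\sum_{\mathbf B\in\mathcal B(\mathbf T_0)}\mu(\mathbf B\cap K_0\cap K_\theta)\gtrsim \delta^{4\zeta}\rho^{t-\sigma}.
\]
Every point of $\mathbf B\cap K_0$, with $\mathbf B\in\mathcal B(\mathbf T_0)$, lies in $K_0\cap\mathbf T_0$. Hence its $\pi_{\theta_0}$-image lies in some $J\in\mathcal A$, and
\[
M_\theta=\sum_{J\in\mathcal A}\sum_{b\in B}\;\sum_{\mathbf B\text{ with parent }b}\mu\bigl(\pi_{\theta_0}^{-1}(J)\cap\mathbf B\cap K_0\cap K_\theta\bigr).
\]
For fixed $J$, the inner sum over $b$ runs only over $b\in B_{J,\theta}$, because the other terms vanish by the definition \eqref{eq: 49 B J theta definition}. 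By \Cref{lem: 49 parent slice mass upper}, each $b$ contributes at most $m\tau^{-\sigma}\delta^{t-O_\zeta(\epsilon)}$. This gives
\[
M_\theta\lesssim m\tau^{-\sigma}\delta^{t-O_\zeta(\epsilon)}\sum_{J\in\mathcal A}|B_{J,\theta}|.
\]

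Next split the sum into $J\in\mathcal A_\theta$ and $J\in\mathcal A\setminus\mathcal A_\theta$. For $J\notin\mathcal A_\theta$ we have $|B_{J,\theta}|<\delta^{6\zeta}|B|$, and the number of such $J$ is at most $|\mathcal A|\le\delta^{-O_\zeta(\epsilon)}\tau^{\sigma-t}$ by \eqref{eq: 49 A upper bound}. For $J\in\mathcal A_\theta$ use the trivial bound $|B_{J,\theta}|\le|B|$. Therefore
\[
M_\theta\lesssim m|B|\tau^{-\sigma}\delta^{t-O_\zeta(\epsilon)}\bigl(|\mathcal A_\theta|+\delta^{6\zeta}\tau^{\sigma-t}\bigr).
\]
It remains to control $m|B|$. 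By the uniformity arranged above, $m|B|\sim|\mathcal B(\mathbf T_0)|\lesssim\rho^{-\sigma}$, using \Cref{lem: 48 nonconcentration B T0}. So the prefactor is at most
\[
\rho^{-\sigma}\tau^{-\sigma}\delta^{t-O_\zeta(\epsilon)}=\delta^{t-\sigma-O_\zeta(\epsilon)}.
\]
Using $\delta^{t-\sigma}=\tau^{t-\sigma}\rho^{t-\sigma}$ and the lower bound on $M_\theta$, we get
\[
\delta^{4\zeta}\rho^{t-\sigma}\lesssim \delta^{-O_\zeta(\epsilon)}\tau^{t-\sigma}\rho^{t-\sigma}\bigl(|\mathcal A_\theta|+\delta^{6\zeta}\tau^{\sigma-t}\bigr).
\]
Dividing through gives $|\mathcal A_\theta|\gtrsim\delta^{4\zeta+O_\zeta(\epsilon)}\tau^{\sigma-t}-\delta^{6\zeta}\tau^{\sigma-t}$. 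Since $\delta^{6\zeta}\ll\delta^{4\zeta+O_\zeta(\epsilon)}$, this is at least $\delta^{5\zeta}\tau^{\sigma-t}$ for $\epsilon\ll\zeta$ and $\delta$ small.

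The main obstacle I expect is the bookkeeping of $m|B|$ against $|\mathcal B(\mathbf T_0)|$. The bound $|\mathcal B(\mathbf T_0)|\lesssim\rho^{-\sigma}$ must survive the refinement to $\mathcal B_*$; it does, since $\mathcal B_*$ is a subfamily. The heaviness lower bound, in turn, was established for the refined family in \eqref{eq: 49 selected pointwise tube lower}. One must also check that the powers of $\rho$ and $\tau$ combine exactly, with no leftover factor like $\rho^{-O(\sqrt\zeta)}$ from \Cref{lem: 49 B nonconcentration}. This is avoided by using only the cardinality bound $m|B|\lesssim\rho^{-\sigma}$ and not the non-concentration of $B$.
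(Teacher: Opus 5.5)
Your proposal is correct and is essentially the paper's argument: the same double count of the mass in \eqref{eq: 49 selected pointwise tube lower} over $J\in\mathcal A$, using \Cref{lem: 49 parent slice mass upper} (with only $b\in B_{J,\theta}$ contributing), the bound $|\mathcal A|\le\delta^{-O_\zeta(\epsilon)}\tau^{\sigma-t}$, and $m|B|\le|\mathcal B(\mathbf T_0)|\lesssim\rho^{-\sigma}$. The differences are cosmetic. You merge the paper's two steps (the bad $J$'s carry at most half the mass, then an upper bound on the good part) into a single inequality. You also make explicit a fact the paper uses tacitly: $\mathbf B\subset\mathbf T_0$, so every point of $\mathbf B\cap K_0$ projects into some $J\in\mathcal A$.
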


\begin{proof}
Fix $\theta\in E$. We first estimate the contribution from
$\mathcal A\setminus\mathcal A_\theta$.

If $J\in\mathcal A\setminus\mathcal A_\theta$, then $|B_{J,\theta}|
<\delta^{6\zeta}|B|$. Therefore, by \Cref{lem: 49 parent slice mass upper},
$$
\sum_{\mathbf B\in\mathcal B(\mathbf T_0)}
\mu\bigl(\pi_{\theta_0}^{-1}(J)\cap
\mathbf B\cap K_0\cap K_\theta\bigr)\lesssim
\delta^{6\zeta}|B|m\tau^{-\sigma}
\delta^{t-O_\zeta(\epsilon)}.
$$
Since 
\[
m|B|
\lesssim
|\mathcal B(\mathbf T_0)|
\lesssim
\rho^{-\sigma}
\]
by \Cref{lem: 48 nonconcentration B T0}, the right hand side is bounded by
\[
\delta^{6\zeta}
\rho^{-\sigma}
\tau^{-\sigma}
\delta^{t-O_\zeta(\epsilon)}
=
\delta^{t-\sigma+6\zeta-O_\zeta(\epsilon)}.
\]

Summing over
$J\in\mathcal A\setminus\mathcal A_\theta$ and using
\eqref{eq: 49 A upper bound}, we obtain
\begin{equation}\label{eq: 49 bad A total}
\sum_{J\in\mathcal A\setminus\mathcal A_\theta}\sum_{\mathbf B\in\mathcal B(\mathbf T_0)}
\mu\bigl(\pi_{\theta_0}^{-1}(J)
\cap\mathbf B\cap K_0\cap
K_\theta\bigr)\lesssim\delta^{6\zeta-O_\zeta(\epsilon)}
\rho^{t-\sigma}.
\end{equation}
After choosing $\epsilon>0$ sufficiently small in terms of $\zeta$, this is
at most half of the lower bound in
\eqref{eq: 49 selected pointwise tube lower}. Consequently,
\begin{equation}
\label{eq: 49 good A contribution lower}
\sum_{J\in\mathcal A_\theta}
\sum_{\mathbf B\in\mathcal B(\mathbf T_0)}
\mu\bigl(
\pi_{\theta_0}^{-1}(J)
\cap
\mathbf B
\cap
K_0
\cap
K_\theta
\bigr)
\gtrsim
\delta^{4\zeta}
\rho^{t-\sigma}.
\end{equation}

On the other hand, by \Cref{lem: 49 slice mass upper} and $|\mathcal B(\mathbf T_0)|\lesssim\rho^{-\sigma}$, we have
\begin{equation}
\label{eq: 49 good A contribution upper}
\sum_{J\in\mathcal A_\theta}
\sum_{\mathbf B\in\mathcal B(\mathbf T_0)}
\mu\bigl(\pi_{\theta_0}^{-1}(J)
\cap\mathbf B\cap K_0\cap K_\theta
\bigr)\lesssim|\mathcal A_\theta|
\rho^{-\sigma}\tau^{-\sigma}
\delta^{t-O_\zeta(\epsilon)}=
|\mathcal A_\theta|\delta^{t-\sigma-O_\zeta(\epsilon)}.
\end{equation}
Combining
\eqref{eq: 49 good A contribution lower} and
\eqref{eq: 49 good A contribution upper} yields
\[
|\mathcal A_\theta|\gtrsim
\delta^{4\zeta+O_\zeta(\epsilon)}
\rho^{t-\sigma}\delta^{\sigma-t}=
\delta^{4\zeta+O_\zeta(\epsilon)}
\tau^{\sigma-t}.
\]
Taking $\epsilon>0$ sufficiently small compared to $\zeta$ proves
\eqref{eq: 49 A theta lower bound}.
\end{proof}

\subsection{Violating the ABC theorem}
\label{subsec: violating ABC theorem}

We now complete the proof of \Cref{prop: iterating projection}. We continue
with the notation of
\Cref{subsec: choosing good Delta tube,subsec: sets A and A theta}.

Recall that
\begin{equation}
\label{eq: 410 A summary recalled}
|\mathcal A|\le\delta^{-O_\zeta(\epsilon)}
\tau^{\sigma-t},
\end{equation}
while, for every $\theta\in E$,
\begin{equation}
\label{eq: 410 A theta summary recalled}
|\mathcal A_\theta|
\ge\delta^{5\zeta}\tau^{\sigma-t}.
\end{equation}
Moreover, by the definition of $\mathcal A_\theta$,
\begin{equation}
\label{eq: 410 rich transverse fibres recalled}
|B_{J,\theta}|\ge\delta^{6\zeta}|B|,
\qquad J\in\mathcal A_\theta,
\end{equation}
and
\begin{equation}
\label{eq: 410 small projection recalled}
|\pi_\theta(K_\theta\cap\mathbf T_0)|_\delta
\le\delta^{-O_\zeta(\epsilon)}
\tau^{\sigma-t},\qquad\theta\in E.
\end{equation}

Recall also from \Cref{lem: 49 B nonconcentration} that
\begin{equation}
\label{eq: 410 B nonconcentration recalled}
B\text{ is a }\bigl(\tau,\sigma-O(\sqrt{\zeta}),\tau^{-O(\sqrt{\zeta})}
\bigr)\text{-set}.
\end{equation}

Since $\mathbf T_0$ is a $\rho$-tube parallel to $\ker\pi_{\theta_0}$, there exists a $\rho$-ball $J_0\subset\Z_p$ such that
$$
\mathbf T_0=\pi_{\theta_0}^{-1}(J_0)
\cap\Z_p^2.
$$
After translating the $\pi_{\theta_0}$-coordinate, we may identify
$J_0$ with $\rho^{-1}\Z_p$.

Let $A\subset\rho^{-1}\Z_p/\delta^{-1}\Z_p$
be the set of $\delta$-cosets corresponding to $\mathcal A$, and define
\begin{equation}
\label{eq: 410 A normalized definition}
A_1:=\rho A\subset\Z_p/\tau^{-1}\Z_p.
\end{equation}
Then $|A_1|=|\mathcal A|$, so by \eqref{eq: 410 A summary recalled},
\[
|A_1|\le\delta^{-O_\zeta(\epsilon)}\tau^{\sigma-t}.
\]
Recall that the ABC parameter is $\alpha
=t-\sigma+\zeta_0$. Since $\tau\le\delta^\eta$, after choosing $\epsilon>0$ sufficiently small
in terms of $\eta,\zeta_0$, and then choosing $\zeta>0$ sufficiently small
compared to $\zeta_0$, we obtain
\begin{equation}
\label{eq: 410 A1 ABC size}
|A_1|\le\tau^{-\alpha}.
\end{equation}

By \eqref{eq: 410 B nonconcentration recalled}, after choosing $\zeta>0$ sufficiently small compared to $\zeta_0$, the set $B$ is a
\begin{equation}
\label{eq: 410 B ABC set}
(\tau,\beta,\tau^{-\chi})\text{-set},
\end{equation}
where $\beta=\sigma-\zeta_0$ is the parameter fixed in
\eqref{eq: ABC perturbed parameters}.

We define the coefficient set by
\begin{equation}
\label{eq: 410 C definition}
C:=\rho(E-\theta_0)\subset\Z_p^\times,
\end{equation}
where $C$ is understood at resolution $\tau$. By
\eqref{eq: 48 final coefficient set properties}, $C$ is a $\bigl(\tau,s-O(\sqrt{\zeta}),\tau^{-O(\sqrt{\zeta})}
\bigr)$-set. Thus, after choosing $\zeta>0$ sufficiently small compared to $\zeta_0$,
\begin{equation}
\label{eq: 410 C ABC set}
C
\text{ is a }
(\tau,\gamma,\tau^{-\chi})\text{-set},
\end{equation}
where $\gamma=s-\zeta_0$. For every $\theta\in E$, write
\begin{equation}
\label{eq: 410 theta normalized coefficient}
\theta=\theta_0+\rho^{-1}c_\theta,
\qquad c_\theta\in C\subset\Z_p^\times.
\end{equation}

We now construct the restricted graph directly from the transverse sets
$B_{J,\theta}$. If $a\in A_1$, let
\[
J_a
:=
\rho^{-1}a+\delta^{-1}\Z_p
\]
be the corresponding $\delta$-ball in the
$\pi_{\theta_0}$-coordinate. Define
\begin{equation}
\label{eq: 410 G theta definition}
G_\theta
:=
\left\{
(a,b)\in A_1\times B:
J_a\in\mathcal A_\theta
\text{ and }
b\in B_{J_a,\theta}
\right\}.
\end{equation}

\begin{lemma}
\label{lem: 410 G theta large}
For every $\theta\in E$,
\begin{equation}
\label{eq: 410 G theta large}
|G_\theta|
\ge
\delta^{11\zeta}
\tau^{\sigma-t}
|B|.
\end{equation}
Consequently,
\begin{equation}
\label{eq: 410 G theta relative large}
|G_\theta|
\ge
\delta^{11\zeta+O_\zeta(\epsilon)}
|A_1||B|.
\end{equation}
In particular, after choosing $\zeta>0$ and then $\epsilon>0$ sufficiently
small in terms of the ABC constant $\chi$ and $\eta$,
\begin{equation}
\label{eq: 410 G theta ABC large}
|G_\theta|
\ge
\tau^\chi|A_1||B|.
\end{equation}
\end{lemma}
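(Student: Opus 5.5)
The plan is a direct count using the lower bound for $|\mathcal A_\theta|$ and the richness of the fibres $B_{J,\theta}$.

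\textbf{First bound.} Fix $\theta\in E$. By the definition \eqref{eq: 410 G theta definition}, $G_\theta$ is the union, over $a\in A_1$ with $J_a\in\mathcal A_\theta$, of the fibres $\{a\}\times B_{J_a,\theta}$. The map $a\mapsto J_a$ is a bijection from $A_1$ onto $\mathcal A$; this is just the identification $A_1=\rho A$ with $A$ the set of $\delta$-cosets of $\mathcal A$. Hence
$$
|G_\theta|=\sum_{J\in\mathcal A_\theta}|B_{J,\theta}|.
$$
Each summand satisfies $|B_{J,\theta}|\ge\delta^{6\zeta}|B|$ by \eqref{eq: 410 rich transverse fibres recalled}. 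Combining this with \Cref{lem: 49 A theta large} gives
$$
|G_\theta|\ge \delta^{5\zeta}\tau^{\sigma-t}\cdot\delta^{6\zeta}|B|=\delta^{11\zeta}\tau^{\sigma-t}|B|,
$$
which is \eqref{eq: 410 G theta large}.

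\textbf{Relative bound.} Use the upper bound $|A_1|=|\mathcal A|\le\delta^{-O_\zeta(\epsilon)}\tau^{\sigma-t}$ from \eqref{eq: 410 A summary recalled}. This gives $\tau^{\sigma-t}\ge\delta^{O_\zeta(\epsilon)}|A_1|$, and substituting into the first bound yields \eqref{eq: 410 G theta relative large}.

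\textbf{Converting to a power of $\tau$.} The one genuine point is passing from powers of $\delta$ to powers of $\tau$. We only know $\Delta\le\tau\le\delta^\eta$, so $\tau\ge\delta^{1/2}$. It follows that $\delta^{c}=(\delta^{1/2})^{2c}\ge\tau^{2c}$ for every $c>0$. Therefore
$$
\delta^{11\zeta+O_\zeta(\epsilon)}\ge\tau^{22\zeta+O_\zeta(\epsilon)}.
$$
By \eqref{eq: zeta choice projection}, $\zeta\ll\chi$, so we may take $22\zeta\le\chi/2$. Then choosing $\epsilon$ small in terms of $\zeta$ and $\chi$ makes the $O_\zeta(\epsilon)$ term at most $\chi/2$. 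This gives \eqref{eq: 410 G theta ABC large}.

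The bound $\tau\le\delta^\eta$ plays no role here, since we only need a lower bound on $\tau$. I expect no real obstacle beyond keeping the order of the parameter choices consistent: $\zeta$ is fixed before $\epsilon$, and the $O_\zeta(\epsilon)$ losses from \eqref{eq: 410 A summary recalled} are absorbed only at the last step.
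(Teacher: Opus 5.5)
Your first two steps are exactly the paper's argument: $|G_\theta|=\sum_{J\in\mathcal A_\theta}|B_{J,\theta}|\ge\delta^{6\zeta}|\mathcal A_\theta||B|$, then \Cref{lem: 49 A theta large}, then the upper bound \eqref{eq: 410 A summary recalled} on $|A_1|$. The gap is in the last step, where an inequality is reversed. From $\tau\ge\delta^{1/2}$ you get $\tau^{2c}\ge(\delta^{1/2})^{2c}=\delta^{c}$, not $\delta^{c}\ge\tau^{2c}$. A lower bound on $\tau$ can only bound $\delta^{c}$ from above, which is useless for the lower bound you need. Concretely, take $\tau=\delta^{\eta}$, which is allowed. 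Then $\tau^{22\zeta}=\delta^{22\eta\zeta}$, and this is strictly larger than $\delta^{11\zeta}$ because $\eta=\sqrt{\zeta}<1/2$. So your claimed estimate $\delta^{11\zeta+O_\zeta(\epsilon)}\ge\tau^{22\zeta+O_\zeta(\epsilon)}$ fails.

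What is needed is the opposite bound, the one you set aside: $\tau\le\delta^{\eta}$. It gives $\tau^{\chi}\le\delta^{\eta\chi}$, so $\delta^{11\zeta+O_\zeta(\epsilon)}\ge\tau^{\chi}$ as soon as $11\zeta+O_\zeta(\epsilon)\le\eta\chi$. With $\eta=\sqrt{\zeta}$ this reads $11\sqrt{\zeta}+O_\zeta(\epsilon)/\sqrt{\zeta}\le\chi$. That holds after taking $\zeta\le(\chi/22)^2$ and then $\epsilon$ small in terms of $\zeta$ and $\chi$, and this is how the paper concludes.

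The conceptual reason is that $\tau^{\chi}$ is only a meaningful loss if $\tau$ is genuinely a small scale. If $\tau$ could be close to $1$ (that is, $\rho$ close to $\delta$), then $|G_\theta|\ge\tau^{\chi}|A_1||B|$ would demand nearly full density, which a $\delta^{11\zeta}$ loss cannot supply. This is exactly why the directions in $F_{\mathrm{near}}$ were discarded, to guarantee $\rho\ge\delta^{1-\eta}$. Note also that the repair relies on $\eta$ not being too small relative to $\zeta$. It works with $\eta=\sqrt{\zeta}$ as fixed in this part of the argument, but would fail for a choice like $\eta\ll\zeta$.
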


\begin{proof}
By
\eqref{eq: 410 A theta summary recalled} and
\eqref{eq: 410 rich transverse fibres recalled},
$$
|G_\theta|=\sum_{J\in\mathcal A_\theta}
|B_{J,\theta}|\ge\delta^{6\zeta}|\mathcal A_\theta||B|\ge\delta^{11\zeta}\tau^{\sigma-t}|B|.
$$
This proves \eqref{eq: 410 G theta large}. Moreover, by \eqref{eq: 410 A summary recalled}, 
\[
|A_1|
\le
\delta^{-O_\zeta(\epsilon)}
\tau^{\sigma-t}.
\]
Therefore $|G_\theta|\ge\delta^{11\zeta+O_\zeta(\epsilon)}
|A_1||B|$, which proves \eqref{eq: 410 G theta relative large}.

Finally, since $\tau\le\delta^\eta$, the factor $\delta^{11\zeta+O_\zeta(\epsilon)}$ dominates $\tau^\chi$ after choosing $\zeta>0$ and then $\epsilon>0$
sufficiently small in terms of $\chi$ and $\eta$. This gives
\eqref{eq: 410 G theta ABC large}.
\end{proof}

The next lemma compares the projection of $G_\theta$ in the normalized
coefficient direction with the projection of
$K_\theta\cap\mathbf T_0$.

\begin{lemma}
\label{lem: 410 G theta small projection}
For every $\theta\in E$,
\begin{equation}
\label{eq: 410 G theta small projection}
\left|
\left\{
a+c_\theta b:
(a,b)\in G_\theta
\right\}
\right|_\tau
\le
|\pi_\theta(K_\theta\cap\mathbf T_0)|_\delta.
\end{equation}
Consequently,
\begin{equation}
\label{eq: 410 G theta small projection upper}
\left|
\left\{
a+c_\theta b:
(a,b)\in G_\theta
\right\}
\right|_\tau
\le
\delta^{-O_\zeta(\epsilon)}
\tau^{\sigma-t}.
\end{equation}
\end{lemma}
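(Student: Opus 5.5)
The plan is to build an injection from the $\tau$-balls of the left-hand set into the $\delta$-balls of $\pi_\theta(K_\theta\cap\mathbf T_0)$. The second display then follows at once from \Cref{lem: 47 projection upper tube} in its form \eqref{eq: 48 Ktheta T0 small projection variable scale}, i.e.\ \eqref{eq: 410 small projection recalled}.

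Work in the coordinates fixed after the change of variables $(x,y)\mapsto(x+\theta_0y,y)$. Here $\pi_{\theta_0}$ is the first coordinate, $\mathbf T_0=J_0\times\Z_p$ with $J_0=\rho^{-1}\Z_p$, and $\pi_\theta(x,y)=x+\rho^{-1}c_\theta y$. The transverse coordinate of a point of $\mathbf T_0$ is its $y$-coordinate. We read the transverse $\tau$-parent of a $\rho$-ball $\mathbf B$ as the class of $\rho y$ modulo $\tau^{-1}\Z_p$, where $y\in\mathbf B$ is arbitrary. This is well defined up to a harmless ambiguity, since $\rho y$ is determined modulo $\rho\cdot\rho^{-1}\Z_p=\Z_p$; the convention has to be fixed so that the $b$ coordinate lives at resolution $\tau$ after scaling. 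The expected way to make this consistent is to scale $\mathbf T_0\cap\mathbf B_0$ by $\rho$ in both coordinates, with $\mathbf B_0$ the relevant $\rho$-ball. This is where $\delta$ becomes $\tau=\delta/\rho$.

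Given $(a,b)\in G_\theta$, the definition of $B_{J_a,\theta}$ yields a ball $\mathbf B$ with transverse parent $b$ and a point $z=(x,y)\in\pi_{\theta_0}^{-1}(J_a)\cap\mathbf B\cap K_0\cap K_\theta$. Then $z\in K_\theta\cap\mathbf T_0$, and $x\in J_a=\rho^{-1}a+\delta^{-1}\Z_p$. Compute
\[
\rho\,\pi_\theta(z)=\rho x+c_\theta\,\rho y\equiv a+c_\theta b \pmod{\rho\delta^{-1}\Z_p=\tau^{-1}\Z_p},
\]
using $\rho x\equiv a$ modulo $\tau^{-1}\Z_p$, that $\rho y$ represents $b$ at resolution $\tau$, and that $c_\theta\in\Z_p$, so multiplying by $c_\theta$ preserves the congruence. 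Thus the $\tau$-ball of $a+c_\theta b$ equals $\rho$ times the $\delta$-ball containing $\pi_\theta(z)$, a ball which meets $\pi_\theta(K_\theta\cap\mathbf T_0)$. The map $\delta$-ball $\mapsto$ $\rho\cdot(\delta$-ball$)$ is a bijection onto $\tau$-balls, so each $\tau$-ball of $\{a+c_\theta b\}$ arises from at least one $\delta$-ball of the projection. This gives the first display.

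The main obstacle is the bookkeeping of the transverse coordinate. The claim that $\rho y$ modulo $\tau^{-1}$ determines $b$ requires that the transverse $\tau$-parent record $y$ to precision $\delta$ (rescaled), not merely the $\rho$-ball. If $\mathcal B(\mathbf T_0)$ consists of $\rho$-balls, $y$ is only known modulo $\rho^{-1}$, and the congruence fails unless $B$ is understood as a set of $\delta$-resolution transverse positions. The first thing to settle is therefore this convention: I would define the transverse parent via the $\delta$-ball of $y$ for the point $z$, scaled by $\rho$. If that is too rigid, I would replace $b$ by the class of $\rho y$ for a chosen witness $z$, which still lies in $B$ by construction. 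The remaining steps are routine.
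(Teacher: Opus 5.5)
Your strategy is the paper's: take a witness $z\in\pi_{\theta_0}^{-1}(J_a)\cap\mathbf B\cap K_0\cap K_\theta$ and show that scaling by $\rho$ sends the $\delta$-coset of $\pi_\theta(z)$ onto the $\tau$-coset of $a+c_\theta b$. The proof as written does not go through, however, because of an algebra slip in the key computation. The ``main obstacle'' you then try to repair is an artifact of that slip. With $\pi_\theta(x,y)=x+\rho^{-1}c_\theta y$ one has
\[
\rho\,\pi_\theta(z)=\rho x+c_\theta\, y ,
\]
not $\rho x+c_\theta\,\rho y$. So what you need is $y\equiv b\pmod{\tau^{-1}\Z_p}$, i.e.\ that $b$ is the $\tau$-ball containing the $y$-coordinate of $z$. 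This is exactly what the transverse $\tau$-parent of the $\rho$-ball $\mathbf B$ records. It is determined by $\mathbf B$ alone, since $\rho\le\Delta=\delta^{1/2}\le\delta/\rho=\tau$, so the $y$-projection of $\mathbf B$ (a $\rho$-ball) lies in a unique $\tau$-ball. Then
\[
\rho\,\pi_\theta(z)\in a+\tau^{-1}\Z_p+c_\theta\bigl(b+\tau^{-1}\Z_p\bigr)=a+c_\theta b+\tau^{-1}\Z_p ,
\]
and your injection argument, together with \eqref{eq: 410 small projection recalled}, finishes as in the paper.

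Both your stated convention and your proposed repair are genuinely wrong. Under the convention $b=\rho y\bmod\tau^{-1}\Z_p$, the class is not determined by $\mathbf B$. Knowing $y$ modulo $\rho^{-1}\Z_p$ only gives $\rho y$ modulo $\Z_p$, which is far coarser than $\tau^{-1}\Z_p$, so this is not a ``harmless ambiguity''. Moreover, $\rho y$ need not lie in $\Z_p$ at all. Your repair defines the parent through the $\delta$-ball of $y$ for a chosen witness, scaled by $\rho$. That would make $B$ depend on witnesses rather than on $\mathcal B(\mathbf T_0)$, and would take values outside $\Z_p/\tau^{-1}\Z_p$. It would also destroy the uniform multiplicity $m$ per parent and \Cref{lem: 49 B nonconcentration}. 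Both are needed to feed $B$ into \Cref{thm: ABC sum-product} at resolution $\tau$. Once the factor of $\rho$ is corrected, no change of convention is needed.
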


\begin{proof}
Fix $(a,b)\in G_\theta$. By definition, $J_a
\in\mathcal A_\theta$ and $b
\in B_{J_a,\theta}$. Therefore there exist
$\mathbf B\in\mathcal B(\mathbf T_0)$ and
\[
z=(x,y)\in\pi_{\theta_0}^{-1}(J_a)\cap
\mathbf B\cap K_0\cap K_\theta
\]
such that the transverse $\tau$-parent of $\mathbf B$ is $b$. Hence
\[
\pi_{\theta_0}(z)
\in
\rho^{-1}a+\delta^{-1}\Z_p,
\qquad
y
\in
b+\tau^{-1}\Z_p.
\]
Using $\theta=\theta_0+\rho^{-1}c_\theta$ and $\rho\delta^{-1}=
\tau^{-1}$, we obtain
$$
\rho\pi_\theta(z)=\rho\pi_{\theta_0}(z)
+\rho(\theta-\theta_0)y\in
a+\tau^{-1}\Z_p+c_\theta
\bigl(b+\tau^{-1}\Z_p
\bigr)=a+c_\theta b+\tau^{-1}\Z_p.
$$
Thus every $\tau$-coset meeting $\left\{
a+c_\theta b:(a,b)\in G_\theta\right\}$ is determined by a $\delta$-coset meeting $\pi_\theta(K_\theta\cap\mathbf T_0)$. This proves \eqref{eq: 410 G theta small projection}. The final estimate follows from
\eqref{eq: 410 small projection recalled}.
\end{proof}

We now apply \Cref{thm: ABC sum-product} at resolution $\tau$ to the sets
$A_1,B,C$. The hypotheses are
\eqref{eq: 410 A1 ABC size},
\eqref{eq: 410 B ABC set}, and
\eqref{eq: 410 C ABC set}. Hence there exists $c\in C$ such that
\begin{equation}
\label{eq: 410 ABC lower for all graphs}
\left|\left\{a+cb:
(a,b)\in G
\right\}\right|_\tau\ge\tau^{-\chi}|A_1|
\end{equation}
for every $G\subset A_1\times B$ with $|G|\ge\tau^\chi|A_1||B|$.

By the definition of $C$, the coefficient $c$ has the form $c=c_\theta$ for some $\theta\in E$. Applying
\eqref{eq: 410 ABC lower for all graphs} to $G=G_\theta$, and using
\eqref{eq: 410 G theta ABC large}, gives
\begin{equation}
\label{eq: 410 ABC lower Gtheta}
\left|
\left\{
a+c_\theta b:
(a,b)\in G_\theta
\right\}\right|_\tau\ge\tau^{-\chi}|A_1|.
\end{equation}
Since $|A_1|\ge|\mathcal A_\theta|$, it follows from \eqref{eq: 410 A theta summary recalled} that
\[
\tau^{-\chi}|A_1|
\ge
\delta^{5\zeta}
\tau^{\sigma-t-\chi}.
\]
Since $\tau\le\delta^\eta$, after choosing $\zeta>0$ and then
$\epsilon>0$ sufficiently small in terms of $\chi$ and $\eta$, the right
hand side is larger than
\[
\delta^{-O_\zeta(\epsilon)}
\tau^{\sigma-t}.
\]
This contradicts
\eqref{eq: 410 G theta small projection upper}. The contradiction proves \Cref{prop: iterating projection}, and hence
completes the proof of \Cref{thm: exceptional measure estimate} and
\Cref{thm: regular set projection}.

\section{Furstenberg set estimates via recursive construction}
\label{sec: Furstenberg regular set}

In this section we prove the discretized Furstenberg estimate in the regular
case. The argument is the $p$-adic analogue of the recursive construction in Orponen-Shmerkin \cite[Section 5]{orponen2023projections}. The key input is the
regular-set projection theorem from \Cref{sec: projections}, in the many-directions form recorded in \Cref{rmk: many good directions}. The role of the recursion is to reduce a general Furstenberg configuration to a quasi-product situation where this projection theorem can be applied.

Throughout this section, for a $p$-adic tube $T$ we write $\sigma(T)$ for its slope, i.e., $\sigma(T)=\Dir(T)$ in the notation of \Cref{defi: p-adic tubes}. If $\mathcal{T}$ is a family of tubes, we write
$$
\sigma(\mathcal{T}):=\{\sigma(T):T\in\mathcal{T}\}.
$$
As usual, $\D_\Delta(\mathcal{T})$ denotes the family of $\Delta$-tubes containing
at least one tube from $\mathcal{T}$, and $|\mathcal{T}|_\Delta=|\D_\Delta(\mathcal{T})|$.

\begin{defi}[Nice configurations]\label{defi: nice configuration}
Fix $\delta=p^{-n}\in p^{-\N}$, $s\in[0,1]$, $C>0$, and $M\in\N$. A pair $(\P,\T)$ with $\P\subset \P_\delta$ and $\T\subset \T_\delta$ is called a $\delta$-configuration if, for every $\mathscr p\in\P$, there is an assigned subfamily $\T(\mathscr p)\subset \T$ such that
$$
\mathscr p\subset T,\qquad T\in\T(\mathscr p).
$$
We emphasize that $\T(\mathscr p)$ is not required to be the family of all tubes
in $\T$ containing $\mathscr p$.

We say that $(\P,\T)$ is a $(\delta,s,C,M)$-nice configuration if, for every $\mathscr p\in\P$, the family $\T(\mathscr p)$ is a $(\delta,s,C)$-set of $\delta$-tubes and $|\T(\mathscr p)|=M$.
\end{defi}

By the basic property of $(\delta,s,C)$-sets, every $(\delta,s,C,M)$-nice configuration satisfies $M\gtrsim C^{-1}\delta^{-s}$.

\begin{defi}[Refinements]\label{defi: configuration refinement}
Let $(\P_0,\T_0)$ be a $(\delta,s,C_0,M_0)$-nice configuration. A $\delta$-configuration $(\P,\T)$ is called a refinement of $(\P_0,\T_0)$ if
$$
\P\subset\P_0,\qquad |\P|\gtrapprox_\delta |\P_0|,
$$
and, for every $\mathscr p\in\P$, there is an assigned family $\T(\mathscr p)\subset \T\cap \T_0(\mathscr p)$ such that
$$
\sum_{\mathscr p\in\P}|\T(\mathscr p)|
\gtrapprox_\delta |\P_0|M_0.
$$
We call such a refinement a nice refinement if, in addition,
$$
|\T(\mathscr p)|\approx_\delta M_0,\qquad \mathscr p\in\P.
$$

Let $\Delta\in p^{-\N}$ with $\delta<\Delta<1$. We say that $(\P,\T)$ is a
refinement of $(\P_0,\T_0)$ at resolution $\Delta$ if $\P$ is a union of complete
fibres over a subset of $\D_\Delta(\P_0)$, and for every $\mathscr p\in\P$ the family $\T(\mathscr p)$ is obtained from $\T_0(\mathscr p)$ by keeping complete
fibres over a selected family of $\Delta$-tubes. Equivalently, for every
$\mathbf T\in\D_\Delta(\T_0(\mathscr p))$, either $\T(\mathscr p)\cap\mathbf T=\varnothing$ or $\T(\mathscr p)\cap\mathbf T=\T_0(\mathscr p)\cap\mathbf T$.
\end{defi}

\begin{prop}[Pigeonholing to a nice refinement]\label{prop: nice refinement pigeonhole}
Let $(\P_0,\T_0)$ be a $(\delta,s,C_0,M_0)$-nice configuration, and let $(\P,\T)$ be a refinement of $(\P_0,\T_0)$. Then $(\P,\T)$ has a nice refinement $(\P',\T')$ which is also a refinement of $(\P_0,\T_0)$.
\end{prop}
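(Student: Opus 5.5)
This is a dyadic (here $p$-adic) pigeonholing on the sizes $|\T(\mathscr p)|$, with the scale losses absorbed into $\approx_\delta$. Since $(\P,\T)$ is a refinement, $|\P|\gtrapprox_\delta|\P_0|$ and $\sum_{\mathscr p\in\P}|\T(\mathscr p)|\gtrapprox_\delta |\P_0|M_0$, while trivially $|\T(\mathscr p)|\le|\T_0(\mathscr p)|=M_0$ for every $\mathscr p$. So the task is to extract many points whose tube families are all of size comparable to $M_0$, losing only polylogarithmic factors.

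First I discard the light points. Let $\lambda\approx_\delta 1$ be the implicit constant, so that $\sum_{\mathscr p\in\P}|\T(\mathscr p)|\ge\lambda|\P_0|M_0$. Set
$$
\P_{\mathrm{light}}:=\{\mathscr p\in\P:|\T(\mathscr p)|<\tfrac{\lambda}{2}M_0\}.
$$
These contribute at most $\frac{\lambda}{2}|\P|M_0\le\frac{\lambda}{2}|\P_0|M_0$ to the sum. Hence the remaining points $\P_{\mathrm{heavy}}$ carry at least $\frac{\lambda}{2}|\P_0|M_0$. Since each term is at most $M_0$, this gives $|\P_{\mathrm{heavy}}|\ge\frac{\lambda}{2}|\P_0|$.

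Next, if a sharper form of niceness is wanted, I split $[\frac\lambda2M_0,M_0]$ into $O(\log(1/\delta))$ intervals $[p^{k-1},p^k)$. Pigeonholing then yields $k$ and a subset $\P'\subset\P_{\mathrm{heavy}}$ with $|\P'|\gtrsim|\P_{\mathrm{heavy}}|/\log(1/\delta)\gtrapprox_\delta|\P_0|$ on which $|\T(\mathscr p)|\in[p^{k-1},p^k)$. For $\mathscr p\in\P'$ I set $\T'(\mathscr p):=\T(\mathscr p)$ and $\T':=\bigcup_{\mathscr p\in\P'}\T'(\mathscr p)\subset\T$.

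It remains to check the refinement axioms against $(\P_0,\T_0)$. We have $\P'\subset\P\subset\P_0$ and $|\P'|\gtrapprox_\delta|\P_0|$. Also $\T'(\mathscr p)\subset\T'\cap\T_0(\mathscr p)$, because $\T(\mathscr p)\subset\T\cap\T_0(\mathscr p)$. The mass condition holds since
$$
\sum_{\mathscr p\in\P'}|\T'(\mathscr p)|\ge|\P'|\tfrac{\lambda}{2}M_0\gtrapprox_\delta|\P_0|M_0.
$$
Finally $|\T'(\mathscr p)|\approx_\delta M_0$ for every $\mathscr p\in\P'$, so $(\P',\T')$ is a nice refinement of $(\P,\T)$ as well, with $\P'\subset\P$ and $\T'(\mathscr p)\subset\T(\mathscr p)$.

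There is no real obstacle here; the only point needing care is bookkeeping. The constants hidden in $\approx_\delta$ must be polylogarithmic in $1/\delta$, which holds because only one factor of $2$ and one factor of $\log(1/\delta)$ are lost. If the statement is applied to refinements at resolution $\Delta$, the same argument works verbatim, because we keep whole families $\T(\mathscr p)$ and only delete points, so the complete-fibre structure in $\T$ is preserved. If one also wants $\P'$ to be a union of complete $\Delta$-fibres, one pigeonholes over $\Delta$-parents instead.
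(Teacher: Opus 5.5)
Your argument is correct and is essentially the paper's proof. Both pigeonhole the sizes $|\T(\mathscr p)|\le M_0$: the paper does it dyadically and reads off $|\P'|\gtrapprox_\delta|\P_0|$ and $M'\gtrapprox_\delta M_0$ from $|\P'|M'\gtrapprox_\delta|\P_0|M_0$, whereas your threshold step (discarding points with $|\T(\mathscr p)|<\frac{\lambda}{2}M_0$) gives both conclusions at once, so your further $p$-adic pigeonholing is optional; keeping the full families $\T(\mathscr p)$ rather than trimming them to a common size, as the paper does, is harmless. Your closing aside about complete $\Delta$-fibres is not needed for this statement, and as written it would not give niceness, since a parent selected by pigeonholing can still contain points with $|\T(\mathscr p)|\ll M_0$.
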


\begin{proof}
For each $\mathscr p\in\P$, set $m(\mathscr p):=|\T(\mathscr p)|$. By the
definition of refinement,
$$
\sum_{\mathscr p\in\P}m(\mathscr p)
\gtrapprox_\delta |\P_0|M_0.
$$
Since $m(\mathscr p)\le M_0$, pigeonholing gives a number $M'\le M_0$ and a subcollection $\P'\subset\P$ such that
$$
m(\mathscr p)\sim M',\qquad \mathscr p\in\P',
$$
and $|\P'|M'\gtrapprox_\delta |\P_0|M_0$. It follows that $|\P'|\gtrapprox_\delta|\P_0|$ and $M'\gtrapprox_\delta M_0$. For
each $\mathscr p\in\P'$, choose a subfamily
$\T'(\mathscr p)\subset\T(\mathscr p)$ with $|\T'(\mathscr p)|\sim M'$, and set $
\T':=\bigcup_{\mathscr p\in\P'}\T'(\mathscr p)$. Then $(\P',\T')$ is a refinement of $(\P,\T)$ and
$|\T'(\mathscr p)|\approx_\delta M_0$ for all $\mathscr p\in\P'$, as required.
\end{proof}

\begin{lemma}[Fibre-uniform parent selection]
\label{lem: fibre uniform parent selection}
Let $\delta<\Delta\le 1$ be $p$-adic scales, and let
$\P\subset \P_\delta$ be a $(\delta,t,C)$-set. Then there exists a subset
$\overline{\P}\subset \P$ and an integer $A_\Delta\ge 1$ such that
\begin{equation}
\label{eq: fibre uniform parent selection size}
|\overline{\P}|\gtrapprox_\delta |\P|,
\end{equation}
\begin{equation}
\label{eq: fibre uniform parent size}
|\overline{\P}\cap \mathbf p|=A_\Delta,
\qquad
\mathbf p\in\D_\Delta(\overline{\P}),
\end{equation}
and $\D_\Delta(\overline{\P})$ is a $(\Delta,t,\lessapprox_\delta C)$-set.
\end{lemma}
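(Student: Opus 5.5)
The plan is a single dyadic (here $p$-adic) pigeonholing of the parent multiplicities, followed by an appeal to \Cref{lem: passing to coarse parents}. For each $\mathbf p\in\D_\Delta(\P)$ set $n(\mathbf p):=|\P\cap\mathbf p|$. This is an integer between $1$ and $|\P_\delta\cap\mathbf p|\le(\Delta/\delta)^2$. So $n(\mathbf p)$ falls into one of $O(\log_p(1/\delta))$ classes of the form $[p^k,p^{k+1})$.

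\textbf{Step 1: pigeonhole a class.} Since
$$
\sum_{\mathbf p}n(\mathbf p)=|\P|,
$$
there is a class $k$ such that the parents $\mathbf p$ with $n(\mathbf p)\in[p^k,p^{k+1})$ together contain at least $|\P|/O(\log(1/\delta))$ cubes of $\P$.

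\textbf{Step 2: equalize the fibres.} Set $A_\Delta:=p^k$. Inside each selected parent, keep exactly $A_\Delta$ of its cubes; this is possible since $n(\mathbf p)\ge p^k$. Call the resulting set $\overline{\P}$. Each selected parent loses at most a factor $p$, so
$$
|\overline{\P}|\ge p^{-1}|\P|/O(\log(1/\delta))\gtrapprox_\delta|\P|.
$$
This gives \eqref{eq: fibre uniform parent selection size}, and \eqref{eq: fibre uniform parent size} holds by construction.

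\textbf{Step 3: non-concentration of parents.} We now have $\overline{\P}\subset\P$ with $|\overline{\P}|\ge\lambda|\P|$, where $\lambda\gtrapprox_\delta 1$, and every occupied $\Delta$-parent contains exactly $A_\Delta$ elements. The second part of \Cref{lem: passing to coarse parents} then says that $\D_\Delta(\overline{\P})$ is a $(\Delta,t,O_p(C\lambda^{-1}))$-set, which is a $(\Delta,t,\lessapprox_\delta C)$-set.

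None of these steps is a genuine obstacle. The only point to watch is bookkeeping: the loss $\lambda^{-1}$ is polylogarithmic in $1/\delta$ (times a factor depending on $p$), and this is exactly what the notation $\lessapprox_\delta$ permits. If one also wants $\overline{\P}$ itself to remain a $(\delta,t,\lessapprox_\delta C)$-set, that follows the same way, since it is a subset of $\P$ of comparable size.
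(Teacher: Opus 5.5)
Your proposal is correct and is essentially the paper's proof. The paper pigeonholes the parent multiplicities into dyadic classes $[A_\Delta,2A_\Delta)$ rather than $p$-adic ones and trims each selected parent to exactly $A_\Delta$ cubes. It then checks the coarse non-concentration inline via $|\D_\Delta(\overline{\P})\cap B|\,A_\Delta=|\overline{\P}\cap B|_\delta\le CR^t|\P|$, which is exactly the computation packaged in \Cref{lem: passing to coarse parents}, so the differences are purely cosmetic.
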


\begin{proof}
For each $\mathbf p\in\D_\Delta(\P)$, write
$a(\mathbf p):=|\P\cap \mathbf p|$. Since $\P\subset\P_\delta$, we have $1\le a(\mathbf p)\le |\P|$. We pigeonhole the values of $a(\mathbf p)$ dyadically. Thus there exists an integer $A_\Delta\ge 1$ and a subcollection
$\mathcal Q\subset \D_\Delta(\P)$ such that
\begin{equation}
\label{eq: dyadic fibre size class}
A_\Delta\le a(\mathbf p)<2A_\Delta,
\qquad \mathbf p\in\mathcal Q,
\end{equation}
and
\begin{equation}
\label{eq: dyadic fibre size class mass}
\sum_{\mathbf p\in\mathcal Q} a(\mathbf p)
\gtrsim
(\log(1/\delta))^{-1}|\P|.
\end{equation}
For each $\mathbf p\in\mathcal Q$, choose exactly $A_\Delta$ elements of
$\P\cap\mathbf p$, and let $\overline{\P}$ be the union of all these chosen
subsets. Then $|\overline{\P}|=A_\Delta|\mathcal Q|$. On the other hand, by \eqref{eq: dyadic fibre size class},
$$
\sum_{\mathbf p\in\mathcal Q}a(\mathbf p)
\le 2A_\Delta|\mathcal Q|=2|\overline{\P}|.
$$
Combining this with \eqref{eq: dyadic fibre size class mass}, we get $|\overline{\P}|\gtrsim(\log(1/\delta))^{-1}|\P|$. This proves \eqref{eq: fibre uniform parent selection size}. The fibre-uniformity
condition \eqref{eq: fibre uniform parent size} follows directly from the
construction. Moreover,
$\D_\Delta(\overline{\P})=\mathcal Q$.

It remains to prove the coarse non-concentration estimate. Let
$R\in[\Delta,1]\cap p^{-\mathbb N}$, and let $B$ be an $R$-ball in $\Z_p^2$.
Since every parent in $\D_\Delta(\overline{\P})$ contains exactly $A_\Delta$
chosen $\delta$-cubes of $\overline{\P}$, we have
$$
|\D_\Delta(\overline{\P})\cap B|\cdot A_\Delta=|\overline{\P}\cap B|_\delta.
$$
Since $\overline{\P}\subset\P$ and $\P$ is a $(\delta,t,C)$-set,
$$
|\overline{\P}\cap B|_\delta
\le
|\P\cap B|_\delta
\le
CR^t|\P|.
$$
Using \eqref{eq: fibre uniform parent selection size}, we have $|\P|\lessapprox_\delta |\overline{\P}|=A_\Delta|\D_\Delta(\overline{\P})|$. Therefore
$$
|\D_\Delta(\overline{\P})\cap B|
\lessapprox_\delta
CR^t|\D_\Delta(\overline{\P})|,
$$
which is precisely the $(\Delta,t,\lessapprox_\delta C)$-set condition.
\end{proof}

We next isolate the recursive refinement step. This is the $p$-adic counterpart of
the recursive pigeonholing used by Orponen-Shmerkin. The statement is somewhat technical, but the point is simple: after passing from the scale $\Delta_j$ to the
coarser scale $\Delta_{j+1}$, one obtains both a coarser configuration and, inside
each coarse cube, a rescaled local configuration at the fixed scale $\Delta$.

\begin{prop}[One-step recursive refinement with fibre-uniform point pruning]
\label{prop: one step recursive refinement}
Let $\delta<\Delta<1$ be $p$-adic scales. Let $(\P_0,\T_0)$ be a
$(\delta,s,C_{\mathrm{tube}},M)$-nice configuration, and assume in addition that
the point set $\P_0$ is a $(\delta,t,C_{\mathrm{pt}})$-regular set. Then, after passing to a refinement at resolution $\Delta$, there exist:

\begin{enumerate}
    \item a refined configuration $(\P',\T')$ at scale $\delta$;
    \item a coarsened configuration
    $(\P^\Delta,\T^\Delta)$;
    \item for every surviving parent $\mathbf p\in\P^\Delta$, a rescaled local
    configuration $(\P_{\mathbf p},\T_{\mathbf p})$ at scale $\delta/\Delta$,
\end{enumerate}
such that the following properties hold. First, the point refinement is fibre-uniform at scale $\Delta$. More precisely, there exists an integer $A_\Delta\ge 1$ such that
\begin{equation}
\label{eq: one step point size preserved new}
|\P'|\gtrapprox_\delta |\P_0|,
\end{equation}
\begin{equation}
\label{eq: one step complete fibre size new}
|\P'\cap\mathbf p|=A_\Delta,\qquad\mathbf p\in\P^\Delta,
\end{equation}
and
\begin{equation}
\label{eq: complete point fibre refinement new}
\P'=\bigcup_{\mathbf p\in\P^\Delta}(\P'\cap\mathbf p).
\end{equation}
Moreover, the common fibre size satisfies
\begin{equation}
\label{eq: one step fibre size lower new}
A_\Delta
\gtrapprox_\delta
C_{\mathrm{pt}}^{-1}\Delta^t|\P_0|.
\end{equation}

Second, the coarse point set retains the upper regularity needed in the
recursive argument:
\begin{equation}
\label{eq: one step coarse point regular new}
\P^\Delta
\text{ is a }
(\Delta,t,\lessapprox_\delta C_{\mathrm{pt}})\text{-regular set}.
\end{equation}
For every $\mathbf p\in\P^\Delta$, if
$\phi_{\mathbf p}:\mathbf p\to\Z_p^2$ denotes the affine homothety sending $\mathbf p$ onto $\Z_p^2$, then $\P_{\mathbf p}:=\phi_{\mathbf p}(\P'\cap\mathbf p)$.

Third, the tube assignments are refined by complete $\Delta$-tube fibres.
Namely, for each $p\in\P'$ and each
$\mathbf T\in\D_\Delta(\T_0(p))$, either
$\T'(p)\cap\mathbf T=\varnothing$ or $\T'(p)\cap\mathbf T=\T_0(p)\cap\mathbf T$. Furthermore, there exist integers $M_\Delta$ and $M_{\loc}$, independent of
$p\in\P'$, such that
\begin{equation}
\label{eq: one step coarse tube multiplicity new}
|\D_\Delta(\T'(p))|\approx_\delta M_\Delta,
\qquad p\in\P',
\end{equation}
and
\begin{equation}
\label{eq: one step local tube fibre multiplicity new}
|\T'(p)\cap\mathbf T|\approx_\delta M_{\loc},
\qquad
p\in\P',\quad \mathbf T\in\D_\Delta(\T'(p)).
\end{equation}
Consequently, $M\approx_\delta M_\Delta M_{\loc}$. In particular,
\begin{equation}
\label{eq: one step fine tube multiplicity new}
|\T'(p)|\approx_\delta M,
\qquad p\in\P'.
\end{equation}

Fourth, the coarsened configuration $(\P^\Delta,\T^\Delta)$ is a $(\Delta,s,\lessapprox_\delta C_{\mathrm{tube}},M_\Delta)$-nice configuration. For every $\mathbf p\in\P^\Delta$, the local configuration
$(\P_{\mathbf p},\T_{\mathbf p})$ is a $(\delta/\Delta,s,\lessapprox_\delta C_{\mathrm{tube}},M_{\loc})$-nice configuration. Here the local tube families are obtained by rescaling, inside each surviving parent cube $\mathbf p$, the traces $T\cap\mathbf p$ with $T\in\T'(p)$.
We record the precise rescaling convention in \Cref{lem: local rescaling points tubes} below.

Finally, for every $\mathbf p\in\P^\Delta$, one has the one-step product estimate
\begin{equation}
\label{eq: one step product estimate new}
\frac{|\T_0|}{M}
\gtrapprox_\delta
\frac{|\T^\Delta|}{M_\Delta}
\cdot
\frac{|\T_{\mathbf p}|}{M_{\loc}}.
\end{equation}
\end{prop}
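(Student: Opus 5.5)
The plan is a sequence of pigeonholings, each costing only $\log(1/\delta)^{O(1)}$ factors, organised so that every refinement keeps complete fibres at resolution $\Delta$.

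\textbf{Step 1: tube multiplicities.} For each $p\in\P_0$, group $\T_0(p)$ by its $\Delta$-parents $\mathbf T\in\D_\Delta(\T_0(p))$. Pigeonhole the fibre counts $|\T_0(p)\cap\mathbf T|$ into dyadic classes. Keep, for each $p$, the class that carries the most tubes; only $O(\log(1/\delta))$ classes exist. Pigeonholing over $p$ then gives a subset of $\P_0$ of comparable size, a common dyadic value $M_{\loc}$, and a common value $M_\Delta$ for the number of retained parents. Retaining whole fibres is exactly the ``refinement at resolution $\Delta$'' condition. Since $M_\Delta M_{\loc}\approx_\delta M$, we get \eqref{eq: one step coarse tube multiplicity new}, \eqref{eq: one step local tube fibre multiplicity new} and \eqref{eq: one step fine tube multiplicity new}.

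\textbf{Step 2: point pruning.} Apply \Cref{lem: fibre uniform parent selection} to the surviving points. This yields $\P'$ with constant fibre size $A_\Delta$, giving \eqref{eq: one step point size preserved new}, \eqref{eq: one step complete fibre size new} and \eqref{eq: complete point fibre refinement new}, and makes $\P^\Delta=\D_\Delta(\P')$ a $(\Delta,t,\lessapprox_\delta C_{\mathrm{pt}})$-set.

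\textbf{Step 3: fibre size and upper regularity.} The lower bound \eqref{eq: one step fibre size lower new} follows from $|\P'|=A_\Delta|\P^\Delta|$ together with the count $|\P^\Delta|\le C_{\mathrm{pt}}\Delta^{-t}$; the latter comes from the upper regularity of $\P_0$ applied with $R=1$, which gives $|\P_0|_\Delta\le C_{\mathrm{pt}}\Delta^{-t}$. The regularity statement \eqref{eq: one step coarse point regular new} then needs the second (scale-to-scale) condition of \Cref{def: upper regular set} for $\P^\Delta$. This is inherited directly, because $|\P^\Delta\cap\mathbf q|_r\le|\P_0\cap\mathbf q|_r$ for $r\ge\Delta$.

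\textbf{Step 4: niceness of the two new configurations.} The coarse configuration assigns to $\mathbf p$ the family $\D_\Delta(\T'(p))$ for a chosen representative $p\in\P'\cap\mathbf p$. Its $(\Delta,s)$-property is obtained by the argument of \Cref{lem: passing to coarse parents} applied in the dual space. A ball of radius $R\ge\Delta$ in tube-parameter space contains at most $C_{\mathrm{tube}}R^sM$ tubes of $\T_0(p)$, and each retained $\Delta$-parent contributes $\approx M_{\loc}$ of them. Hence the number of retained parents in that ball is $\lessapprox C_{\mathrm{tube}}R^sM/M_{\loc}\approx C_{\mathrm{tube}}R^sM_\Delta$.

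For the local configuration, fix $\mathbf T$ and rescale $\T'(p)\cap\mathbf T$ inside $\mathbf p$ via $\phi_{\mathbf p}$. Tubes through $\mathbf p$ inside one $\Delta$-tube correspond to $(\delta/\Delta)$-tubes, with an affine relabelling of slope and intercept; this is the content of \Cref{lem: local rescaling points tubes}. Non-concentration at scales $R\in[\delta/\Delta,1]$ in the rescaled space corresponds to scales $R\Delta$ upstairs. There it gives at most $C_{\mathrm{tube}}(R\Delta)^sM\approx C_{\mathrm{tube}}R^s\Delta^sM_\Delta M_{\loc}$ tubes. This count is $\lesssim C_{\mathrm{tube}}R^sM_{\loc}$ only if $M_\Delta\lessapprox\Delta^{-s}$, and I expect that comparison to be the main obstacle. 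I would handle it by pigeonholing within the fibre so that the tubes in a single $\Delta$-tube are themselves roughly uniform. If that fails, I would instead use the stronger bound $|\T_0(p)\cap\mathbf T|\le C_{\mathrm{tube}}\Delta^sM$ directly, so that the lower count $M_{\loc}$ controls the normalisation.

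\textbf{Step 5: the product estimate.} The estimate \eqref{eq: one step product estimate new} follows by double counting. Each $T\in\T_0$ meeting $\mathbf p$ lies in a unique $\Delta$-tube of $\T^\Delta$, and distinct local tubes in $\T_{\mathbf p}$ that lie in a common $\Delta$-tube come from distinct $\delta$-tubes. So $|\T_0|\gtrsim|\T^\Delta|\cdot|\T_{\mathbf p}|/\max_{\mathbf T}\#\{\text{local tubes per }\mathbf T\}$, and combining this with $M\approx M_\Delta M_{\loc}$ gives the claim.
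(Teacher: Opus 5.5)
Your Steps 1--3 and the coarse half of Step 4 are the paper's argument: pointwise dyadic pigeonholing of $\Delta$-tube fibres, \Cref{lem: fibre uniform parent selection}, the count $|\P_0|_\Delta\le C_{\mathrm{pt}}\Delta^{-t}$, inherited scale-local regularity, and coarse-parent counting for $\D_\Delta(\T'(p))$. The gap is the one you flag in Step 4, and neither of your fixes closes it.

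Your computation is right. The hypothesis bounds the rescaled count by $C_{\mathrm{tube}}r^s\Delta^sM$, and this is $\lesssim C_{\mathrm{tube}}r^sM_{\loc}$ only when $M_{\loc}\gtrapprox\Delta^sM$. The $(\delta,s,C_{\mathrm{tube}})$ condition gives only the reverse bound $M_{\loc}\lesssim C_{\mathrm{tube}}\Delta^sM$, since a fibre lies in a $\Delta$-ball of parameters. Your second fix is exactly this upper bound, so it points the wrong way. Your first fix is unavailable, because a refinement at resolution $\Delta$ keeps complete fibres and so gives nothing to pigeonhole inside them.

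In fact the fourth item fails as stated. Take $s=\frac12$, $\delta=\Delta^2$, any regular $\P_0$, and let $\T_0(p)$ consist of one tube through $p$ in each $\Delta$-ball of slopes. Then $\T_0(p)$ is a $(\delta,\frac12,1)$-set with $M=M_\Delta=\Delta^{-1}$, and $M_{\loc}=1$ for every refinement. But a $(\delta/\Delta,\frac12,C,1)$-nice configuration forces $C\gtrsim\Delta^{-1/2}=\delta^{-1/4}$.

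Reading $\T_{\mathbf p}$ literally as the rescaled traces of all of $\T'(p)$ does not help either. By \Cref{lem: local rescaling points tubes} the slope is kept and the intercept is divided by $\Delta$, so each local point carries $|\sigma(\T'(p))|_{\delta/\Delta}\approx\Delta^{-1}\ne M_{\loc}$ tubes. Dividing by $M_{\loc}$ in \eqref{eq: one step product estimate new} then inflates the right-hand side by $\Delta^{-1}$ relative to what incidence counting gives. The paper's proof disposes of this point with ``the same rescaling and fibre-counting argument'', which does not supply the comparison you isolated.

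Your Step 5 also does not work. The facts you cite bound $|\T_0\cap\mathbf T|$ from below only for $\Delta$-tubes $\mathbf T$ through $\mathbf p$. That yields $|\T_0|\ge|\T_{\mathbf p}|$ and nothing involving $|\T^\Delta|$. Even granting your inequality, the claim needs its denominator to be $\lessapprox 1$.

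The version that can be proved follows the Euclidean two-scale decomposition of Orponen--Shmerkin:
\begin{enumerate}
\item Take the local multiplicity $M_{\mathbf p}$ to be the number of slopes of $\T'(p)$ at resolution $\delta/\Delta$, uniformised at that resolution, so that \Cref{lem: passing to coarse parents} gives the local non-concentration.
\item Make no claim that $M\approx M_\Delta M_{\mathbf p}$.
\item Prove the product estimate by counting incidences inside each $\mathbf T\in\T^\Delta$. Arrange that all points of $\P'\cap\mathbf p$ share the coarse family. Then each parent $\mathbf p$ whose coarse family contains $\mathbf T$ contributes $\approx A_\Delta M/M_\Delta$ incidences. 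Each $\delta$-tube in $\mathbf T$ meets at most (number of such parents)$\times$(local incidence multiplicity, $\approx A_\Delta M_{\mathbf p}/|\T_{\mathbf p}|$ after uniformising) points of $\P'$.
\end{enumerate}
Hence $|\T_0\cap\mathbf T|\gtrapprox\frac{M}{M_\Delta}\cdot\frac{|\T_{\mathbf p}|}{M_{\mathbf p}}$. Summing over $\mathbf T$ gives \eqref{eq: one step product estimate new} with $M_{\mathbf p}$ in place of $M_{\loc}$.
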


\begin{lemma}[Local rescaling of points and dual tubes]
\label{lem: local rescaling points tubes}
Let $\delta<\Delta$ be $p$-adic scales, and let
$$
\mathbf p=B((x_0,y_0),\Delta)\in \mathcal P_\Delta.
$$
Define the affine homothety
$$
\phi_{\mathbf p}:\mathbf p\to \mathbb Z_p^2,\qquad
\phi_{\mathbf p}(x,y)=\left(\frac{x-x_0}{\Delta},\frac{y-y_0}{\Delta}\right).
$$
Then $\phi_{\mathbf p}$ maps $\delta$-cubes contained in $\mathbf p$ bijectively
onto $(\delta/\Delta)$-cubes in $\mathbb Z_p^2$.

Moreover, let $T=D(B((a,b),\delta))$ be a $\delta$-tube meeting $\mathbf p$. Then $\phi_{\mathbf p}(T\cap\mathbf p)$ is contained in a $(\delta/\Delta)$-tube in the point-line parametrisation. More precisely, if
$(X,Y)=\phi_{\mathbf p}(x,y)$, then the line $y=ax+b$ is transformed into $Y=aX+\frac{ax_0+b-y_0}{\Delta}$. In particular, incidences are preserved under this rescaling: if
$p\subset\mathbf p$ is a $\delta$-cube and $T$ is a $\delta$-tube meeting
$\mathbf p$, then
$$
p\subset T
\quad\Longleftrightarrow\quad
\phi_{\mathbf p}(p)\subset \phi_{\mathbf p}(T\cap\mathbf p).
$$
The slope parameter is unchanged by the affine rescaling. If one further
restricts to a fixed $\Delta$-slope ball $a_0+\Delta\Z_p$, then the normalized
slope parameter $\frac{a-a_0}{\Delta}$ has the same covering numbers as the corresponding rescaled slope set at scale
$\delta/\Delta$.
\end{lemma}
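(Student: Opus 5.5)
This is an elementary computation, and I will verify the claims in the order they are stated, using only the ultrametric structure and the explicit formulas.

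\emph{Cubes.} Write $\Delta=p^{-k}$ and $\delta=p^{-n}$ with $n>k$. A $\delta$-cube inside $\mathbf p$ has the form $(x_0+u+p^n\Z_p,\;y_0+v+p^n\Z_p)$ with $u,v\in p^k\Z_p$. Since $\Delta$ is a power of $p$, division by $\Delta$ is multiplication by $p^{k}$ in our convention; concretely it sends $p^m\Z_p$ to $p^{m-k}\Z_p$. Hence this cube is mapped onto $(u/\Delta+p^{n-k}\Z_p,\;v/\Delta+p^{n-k}\Z_p)$, which is a $(\delta/\Delta)$-cube. The map $\phi_{\mathbf p}$ is an invertible homothety from $\mathbf p$ onto $\Z_p^2$, and it scales distances by the fixed factor $\Delta^{-1}$. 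It therefore sends cubes to cubes and is a bijection.

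\emph{Lines.} Substitute $x=x_0+\Delta X$ and $y=y_0+\Delta Y$ into $y=ax+b$. This gives $\Delta Y=a\Delta X+(ax_0+b-y_0)$, that is, $Y=aX+\frac{ax_0+b-y_0}{\Delta}$. The new intercept lies in $\Z_p$ precisely when the line meets $\mathbf p$: if a point $(x_0+\Delta X,y_0+\Delta Y)$ of the line lies in $\mathbf p$, then $ax_0+b-y_0=\Delta(Y-aX)\in\Delta\Z_p$.

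\emph{Tubes.} Let $(a',b')$ range over the parameter cube $B((a,b),\delta)$. For each such line, the rescaled slope is $a'$ and the rescaled intercept is $\frac{a'x_0+b'-y_0}{\Delta}$. Since $x_0\in\Z_p$, the ultrametric inequality gives $|(a'-a)x_0+(b'-b)|\le\delta$. Dividing by $\Delta$ shows that the rescaled intercepts vary within a ball of radius $\delta/\Delta$. The slopes vary within $a+p^n\Z_p\subset a+p^{n-k}\Z_p$. It follows that $\phi_{\mathbf p}(T\cap\mathbf p)$ is contained in the dual tube $\mathbf D\bigl(B((a,\tfrac{ax_0+b-y_0}{\Delta}),\delta/\Delta)\bigr)$.

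\emph{Incidences.} Suppose $p\subset T$ and $p\subset\mathbf p$. Then $\phi_{\mathbf p}(p)\subset\phi_{\mathbf p}(T\cap\mathbf p)$ by applying the bijection $\phi_{\mathbf p}$. Conversely, suppose $\phi_{\mathbf p}(p)\subset\phi_{\mathbf p}(T\cap\mathbf p)$. Apply $\phi_{\mathbf p}^{-1}$, which is injective on $\mathbf p$, to get $p\subset T\cap\mathbf p\subset T$.

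This is the one step I expect to need care. The statement defines incidence through the trace $T\cap\mathbf p$, so the equivalence above is tautological. If the intended meaning is containment in the enlarged rescaled tube, the equivalence needs its own check. That check is the point/line comparison at scale $\delta/\Delta$. If a point $(X,Y)$ of $\phi_{\mathbf p}(p)$ satisfies $|Y-aX-c|\le\delta/\Delta$, with $c$ the rescaled intercept, then multiplying back by $\Delta$ gives $|y-ax-b|\le\delta$. Since $x,y\in\Z_p$, this means $p$ lies in $T$.

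\emph{Slopes.} The slope is literally unchanged, as the line formula shows. Restrict to $a\in a_0+\Delta\Z_p$. The map $a\mapsto(a-a_0)/\Delta$ is again a homothety, sending balls of radius $r$ to balls of radius $r/\Delta$. Hence the $\delta$-covering numbers of the slope set equal the $(\delta/\Delta)$-covering numbers of the normalized slope set, exactly as in the cube argument.
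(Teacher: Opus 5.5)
Your proof is correct and is essentially the paper's argument: the substitution $x=x_0+\Delta X$, $y=y_0+\Delta Y$, tracking how the parameter cube $B((a,b),\delta)$ moves, bijectivity of $\phi_{\mathbf p}$ for the incidence equivalence, and the homothety $a\mapsto (a-a_0)/\Delta$ for slopes. Your ultrametric estimate $|(a'-a)x_0+(b'-b)|\le\delta$ and your check against the full rescaled tube make explicit what the paper leaves vague ("up to the uncertainty allowed by the $\Delta$-parent tube"). The only slip is verbal: the expanding normalization is multiplication by $p^{-k}$ (division by $p^{k}$), not multiplication by $p^{k}$; your concrete description $p^m\Z_p\mapsto p^{m-k}\Z_p$ is the correct one.
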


\begin{proof}
The assertion for $\delta$-cubes is immediate from the definition of
$\phi_{\mathbf p}$: a ball of radius $\delta$ inside $\mathbf p$ is sent to a
ball of radius $\delta/\Delta$.

For the tube statement, write $x=x_0+\Delta X,\qquad y=y_0+\Delta Y$. If $y=ax+b$, then
$$
y_0+\Delta Y=a(x_0+\Delta X)+b,
$$
and therefore $Y=aX+\frac{ax_0+b-y_0}{\Delta}$. Since $T$ meets $\mathbf p$, the quantity $ax_0+b-y_0$ is divisible by
$\Delta$, up to the uncertainty allowed by the $\Delta$-parent tube; hence the
displayed intercept lies in $\Z_p$ at the relevant scale. If $(a,b)$ varies in a
$\delta$-cube in the line-parameter space, then the transformed pair $\left(a,\frac{ax_0+b-y_0}{\Delta}\right)$
varies inside a $(\delta/\Delta)$-cube. This proves that the image is contained
in a $(\delta/\Delta)$-tube. The incidence equivalence follows from applying the
bijection $\phi_{\mathbf p}$ to both the point cube and the trace of the tube.
Finally, the formula shows that the slope remains $a$ under this affine
rescaling. On a fixed $\Delta$-slope parent $a_0+\Delta\Z_p$, replacing $a$ by
$(a-a_0)/\Delta$ is a homothety of the slope parameter, so covering numbers are
preserved after changing the scale from $\delta$ to $\delta/\Delta$.
\end{proof}

\begin{proof}[Proof of \Cref{prop: one step recursive refinement}]
We first perform the tube-fibre pigeonholing pointwise. For each
$p\in\P_0$, decompose $\T_0(p)$ into complete $\Delta$-tube fibres:
$$
\T_0(p)=\bigcup_{\mathbf T\in\D_\Delta(\T_0(p))}
(\T_0(p)\cap\mathbf T).
$$
By dyadic pigeonholing in the two quantities $|\D_\Delta(\T_0(p))|$ and $|\T_0(p)\cap\mathbf T|$, we may choose, for each $p$ in a subset $\P_*\subset\P_0$, a subfamily $\T_*(p)\subset\T_0(p)$ obtained by keeping complete $\Delta$-tube fibres, and
integers $M_\Delta$ and $M_{\loc}$, such that
\begin{equation}
\label{eq: one step P star size}
|\P_*|\gtrapprox_\delta |\P_0|,
\end{equation}
\begin{equation}
\label{eq: one step T star coarse multiplicity}
|\D_\Delta(\T_*(p))|\approx_\delta M_\Delta,
\qquad p\in\P_*,
\end{equation}
and
\begin{equation}
\label{eq: one step T star local multiplicity}
|\T_*(p)\cap\mathbf T|\approx_\delta M_{\loc},\qquad p\in\P_*,\quad \mathbf T\in\D_\Delta(\T_*(p)).
\end{equation}
The number of possible dyadic classes is only absorbed by the $\approx_\delta$ notation. Since $|\T_0(p)|=M$ for every $p\in\P_0$, the last two estimates give
\begin{equation}
\label{eq: one step T star factorization}
M\approx_\delta M_\Delta M_{\loc}.
\end{equation}

We now regularize the point set at the intermediate scale. Since $\P_*$ is a
large subset of the $(\delta,t,C_{\mathrm{pt}})$-regular set $\P_0$, it is a $(\delta,t,\lessapprox_\delta C_{\mathrm{pt}})$-regular set. Indeed, the $(\delta,t,\cdot)$-set condition loses the reciprocal of the relative size in \eqref{eq: one step P star size}, while the scale-local upper regularity
condition is inherited by subsets. Apply
\Cref{lem: fibre uniform parent selection} to $\P_*$. This gives a subset
$\P'\subset\P_*$ and an integer $A_\Delta\ge 1$ such that
\begin{equation}
\label{eq: one step P prime size proof}
|\P'|\gtrapprox_\delta |\P_*|\gtrapprox_\delta |\P_0|,
\end{equation}
\begin{equation}
\label{eq: one step P prime fibre uniform proof}
|\P'\cap\mathbf p|=A_\Delta,
\qquad
\mathbf p\in\D_\Delta(\P'),
\end{equation}
and $\D_\Delta(\P')$ is a $(\Delta,t,\lessapprox_\delta C_{\mathrm{pt}})$-set. Set $\P^\Delta:=\D_\Delta(\P')$. For $p\in\P'$, define $\T'(p):=\T_*(p)$. Then \eqref{eq: one step point size preserved new}, \eqref{eq: one step complete fibre size new}, and \eqref{eq: complete point fibre refinement new} follow from the construction.

We record the lower bound for the common fibre size. Since $|\P'|=A_\Delta|\P^\Delta|
$ and $\P^\Delta\subset\D_\Delta(\P_0)$, the scale-local upper regularity of
$\P_0$ gives
$$
|\P^\Delta|\le|\D_\Delta(\P_0)|=
|\P_0|_\Delta\le C_{\mathrm{pt}}\Delta^{-t}.
$$
Together with \eqref{eq: one step P prime size proof}, this implies
$$
A_\Delta
=
\frac{|\P'|}{|\P^\Delta|}
\gtrapprox_\delta
C_{\mathrm{pt}}^{-1}\Delta^t|\P_0|,
$$
which is \eqref{eq: one step fibre size lower new}.

We next prove that $\P^\Delta$ is regular at scale $\Delta$. The
$(\Delta,t,\lessapprox_\delta C_{\mathrm{pt}})$-set condition was obtained from
\Cref{lem: fibre uniform parent selection}. It remains to check the scale-local
upper regularity. Let $\Delta\le r\le R\le 1$ be $p$-adic scales, and let
$\mathbf Q$ be an $R$-cube. Since $\P^\Delta\subset\D_\Delta(\P_0)$, we have
$$
|\P^\Delta\cap\mathbf Q|_r
\le
|\P_0\cap\mathbf Q|_r.
$$
The scale-local upper regularity of $\P_0$ gives $|\P_0\cap\mathbf Q|_r\le C_{\mathrm{pt}}\left(\frac Rr\right)^t$. Thus $|\P^\Delta\cap\mathbf Q|_r\le
C_{\mathrm{pt}}\left(\frac Rr\right)^t$, and hence $\P^\Delta$ is $(\Delta,t,\lessapprox_\delta C_{\mathrm{pt}})$-regular.

We now verify the tube non-concentration statements. Since each original
$\T_0(p)$ is a $(\delta,s,C_{\mathrm{tube}})$-set and $\T'(p)\subset\T_0(p)$ is
obtained by keeping complete $\Delta$-tube fibres with
$|\T'(p)|\approx_\delta M$, the fibre-counting argument of
\Cref{lem: passing to coarse parents} gives $\D_\Delta(\T'(p))$ is a $(\Delta,s,\lessapprox_\delta C_{\mathrm{tube}})$-set for every $p\in\P'$. After the pigeonholing above, the multiplicity
$|\D_\Delta(\T'(p))|$ is $\approx_\delta M_\Delta$, uniformly in $p$.

The coarsened tube assignment $\T^\Delta$ is defined by the corresponding
$\Delta$-parents of the refined tube families. With the preceding uniformity,
$(\P^\Delta,\T^\Delta)$ is a $(\Delta,s,\lessapprox_\delta C_{\mathrm{tube}},M_\Delta)$-nice configuration.

For each $\mathbf p\in\P^\Delta$, define
$\P_{\mathbf p}:=\phi_{\mathbf p}(\P'\cap\mathbf p)$. The local tube families $\T_{\mathbf p}$ are obtained by applying
\Cref{lem: local rescaling points tubes} to the traces $T\cap\mathbf p$ with
$T\in\T'(p)$, for $p\in\P'\cap\mathbf p$. The estimate
\eqref{eq: one step T star local multiplicity} implies that these local tube
families have multiplicity $\approx_\delta M_{\loc}$, uniformly in the local
point. The tube non-concentration condition at scale $\delta/\Delta$ follows
from the same rescaling and fibre-counting argument. Thus
$(\P_{\mathbf p},\T_{\mathbf p})$ is a $(\delta/\Delta,s,\lessapprox_\delta C_{\mathrm{tube}},M_{\loc})$-nice configuration.

Finally, we prove the product estimate. The one-step decomposition gives
$M\approx_\delta M_\Delta M_{\loc}$ by
\eqref{eq: one step T star factorization}. The number of global refined tubes is
controlled from below by the number of coarse tubes times the number of local
descendants inside the chosen parent. More precisely, for every
$\mathbf p\in\P^\Delta$,
$|\T_0|\gtrapprox_\delta|\T^\Delta|\cdot |\T_{\mathbf p}|$, after quotienting by the common multiplicities at the coarse and local levels. Therefore
$$
\frac{|\T_0|}{M}
\gtrapprox_\delta
\frac{|\T^\Delta|}{M_\Delta}
\cdot
\frac{|\T_{\mathbf p}|}{M_{\loc}}.
$$
This is \eqref{eq: one step product estimate new}, and the proof is complete.
\end{proof}

We now prove the regular-case discretized Furstenberg estimate.

\begin{thm}
\label{thm: discrete regular case}
Let $s\in(0,1)$, $t\in(s,2-s)$, and let $0\le u<\frac{s+t}{2}$. Then there exist
$$
\eta=\eta(s,t,u)>0,\qquad \delta_0=\delta_0(s,t,u)>0
$$
such that the following holds for all $\delta=p^{-n}\in(0,\delta_0]$. Let
$(\P,\T)$ be a $(\delta,s,\delta^{-\eta},M)$-nice configuration, and assume that
$\P$ is a $(\delta,t,\delta^{-\eta})$-regular set. Then
\begin{equation}
\label{eq: flexible regular conclusion}
\left|\bigcup_{\mathscr p\in\P}\T(\mathscr p)\right|
\ge
M\delta^{-u}.
\end{equation}
\end{thm}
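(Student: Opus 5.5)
The plan is to prove the estimate directly at scale $\delta$ by double counting incidences between points and slopes, using the many-directions form of \Cref{thm: regular set projection} from \Cref{rmk: many good directions}. If the losses cannot be controlled in one step, I would fall back on the recursive scheme of \Cref{prop: one step recursive refinement}.

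\textbf{Setup.} Fix $u<u'<\frac{s+t}{2}$ and let $\epsilon=\epsilon(s,t,u')$ be the constant from \Cref{thm: regular set projection}. Choose $\eta\ll\epsilon$.

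\emph{Duality.} For $\mathscr p\in\P$, let $S(\mathscr p):=\sigma(\T(\mathscr p))\subset\Z/p^n\Z$ be its slope set. A point $(x,y)$ lies on the line of slope $a$ and intercept $b$ exactly when $b=y-ax$. So, after swapping coordinates, the intercept is the projection $\pi_{-a}$ of \Cref{defi: projection map}. The map $(x,y)\mapsto(y,x)$ and the reflection $a\mapsto -a$ preserve all regularity and non-concentration properties.

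\emph{Slopes of a fixed point.} Two distinct tubes through a common $\delta$-cube must have distinct slopes at resolution $\delta$, since equal slope forces equal intercept modulo $\delta$. Hence $|S(\mathscr p)|=M$, and $S(\mathscr p)$ inherits the $(\delta,s,\delta^{-\eta})$-set property from $\T(\mathscr p)$.

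\emph{Fixed slope.} For $\theta\in\Z/p^n\Z$, set $\P_\theta:=\{\mathscr p\in\P:\theta\in S(\mathscr p)\}$. The tubes of slope $\theta$ used by points of $\P_\theta$ are in bijection with the $\delta$-balls of $\pi_{-\theta}(\P_\theta)$ in the swapped coordinates. Therefore
$$
\Bigl|\bigcup_{\mathscr p}\T(\mathscr p)\Bigr|\ \ge\ \sum_{\theta}|\pi_{-\theta}(\P_\theta)|_\delta .
$$

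\textbf{Key steps.}

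First, define the bad set $\mathcal B$ of slopes $\theta$ for which there is some $\P'\subset\P$ with $|\P'|\ge\delta^{\epsilon}|\P|$ and $|\pi_{-\theta}(\P')|_\delta<\delta^{-u'}$.

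Second, show that $|S(\mathscr p)\cap\mathcal B|\le\delta^{\epsilon/2}M$ for every $\mathscr p$. If not, then $S(\mathscr p)\cap\mathcal B$ is a subset of relative density at least $\delta^{\epsilon/2}$ in an $(\delta,s,\delta^{-\eta})$-set. It is therefore a $(\delta,s,\delta^{-\eta-\epsilon/2})$-set, and this constant is admissible after shrinking constants. Since $\P$ is $(\delta,t,\delta^{-\eta})$-regular, \Cref{thm: regular set projection} applies and produces a slope in $\mathcal B$ that is good for every large subset, which is a contradiction.

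Third, double count. We have $\sum_\theta|\P_\theta|=M|\P|$. By the second step, the pairs $(\mathscr p,\theta)$ with $\theta\in\mathcal B$ contribute at most $\delta^{\epsilon/2}M|\P|$. The pairs with $|\P_\theta|<\delta^{\epsilon}|\P|$ contribute at most $\delta^{\epsilon}|\P|\cdot N_{\rm light}$, where $N_{\rm light}$ is the number of such slopes.

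Fourth, bound the good slopes from below. Every good heavy slope $\theta$ has $|\pi_{-\theta}(\P_\theta)|_\delta\ge\delta^{-u'}$. Since $|\P_\theta|\le|\P|$, the good heavy slopes number at least about $M$, up to the losses above. Summing gives $\gtrsim M\delta^{-u'+O(\epsilon)}\ge M\delta^{-u}$.

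\textbf{Main obstacle.} The difficulty is the light slopes. The slope set $\bigcup_{\mathscr p}S(\mathscr p)$ can have size up to $\delta^{-1}\gg M$. Then $N_{\rm light}$ is not controlled, and light slopes could carry most of the incidences.

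To handle this, I would first pigeonhole the slopes by $|\P_\theta|$ into dyadic classes. Suppose the dominant class has $|\P_\theta|\sim\lambda|\P|$ with $\lambda\ll\delta^{\epsilon}$. I would then refine the configuration so that the points concentrate on a sub-block, and treat it by the recursion. That is, apply \Cref{prop: one step recursive refinement} with $\Delta=\delta^{\kappa}$, and multiply the coarse and local estimates through the product formula \eqref{eq: one step product estimate new}. Each factor $|\T^\Delta|/M_\Delta$ and $|\T_{\mathbf p}|/M_{\loc}$ is handled by the direct argument above at its own scale, and the $\approx_\delta$ losses are absorbed in the gap $u'-u$.

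Checking that the light-slope regime really reduces, after rescaling, to configurations where the direct argument applies is where I expect most of the work to lie.
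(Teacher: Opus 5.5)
Your heavy-slope argument is sound, and it is the same local mechanism the paper uses. Suppose every point's slope set is a dense subset of a common slope set. Then many slopes $\theta$ have $|\P_\theta|$ comparable to $|\P|$, the many-directions form of \Cref{thm: regular set projection} applies to $\P_\theta$, and summing over $\theta$ gives the gain. The gap is exactly the one you flag, and your proposed fix does not close it. The light-slope regime is the generic situation, not an edge case caused by spatial concentration. For instance, if $M\approx\delta^{-s}$ and the sets $S(\mathscr p)$ jointly cover all $\delta^{-1}$ slope classes, then the average of $|\P_\theta|/|\P|$ over used slopes is $\approx M\delta\approx\delta^{1-s}\ll\delta^{\epsilon}$. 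No refinement of the point set repairs this. A recursion with one or boundedly many intermediate scales $\Delta=\delta^{\kappa}$ only reproduces the problem in each factor of \eqref{eq: one step product estimate new}. Both $|\T^\Delta|/M_\Delta$ and $|\T_{\mathbf p}|/M_{\loc}$ are again Furstenberg configurations with no reason to have heavy slopes. So ``handle each factor by the direct argument'' is circular.

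The missing idea is a saturation argument across many scales. The paper sets $\Delta=\delta^{\rho^3}$ and iterates \Cref{prop: one step recursive refinement} through $J\approx\rho^{-3}$ scales $\Delta_j=\Delta^{-j}\delta$. Every local configuration then lives at the fixed relative scale $\Delta$, where there are only $\Delta^{-1}$ slope classes. Let $N_j$ be the number of $\Delta$-slope classes per point in the $j$-th local configuration. The coarse tubes through a parent cube contain the fine tubes through its children, so up to refinement losses $N_j\lessapprox N_{j+1}\le\Delta^{-1}$. Hence $N_{j+1}\ge\Delta^{-\rho}N_j$ holds for at most $O(\rho^{-1})$ indices. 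At every other index, each point's slope set $\Theta_{\mathscr p}$ is a $\Delta^{\rho}$-dense subset of the common set $\Theta$ attached to the parent. That is precisely the heavy-slope situation where your argument works, taking $\rho\ll\eta_0$ so that $|Q_\theta|\ge\Delta^{\eta_0}|Q|$ for a $\gtrapprox\Delta^{\rho}$-proportion of $\theta\in\Theta$. The exceptional indices are bounded trivially by $|\T_{\mathbf q_j}|\ge N_j$, and the product formula then gives $\delta^{-v+O(\rho)}$. Running this also requires that the blown-up point sets $\phi_{\mathbf q}(\widetilde\P_j\cap\mathbf q)$ remain $(\Delta,t,\Delta^{-\eta_0})$-regular. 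This is where the upper-regularity hypothesis on $\P$, not just its Frostman property, is used. A one-scale argument never meets this issue, but it becomes essential once you rescale.
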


\begin{proof}
We choose the parameters first. Let $v:=\frac12\left(u+\frac{s+t}{2}\right)$, so that $u<v<(s+t)/2$. Apply \Cref{thm: regular set projection} together with
\Cref{rmk: many good directions}, with $v$ in place of $u$. We shall use this
projection input in the dual coordinates. More precisely, let
$S:\Z_p^2\to\Z_p^2$ be the coordinate swap $S(x,y)=(y,x)$. For a slope
parameter $a\in\Z_p$, the intercept map for the family of lines $y=ax+b$ is
$$
(x,y)\mapsto y-ax=\pi_{-a}(S(x,y)).
$$
Thus, after applying the bi-Lipschitz map $S$ and replacing the direction set by
its image under $a\mapsto -a$, estimates for the number of intercept fibres of
lines with slope $a$ are exactly estimates for the affine projections
$\pi_{-a}$.

Since $S$ is bi-Lipschitz and $a\mapsto -a$ preserves all discretized
non-concentration and regularity hypotheses, the hypotheses of
\Cref{thm: regular set projection} and \Cref{rmk: many good directions} are
unchanged up to absolute constants. Therefore there are
$\eta_0>0$ and $\Delta_0>0$ such that the following holds at every scale
$\Delta\le \Delta_0$: if $Q\subset\P_\Delta$ is
$(\Delta,t,\Delta^{-\eta_0})$-regular and $E\subset\Z_p$ is a
$(\Delta,s,\Delta^{-\eta_0})$-set, then there exists $E'\subset E$ with
$|E'|\ge |E|/2$ such that, for every $\theta\in E'$ and every
$Q'\subset Q$ with $|Q'|\ge\Delta^{\eta_0}|Q|$,
\begin{equation}
\label{eq: section5 projection input}
|\pi_{-\theta}(S(Q'))|_\Delta\ge \Delta^{-v}.
\end{equation}
For notational simplicity, from now on in this proof we relabel $S(Q)$ by $Q$
and $-E'$ by $E'$. With this harmless relabelling, the projection input takes the
following form: for every $\theta\in E'$ and every
$Q'\subset Q$ with $|Q'|\ge\Delta^{\eta_0}|Q|$,
\begin{equation}
\label{eq: section5 projection input relabelled}
|\pi_{\theta}(Q')|_\Delta\ge \Delta^{-v}.
\end{equation}

Choose a small parameter $\rho>0$ so that
\begin{equation}
\label{eq: section5 margin}
v-10\rho>u.
\end{equation}
Then choose $\eta>0$ sufficiently small in terms of $\rho,\eta_0,s,t,u$, and take $\delta_0>0$ sufficiently small. We shall use the notation $\lessapprox_\delta,\gtrapprox_\delta,\approx_\delta$ to hide only powers $\delta^{\pm o_\rho(1)}$ and logarithmic losses; all such losses will be absorbed by the margin in \eqref{eq: section5 margin}.

Let $(\P_0,\T_0)$ be the given
$(\delta,s,\delta^{-\eta},M_0)$-nice configuration, where $M_0=M$, and suppose
that $\P_0$ is $(\delta,t,\delta^{-\eta})$-regular. We may assume, after changing
$\rho$ slightly, that
$$
\Delta:=\delta^{\rho^3}\in p^{-\N}.
$$
We also assume $\Delta\le \Delta_0$. Define $\Delta_j:=\Delta^{-j}\delta$ for all $0\le j\le J$, where $J:=\rho^{-3}-1$ after replacing $\rho$ by a nearby number for which this
is an integer. Then
\begin{equation}
\label{eq: section5 scale range}
\Delta_j\le \Delta,\qquad 0\le j\le J,
\end{equation}
and $\frac{\Delta_j}{\Delta_{j+1}}=\Delta$ for all $0\le j<J$.

We now construct recursively ordinary nice configurations
$$
(\P_j,\T_j),\qquad 0\le j\le J.
$$
At scale $\Delta_j$, the configuration $(\P_j,\T_j)$ will be a $(\Delta_j,s,\lessapprox_\delta\delta^{-\eta},M_j)$-nice configuration, where $M_j$ is independent of the point in $\P_j$. In addition, $\P_j$ will be a $(\Delta_j,t,\lessapprox_\delta\delta^{-\eta})$-regular set. Initially this is the given configuration.

Suppose that $(\P_j,\T_j)$ has been constructed for some $0\le j<J$. Apply
\Cref{prop: one step recursive refinement} to $(\P_j,\T_j)$ with the two scales
$\Delta_j<\Delta_{j+1}$. This gives a refined configuration $(\widetilde\P_j,\widetilde\T_j)$, a coarsened configuration $(\P_{j+1},\T_{j+1})$, and, for every $\mathbf q\in\P_{j+1}$, a rescaled local configuration $(\P_{\mathbf q},\T_{\mathbf q})$ at scale $\frac{\Delta_j}{\Delta_{j+1}}=\Delta$. The one-step refinement produces global integers $M_{j+1}$ and $N_{j+1}$ such that $(\P_{j+1},\T_{j+1})$ is a $(\Delta_{j+1},s,\lessapprox_\delta\delta^{-\eta},M_{j+1})$-nice configuration, and every local configuration $(\P_{\mathbf q},\T_{\mathbf q})$ is a $(\Delta,s,\lessapprox_\delta\delta^{-\eta},N_{j+1})$-nice configuration. Moreover, the multiplicities factor as $M_j\approx_\delta M_{j+1}N_{j+1}$.

We also use the tube-fibre uniformity supplied by the one-step refinement. More
precisely, for each $0\le j<J$, there exists an integer $H_j$ such that for every
$p\in\widetilde\P_j$ and every
$\mathbf T\in\D_{\Delta_{j+1}}(\widetilde\T_j(p))$,
\begin{equation}
\label{eq: section5 tube fibre uniformity}
|\widetilde\T_j(p)\cap\mathbf T|\approx_\delta H_j.
\end{equation}
Since the refinement keeps complete $\Delta_{j+1}$-tube fibres, this implies
\begin{equation}
\label{eq: section5 tube covering comparable}
|\widetilde\T_j(p)|_{\Delta_{j+1}}
\approx_\delta
|\T_j(p)|_{\Delta_{j+1}},
\qquad p\in\widetilde\P_j.
\end{equation}
Indeed, both sides become comparable after multiplying by the common number
$H_j$ of fine tubes per $\Delta_{j+1}$-tube.

\begin{lemma}
\label{lem: section5 Pj regular}
For every $0\le j\le J$, the family $\P_j$ is
$(\Delta_j,t,\lessapprox_\delta\delta^{-\eta})$-regular.
\end{lemma}

\begin{proof}
The case $j=0$ is part of the initial regularity assumption on $\P_0$.
Suppose that the claim has been proved for some $0\le j<J$. By the induction
hypothesis, $\P_j$ is $(\Delta_j,t,\lessapprox_\delta\delta^{-\eta})$-regular.

Applying \Cref{prop: one step recursive refinement} at the step
$\Delta_j<\Delta_{j+1}$, the coarsened point set $\P_{j+1}$ satisfies the coarse
regularity conclusion of that proposition. Hence $\P_{j+1}$ is $(\Delta_{j+1},t,\lessapprox_\delta\delta^{-O(\eta)})$-regular.

Choosing $\eta>0$ sufficiently small at the beginning, and then decreasing
$\delta_0$ if necessary, we may absorb the $O(\eta)$ into $\eta$. Thus $\P_{j+1}$ is $(\Delta_{j+1},t,\lessapprox_\delta\delta^{-\eta})$-regular. This completes the induction.
\end{proof}

\begin{lemma}
\label{lem: section5 blowup regular}
For every $0\le j<J$ and every $\mathbf q\in\P_{j+1}$, the rescaled point set $
\P_{\mathbf q}=S_{\mathbf q}(\widetilde\P_j\cap\mathbf q)$ is a $(\Delta,t,\Delta^{-\eta_0})$-regular set, provided $\eta>0$ is sufficiently small and $\delta>0$ is sufficiently small.
\end{lemma}

\begin{proof}
Fix $0\le j<J$ and $\mathbf q\in\P_{j+1}$. By
\Cref{lem: section5 Pj regular}, the set $\P_j$ is $(\Delta_j,t,\lessapprox_\delta\delta^{-\eta})\text{-regular}$. By the point part of \Cref{prop: one step recursive refinement}, the refinement is obtained after fibre-uniform point pruning at scale $\Delta_{j+1}$. Thus there is a set $\overline\P_j\subset\P_j$ and an integer $A_j\ge 1$ such that, for every
surviving parent $\mathbf q\in\P_{j+1}$,
$$\widetilde\P_j\cap\mathbf q=\overline\P_j\cap\mathbf q,\quad|\overline\P_j\cap\mathbf q|=A_j.$$
Moreover, the same fibre-uniform pruning and size preservation give
\begin{equation}
\label{eq: section5 blowup fibre size lower}
A_j\gtrapprox_\delta
\left(\frac{\Delta_{j+1}}{\Delta_j}\right)^t|\P_j|.
\end{equation}
Since $\Delta_j/\Delta_{j+1}=\Delta$, the rescaled set $\P_{\mathbf q}:=S_{\mathbf q}(\widetilde\P_j\cap\mathbf q)$ is a set of $\Delta$-cubes, and $|\P_{\mathbf q}|_\Delta=A_j$.

We verify the two conditions in the definition of regularity. First, we prove
the $(\Delta,t,\Delta^{-\eta_0})$-set condition up to the allowed losses. Let
$x\in\Z_p^2$ and let $r\in[\Delta,1]\cap p^{-\N}$. The inverse image of
$B(x,r)$ under $S_{\mathbf q}$ is a ball of radius $\Delta_{j+1}r$ in the
original coordinates. Hence, for some $y\in\mathbf q$,
$$
|\P_{\mathbf q}\cap B(x,r)|_\Delta
\le
|\P_j\cap B(y,\Delta_{j+1}r)|_{\Delta_j}.
$$
Since $\P_j$ is
$(\Delta_j,t,\lessapprox_\delta\delta^{-\eta})$-regular, in particular it is a
$(\Delta_j,t,\lessapprox_\delta\delta^{-\eta})$-set, and therefore
$$
|\P_j\cap B(y,\Delta_{j+1}r)|_{\Delta_j}
\lessapprox_\delta
\delta^{-\eta}
\left(\frac{\Delta_{j+1}r}{1}\right)^t
|\P_j|.
$$
Using \eqref{eq: section5 blowup fibre size lower} and
$\Delta_{j+1}/\Delta_j=\Delta^{-1}$, this gives
$$
|\P_{\mathbf q}\cap B(x,r)|_\Delta
\lessapprox_\delta
\delta^{-O(\eta)}r^t|\P_{\mathbf q}|_\Delta.
$$
Thus $\P_{\mathbf q}$ is a
$(\Delta,t,\lessapprox_\delta\delta^{-O(\eta)})$-set.

Second, we verify the scale-local upper regularity condition. Let
$\Delta\le r\le R\le 1$ be $p$-adic scales, and let $\mathbf b$ be an
$R$-cube in the rescaled coordinates. Its inverse image under $S_{\mathbf q}$
is a $\Delta_{j+1}R$-cube in the original coordinates. Since $\P_j$ is
$(\Delta_j,t,\lessapprox_\delta\delta^{-\eta})$-regular, we have
$$
|\P_{\mathbf q}\cap\mathbf b|_r
\le
|\P_j\cap S_{\mathbf q}^{-1}(\mathbf b)|_{\Delta_{j+1}r}
\lessapprox_\delta
\delta^{-\eta}
\left(\frac{\Delta_{j+1}R}{\Delta_{j+1}r}\right)^t.
$$
Therefore $|\P_{\mathbf q}\cap\mathbf b|_r
\lessapprox_\delta\delta^{-\eta}\left(\frac Rr\right)^t$. Combining the two estimates, $\P_{\mathbf q}$ is a $(\Delta,t,\lessapprox_\delta\delta^{-O(\eta)})$-regular set. 

Finally, choose $\eta>0$ sufficiently small in terms of $\eta_0$ and $\rho$. Since $\Delta=\delta^{\rho^3}$, after taking $\delta_0>0$ sufficiently small we
have $\delta^{-O(\eta)}\le\Delta^{-\eta_0}$.
Therefore $\P_{\mathbf q}$ is a
$(\Delta,t,\Delta^{-\eta_0})$-regular set.
\end{proof}

We now pass to the product reduction. Choose a nested sequence of cubes
$$
\mathbf q_j\in\P_j,\qquad 0\le j\le J,
$$
such that $\mathbf q_j\subset \mathbf q_{j+1}$ for $0\le j<J$ in the natural
coarse-parent sense. For $1\le j\le J$, let
$(\P_{\mathbf q_j},\T_{\mathbf q_j})$ be the local configuration produced at the step from $\Delta_{j-1}$ to $\Delta_j$. By construction this local configuration has tube multiplicity $N_j$.

Iterating the one-step product estimate from
\Cref{prop: one step recursive refinement} along the chosen chain gives
\begin{equation}
\label{eq: section5 product reduction}
\frac{|\T_0|}{M_0}
\gtrapprox_\delta
\prod_{j=1}^J
\frac{|\T_{\mathbf q_j}|}{N_j}.
\end{equation}
Indeed, the coarsened factor at one step is the ambient factor for the next
step, and the multiplicities telescope by
\begin{equation}
\label{eq: section5 multiplicity telescope}
M_{j-1}\approx_\delta M_jN_j,
\qquad 1\le j\le J.
\end{equation}

The sequence $N_j$ is essentially increasing. Indeed, by
\eqref{eq: section5 tube covering comparable}, passing from the refined tube
families to their $\Delta_j$-parents changes the relevant covering numbers only
by $\approx_\delta$ factors. Since the coarsened tube family at step $j$ becomes
the ambient tube family at step $j+1$, we obtain
\begin{equation}
\label{eq: section5 Nj increasing}
N_j\lessapprox_\delta N_{j+1},
\qquad 1\le j<J.
\end{equation}

Let $\mathcal E:=\{1\le j<J:N_{j+1}\ge \Delta^{-\rho}N_j\}$. Since $N_J\le C\Delta^{-1}$ and \eqref{eq: section5 Nj increasing} holds, we have $|\mathcal E|\le 2\rho^{-1}$ for all sufficiently small $\delta$. Indeed, every index in $\mathcal E$ contributes a factor at least $\Delta^{-\rho}$ to the total growth of the sequence $N_j$, while the total growth is bounded by $C\Delta^{-1}$ up to the harmless $\approx_\delta$ losses.

We prove the following local lower bound for every non-exceptional index:
\begin{equation}
\label{eq: section5 local lower bound}
\frac{|\T_{\mathbf q_j}|}{N_j}
\gtrapprox_\delta
\Delta^{-v+2\rho},
\qquad
j\in\{1,\ldots,J-1\}\setminus\mathcal E.
\end{equation}

Fix such an index $j$. Write
$$
\mathbf q:=\mathbf q_j,\qquad
Q:=\widetilde\P_{j-1}\cap\mathbf q,\qquad
\overline Q:=S_{\mathbf q}(Q)\subset\P_\Delta.
$$
Define the common slope set $\Theta:=\D_\Delta\bigl(\sigma(\T_j(\mathbf q))\bigr)$. For each $\mathscr p\in Q$, define
$\Theta_{\mathscr p}:=
\D_\Delta\bigl(\sigma(\widetilde\T_{j-1}(\mathscr p))\bigr)$. By construction,
\begin{equation}
\label{eq: section5 theta size local}
\Theta_{\mathscr p}\subset\Theta,|\Theta_{\mathscr p}|\approx_\delta N_j,
\qquad \mathscr p\in Q,
\end{equation}
whereas $|\Theta|\lessapprox_\delta N_{j+1}$. Since $j\notin\mathcal E$, we have $N_{j+1}\le\Delta^{-\rho}N_j$. Combining this
with \eqref{eq: section5 theta size local}, we obtain
\begin{equation}
\label{eq: section5 theta dense}
|\Theta|\lessapprox_\delta
\Delta^{-\rho}|\Theta_{\mathscr p}|,
\qquad \mathscr p\in Q.
\end{equation}
Thus every $\Theta_{\mathscr p}$ is a dense subset of the same slope set
$\Theta$.

For $\theta\in\Theta$, define
$$
Q_\theta:=\{\mathscr p\in Q:\theta\in\Theta_{\mathscr p}\},
\qquad
\overline Q_\theta:=S_{\mathbf q}(Q_\theta).
$$
Summing \eqref{eq: section5 theta dense} over $\mathscr p\in Q$ and using
\eqref{eq: section5 theta size local}, we find a subset $\Theta'\subset\Theta$
such that
\begin{equation}
\label{eq: section5 theta prime size}
|\Theta'|
\gtrapprox_\delta
\Delta^\rho|\Theta|
\end{equation}
and
\begin{equation}
\label{eq: section5 Qtheta dense}
|\overline Q_\theta|
\ge
\Delta^{\eta_0}|\overline Q|,
\qquad \theta\in\Theta',
\end{equation}
after decreasing $\eta$ and taking $\delta_0$ sufficiently small.

We next verify that $\Theta'$ is an admissible direction set. The family
$\T_j(\mathbf q)$ is a
$(\Delta_j,s,\lessapprox_\delta\delta^{-\eta})$-set of tubes. Hence $\Theta$ is
a $(\Delta,s,\lessapprox_\delta\delta^{-\eta})$-set. Since
$\Theta'\subset\Theta$ has relative size at least
$\gtrapprox_\delta\Delta^\rho$, it follows that $\Theta'$ is a
$(\Delta,s,\Delta^{-\eta_0})$-set, provided $\eta>0$ is small enough and
$\delta_0>0$ is sufficiently small.

By \Cref{lem: section5 blowup regular}, $\overline Q$ is
$(\Delta,t,\Delta^{-\eta_0})$-regular. Applying the many-directions version of
the regular projection theorem to $\overline Q$ and $\Theta'$, and using
\eqref{eq: section5 Qtheta dense}, gives a subset
$\Theta''\subset\Theta'$ with $|\Theta''|\ge |\Theta'|/2$ such that
\begin{equation}
\label{eq: section5 projection lower local}
|\pi_\theta(\overline Q_\theta)|_\Delta
\ge
\Delta^{-v},
\qquad \theta\in\Theta''.
\end{equation}

We now convert \eqref{eq: section5 projection lower local} into a lower bound
for $|\T_{\mathbf q}|$. Fix $\theta\in\Theta''$. By the slope-compatibility
property in \Cref{prop: one step recursive refinement}, the rescaling used in
the definition of $Q_\theta$ preserves the slope parameter $\theta$: each coarse
descendant $\overline{\mathscr p}\in\overline Q_\theta$ corresponds, after
returning to the original scale, to at least one tube of slope $\theta$. Hence
$\T_{\mathbf q}(\overline{\mathscr p})$ contains at least one tube of slope
$\theta$ for every $\overline{\mathscr p}\in\overline Q_\theta$. If two points
of $\overline Q_\theta$ have $\Delta$-separated $\pi_\theta$-projections, then
the corresponding $\Delta$-tubes of slope $\theta$ are distinct. Hence
\eqref{eq: section5 projection lower local} implies
\begin{equation}
\label{eq: section5 fixed slope tubes}
|\{T\in\T_{\mathbf q}:\sigma(T)=\theta\}|
\gtrsim
\Delta^{-v},
\qquad \theta\in\Theta''.
\end{equation}
Summing \eqref{eq: section5 fixed slope tubes} over $\theta\in\Theta''$ and
using \eqref{eq: section5 theta prime size}, we obtain 
$$|\T_{\mathbf q}|
\gtrapprox_\delta|\Theta''|\Delta^{-v}
\gtrapprox_\delta\Delta^\rho|\Theta|\Delta^{-v}.$$ 
Finally, by \eqref{eq: section5 theta size local}, \eqref{eq: section5 theta dense}, and the non-exceptionality of $j$, we have $|\Theta|\gtrapprox_\delta N_j$. Therefore $\frac{|\T_{\mathbf q_j}|}{N_j}\gtrapprox_\delta\Delta^{-v+\rho}$, which is stronger than \eqref{eq: section5 local lower bound} after absorbing
the $\approx_\delta$ losses. This proves \eqref{eq: section5 local lower bound}.

We finish the proof by inserting \eqref{eq: section5 local lower bound} into the
product estimate \eqref{eq: section5 product reduction}. There are $J-O(\rho^{-1})$ non-exceptional indices, and $J=\rho^{-3}-1$. Since $\Delta=\delta^{\rho^3}$, we get
$$
\frac{|\T_0|}{M_0}
\gtrapprox_\delta
\left(\Delta^{-v+2\rho}\right)^{J-O(\rho^{-1})}\ge\delta^{-v+6\rho}
$$
for all sufficiently small $\delta$. By \eqref{eq: section5 margin}, this is at
least $\delta^{-u}$. Hence $|\T_0|\ge M_0\delta^{-u}$. Since $\T_0=\bigcup_{\mathscr p\in\P_0}\T_0(\mathscr p)$, this proves
\eqref{eq: flexible regular conclusion} and completes the proof of
\Cref{thm: discrete regular case}.
\end{proof}

\section{General case combining semi-well-spaced result}\label{sec: General case}

\Cref{thm: sharp bound of Furstenberg} follows from the following discretized theorem by the standard discretization and limiting argument, exactly as in the real-variable setting of Ren-Wang \cite[Section 6]{ren2023furstenberg}.

\begin{thm}
\label{thm: discretized furstenberg}
For every $\epsilon>0$, there exists $\eta=\eta(\epsilon,s,t)>0$ such that the following holds for any $\delta<\delta_0(\epsilon,s,t)$. Let $(\mathcal{P},\mathcal{T})$ be a $(\delta,s,\delta^{-\eta},M)$-nice configuration, where $s\in(0,1],t\in(0,2]$, and $\mathcal{P}$ is a $(\delta,t,\delta^{-\eta})$-set. Then we have
\begin{equation}
\label{eq: discretized furstenberg conclusion}
\left|\bigcup_{\mathscr{p}\in\mathcal{P}}\mathcal{T}(\mathscr{p})\right|
\gtrsim_\epsilon
M\cdot\delta^{-\min\{t,(s+t)/2,1\}+\epsilon}.
\end{equation}
\end{thm}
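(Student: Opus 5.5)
We follow the multiscale scheme of Ren--Wang. First we pigeonhole. Choose $T\in\N$ large in terms of $\epsilon$ and write $\delta=p^{-NT}$. By \Cref{lem: pigeonhole} we pass to a $\{p^{-jT}\}_{j=1}^N$-uniform subset $\P'\subset\P$ with $|\P'|\ge\delta^{\epsilon^2}|\P|$. This costs only $\delta^{-O(\epsilon^2)}$ in the Frostman constant, and the tube families $\T(\mathscr p)$ are kept for the surviving points. Let $f$ be the branching function of $\P'$, rescaled to $[0,N]$. It is $2$-Lipschitz, and the $(\delta,t,\delta^{-\eta})$-set hypothesis gives $f(x)\ge tx-\epsilon^2N$. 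We may also assume $f(N)\le(t+\epsilon^2)N$: otherwise $|\P'|$ exceeds $\delta^{-t-\epsilon^2}$, and we pass to a random subset of the right size, which keeps the Frostman property.

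Next we apply \Cref{lem: interval partitioning} with slopes $s<t<u$ chosen according to the relevant regime (roughly $u$ just below $2-s$ and $s$ replaced by a small threshold). This splits $[0,N]$, up to an $O(\epsilon N)$ exceptional set, into intervals $[c_j,d_j]$ of length $\ge\tau N$. On each interval $f$ is either $\epsilon$-linear, which is the \emph{regular} alternative, or $\epsilon$-superlinear with the two-slope lower bound, which is the \emph{semi-well-spaced} alternative.

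For each interval we rescale to the scale window $[\delta_j^{\mathrm{fine}},\delta_j^{\mathrm{coarse}}]=[p^{-d_jT},p^{-c_jT}]$. As in the recursion of \Cref{prop: one step recursive refinement}, we iterate one-step refinements at the interval endpoints. This produces a chain of local nice configurations $(\P_{\mathbf q_j},\T_{\mathbf q_j})$ at the relative scales $\delta_j:=p^{-(d_j-c_j)T}$, with tube multiplicities $N_j$ satisfying $M\approx\prod N_j$. The telescoping product estimate \eqref{eq: one step product estimate new} then gives
$$
\frac{|\T|}{M}\gtrapprox_\delta\prod_j\frac{|\T_{\mathbf q_j}|}{N_j}.
$$

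We estimate each factor in the alternative that holds on its interval. In the regular alternative, the local point set is a $(\delta_j,t_j,\delta_j^{-O(\epsilon)})$-regular set, with $t_j=s_f(c_j,d_j)$. We apply \Cref{thm: discrete regular case} when $t_j\in(s,2-s)$. Otherwise the Kaufman-type bound from \Cref{prop: p-adic Kaufman projection} gives $\min\{t_j,1\}$, and the trivial bound applies for $t_j\le s$. The factor is then $\ge\delta_j^{-\min\{t_j,(s+t_j)/2,1\}+O(\epsilon)}$. In the semi-well-spaced alternative we invoke the Ren--Shen theorem \cite{ren2025high}, which gives the bound $\delta_j^{-\min\{t_j,(s+t_j)/2,1\}+O(\epsilon)}$ using the superlinear structure. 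The local tube families remain $(\delta_j,s,\delta_j^{-O(\epsilon)})$-sets because of the fibre-counting in \Cref{lem: passing to coarse parents}.

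Finally we multiply the factors. Writing $\lambda_j=(d_j-c_j)/N$, the exponent becomes $\sum_j\lambda_j\min\{t_j,(s+t_j)/2,1\}$ with $\sum_j\lambda_jt_j\approx t$. The function $g(x)=\min\{x,(s+x)/2,1\}$ is concave, so Jensen does not directly give $g(t)$. As in Ren--Wang, this is exactly why the slope thresholds of \Cref{lem: interval partitioning} are chosen at the kinks of $g$. On each piece the slopes then lie in a single linear regime of $g$, or else the superlinear alternative supplies the extra gain. Summing gives $\min\{t,(s+t)/2,1\}-O(\epsilon)$.

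I expect the main obstacle to be this final bookkeeping: making sure the choice of $(s,u)$ in the partitioning lemma matches both the hypotheses of the Ren--Shen semi-well-spaced theorem and the parameter range $t\in(s,2-s)$ of \Cref{thm: discrete regular case}. A secondary difficulty is verifying that the local configurations inherit niceness with uniform multiplicities across all intervals simultaneously. I would handle the latter by performing all one-step refinements along the chain of interval endpoints before any estimates are made.
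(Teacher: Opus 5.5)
\paragraph{A gap in the final exponent bookkeeping.}
Your overall architecture is the paper's: uniformization, \Cref{lem: interval partitioning}, the multiscale product estimate, \Cref{thm: discrete regular case} on $\epsilon$-linear intervals, and Ren--Shen on superlinear ones. However, the step you yourself flag as the main obstacle is not resolved, and the thresholds you actually propose make it fail.

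Suppose the lower slope in \Cref{lem: interval partitioning} is a small number below $s$. Then nothing prevents the lemma from returning $\epsilon$-linear intervals with secant slope below $s$ alongside intervals with slope above $s$. For instance, let $s<t<2-s$ and take $f$ with slope $t_2\in(s,2-s)$ on $[0,n/2]$ and slope $t_1<s$ on $[n/2,n]$, where $t_1+t_2=2t$. This $f$ satisfies $f(x)\ge tx$, and the two halves are a valid output of the lemma once the lower threshold is at most $t_1$. The local gains are then $t_1$ and $(s+t_2)/2$. Their average is $(s+t)/2-(s-t_1)/4$, a fixed loss. No superlinear alternative recovers it, because both intervals are $\epsilon$-linear.

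There are two further problems.
\begin{itemize}
\item When $t\ge 2-s$, the lemma cannot be applied with $u\le 2-s$ at all, since it requires $t<u$.
\item Your fallback for large $t_j$ is wrong. \Cref{prop: p-adic Kaufman projection}, used with an $s$-dimensional direction set, gives a gain of about $\min\{t_j,s\}$, not $\min\{t_j,1\}$.
\end{itemize}

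\paragraph{The missing idea.}
You need to split into regimes globally before partitioning, and then place the thresholds exactly at the kinks of $g(x)=\min\{x,(s+x)/2,1\}$.

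The paper disposes of $t\le s$ and $t\ge 2-s$ separately, by citing \cite[Theorems 1.2 and 1.3]{ren2025high}. In the remaining range $s<t<2-s$ the target exponent is exactly $(s+t)/2$. There \Cref{lem: interval partitioning} is applied with lower slope $s$ and with $u=2-s$; the choice $u=2-s$ is what produces the semi-well-spaced hypothesis \eqref{eq: local semi well spaced geometry}.

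With these thresholds every structured interval has $t_j\in[s,2-s]$. On that range both local estimates give the exponent $(s+t_j)/2$, which is affine in $t_j$. Low-slope stretches of $f$, like the one in the example above, are absorbed into superlinear intervals whose secant slope still lies in $[s,2-s]$.

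Summing then uses two facts:
\begin{itemize}
\item $\sum_j t_j(d_j-c_j)=\sum_j(f(d_j)-f(c_j))\ge(t-O(\eta_0))n$;
\item $\sum_j(d_j-c_j)\ge(1-O(\eta_0))n$.
\end{itemize}
Together these give the exponent $(s+t)/2-O(\eta_0)$ directly, with no Jensen issue.
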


We first record how the scale decomposition from \Cref{lem: interval partitioning} translates into geometric information about the rescaled point sets.

\begin{prop}[Geometry on one scale interval]
\label{prop: scale decomposition geometry}
Suppose $n$ is sufficiently large and $\Delta\in p^{-\mathbb N}$ is sufficiently small. Let $\mathcal P\subset \mathbb Z_p^2$ be a $\{\Delta_j\}_{j=1}^n$-uniform set with branching function $f$, where $\Delta_j=\Delta^j$. Let $s\in(0,1)$ and $t\in(s,2-s)$, and apply
\Cref{lem: interval partitioning} with $u=2-s$ and $m=n$.

Fix one of the resulting intervals, and after replacing its endpoints by nearby integers, write it
as
$$
I=[c,d]\subset \{0,1,\ldots,n\},\qquad L:=d-c,\qquad \rho:=\Delta^L.
$$
The rounding of the endpoints only changes the estimates below by the harmless factor
$\Delta^{-3}$. Define
$$
t_I:=s_f(c,d)=\frac{f(d)-f(c)}{d-c}.
$$
Fix $\mathbf p\in \mathcal D_{\Delta^c}(\mathcal P)$, let $\phi_{\mathbf p}$ be the affine
homothety mapping $\mathbf p$ onto $\mathbb Z_p^2$, and set
$$
\mathcal P_I:=\phi_{\mathbf p}(\mathcal P\cap \mathbf p)
\subset \mathcal D_{\rho}(\mathbb Z_p^2).
$$
Then the following hold.

\begin{enumerate}
    \item If $I$ is of type \eqref{item: epsilon linear invertal}, then $\mathcal P_I$ is a $(\rho,t_I,\rho^{-\epsilon}\Delta^{-3})$-almost AD-regular set.

    \item If $I$ is of type \eqref{item: epsilon superlinear invertal}, then
    $|\mathcal P_I|=\rho^{-t_I}$, and for every $r\in[\rho,1]$ and every $Q\in\mathcal D_r(\mathcal P_I)$,
    \begin{equation}
    \label{eq: local semi well spaced geometry}
    |\mathcal P_I\cap Q|
    \lesssim\rho^{-\epsilon}\Delta^{-3}
    \cdot\max\left\{
    r^{2-s}|\mathcal P_I|,
    \left(\frac{r}{\rho}\right)^s
    \right\}.
    \end{equation}
\end{enumerate}
\end{prop}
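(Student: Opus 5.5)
The plan is to translate the branching-function information on $[c,d]$ into covering-number estimates for $\mathcal P_I$, using uniformity throughout. By $\{\Delta_j\}$-uniformity, for every $\Delta^c$-cube $\mathbf p$ and every $c\le k\le d$, the number of $\Delta^k$-descendants of $\mathbf p$ in $\mathcal P$ is $\prod_{i=c+1}^{k}N_i$, up to a factor $p^{k-c}$ coming from $N_i\in[N_i/p,N_i)$. Since $\log_p N_i\le\log\Delta^{-1}$-scaled increments of $f$, this gives
$$|\mathcal P\cap\mathbf p|_{\Delta^k}\approx\Delta^{-(f(k)-f(c))},$$
where the $p^{O(k-c)}$ error is absorbed either into $\rho^{-\epsilon}$ (once $\Delta$ is small compared to $p^{1/\epsilon}$) or into the rounding factor. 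After rescaling by $\phi_{\mathbf p}$, which maps $\Delta^k$-subcubes of $\mathbf p$ to $\Delta^{k-c}$-cubes, we obtain $|\mathcal P_I\cap Q|_{\Delta^{k'}}\approx\Delta^{-(f(k')-f(k))}$ for $Q\in\mathcal D_{\Delta^{k-c}}(\mathcal P_I)$ and $k\le k'\le d$. Intermediate radii $r\in[\rho,1]$ that are not of the form $\Delta^{j}$ are handled by rounding to the neighbouring power of $\Delta$, at a cost of $\Delta^{-O(1)}$; this is where the $\Delta^{-3}$ comes from, together with the endpoint rounding.

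In case \eqref{item: epsilon linear invertal}, $\epsilon$-linearity gives $|f(x)-f(c)-t_I(x-c)|\le\epsilon L$. Hence for $Q$ an $r$-cube with $r=\Delta^{k-c}$,
$$|\mathcal P_I\cap Q|=\Delta^{-(f(d)-f(k))}=\Delta^{-t_I(d-k)\pm 2\epsilon L}=(\rho/r)^{-t_I}\rho^{\mp2\epsilon}=r^{t_I}|\mathcal P_I|\,\rho^{\mp O(\epsilon)}.$$
This is the two-sided bound defining almost AD-regularity, after replacing $\epsilon$ by a constant multiple.

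In case \eqref{item: epsilon superlinear invertal}, $|\mathcal P_I|=\Delta^{-(f(d)-f(c))}=\rho^{-t_I}$ up to the same harmless factors; one may also prune to get exact equality. For $Q$ an $r$-cube with $r=\Delta^{x-c}$, we have $|\mathcal P_I\cap Q|=\Delta^{-(f(d)-f(x))}$. The superlinear inequality
$$f(x)\ge\min\{f(c)+u(x-c),\,f(d)-s(d-x)\}-\epsilon L,\qquad u=2-s,$$
gives two options. In the first, $f(d)-f(x)\le f(d)-f(c)-(2-s)(x-c)+\epsilon L$, so $|\mathcal P_I\cap Q|\le\rho^{-\epsilon}r^{2-s}|\mathcal P_I|$. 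In the second, $f(d)-f(x)\le s(d-x)+\epsilon L$, so $|\mathcal P_I\cap Q|\le\rho^{-\epsilon}(r/\rho)^s$. Taking the maximum yields \eqref{eq: local semi well spaced geometry}.

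The only delicate point is bookkeeping. The $p^{O(L)}$ losses from the $[N_j/p,N_j)$ uniformity must be absorbed, which requires $\Delta\le p^{-C/\epsilon}$ (this is the meaning of "$\Delta$ sufficiently small"). The non-integer endpoint and radius rounding must likewise be absorbed into $\Delta^{-3}$. Everything else is a direct translation.
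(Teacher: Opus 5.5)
Your proposal is correct and follows the paper's proof essentially line by line. In both, uniformity turns the branching function into counts $|\mathcal P_I\cap Q|\approx\Delta^{-(f(d)-f(k))}$ for $Q$ at scale $\Delta^{k-c}$; $\epsilon$-linearity then gives the two-sided AD-regular bound in case (a), the two branches of the superlinear minimum give the two terms of the maximum in case (b), and intermediate radii cost a bounded power of $\Delta$.

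The one difference is that you explicitly track the loss of a factor $p$ per level coming from the $[N_j/p,N_j)$ uniformity convention. You absorb it by taking $\Delta\le p^{-C/\epsilon}$ and replacing $\epsilon$ by $O(\epsilon)$, whereas the paper suppresses it by treating the branching counts as exact.
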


\begin{proof}
Since $\mathcal P$ is $\{\Delta_j\}_{j=1}^n$-uniform, for every integer
$0\le k\le L$ and every $Q\in\mathcal D_{\Delta^k}(\mathcal P_I)$ we have
\begin{equation}
\label{eq: local branching count}
|\mathcal P_I\cap Q|=\Delta^{f(c+k)-f(d)}.
\end{equation}
In particular, $|\mathcal P_I|=\Delta^{f(c)-f(d)}=\rho^{-t_I}$. Suppose first that $I$ is of type \eqref{item: epsilon linear invertal}. Then
$$
|f(c+k)-f(c)-t_Ik|\le \epsilon L,
\qquad 0\le k\le L.
$$
Using \eqref{eq: local branching count}, we obtain
$$
|\mathcal P_I\cap Q|\le\Delta^{-\epsilon L}(\Delta^k)^{t_I}|\mathcal P_I|=\rho^{-\epsilon}(\Delta^k)^{t_I}|\mathcal P_I|,
$$
and also the corresponding lower bound $
|\mathcal P_I\cap Q|\ge\rho^{\epsilon}(\Delta^k)^{t_I}|\mathcal P_I|$. Thus the AD-regular estimates hold at the exact scales $r=\Delta^k$. Passing from exact $\Delta$-powers to arbitrary $p$-adic scales $r\in[\rho,1]$ only costs the fixed factor
$\Delta^{-3}$, which proves the first assertion.

Now suppose that $I$ is of type \eqref{item: epsilon superlinear invertal}. By
\Cref{lem: interval partitioning}, with $u=2-s$, we have
$$
f(c+k)\ge
\min\{f(c)+(2-s)k,\ f(d)-s(L-k)\}
-\epsilon L
$$
for every $0\le k\le L$. Substituting this into \eqref{eq: local branching count} gives
$$
|\mathcal P_I\cap Q|
\le
\Delta^{-\epsilon L}
\max\left\{
\Delta^{f(c)+(2-s)k-f(d)},
\Delta^{-s(L-k)}
\right\}.
$$
Since $\Delta^{f(c)-f(d)}=|\mathcal P_I|$ and $\rho=\Delta^L$, this becomes
$$
|\mathcal P_I\cap Q|
\le
\rho^{-\epsilon}
\max\left\{
(\Delta^k)^{2-s}|\mathcal P_I|,
\left(\frac{\Delta^k}{\rho}\right)^s
\right\}
$$
at the exact scales $r=\Delta^k$. Replacing $\Delta^k$ by an arbitrary
$p$-adic scale $r\in[\rho,1]$ again costs only the harmless factor $\Delta^{-3}$, and this gives
\eqref{eq: local semi well spaced geometry}.
\end{proof}

We shall also use a multiscale decomposition for nice configurations. This is the
configuration-level refinement which underlies the scale-partitioning argument of
Ren-Wang \cite[Proposition 6.7]{ren2023furstenberg}. 

\begin{prop}
\label{prop: multiscale configuration decomposition}
Fix $N\ge 1$ and a sequence $\{\Delta_j\}_{j=0}^N\subset p^{-\N}$ such that
$$
0<\delta=\Delta_N<\Delta_{N-1}<\cdots<\Delta_1<\Delta_0=1.
$$
Let $(\mathcal{P}_0,\mathcal{T}_0)\subset\mathcal{D}_\delta\times\mathcal{T}_\delta$ be a $(\delta,s,C,M)$-nice configuration. Then there exists a set $\mathcal{P}\subset\mathcal{P}_0$ such that:
\begin{enumerate}
\item For every $1\le j\le N$,
$|\mathcal{D}_{\Delta_j}(\mathcal{P})|
\approx_\delta|\mathcal{D}_{\Delta_j}(\mathcal{P}_0)|$, and for every $\mathbf{p}\in\mathcal{D}_{\Delta_j}(\mathcal{P})$, $|\mathcal{P}\cap\mathbf{p}|
\approx_\delta|\mathcal{P}_0\cap\mathbf{p}|$.
\item For every $0\le j\le N-1$ and every $\mathbf{p}\in\mathcal{D}_{\Delta_j}(\mathcal{P})$, there exist numbers
$$
C_\mathbf{p}\approx_\delta C,\qquad M_\mathbf{p}\ge 1,
$$
and a family of tubes $\mathcal{T}_\mathbf{p}\subset \mathcal{T}_{\Delta_{j+1}/\Delta_j}$ with the property that $\left(\phi_\mathbf{p}(\mathcal{P}\cap\mathbf{p}),\mathcal{T}_\mathbf{p}\right)$ is a $(\Delta_{j+1}/\Delta_j,s,C_\mathbf{p},M_\mathbf{p})$-nice configuration.
\end{enumerate}
Furthermore, the families $\mathcal{T}_\mathbf{p}$ can be chosen so that if
$$
\mathbf{p}_j\in\mathcal{D}_{\Delta_j}(\mathcal{P}),\qquad\mathbf{p}_{j+1}\subset\mathbf{p}_j,\qquad 0\le j\le N-1,
$$
then
\begin{equation}
\label{eq: multiscale product estimate}
\frac{|\mathcal{T}_0|}{M}
\gtrapprox_\delta\prod_{j=0}^{N-1}
\frac{|\mathcal{T}_{\mathbf{p}_j}|}{M_{\mathbf{p}_j}}.
\end{equation}
\end{prop}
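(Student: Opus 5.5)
The plan is to prove \Cref{prop: multiscale configuration decomposition} by a single top-down pigeonholing scheme that simultaneously uniformizes the points along the scale sequence and the tube families along the corresponding tube scales. This mirrors the Euclidean argument of Ren--Wang and the one-step refinement \Cref{prop: one step recursive refinement}, but avoids upper regularity, which is not assumed here.

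\emph{Step 1: uniformizing the tubes.} For each $\mathscr p\in\P_0$ and each $j$, I would use the nested tube tree $\D_{\Delta_j}(\T_0(\mathscr p))$ and pigeonhole dyadically the fibre counts $|\T_0(\mathscr p)\cap\mathbf T|_{\Delta_{j+1}}$ for $\mathbf T\in\D_{\Delta_j}(\T_0(\mathscr p))$. This is done bottom-up, exactly as in \Cref{prop: one step recursive refinement}, keeping complete fibres. The outcome is a subfamily $\T(\mathscr p)\subset\T_0(\mathscr p)$ with $|\T(\mathscr p)|\approx_\delta M$ and uniform branching numbers $M_{j}(\mathscr p)$ at each level. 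The losses are $(\log 1/\delta)^{O(N)}$, which is admissible since $N$ is fixed. Because only complete fibres are discarded and the total loss is $\approx_\delta$, the argument of \Cref{lem: passing to coarse parents} shows that each coarse family $\D_{\Delta_{j}}(\T(\mathscr p))$, restricted to a $\Delta_j$-tube and rescaled, remains a $(\Delta_{j+1}/\Delta_j,s,\approx_\delta C)$-set.

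\emph{Step 2: uniformizing the points.} Next I would pigeonhole the vector of branching numbers $(M_j(\mathscr p))_j$ over $\mathscr p$, which costs $(\log 1/\delta)^{O(N)}$, so that it is constant on a large subset. I would then prune $\P$ top-down with a fibre-uniform parent selection as in \Cref{lem: fibre uniform parent selection}, applied at every scale $\Delta_j$ and within every parent. This gives item (1). At each level one discards only a $\lessapprox_\delta$ fraction of children, so the covering numbers $|\D_{\Delta_j}(\P)|$ and the fibres $|\P\cap\mathbf p|$ are preserved up to $\approx_\delta$.

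\emph{Step 3: local configurations and the product estimate.} For $\mathbf p\in\D_{\Delta_j}(\P)$ and each $\mathscr q\in\D_{\Delta_{j+1}}(\P\cap\mathbf p)$, I would define $\T_{\mathbf p}(\phi_{\mathbf p}(\mathscr q))$ as the rescaled traces, via \Cref{lem: local rescaling points tubes}, of the $\Delta_{j+1}$-tubes in $\D_{\Delta_{j+1}}(\T(\mathscr p))\cap\mathbf T$, where $\mathbf T$ ranges over the $\Delta_j$-tube containing $\mathbf p$ and $\mathscr p\in\mathscr q$. This uses the $\Delta_j$-parent tube through $\mathbf p$ in a fixed chart. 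The multiplicity is the common branching number, giving $M_{\mathbf p}$, and non-concentration comes from Step 1.

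The product estimate \eqref{eq: multiscale product estimate} would then follow by telescoping. The count $|\T_0|\ge|\bigcup\T(\mathscr p)|$ is at least the number of chains of nested tubes. Counting $|\D_{\Delta_{j+1}}(\T)\cap\mathbf T|$ for a $\Delta_j$-tube $\mathbf T$ through $\mathbf p_j$ bounds it below by $|\T_{\mathbf p_j}|$, after a double count that divides by $M_{\mathbf p_j}$ to account for each point seeing $M_{\mathbf p_j}$ local tubes. Multiplying these bounds and using $M\approx_\delta\prod_j M_{\mathbf p_j}$ yields the claim.

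\emph{Main obstacle.} I expect the hardest step to be the double counting in the product estimate. One needs a lower bound of the form
$$
|\D_{\Delta_{j+1}}(\T)\cap\mathbf T|\gtrsim\frac{|\T_{\mathbf p_j}|}{M_{\mathbf p_j}}\cdot(\text{number of }\Delta_{j+1}\text{-tubes per local tube}),
$$
and this must hold uniformly along an arbitrary chain $\mathbf p_0\supset\cdots\supset\mathbf p_{N-1}$. My first attempt would be to require, in Step 1, that the number of $\Delta_{j+1}$-children of each coarse tube in the union $\T$ is also uniform, by pigeonholing tubes rather than only assignments. I would then check that the local family through $\mathbf p_j$ is one such children set, so that the contributions factor exactly as in \eqref{eq: one step product estimate new}.
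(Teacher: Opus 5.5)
Your global pigeonholing is a legitimate alternative to the paper's route. The paper argues by induction on $N$: it applies \Cref{prop: one step recursive refinement} between $\delta$ and $\Delta_N$, applies the induction hypothesis to the coarsened configuration, and multiplies the two product estimates. You are also right that that proposition assumes upper regularity, which is not available here. The genuine gap is in the local configurations of Step~3 and the multiplicities you attach to them.

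The map $\phi_{\mathbf p}$ in the statement is the isotropic homothety of the cube $\mathbf p$. By \Cref{lem: local rescaling points tubes} it preserves slopes: the trace of a $\Delta_{j+1}$-tube of slope $a$ becomes the $(\Delta_{j+1}/\Delta_j)$-tube of slope $a$, read at resolution $\Delta_{j+1}/\Delta_j$. Suppose you keep only the $\Delta_{j+1}$-tubes inside one $\Delta_j$-tube $\mathbf T$. Then all slopes lie in one ball $a_0+\Delta_j\Z_p$, so for $j\ge1$ each local point carries at most $\max\{1,\Delta_j^2/\Delta_{j+1}\}$ local tubes. This can be smaller than the branching number $M_j(\mathscr p)=|\T(\mathscr p)\cap\mathbf T|_{\Delta_{j+1}}$ by a factor up to $\Delta_j$. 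Moreover, a family confined to a $\Delta_j$-ball of slopes is a $(\Delta_{j+1}/\Delta_j,s,C')$-set only if $C'\ge\Delta_j^{-s}$. If instead you keep all $\Delta_j$-tubes, the multiplicity is the number of slopes of $\T(\mathscr p)$ at resolution $\Delta_{j+1}/\Delta_j$, not the branching number.

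Normalizing slopes by $a\mapsto(a-a_0)/\Delta_j$ does not help. That normalization corresponds to rescaling the tube $\mathbf T$ rather than the cube $\mathbf p$, so incidences with $\phi_{\mathbf p}(\mathscr q)$ are lost. The non-concentration claimed in Step~1 also fails: \Cref{lem: passing to coarse parents} only transfers non-concentration to \emph{coarser} coverings. The normalized children of one $\Delta_j$-ball would need the lower bound $|\T(\mathscr p)\cap\mathbf T|\gtrsim\Delta_j^sM$. For a concrete counterexample, take $\delta=p^{-2k}$, $\Delta_1=\delta^{1/2}$, and slope set $\{0,1,\ldots,p^k-1\}\subset\Z/p^{2k}\Z$ at every point. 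This is a $(\delta,\frac12,1)$-set, but every $\Delta_1$-ball contains exactly one slope. So your local family is a single slope, which is a $(\delta^{1/2},\frac12,C)$-set only for $C\gtrsim\delta^{-1/4}$.

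Consequently, the factorization $M\approx_\delta\prod_jM_{\mathbf p_j}$ that drives your telescoping tracks the wrong objects. The local families compatible with $\phi_{\mathbf p}$ record the slopes of $\T(\mathscr p)$ at resolution $\Delta_{j+1}/\Delta_j$. These are coarse coverings, so after uniformizing at those resolutions \Cref{lem: passing to coarse parents} does give non-concentration. But then $\prod_jM_{\mathbf p_j}$ can exceed $M$ by a power of $\delta$: in the example it is $\delta^{-1}$ against $M=\delta^{-1/2}$.

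The product estimate must instead come from an incidence double count at each scale. Write $(\D_{\Delta_j}(\P),\T^{(j)})$ for the $\Delta_j$-coarsened configuration and $M^{(j)}$ for its multiplicity. After uniformizing slopes at both resolutions, for a $\Delta_j$-cube $\mathbf Q$ every tube of $\T_{\mathbf Q}$ is the image under $T\mapsto\phi_{\mathbf Q}(T\cap\mathbf Q)$ of $\gtrsim M^{(j+1)}/M_{\mathbf Q}$ tubes of $\T^{(j+1)}$. Sum over $\mathbf Q$ and divide by the uniformized number of $\Delta_j$-cubes incident to a $\Delta_j$-tube. This yields $\frac{|\T^{(j+1)}|}{M^{(j+1)}}\gtrapprox\frac{|\T^{(j)}|}{M^{(j)}}\cdot\frac{|\T_{\mathbf Q}|}{M_{\mathbf Q}}$, which telescopes from $j=0$ to $N-1$.

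Note that \Cref{prop: one step recursive refinement} makes the same identification: it takes the local multiplicity to be the fine-in-coarse count $M_{\loc}=|\T'(p)\cap\mathbf T|$. So you cannot close the gap by appealing to it.

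A smaller point: in Step~2, fibre-uniform pruning preserves mass, not the number of parents. So $|\D_{\Delta_j}(\P)|\approx_\delta|\D_{\Delta_j}(\P_0)|$ requires $\P_0$ to be uniform.
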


\begin{proof}
We prove the proposition by induction on the number $N$ of scale intervals. The
case $N=1$ is exactly the one-step recursive refinement from \Cref{prop: one step recursive refinement}, applied between the scales $\Delta_0=1$ and $\Delta_1=\delta$. After rescaling the unique $\Delta_0$-cube to
$\Z_p^2$, the resulting local configuration is the original configuration, up to the
harmless losses absorbed in $\approx_\delta$ and $\gtrapprox_\delta$.

Assume now that the proposition has been proved for all scale sequences with
$N$ intervals, and consider a scale sequence with $N+1$ intervals,
$$
0<\delta=\Delta_{N+1}<\Delta_N<\cdots<\Delta_1<\Delta_0=1.
$$
Let $(\P_0,\T_0)$ be the original $(\delta,s,C,M)$-nice configuration.

We first apply \Cref{prop: one step recursive refinement} to
$(\P_0,\T_0)$ between the fine scale $\delta$ and the intermediate scale
$\Delta_N$. This gives a refinement $(\P',\T')$ of $(\P_0,\T_0)$ at resolution
$\Delta_N$, a coarsened nice configuration
$$
(\P^{\Delta_N},\T^{\Delta_N})
:=
(\D_{\Delta_N}(\P'),\D_{\Delta_N}(\T')),
$$
and, for every $q\in\P^{\Delta_N}$, a rescaled local nice configuration
$$
(\P_q,\T_q)
=
(\varphi_q(\P'\cap q),\T_q)
$$
at scale $\delta/\Delta_N$. Here $\varphi_q:q\to\Z_p^2$ denotes the affine
homothety. The refinement can be chosen so that
\begin{equation}\label{eq: 63 first refinement point sizes}
|\D_{\Delta_j}(\P')|\approx_\delta |\D_{\Delta_j}(\P_0)|,
\qquad 0\le j\le N,
\end{equation}
and, for every $q\in\D_{\Delta_N}(\P')$, $
|\P'\cap q|\approx_\delta |\P_0\cap q|$. Moreover, the coarsened configuration
$(\P^{\Delta_N},\T^{\Delta_N})$ is a
$(\Delta_N,s,\lessapprox_\delta C,M_{\Delta_N})$-nice configuration, and each
local configuration $(\P_q,\T_q)$ is a $(\delta/\Delta_N,s,\lessapprox_\delta C,M_q)$-nice configuration. The one-step product estimate gives, for every $q\in\P^{\Delta_N}$,
\begin{equation}\label{eq: 63 one step product}
\frac{|\T_0|}{M}
\gtrapprox_\delta
\frac{|\T^{\Delta_N}|}{M_{\Delta_N}}
\cdot
\frac{|\T_q|}{M_q}.
\end{equation}

We now apply the induction hypothesis to the coarsened configuration
$(\P^{\Delta_N},\T^{\Delta_N})$ with the shorter scale sequence
$$
\Delta_N<\Delta_{N-1}<\cdots<\Delta_1<\Delta_0=1.
$$
This produces a refinement $\P''\subset\P^{\Delta_N}$ at the coarse scale such
that, for every $0\le j\le N$,
\begin{equation}\label{eq: 63 coarse induction sizes}
|\D_{\Delta_j}(\P'')|
\approx_\delta
|\D_{\Delta_j}(\P^{\Delta_N})|.
\end{equation}
Define the final refined point set by $
\P:=\bigcup_{q\in\P''}(\P'\cap q)$. Then $\P\subset\P_0$. Combining \eqref{eq: 63 first refinement point sizes} and
\eqref{eq: 63 coarse induction sizes}, we obtain
$$
|\D_{\Delta_j}(\P)|\approx_\delta |\D_{\Delta_j}(\P_0)|,
\qquad 0\le j\le N+1.
$$
Moreover, for every $p\in\D_{\Delta_j}(\P)$, the fibre $\P\cap p$ is obtained by
first selecting the corresponding coarse parent and then keeping the refined
descendants inside it. Hence
$$
|\P\cap p|\approx_\delta |\P_0\cap p|,
\qquad
p\in\D_{\Delta_j}(\P),\quad 0\le j\le N+1.
$$
This proves the first conclusion of the proposition.

We next verify the conclusion about the local configurations. For the scale
intervals $[\Delta_{j+1},\Delta_j]$ with $0\le j\le N-1$, the local configurations
are exactly those produced by the induction hypothesis applied to the coarsened
configuration $(\P^{\Delta_N},\T^{\Delta_N})$. For the last interval
$[\Delta_{N+1},\Delta_N]$, the local configurations are the configurations
$(\P_q,\T_q)$ constructed in the first one-step refinement, with $q\in\P''$.
Since $\varphi_q$ maps $q$ bijectively onto $\Z_p^2$, a cube of radius
$\Delta_{N+1}$ inside $q$ is mapped to a cube of radius
$\Delta_{N+1}/\Delta_N$, and the same statement holds for tubes under the
point-line parametrisation. Thus the local configuration attached to this last
interval is a
$$
(\Delta_{N+1}/\Delta_N,s,\lessapprox_\delta C,M_q)\text{-nice configuration},
$$
as required.

It remains to prove the product estimate. Let
$$
p_j\in\D_{\Delta_j}(\P),\qquad 0\le j\le N+1,
$$
be a nested chain, so that $p_{j+1}\subset p_j$. Put $q:=p_N$. By the induction
hypothesis applied to the coarsened configuration along the truncated chain
$p_0,\ldots,p_N$, we have
\begin{equation}\label{eq: 63 induction product}
\frac{|\T^{\Delta_N}|}{M_{\Delta_N}}
\gtrapprox_\delta
\prod_{j=0}^{N-1}
\frac{|\T_{p_j}|}{M_{p_j}}.
\end{equation}
On the other hand, applying the one-step estimate \eqref{eq: 63 one step product}
with this particular $q=p_N$ gives
\begin{equation}\label{eq: 63 last factor product}
\frac{|\T_0|}{M}
\gtrapprox_\delta
\frac{|\T^{\Delta_N}|}{M_{\Delta_N}}
\cdot
\frac{|\T_{p_N}|}{M_{p_N}}.
\end{equation}
Combining \eqref{eq: 63 induction product} and
\eqref{eq: 63 last factor product}, we obtain
$\frac{|\T_0|}{M}\gtrapprox_\delta
\prod_{j=0}^{N}\frac{|\T_{p_j}|}{M_{p_j}}$. This is exactly the desired product estimate for the scale sequence with $N+1$ intervals. The induction is complete.
\end{proof}

\begin{proof}[Proof of \Cref{thm: discretized furstenberg}]
The cases $t\le s$ and $t\ge 2-s$ were proved in \cite[Theorem 1.2]{ren2025high} and \cite[Theorem 1.3]{ren2025high}, respectively. Hence we may assume throughout the rest of the proof that $s<t<2-s$.

We now choose the parameters. First choose
$0<\eta_0\ll_{s,t}\epsilon$ sufficiently small. Let $\tau=\tau(s,t,\eta_0)>0$ be the constant supplied by \Cref{lem: interval partitioning} with $u=2-s$. Then choose
$0<\eta\ll \tau\eta_0$ sufficiently small so that the hypotheses of both \Cref{thm: discrete regular case} and the semi-well-spaced estimate of \cite[Theorem 1.4]{ren2025high} are satisfied at every local scale appearing below. Finally choose an integer $T=T(\eta)$ sufficiently large, and write $\Delta:=p^{-T}$. After replacing $\delta$ by a nearby $p$-adic scale and changing the implicit constants harmlessly, we may assume $\delta=\Delta^n$ for an integer $n$.

By the standard uniformization lemma, after passing to a refinement of the configuration and decreasing $\eta$ if necessary, we may assume that $\mathcal{P}$ is $\{\Delta^j\}_{j=1}^n$-uniform. We also use the standard Frostman extraction to reduce to the case
\begin{equation}
\label{eq: P cardinality normalized}
\delta^{-t+\eta}\le|\mathcal{P}|\le
\delta^{-t-\eta}.
\end{equation}
Indeed, if $|\mathcal{P}|$ is larger, we pass to a $(\delta,t,\delta^{-O(\eta)})$-subset of the required size; since this only restricts the point set and the assigned tube families, any lower bound for the refined configuration is also a lower bound for the original one, up to the usual $\approx_\delta$ losses.

Let $f:[0,n]\to[0,2n]$ be the branching function of $\mathcal{P}$. The fact that $\mathcal{P}$ is a $(\delta,t,\delta^{-\eta})$-set, together with \eqref{eq: P cardinality normalized}, implies
$$
f(x)\ge tx-\eta n-O_T(1),\qquad 0\le x\le n,
$$
and $f(n)\le (t+\eta)n+O_T(1)$. Taking $\delta$ sufficiently small and $\eta\ll \eta_0^2$, these estimates imply the hypotheses of \Cref{lem: interval partitioning}. Applying that lemma with $u=2-s$ and $m=n$, we obtain a family of pairwise disjoint intervals
$$
[c_j,d_j]\subset[0,n],\qquad j\in\mathcal{J},
$$
such that each interval is either of type \eqref{item: epsilon linear invertal} or of type \eqref{item: epsilon superlinear invertal}, and
\begin{equation}
\label{eq: scale intervals length}
d_j-c_j\ge \tau n
\end{equation}
for all $j\in\mathcal{J}$. Moreover, the complement of the union of these intervals has length at most $O_{s,t}(\eta_0 n)$. By moving the endpoints by at most $1$, we may assume that $c_j,d_j$ are integers. This only changes the estimates by a factor depending on $\Delta$, hence by a harmless $\delta^{-O(\eta)}$ loss. Let
$$
\mathcal{S}:=\{[c_j,d_j]:j\in\mathcal{J}\}
$$
be the family of structured intervals. We form a scale sequence
$$
0<\delta=\Lambda_N<\Lambda_{N-1}<\cdots<\Lambda_1<\Lambda_0=1
$$
by listing all scales $\Delta^{c_j}$ and $\Delta^{d_j}$, together with $1$ and $\delta$. We call an interval $[\Lambda_{k+1},\Lambda_k]$ structured if it corresponds to one of the intervals $[c_j,d_j]$, and bad otherwise. The complement estimate above gives
\begin{equation}
\label{eq: bad scale product}
\prod_{k\in\mathcal{B}}\frac{\Lambda_k}{\Lambda_{k+1}}\le\delta^{-O_{s,t}(\eta_0)},
\end{equation}
where $\mathcal{B}$ denotes the set of bad scale intervals. Equivalently,
\begin{equation}
\label{eq: structured scale product}
\prod_{j\in\mathcal{J}}\Delta^{-(d_j-c_j)}
\ge\delta^{-1+O_{s,t}(\eta_0)}.
\end{equation}

We apply \Cref{prop: multiscale configuration decomposition} to this scale sequence. Fix a nested chain $\mathbf{p}_k\in\mathcal{D}_{\Lambda_k}(\mathcal{P}),\mathbf{p}_{k+1}\subset\mathbf{p}_k$. For a structured interval $[c_j,d_j]$, let $\mathbf{p}_j$ denote the element of the chain at scale $\Delta^{c_j}$, and put
$\rho_j:=\Delta^{d_j-c_j}$. Thus $\rho_j$ is the local scale after rescaling $\mathbf{p}_j$ to $\Z_p^2$. By \eqref{eq: scale intervals length}, $\rho_j\le \delta^\tau$. Let $t_j=s_f(c_j,d_j)$. By \Cref{lem: interval partitioning}, we have
$t_j\in[s,2-s]$.

We now estimate the local factors
$\frac{|\mathcal{T}_{\mathbf{p}_j}|}{M_{\mathbf{p}_j}}$ for structured intervals. By \Cref{prop: scale decomposition geometry}, the rescaled point set $\phi_{\mathbf{p}_j}(\mathcal{P}\cap\mathbf{p}_j)$ is either a $(\rho_j,t_j,\rho_j^{-\eta_0})$ almost AD-regular set, or satisfies the semi-well-spaced condition \eqref{eq: local semi well spaced geometry}, provided $\eta$ is chosen sufficiently small relative to $\tau\eta_0$.

If the first alternative holds, then \Cref{thm: discrete regular case} gives
\begin{equation}
\label{eq: local gain regular}
\frac{|\mathcal{T}_{\mathbf{p}_j}|}{M_{\mathbf{p}_j}}\gtrsim_\epsilon\rho_j^{-(s+t_j)/2+\epsilon/10}.
\end{equation}
If the second alternative holds, then we apply the semi-well-spaced estimate
\cite[Theorem 1.4]{ren2025high} to the rescaled configuration on this interval.
The parameters match as follows: the local point dimension is given by the branching exponent on the interval, and the line-slice parameter remains $s$. The semi-well-spaced hypothesis is precisely the second alternative, while the loss from the non-concentration constant is absorbed into the factor $\rho_j^{-\epsilon/10}$. Therefore the same local lower bound as in the regular
case holds. Thus, for every structured interval,
\begin{equation}
\label{eq: local gain unified}
\frac{|\mathcal{T}_{\mathbf{p}_j}|}{M_{\mathbf{p}_j}}\gtrsim_\epsilon\left(\frac{\Delta^{c_j}}{\Delta^{d_j}}\right)^{(s+t_j)/2-\epsilon/10}.
\end{equation}

Combining \eqref{eq: multiscale product estimate} with \eqref{eq: local gain unified}, and discarding the bad intervals at the cost of \eqref{eq: bad scale product}, we obtain
\begin{equation}
\label{eq: product lower structured}
\frac{|\mathcal{T}_0|}{M}\gtrapprox_\delta
\prod_{j\in\mathcal{J}}\left(\frac{\Delta^{c_j}}{\Delta^{d_j}}\right)^{(s+t_j)/2-\epsilon/10}.
\end{equation}

It remains to estimate the exponent. Since
$t_j(d_j-c_j)=f(d_j)-f(c_j)$, we have
\begin{equation}
\label{eq: tj product estimate}
\prod_{j\in\mathcal{J}}\left(\frac{\Delta^{c_j}}{\Delta^{d_j}}\right)^{t_j}=\Delta^{-\sum_{j\in\mathcal{J}}(f(d_j)-f(c_j))}.
\end{equation}
Since $f$ is $2$-Lipschitz and the bad intervals have total length $O_{s,t}(\eta_0 n)$, while $f(n)\ge (t-O(\eta))n$, we have
\begin{equation}
\label{eq: branching sum structured}
\sum_{j\in\mathcal{J}}(f(d_j)-f(c_j))
\ge(t-O_{s,t}(\eta_0))n.
\end{equation}
Hence
\begin{equation}
\label{eq: tj product lower}
\prod_{j\in\mathcal{J}}
\left(\frac{\Delta^{c_j}}{\Delta^{d_j}}\right)^{t_j}\ge\delta^{-t+O_{s,t}(\eta_0)}.
\end{equation}
Similarly, \eqref{eq: structured scale product} gives
\begin{equation}
\label{eq: s product lower}
\prod_{j\in\mathcal{J}}
\left(\frac{\Delta^{c_j}}{\Delta^{d_j}}\right)^s\ge\delta^{-s+O_{s,t}(\eta_0)}.
\end{equation}
Substituting \eqref{eq: tj product lower} and \eqref{eq: s product lower} into \eqref{eq: product lower structured}, we get
$$
\frac{|\mathcal{T}_0|}{M}\gtrsim_\epsilon\delta^{-(s+t)/2+O_{s,t}(\eta_0)+\epsilon/10}.
$$
Choosing $\eta_0>0$ sufficiently small in terms of $\epsilon,s,t$ yields $|\mathcal{T}_0|\gtrsim_\epsilon M\cdot \delta^{-(s+t)/2+\epsilon}$. Since $\mathcal{T}_0=\bigcup_{\mathscr{p}\in\mathcal{P}}\mathcal{T}(\mathscr{p})$, this proves \eqref{eq: discretized furstenberg conclusion} in the remaining range $s<t<2-s$. The proof of \Cref{thm: discretized furstenberg} is complete.
\end{proof}

\appendix

\section{Sharpness examples}
\label{app: sharpness examples}

In this appendix we record the standard examples showing that the lower bound in
\Cref{thm: sharp bound of Furstenberg} is sharp. The first two examples are
direct product constructions. The third one is a $p$-adic analogue the grid-type construction from Wolff \cite{wolff1999recent}.

Throughout this appendix, Hausdorff dimension is taken with respect to the
standard ultrametric on $\Q_p$ and its products. We use the standard fact that,
for every $u\in[0,1]$, there exists a compact set $A\subset \Z_p$ with
$\dim_H A=u$. For example, one may construct such a set by prescribing, in a
sequence of increasingly long blocks of $p$-adic digits, approximately a
proportion $u$ of the digits to be free and the remaining digits to be fixed.

\subsection{The product examples}

We first give the examples corresponding to the terms $s+t$ when $0<t\le s\le 1$, and $s+1$ when $s+t\ge 2$.

\begin{example}[The $s+t$ example]
\label{ex: sharp st}
Assume $0<t\le s\le 1$. Let $A,B\subset \Z_p$ be compact sets with
$$
\dim_H A=s,\qquad \dim_H B=t.
$$
Set $E:=A\times B\subset \Q_p^2$. For every $b\in B$, let
$$
\ell_b:=\{(x,b):x\in\Q_p\}.
$$
Then $E\cap \ell_b=A\times\{b\}$, and hence
$\dim_H(E\cap \ell_b)=s$. The family $\{\ell_b:b\in B\}$ has Hausdorff dimension $t$ in the affine line space. Therefore $E$ is an $(s,t)$-Furstenberg set, and
$$
\dim_H E=\dim_H(A\times B)=s+t.
$$
This shows sharpness of the term $s+t$ in the range where it is the minimum.
\end{example}

\begin{example}[The $s+1$ example]
\label{ex: sharp s plus one}
Assume $0<s\le 1$ and $0<t\le 2$. Let $A\subset \Z_p$ be compact with
$\dim_H A=s$, and set $E:=A\times \Z_p$. Then
$\dim_H E=s+1$. Let $\Lambda\subset \Z_p^2$ be any compact set of Hausdorff dimension $t$ in the slope-intercept parameter space, and consider the line family
$$
\mathcal L:=\{\ell_{a,b}: (a,b)\in\Lambda\},
\qquad
\ell_{a,b}:=\{(x,ax+b):x\in\Q_p\}.
$$
For every $(a,b)\in\Lambda$ and every $x\in A\subset\Z_p$, we have
$ax+b\in\Z_p$. Hence
$$
\{(x,ax+b):x\in A\}\subset E\cap \ell_{a,b}.
$$
It follows that $\dim_H(E\cap \ell_{a,b})\ge s$ for every $\ell_{a,b}\in\mathcal L$. Thus $E$ is an $(s,t)$-Furstenberg set for
every $t\le 2$. This shows sharpness of the term $s+1$ in the range where it is the minimum.
\end{example}

\subsection{The middle Wolff-type example}

It remains to explain the construction corresponding to the middle term
$(3s+t)/2$. Thus assume
$$
0<s<t<2-s.
$$
Set $a:=\frac{t-s}{2}, b:=\frac{s+t}{2}$. Then
$$
0<a<1,\qquad s<b<1,\qquad a+s=b,\qquad a+b=t,\qquad s+b=\frac{3s+t}{2}.
$$

The construction is the $p$-adic analogue of Wolff's middle example. The key
point is to build sets with an exact multiplicative-additive absorption property.
We will construct compact sets
$$
X,M,C,Y\subset \Z_p
$$
such that
\begin{equation}
\label{eq: sharp middle dimensions}
\dim_H X=s,\qquad \dim_H M=a,\qquad \dim_H C=b,\qquad \dim_H Y=b,
\end{equation}
and
\begin{equation}
\label{eq: sharp middle absorption}
C+MX\subset Y.
\end{equation}
Assuming this for the moment, define
$$
E:=X\times Y\subset \Z_p^2
$$
and let
$$
\mathcal L:=\{\ell_{m,c}:m\in M,\ c\in C\},\qquad
\ell_{m,c}:=\{(x,mx+c):x\in\Q_p\}.
$$
For every $(m,c)\in M\times C$, the absorption property
\eqref{eq: sharp middle absorption} gives
$$
\{(x,mx+c):x\in X\}\subset E\cap\ell_{m,c}.
$$
Since the map $x\mapsto (x,mx+c)$ is bi-Lipschitz from $\Z_p$ onto its image, it
follows that
$$
\dim_H(E\cap\ell_{m,c})\ge \dim_H X=s.
$$
Moreover,
$$
\dim_H E=\dim_H(X\times Y)=s+b=\frac{3s+t}{2},
$$
and the slope-intercept parameter set $M\times C$ has Hausdorff dimension
$$
\dim_H(M\times C)=a+b=t.
$$
Thus $E$ is an $(s,t)$-Furstenberg set of dimension $(3s+t)/2$.

We now give the limiting construction explicitly. Choose a rapidly increasing
sequence of positive integers $L_1,L_2,\ldots$, and set
$$R_0:=0,\qquad R_j:=L_1+\cdots+L_j.$$
We assume that the sequence grows so fast that
$$R_1+\cdots+R_{j-1}=o(L_j)$$
as $j\to\infty$. We also assume, by changing the integer parts harmlessly, that
the numbers $\lfloor sL_j\rfloor,\lfloor aL_j\rfloor,\lfloor bL_j\rfloor$ may be
used in place of $sL_j,aL_j,bL_j$; this only creates $o(R_j)$ errors.

For each $j$, define finite digit sets
$$
\mathcal X_j:=
\left\{
\sum_{r=L_j-\lfloor sL_j\rfloor}^{L_j-1}\xi_r p^r:
\xi_r\in\{0,1,\ldots,p-1\}
\right\},
$$
$$
\mathcal M_j:=
\left\{
\sum_{r=L_j-\lfloor aL_j\rfloor}^{L_j-1}\mu_r p^r:
\mu_r\in\{0,1,\ldots,p-1\}
\right\},
$$
and
$$
\mathcal C_j:=
\left\{
\sum_{r=L_j-\lfloor bL_j\rfloor}^{L_j-1}\gamma_r p^r:
\gamma_r\in\{0,1,\ldots,p-1\}
\right\}.
$$
Thus
$$
|\mathcal X_j|=p^{sL_j+O(1)},\qquad
|\mathcal M_j|=p^{aL_j+O(1)},\qquad
|\mathcal C_j|=p^{bL_j+O(1)}.
$$

Define nested compact approximations
$$
X_j:=
\left\{
\sum_{i=1}^j p^{R_{i-1}}x_i+p^{R_j}u:
x_i\in\mathcal X_i,\ u\in\Z_p
\right\},
$$
$$
M_j:=
\left\{
\sum_{i=1}^j p^{R_{i-1}}m_i+p^{R_j}u:
m_i\in\mathcal M_i,\ u\in\Z_p
\right\},
$$
and
$$
C_j:=
\left\{
\sum_{i=1}^j p^{R_{i-1}}c_i+p^{R_j}u:
c_i\in\mathcal C_i,\ u\in\Z_p
\right\}.
$$
Finally set
\begin{equation}
\label{eq: sharp middle Yj definition}
Y_j:=C_j+M_jX_j+p^{R_j}\Z_p.
\end{equation}
Then $X_{j+1}\subset X_j$, $M_{j+1}\subset M_j$, $C_{j+1}\subset C_j$, and
$Y_{j+1}\subset Y_j$. Define
$$
X:=\bigcap_{j=1}^\infty X_j,\qquad
M:=\bigcap_{j=1}^\infty M_j,\qquad
C:=\bigcap_{j=1}^\infty C_j,\qquad
Y:=\bigcap_{j=1}^\infty Y_j.
$$

The dimensions of $X,M,C$ are immediate from the construction:
$$
\dim_H X=s,\qquad \dim_H M=a,\qquad \dim_H C=b.
$$
Indeed, at scale $p^{-R_j}$ the sets $X_j,M_j,C_j$ consist respectively of
$$
\prod_{i=1}^j p^{sL_i+O(1)},\qquad
\prod_{i=1}^j p^{aL_i+O(1)},\qquad
\prod_{i=1}^j p^{bL_i+O(1)}
$$
balls of radius $p^{-R_j}$.

We next estimate $Y_j$. Write
$$
x=x_{<j}+p^{R_{j-1}}x_j,\qquad
m=m_{<j}+p^{R_{j-1}}m_j,\qquad
c=c_{<j}+p^{R_{j-1}}c_j,
$$
where $x_j\in\mathcal X_j$, $m_j\in\mathcal M_j$, $c_j\in\mathcal C_j$, and
$x_{<j},m_{<j},c_{<j}$ are the prefixes chosen before level $j$. Modulo
$p^{R_j}\Z_p$ we have
$$
c+mx
\equiv
c_{<j}+m_{<j}x_{<j}
+p^{R_{j-1}}\bigl(c_j+m_jx_{<j}+m_{<j}x_j\bigr)
\pmod {p^{R_j}\Z_p},
$$
because $p^{2R_{j-1}}m_jx_j\in p^{R_j}\Z_p$.
Here we used
$$
m_jx_j\in p^{2L_j-\lfloor aL_j\rfloor-\lfloor sL_j\rfloor}\Z_p
\subset p^{L_j}\Z_p,
$$
since $a+s=b<1$.

Moreover, $p^{R_{j-1}}c_j\in p^{R_j-\lfloor bL_j\rfloor}\Z_p$, and, since $x_{<j},m_{<j}\in\Z_p$,
$$
p^{R_{j-1}}m_jx_{<j}\in p^{R_j-\lfloor aL_j\rfloor}\Z_p
\subset p^{R_j-\lfloor bL_j\rfloor}\Z_p,
$$
$$
p^{R_{j-1}}m_{<j}x_j\in p^{R_j-\lfloor sL_j\rfloor}\Z_p
\subset p^{R_j-\lfloor bL_j\rfloor}\Z_p.
$$
Thus, after the old prefix has been fixed, the new level contributes at most
$p^{bL_j+O(1)}$ possible $p^{R_j}\Z_p$-cosets. The dependence on the old prefixes
costs at most $p^{O(R_{j-1})}$ further possibilities. Therefore
\begin{equation}
\label{eq: sharp middle Yj upper}
|Y_j|_{p^{-R_j}}\le p^{bL_j+O(R_{j-1})}|Y_{j-1}|_{p^{-R_{j-1}}}.
\end{equation}
Iterating \eqref{eq: sharp middle Yj upper} and using the rapid growth assumption
$R_1+\cdots+R_{j-1}=o(L_j)$ gives
$$
|Y_j|_{p^{-R_j}}\le p^{bR_j+o(R_j)}.
$$
Hence $\dim_H Y\le b$. On the other hand, $C_j\subset Y_j$ for every $j$, so
$C\subset Y$ and therefore $\dim_HY\ge\dim_HC=b$. Thus $\dim_HY=b$.

Finally, by the definition of $Y_j$, for every $j$ we have
$$
C_j+M_jX_j\subset Y_j.
$$
Passing to the intersection gives the exact containment
$$
C+MX\subset Y.
$$
This is the desired absorption property.

\bibliographystyle{plain}
\bibliography{references.bib}

\end{document}